 \definecolor{darkgrey}{gray}{0.35}
\definecolor{grey}{gray}{0.86}
\definecolor{lightgrey}{gray}{0.91}
\definecolor{ballblue}{rgb}{0, 0.5,0.5}
\definecolor{lightballblue}{rgb}{0, 0.8,0.8}
\definecolor{dbblue}{rgb}{0, 0.4,0.4}
\newcounter{constant} 
\newcommand{\cost}[1]{\mathcal{C}(#1)}
\newcommand{\mcost}{\textnormal{mcost}}
\newtheorem{theorem}{Theorem}[section]
\newtheorem{lemma}[theorem]{Lemma}
\newtheorem{claim}[theorem]{Claim}
\newtheorem{proposition}[theorem]{Proposition}
\newtheorem{observation}[theorem]{Observation}
\newtheorem{corollary}[theorem]{Corollary}
\newtheorem{definition}[theorem]{Definition}
\theoremstyle{definition}
\newtheorem{remark}[theorem]{Remark}
\newtheorem{assumption}[theorem]{Assumption}
\newtheorem{setting}[theorem]{Setting}
\numberwithin{equation}{section}
\newcommand{\N}{\mathbb{N}}
\newcommand{\Z}{\mathbb{Z}}
\newcommand{\R}{\mathbb{R}}
\newcommand{\pr}{\mathbb{P}}
\newcommand{\E}{\mathbb{E}}
\newcommand{\ind}[1]{\mathbf{1}_{#1}}
\newcommand{\eps}{\varepsilon}
\newcommand{\ulw}{\underline{w}}
\newcommand{\olw}{\overline{w}}
\newcommand{\newnet}[3]{$(#1,#2,#3)$-net}
\newcommand{\mpar}{\textnormal{\texttt{par}}\xspace}
\newcommand{\lls}{{\,\ll_{\star}\,}}
\newcommand{\ggs}{{\,\gg_{\star}\,}}
\newcommand{\comment}[1]{}
\newcommand{\calA}{\mathcal{A}}
\newcommand{\calB}{\mathcal{B}}
\newcommand{\calC}{\mathcal{C}}
\newcommand{\calE}{\mathcal{E}}
\newcommand{\calF}{\mathcal{F}}
\newcommand{\calG}{\mathcal{G}}
\newcommand{\calH}{\mathcal{H}}
\newcommand{\calJ}{\mathcal{J}}
\newcommand{\calL}{\mathcal{L}}
\newcommand{\calN}{\mathcal{N}}
\newcommand{\calP}{\mathcal{P}}
\newcommand{\calR}{\mathcal{R}}
\newcommand{\calS}{\mathcal{S}}
\newcommand{\calU}{\mathcal{U}}
\newcommand{\calV}{\mathcal{V}}
\newcommand{\calW}{\mathcal{W}}
\newcommand{\calX}{\mathcal{X}}
\newcommand{\calZ}{\mathcal{Z}}
\tikzset{cross/.style={cross out, draw=black, minimum size=2*(#1-\pgflinewidth), inner sep=0pt, outer sep=0pt},
cross/.default={2pt}}
\title{Four universal growth regimes in degree-dependent first passage percolation on spatial random graphs I}
\author{J{\'u}lia Komj{\'a}thy\thanks{Delft University of Technology, j.komjathy@tudelft.nl}, John Lapinskas\thanks{University of Bristol, john.lapinskas@bristol.ac.uk}, Johannes Lengler\thanks{ETH Z{\"u}rich, johannes.lengler@inf.ethz.ch}, Ulysse Schaller\thanks{ETH Z{\"u}rich, ulysse.schaller@inf.ethz.ch U.S. was supported by the Swiss National Science Foundation [grant number 200021\_192079].}}
\begin{document}

\maketitle

\begin{abstract}
One-dependent first passage percolation is a spreading process on a graph where the transmission time through each edge depends on the direct surroundings of the edge. In particular, the classical iid transmission time $L_{xy}$ is multiplied by $(W_xW_y)^\mu$, a polynomial of the expected degrees $W_x, W_y$ of the endpoints of the edge $xy$, which we call the penalty function. Beyond the Markov case, we also allow any distribution for $L_{xy}$ with regularly varying distribution near $0$. 
We then run this process on three spatial scale-free random graph models: finite and infinite Geometric Inhomogeneous Random Graphs, and Scale-Free Percolation. In these spatial models, the connection probability between two vertices depends on their spatial distance and on their expected degrees. 

We show that as the penalty-function, i.e., $\mu$ increases, the transmission time between two far away vertices sweeps through four universal phases: \emph{explosive} (with tight transmission times), \emph{polylogarithmic}, \emph{polynomial} but strictly sublinear, and \emph{linear} in the Euclidean distance. The strictly polynomial growth phase here is a new phenomenon that so far was extremely rare in spatial graph models.
The four growth phases are highly robust in the model parameters and are not restricted to phase boundaries. Further, 
the transition points between the phases depend non-trivially on the main model parameters: the tail of the degree distribution, a long-range parameter governing the presence of long edges, and the behaviour of the distribution $L$ near $0$. 
In this paper we develop new methods to prove the upper bounds in all sub-explosive phases. Our companion paper complements these results by providing matching lower bounds in the polynomial and linear regimes. 

\end{abstract}

\section{Introduction}\label{sec:intro}

First passage percolation (FPP) is a natural way to understand geodesics in random metric spaces. 
Starting from some initial vertex at time $0$, the process spreads through the underlying graph so that the transmission time between any two vertices $x,y$ is the minimum sum of edge transmission times over all paths between $x$ and $y$.
In classical FPP, edge transmission times are independent and identically distributed random variables.
In the recent paper \cite{komjathy2020stopping} we introduced one-dependent FPP, where edge transmission times depend on the edge's direct surroundings in the underlying graph. There, we determined the phase transition for explosion (i.e., reaching infinitely many vertices in finite time). 
In this paper we study the sub-explosive regime, when explosion does not occur. We show that the process exhibits rich behaviour with several growth phases and non-smooth phase transitions between them. This holds across a large class of scale-free spatial random graph models (namely Scale-Free Percolation, Hyperbolic Random Graphs, and infinite and finite Geometric Inhomogeneous Random Graphs \cite{deijfen2013scale, bringmann2019geometric, krioukov2010hyperbolic}), and across all Markovian and non-Markovian transmission time distributions with reasonable limiting behaviour at zero.
\vskip0.5em

\textbf{Universality classes of transmission times.}
In one-dependent FPP, we set the transmission time through the edge $e=xy$ between vertices $x,y$ as the product of an independent and identically distributed (iid) random factor $L_{xy}$ and a factor $(W_xW_y)^{\mu}$ for $\mu \geq 0$, where $W_x$ and $W_y$ are (up to constant factors) the expected degrees of $x$ and $y$ in the graph models under consideration.\footnote{Using $W_x$ instead of the actual degree of $x$ is natural in this model. We are convinced that the same results would also hold if we took the actual degrees instead of their expectation. However, that would make the proofs more technical without giving much additional insight.} When $\mu \in(0,1)$, a high-degree vertex still causes more new infections per unit time than a low-degree vertex, but this effect is sublinear in the degree.
As $\mu$ increases and/or the parameters of the underlying graph change, we prove that the following four different phases occur for the transmission time between the vertex at $0$ and a far away vertex $x$:
\begin{itemize}
\setlength\itemsep{0em}
    \item[(i)] it converges to a limiting distribution that is independent of $|x|$ (\emph{explosive phase});
\end{itemize}
This was the main result of \cite{komjathy2020stopping}. The main result of this paper is to characterise the other phases by the growth of the transmission time between $0$ and $x$:
\begin{itemize}
\setlength\itemsep{0em}
    \item[(ii)] it grows at most \emph{polylogarithmically} in $|x|$, without being explosive;
    \item[(iii)] it grows \emph{polynomially} with exponent $0<\eta_0 < 1$;
    \item[(iv)] it grows \emph{linearly} with exponent $\eta_0=1$.
\end{itemize}
These phases are \emph{highly robust} in the parameters, they are not restricted to phase boundaries in either $\mu$ or the other model parameters. 
Moreover, all four phases can occur on a single underlying graph by changing the penalty exponent $\mu$ only; universally across distributions of $L_{xy}$ with regularly varying behaviour at $0$ (exponent $\beta$ in Table \ref{table:summary}), see Figure \ref{fig:simulation} for a visualisation. 
This rich behaviour arises despite the doubly-logarithmic graph distances in the underlying spatial graph models. By contrast, in other models the behaviour of transmission times in classical FPP is less rich, see Section \ref{paragraph:FPP-other-graphs} for the discussion.
\vskip0.5em

\begin{table}[t]
\begin{center}

\begin{tblr}{|c|c|c|}
\hline
\SetRow{grey}{  Graph param.} &{  1-FPP parameters }&{   Behaviour of 1-FPP transmission times} \\
\hline\hline
\SetCell[r=2]{c} {\textbf{Weak decay:}\\ $\tau\in(2,3)$\\$\alpha \in(1,2)$}
&
$\mu < \frac{3-\tau}{2\beta}$
&
\makecell{\textbf{Explosive}:\\ $d_{\calC}(0,x) = \Theta(1)$} \\
\cline{2-3}
&
$\mu > \frac{3-\tau}{2\beta}$
&
\makecell{\textbf{Polylogarithmic}:\\ $d_{\calC}(0,x) = O((\log|x|)^{\Delta_0+o(1)}), \Delta_0>1$}\\
\hline\hline
\SetCell[r=4]{c} {\textbf{Strong decay:}\\ $\tau\in(2,3)$\\$\alpha > 2$ }
&
$\mu < \frac{3-\tau}{2\beta}$
&
\makecell{\textbf{Explosive}:\\ $d_{\calC}(0,x) = \Theta(1)$ } \\
\cline{2-3}
&$\mu\in\big(\frac{3-\tau}{2\beta}, \frac{3-\tau}{\beta}\big)$
&
\makecell{\textbf{Polylogarithmic}:\\ $d_{\calC}(0,x) = O((\log|x|)^{\Delta_0+o(1)}), \Delta_0>1$} \\
\cline{2-3}
&
$\mu\in\big(\frac{3-\tau}{\beta}, \frac{3-\tau}{\min\{\beta, d(\alpha-2)\}} + \frac{1}{d}\big) $
&
\makecell{\textbf{Polynomial}:\\ $d_{\calC}(0,x) = |x|^{\eta_0\pm o(1)}, \eta_0<1$}\\
\cline{2-3}
&$\mu > \frac{3-\tau}{\min\{\beta, d(\alpha-2)\}} + \frac{1}{d} $
&
\makecell{\textbf{Linear:}\\ $d_{\calC}(0,x) = \Theta(|x|)$} \\
\hline
\end{tblr}

%\begin{tabular}{|c|c|c|}
%\hline
%  \cellcolor{grey}{  Graph param.} &\cellcolor{grey}{  1-FPP parameters }&\cellcolor{grey}{   Behaviour of 1-FPP transmission times} \\
%        \hline
%        \hline
%        \multirowcell{2}{\textbf{Weak decay:}\\ $\tau\in(2,3)$\\$\alpha \in(1,2)$} & 
%            $\mu < \frac{3-\tau}{2\beta}$
%            &
%            \makecell{\textbf{Explosive}:\\ $d_{\calC}(0,x) = \Theta(1)$}\\
%            \cmidrule{2-3}
%            &
%            $\mu > \frac{3-\tau}{2\beta}$
%            &
%            \makecell{\textbf{Polylogarithmic}:\\ $d_{\calC}(0,x) = O((\log|x|)^{\Delta_0+o(1)}), \Delta_0>1$}\\
%        \hline\hline
%        \multirowcell{4}{\textbf{Strong decay:}\\ $\tau\in(2,3)$\\$\alpha > 2$ } &
%            $\mu < \frac{3-\tau}{2\beta}$
%            &
%            \makecell{\textbf{Explosive}:\\ $d_{\calC}(0,x) = \Theta(1)$ }\\
%            \cline{2-3}
%        &
%            $\mu\in\big(\frac{3-\tau}{2\beta}, \frac{3-\tau}{\beta}\big)$
%            &
%            \makecell{\textbf{Polylogarithmic}:\\ $d_{\calC}(0,x) = O((\log|x|)^{\Delta_0+o(1)}), \Delta_0>1$} \\
%            \cline{2-3}
%        &
%            $\mu\in\big(\frac{3-\tau}{\beta}, \frac{3-\tau}{\min\{\beta, d(\alpha-2)\}} + \frac{1}{d}\big) $
%            &
%            \makecell{\textbf{Polynomial}:\\ $d_{\calC}(0,x) = |x|^{\eta_0\pm o(1)}, \eta_0<1$} \\
%            \cline{2-3}
%        &
%            $\mu > \frac{3-\tau}{\min\{\beta, d(\alpha-2)\}} + \frac{1}{d} $
%            &
%            \makecell{\textbf{Linear:}\\ $d_{\calC}(0,x) = \Theta(|x|)$} \\
%        \hline
%    \end{tabular}
\end{center}
\caption{Summary of our main results. In 1-FPP, edge transmission times are $L_{xy} (W_xW_y)^\mu$ where $W_x, W_y$ are constant multiples of the expected degrees of the vertices $x,y$, and $L_{xy}$ is iid with distribution function that varies regularly near $0$ with exponent $\beta\in(0,\infty]$. The degree distribution follows a power law with exponent $\tau\in(2,3)$: graph distances are doubly-logarithmic in the underlying graph. The transmission time $d_\calC(0,x)$ between $0$ and a far away vertex $x$ sweeps through four different phases as the penalty exponent $\mu$ increases. For long-range parameter $\alpha\in(1,2)$, long edges between low-degree vertices maintain polylogarithmic transmission times (similar to long-range percolation), so increasing $\mu$ stops explosion but it has no further effect. When $\alpha>2$, these edges are sparser and a larger $\mu$ slows down 1-FPP, to polynomial but sublinear transmission times in an interval of length at least $1/d$ for $\mu$. Then, all long edges have polynomial transmission times in the distance they bridge.
For even higher penalty exponent $\mu$ the behaviour becomes similar to FPP on the grid $\Z^d$.  We give  the growth exponents $\Delta_0$ and $\eta_0$ explicitly  in~\eqref{eq:Delta_0} and~\eqref{eq:eta_0}.}
\label{table:summary}
\end{table}

\begin{figure}[!ht]
    \centering
    \begin{subfigure}[b]{0.49\textwidth}
         \centering
         \includegraphics[trim = 0 40 0 40,clip,width=\textwidth]{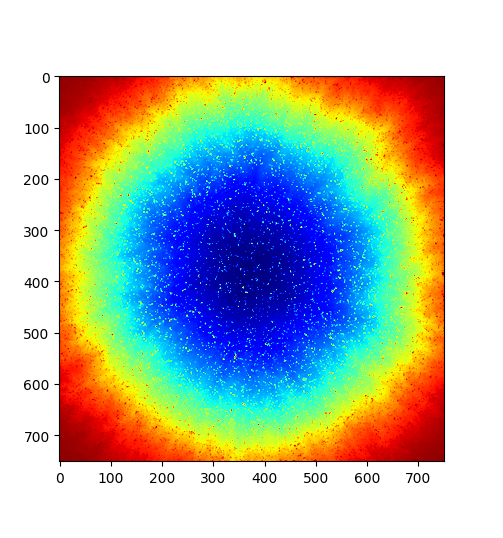}
         (a) Linear regime
         %\label{fig:simulation_linear}
     \end{subfigure}
     \hfill
     \begin{subfigure}[b]{0.49\textwidth}
         \centering
         \includegraphics[trim = 0 40 0 40,clip,width=\textwidth]{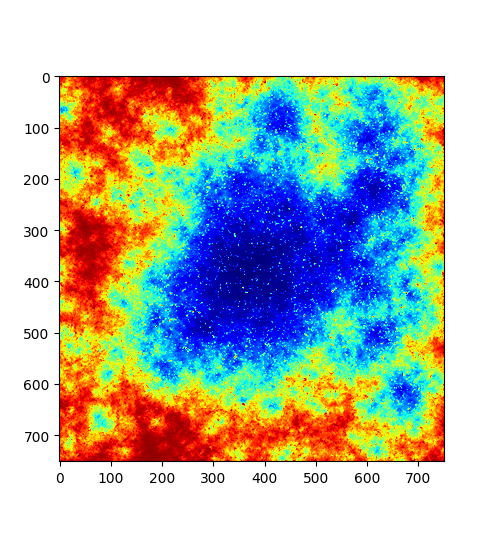}
         (b) Polynomial regime
         %\label{fig:simulation_polynomial}
     \end{subfigure}
     \hfill
     \begin{subfigure}[b]{0.49\textwidth}
         \centering
         \includegraphics[trim = 0 20 0 20,clip,width=\textwidth]{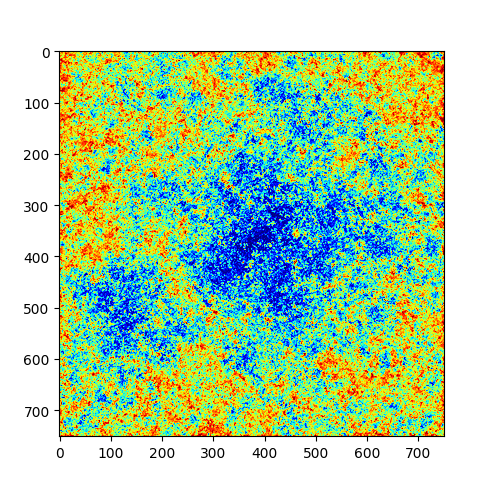}
         (c) Polylogarithmic regime
         %\label{fig:simulation_polylog}
     \end{subfigure}
     \hfill
     \begin{subfigure}[b]{0.49\textwidth}
         \centering
         \includegraphics[trim = 0 20 0 20,clip,width=\textwidth]{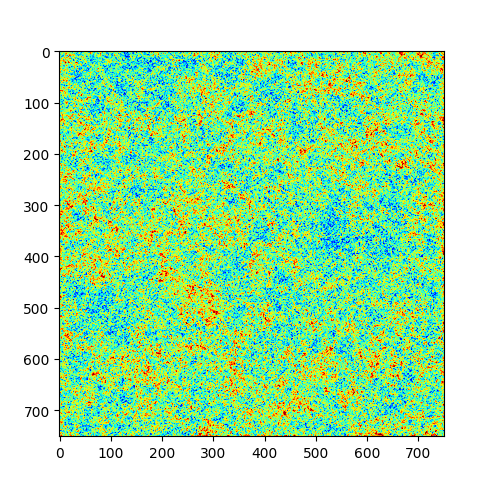}
         (d) Explosive regime
         %\label{fig:simulation_explosive}
     \end{subfigure}\vspace{2ex}
     
    \caption{Heatmaps for the four different universality classes. The vertices are sorted by their transmission times from the origin (center vertex). The colours represent this ordering: blue infected first, red last. All four plots are generated on the \emph{same} underlying graph (with parameters $\tau=2.3$ and $\alpha=5$, and edge connection probabilities $p(u,v)=1 \wedge (w_u w_v / (\E[W]\|u-v\|^2))^5$), where the vertices are placed on a $750 \times 750$ grid in the 2-dimensional torus. The random factors $L_{xy}$ associated to each edge are also identical in all four plots, and follow an exponential distribution (i.e.\ $\beta=1$). The only varying parameter is the penalty exponent $\mu$, taking values (a) $\mu=2$ for the linear regime (b) $\mu=1$ for the polynomial regime (c) $\mu=0.5$ for the polylogarithmic regime and (d) $\mu=0$ for the explosive regime. In the linear regime, the late points are -- typically -- high degree vertices carrying high penalisation. We thank Zylan Benjert for generating the simulations and the pictures.}
    \label{fig:simulation}
\end{figure}

\textbf{Precise behaviour in the four phases.}
In this paper we prove the upper bounds on transmission times in the sub-explosive regime (phase (i) was previous work~\cite{komjathy2020stopping}). In phase (ii), we show that the transmission time is at most $(\log|x|)^{\Delta_0 + o(1)}$ with an explicit $\Delta_0 >1$ which we conjecture to be tight. In phases (iii) and (iv), we show that the transmission time is precisely $|x|^{\eta_0 \pm o(1)}$, where we give $\eta_0<1$ explicitly for phase (iii) and $\eta_0 =1$ for phase (iv). The companion paper~\cite{komjathy2022one2} contains the matching lower bounds for phases (iii)-(iv) as well as some additional results for phase (iv).  
%For phase (iv),~\cite{komjathy2022one2} also contains a more precise upper bound $\overline \kappa |x|$ for some constant $\overline \kappa$ (i.e., strictly linear distances) as long as the underlying spatial graph has dimension at least two. When the dimension is one, we prove here an upper bound of $|x|^{1+o(1)}$ in phase (iv). The lower bound $\underline \kappa |x|$ for some non-zero constant $\underline \kappa$ is contained in \cite{komjathy2022one2} for all dimensions. 
We develop new techniques that allow us to treat upper bounds for all three sub-explosive phases \emph{simultaneously}, which we expect to be of independent interest. 
\vskip0.5em

\textbf{Motivation of the process from applications.}
One-dependent processes in general, and one-dependent FPP in particular allow for more realistic modelling of real phenomena.  In social networks, actual contacts and infections do not scale linearly with the degree~\cite{feldman2017high,kroy2023superspreading,wang2022effects, ke2021vivo}. 1-FPP type penalisation has frequently been used to model the sublinear impact of superspreaders~\cite{giuraniuc2006criticality,karsai2006nonequilibrium,miritello2013time,pu2015epidemic,yang2008optimal, baxter2021degree}, and in other contexts~\cite{bonaventura2014characteristic,ding2018centrality,lee2009centrality,zlatic2010topologically,andrade2009ising,hooyberghs2010biased}. Consistent with our model, all these applications assume a polynomial dependence with exponent in the range $\mu \in (0,1)$, where a high-degree vertex may cause more new infections per time than a low-degree vertex, but this effect is sublinear in the degree.

While our paper is theoretical, we do believe that a model with a rich phase space can have practical implications. In the spread of physical epidemics, while some diseases spread at an exponential rate, others spread at a polynomial rate, dominated by the local geometry. Examples of the latter include HIV/AIDS, Ebola, and foot-and-mouth disease, see the survey~\cite{polyepidemicsurvey} on polynomial epidemic growth. Classical epidemic models can typically only model either exponential or polynomial growth, not both. Arguably, $1$-FPP provides a natural explanation, since in $1$-FPP the transition can be driven by changes only to the transmission dynamics, not to the underlying network.

\vskip0.5em

\textbf{New methodology: moving to quenched to replace FKG-inequality.} 
In this paper we develop a general technique -- \emph{pseudorandom nets combined with multi-round exposure} -- that \emph{replaces the FKG-inequality} in problems concerning vertex and/or edge-weighted graph models where this inequality does not hold. 
Let us explain why the FKG-inequality fails in the context of 1-FPP. Typically, for upper bounds one constructs paths connecting $0$ and $x$ by revealing vertices and/or edges of the graph sequentially, which destroys the independence of edges. For graph distances in long-range percolation, the FKG-inequality resolves this problem \cite{biskup2004scaling}, but it already needs adjustments once vertex-weights are present \cite{gracar2022finiteness}. 
In 1-FPP, the existence of a long edge is positively correlated to its endpoints having large vertex weights, which is \emph{negatively} correlated to its other outgoing edges having short transmission times. 
To overcome this issue, we
move to the \emph{quenched setting} where we reveal the realisation of the whole weighted vertex set -- say $(\calV, \calW)=(V, w_V)$ -- and thus events concerning only edges become independent. Working with `arbitrary' realisations is impossible. We thus develop a `multi-scale control' on the realisation of the weighted vertex set, which we call \emph{pseudorandom nets} or just nets. 
A net $\calN$ is a subset of vertices, together with their weights, such that for every not-too-small radius $r$, every ``reasonable'' weight $w$, and every vertex $v\in\calN$,
the net has \textit{constant density} in $B_r(v)\times [w, 2w]$, shorthand for vertices of weight in $[w,2w]$ within Euclidean distance $r$ of $v$:
\begin{equation}\label{eq:net-heuristics}
    \frac{|\calN\cap B_r(v)\times[w, 2w]|}{\E\big[|\calV \cap B_r(v)\times [w,2w]|\big]} \in \Big(\frac{1}{16}, 8\Big).
\end{equation}  
We prove via a \emph{multi-scale analysis} that as $|x|\to \infty$, in a box containing $0$ and $x$, asymptotically almost every realisation of the weighted vertex set contains a net $\mathcal N$ with total density at least $1/4$. Then for every net, with a carefully chosen \emph{multi-round exposure} process we can define a coupling which lets us replace the FKG inequality, see Section~\ref{sec:exposure} for more details. We believe that this method is also useful for many other graph models, see shortly below.
\vskip0.5em
%
%Given a net $\calN$, the FKG-inequality in FPP and 1-FPP is still not applicable to find edges, since we select vertices based on the existence of cheap edges. Such a selection introduces dependencies, for example if we check edges one-by-one until we find a cheap edge, then selecting the $i$th edge implies that the first $i-1$ edges were not cheap. In spatial settings, the combinatorial options might also vanish in some bad instances of adaptively chosen sequences.
%
%Standardly these issues are resolved by vertex-sprinkling, however, vertex sprinkling is impossible when the vertex set of the underlying graph is $\Z^d$. 
%Instead, we define an edge-weighted \emph{multi-round exposure} process based on a careful thinning of the edges, which replaces the FKG-inequality.  
%We fix the realisation of the weighted vertex set $(V, w_V)$. Then we couple $R$ rounds of exposure of edges (with their transmission times) to $R$ different, conditionally independent copies $H_i$ of the graph on the same weighted vertex set, each with edge probability $1/R$ times the original edge-probabilities.
%In Section~\ref{sec:exposure} we design a coupling so that the probability that an \emph{adaptively} chosen sequence of set of edges $E_1, \dots, E_R$ (with constraints on their transmission times) is present in the original graph satisfies
%\begin{equation*}
%\pr\Big(\text{adaptively chosen } E_1, \dots, E_R \text{ present}\mid (V,w_V)\Big) \ge \prod_{i\le R} \pr\Big(E_i \text{ is present in }H_i \mid (V, w_V)\Big).
%\end{equation*}

\begin{figure}[t]
    \subfloat[]{\includegraphics[width=0.45\textwidth]{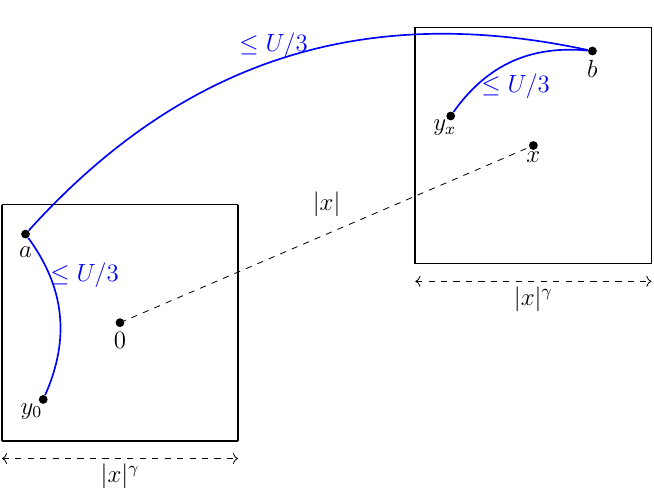}}
    \hspace{0.05\textwidth}
    \subfloat[]{\includegraphics[width=0.45\textwidth]{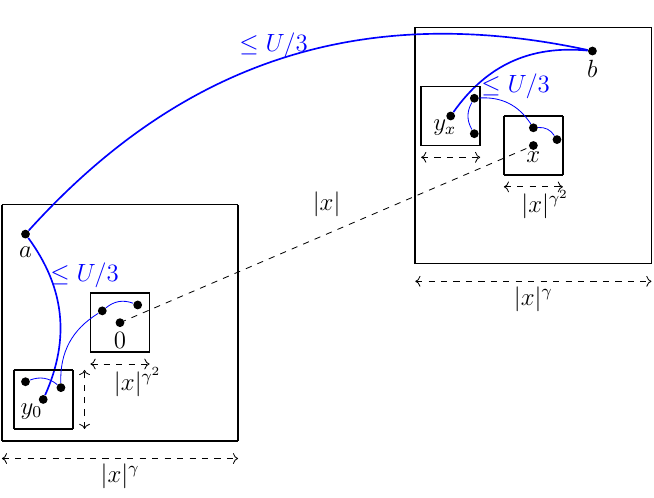}}
    
    \caption{The budget travel plan with 3-edge bridging-paths: (a) first and (b) second iteration.}
    \label{fig:intuition_hierarchy}
\end{figure}

\vskip0.5em

\textbf{Budget travel plan with 3-edge bridge-paths.}
Switching to the quenched setting allows to prove the upper bounds in all subexponential phases (ii)--(iv) all-at-once. Our construction of a connecting path overcomes the following problem: A long edge with a short transmission time typically occurs on typical high-degree vertices and thus all other outgoing edges from the same vertices have too long transmission times. The main idea resembles a `budget travel plan': when someone travels with a low budget, one takes the cheapest mode of transport to the airport within a $200$km radius that offers the cheapest flight landing within a $200$km radius of the destination, then takes the cheapest transport to the destination city.

Formally, we put balls of radius $|x|^\gamma$ for some $\gamma \in (0,1)$ around $0$ and around $x$, and we find a cheap $3$-edge path (``bridge'') $\pi_1 = y_0aby_x$ between these two balls \emph{using only vertices in the pseudorandom net}. The net guarantees enough vertices in each vertex-weight range of interest. We find atypical vertices high-weight vertices $a,b$ that are connected by an atypically cheap edge, that simultaneously have an atypically cheap edge to low-weight vertices $y_0, y_x$, respectively. (Here we use the common terminology of fast transmission corresponding to `cheap' cost.)  
Then we have replaced the task of connecting $0$ and $x$ by the two tasks of connecting $0$ with $y_0$ and $x$ with $y_x$, where the new `gaps' $|0-y_0|$ and $|x-y_x|$ are much smaller than $|x|$. The \emph{multi-round exposure} and the \emph{pseudorandom net} on the fixed vertex set together guarantee that we can iterate this process without running out vertices in the relevant weight-ranges, and without accumulated correlations in the presence of edges along the iteration (e.g.\ out of $y_0, y_x$). Iteration yields a set of multi-scale bridge-paths, which we call after Biskup a \emph{hierarchy} \cite{biskup2004scaling}. The construction in \cite{biskup2004scaling} also uses recursion, with one-edge bridges instead of three-edge bridges, and yields polylogarithmic graph distances in long-range percolation. The techniques in \cite{biskup2004scaling} would not work for 1-FPP because we need to balance distances vs costs vs the penalisation on high-weight vertices in very different regimes, and at the same time deal with edge-costs dependencies. Those can only be dealt with in the quenched setting.  
\vskip0.5em

\textbf{The phases.}
The cost (transmission time) of the bridge-paths $\pi$ in 1-FPP are either polynomial in the distance they bridge or constant.   When the cost is \emph{polynomial} -- with optimal exponent $\eta_0$ -- we are in the \emph{polynomial phase}. The cost of the first bridge $\pi_1$ then dominates the cost of the whole path, and we only carry out a constant number of iterations (irrespective of $|x|$). When bridge-paths with constant cost exist, we are in the \emph{polylogarithmic phase}. Then, the cost of all bridges together are negligible compared to the cost of the polylogarithmic number of gaps that remain after the last iteration. Here, we iterate until we can connect the remaining gaps via essentially constant cost paths. Connecting the gaps is a non-trivial task itself since GIRG/SFP does not contain nearest-neighbour edges. Solutions for filling gaps in \cite{biskup2004scaling} do not work in our setting due to the presence of vertex weights. 
Instead, we connect the gaps with \emph{`weight-increasing paths'} that crucially use that the underlying graphs are scale-free.
We give a more detailed discussion about the hierarchical construction at the beginning of Section~\ref{sec:hierarchy} and back-of-the-envelope calculations about how to obtain the precise growth exponents in phases (ii) and (iii) at the beginning of Section \ref{sec:choices} with proof sketches below Corollaries \ref{cor:computations-polylog} and \ref{cor:computations-polynomial}.
\vskip0.5em

\textbf{Robustness of our techniques.}
The technique of nets combined with multi-round edge-exposure is robust, and will be applicable elsewhere, for questions concerning \emph{first passage percolation, robustness to percolation (random deletion of edges), graph distances, SIR-type and other epidemic processes, rumour spreading}, etc.\ on a larger class of vertex-weighted graphs; including random geometric graphs, Boolean models with random radii, the age-dependent and the weight-dependent random connection model (mimicking spatial preferential attachment), scale-free Gilbert graph, and the models used here \cite{AieBonCooJanss08, CooFriePral12, gracar2019age, gracar2022chemical, GraHeyMonMor19, gracar2022finiteness, gracar2021percolation, hirsch2017gilbertgraph, JacMor15}, and can also be extended to dynamical versions of the above graph models on fixed vertex sets.
\vskip0.5em

\textbf{Two papers, two techniques and optimality.}
The `budget travel plan' together with the renormalisation group argument in~\cite{komjathy2022one2} reveals that the strategy of polynomial paths is essentially optimal: in this phase, all long edges have polynomial transmission time in the distance they bridge.  Our techniques for the lower bounds are entirely different and deserve their own exposition, hence we present them in the companion paper \cite{komjathy2022one2}.
\vskip0.5em

\subsubsection{Related work: phases of FPP in other models.}\label{paragraph:FPP-other-graphs}
The phase diagrams of transmission times in classical FPP are less rich. In particular, the strict polynomial phase is absent or restricted only to phase transition boundaries. 
Indeed, on sparse \emph{non-spatial} graph models with finite-variance degrees, both Markovian and non-Markovian classical FPP universally show Malthusian (exponential) growth \cite{bhamidi2017universality}. Transmission times between two uniformly chosen vertices are then \emph{logarithmic} in the graph size. Sparse \emph{spatial} graphs with finite-variance degrees (e.g. percolation, long-range percolation, random geometric graphs etc.) are typically restricted to linear graph distances/transmission times in the absence of long edges \cite{antal1996chemical, penrose2003random, cox1981some}, or to polylogarithmic distances in the presence of long edges \cite{biskup2004scaling, biskup2019sharp, hao2021graph}. 
In both spatial and non-spatial graph models with infinite-variance degrees, classical FPP typically either explodes or exhibits a smooth transition between explosion and doubly-logarithmic transmission times (which match the graph distances) \cite{adriaans2018weighted, jorritsma2020weighted, van2017explosion}; in particular, there is no analogue of phases (ii)--(iv). For  one-dependent FPP on non-spatial graphs there are strong indications that the process either explodes \cite{slangen19}, with the same criterion for explosion as for spatial graphs in \cite{komjathy2020stopping}, or becomes Malthusian \cite{Fransson1720143, fransson2022stochastic}, the latter implying logarithmic transmission times between two uniformly chosen vertices by the universality in \cite{bhamidi2017universality}, so only two phases can occur. The only graph model to exhibit a transition from a fast-growing phase to a slow-growing phase is long-range percolation, where the polynomial phase is restricted to the phase boundary in the long-range parameter $\alpha=2$ \cite{baumler2023distances}. 
Even in degenerate models (where the underlying graph is complete), long-range first passage percolation \cite{chatterjee2016multiple} is the only other model where a similarly rich set of phases is known to occur.
 Thus one-dependent FPP is the first process that displays a full interpolation between the four phases on a \emph{single non-degenerate graph model}. Moreover, the phase boundaries for one-dependent FPP depend non-trivially on the main model parameters: the degree power-law exponent $\tau$, the parameter $\alpha$ controlling the prevalence of long-range edges, and the behaviour of $L_{xy}$ near $0$ characterised by $\beta$, see Table \ref{table:summary} for our results, Table \ref{table:phases-top-FPP} for phases of growth in other models, and Section \ref{sec:discussion} for more details on related work.

\subsection{Graph Models}\label{sec:graph_model}

We consider simple and undirected graphs with vertex set $\calV \subseteq \R^d$. We use standard graph notation along with other common terminology, see Section~\ref{sec:notation}. 
We consider three random graph models: \emph{Scale-Free Percolation} (SFP), \emph{Infinite Geometric Inhomogeneous Random Graphs} (IGIRG)\footnote{They have also been called EGIRG, where E stands for extended~\cite{komjathy2020explosion}.}, and (finite) \emph{Geometric Inhomogeneous Random Graphs} (GIRG). The latter model contains \emph{Hyperbolic Random Graphs} (HypRG) as special case, so our results extend to HypRG. The main difference between SFP and IGIRG is the vertex set $\calV$. For SFP, we use $\calV := \mathbb Z^d$, with $d \in \mathbb{N}$. For IGIRG, a unit-intensity Poisson point process on $\mathbb R^d$ forms $\calV$. 
\begin{definition}[SFP, IGIRG, GIRG]\label{def:girg}
Let $d\in \N$, $\tau >2$, $\alpha\in(1,\infty)$, and $\overline{c}>\underline{c}>0$. Let $\ell:[1,\infty)\rightarrow(0,\infty)$ be function that varies slowly at infinity (see Section \ref{sec:notation}), and let
 $h:\R^d\times[1,\infty)\times[1,\infty)\rightarrow[0,1]$ be a function satisfying
\begin{align}\label{eq:connection_prob}
	\underline{c}\cdot\min\left\{1,
	\dfrac{w_1w_2}{|x|^d}\right\}^{\alpha}
	\le h(x,w_1,w_2)\le \overline{c}\cdot\min\left\{1,
	\dfrac{w_1w_2}{|x|^d}\right\}^{\alpha}.
 \end{align}
The vertex set and vertex-weights: For SFP, set $\calV := \mathbb Z^d$, for IGIRG, let $\calV$ be given by a Poisson point process on $\mathbb R^d$ of intensity one.\footnote{If we take an IGIRG and rescale the underlying space $\mathbb R^d$ by a factor $\lambda$, then we obtain a random graph which satisfies all conditions of IGIRGs except that the density of the Poisson point process is $\lambda^{-d}$ instead of one. Thus it is no restriction to assume density one.} For each $v\in\calV$, we draw a \emph{weight} $W_v$ independently from a probability distribution on $[1, \infty)$ satisfying
\begin{equation}\label{eq:power_law}
  F_W(w)=\mathbb{P}( W\le w)= 1-\ell(w)/w^{\tau-1}.
\end{equation} 
We denote $\widetilde \calV(G):=(\calV, \calW)$ the vertex set $\calV$ together with the random weight vector $\calW_{\calV}:=(W_v)_{v\in \calV}$, and $(V,w_V):=(V, (w_v)_{v\in V})$ a realisation of $\widetilde \calV:=\widetilde \calV(G)$, where $\tilde v:=(v, w_v)$ stands for a single weighted vertex. 

The edge set:  Conditioned on $\widetilde \calV=(V, w_V)$, consider all unordered pairs $\calV^{\scriptscriptstyle{(2)}}$ of $\calV$. Then every pair $xy\in \calV^{\scriptscriptstyle{(2)}}$ is present in $\calE(G)$ independently with probability $h(x-y,w_x,w_y)$. 

Finally, a GIRG $G_n$ is obtained as the induced subgraph $G[Q_n]$ of an IGIRG $G$ by the set of vertices in the cube $Q_n$ of volume $n$ centred at~$0$. We call $h$ the \emph{connection probability}, $d$ the \emph{dimension}, $\tau$ the \emph{power-law exponent}, and $\alpha$ the \emph{long-range parameter}.
\end{definition}
 The above definition essentially merges the Euclidean space and the vertex-weight space by considering vertices with weights as points in
$\R^d \times [1, \infty)$, i.e., we think of each vertex as a pair $\tilde v=(v, w_v)$, where $v\in \R^d$ is its spatial location and $w_v$ is its weight.

For finite GIRG models, we are interested in the behaviour as $n\to \infty$. Definition \ref{def:girg} leads to a slightly less general model than those e.g.\ in~\cite{bringmann2019geometric} and~\cite{komjathy2020stopping}. There, the original definition had a different scaling of the geometric space vs connection probabilities. However, the resulting graphs are identical in distribution after rescaling, see~\cite{komjathy2020stopping} for a comparison. Finally,~\cite{bringmann2019geometric} considered the torus topology on the cube, identifying ``left'' and ``right'' boundaries, but this does not make a difference for our results. Next we define $1$-dependent FPP on these graphs.

\begin{definition}[1-dependent first passage percolation (1-FPP)]\label{def:1-FPP}
Consider a graph $G = (\calV, \calE)$ where each vertex $v\in \calV$ has an associated vertex-weight $W_v$.  For every edge $xy\in \calE$, draw an i.i.d.\ copy $L_{xy}$ of a random variable $L$, and set the \emph{(transmission) cost} of an edge $xy$ as 
\begin{equation}\label{eq:cost}
\cost{xy}:=L_{xy}(W_xW_y)^{\mu},    
\end{equation}
for a fixed parameter $\mu>0$ called the \emph{penalty strength}. The costs define a \emph{cost distance} $d_{\calC}(x,y)$ between any two vertices $x$ and $y$, which is the minimal total cost of any path between $x$ and $y$ (see Section~\ref{sec:notation}). We call $d_{\calC}$ the 1-dependent first passage percolation.
\end{definition}
We usually assume that the cumulative distribution function (cdf) $F_L:[0,\infty)\rightarrow[0,1]$ of $L$ satisfies the following assumption, (with exceptions of this assumption explicitly mentioned):
\begin{assumption}\label{assu:L}
There exist constants $t_0,\,c_1,\,c_2,\,\beta>0$ such that
\begin{align}\label{eq:F_L-condition}
	c_1t^{\beta}\le F_L(t)\le c_2t^{\beta}\mbox{ for all }t\in[0,t_0].
\end{align}
\end{assumption}
Without much effort, one can relax Assumption~\ref{assu:L} to $\lim_{x\to0} \log F_L(x)/\log x=\beta$. We work with~\eqref{eq:F_L-condition} for the sake of readability. We discuss  extensions to $\alpha = \infty$ and $\beta = \infty$ separately in Section~\ref{sec:threshold}.
We call the set of parameters $\mpar := \{d, \tau, \alpha, \mu, \beta, \underline{c}, \overline{c}, c_1, c_2, t_0\}$ the \emph{model parameters}. We say that a variable is \emph{large} (or \emph{small}) relative to a collection of other variables when it is bounded below (or above) by some finite positive function of those variables and the model parameters. We restrict to $\tau \in (2,3)$,  (explicitly stated in the theorems), which ensures that there is a unique infinite component (or linear-sized ``giant'' component for finite GIRG)\footnote{For $\tau >3$, an infinite component only exists for high enough edge density, which is captured by $h$ in \eqref{eq:connection_prob}.} and that graph distances between vertices $x,y$ in the infinite/giant component grow like $d_G(x,y) \sim 2\log \log |x-y|/|\log(\tau-2)|$ in all three models~\cite{komjathy2020explosion,bringmann2016average,deijfen2013scale,van2017explosion}. We consider $\mu$ as the easiest parameter to change: increasing $\mu$ means gradually slowing down the spreading process around high-degree vertices, which corresponds to adjusting behaviour of individuals with high number of contacts. Hence, we will phrase our results from this perspective.

\subsection{Results}\label{sec:results}
In this paper, we focus on the sub-explosive parameter regime
\begin{equation}\label{eq:no_explosion}
   \mu>\frac{3-\tau}{2\beta}:=\mu_{\mathrm{expl}},
\end{equation}
since for $\mu < \mu_{\mathrm{expl}}$ we have shown in previous work~\cite{komjathy2020stopping} that the model is \emph{explosive}: the cost-distance of two vertices $x,y$ converges in distribution to an almost surely finite variable as $|x-y| \to \infty$, conditioned on $x$ and $y$ being in the infinite component.\footnote{The phase is called \emph{explosive} since the size of the the cost-ball of radius $r$ jumps from finite to infinite at some random finite threshold, called the \emph{explosion time}.} 
In other words, \eqref{eq:no_explosion} restricts us to the non-explosive phase. The following two quantities define the boundaries of the new phases:
\begin{align}\label{eq:mu_pol_log}
    \mu_{\log}:=\frac{3-\tau}{\beta}, \quad \mu_{\mathrm{pol}}:=\frac1d+\frac{3-\tau}{\min\{\beta, d(\alpha-2)\}} = \max\big\{1/d+\mu_{\log}, \mu_{\mathrm{pol}, \alpha}\big\},
    \end{align}
with $\mu_{\mathrm{pol},\alpha}:=\tfrac{1}{d}+\tfrac{3-\tau}{d(\alpha-2)}=\tfrac{\alpha-(\tau-1)}{d(\alpha-2)}$ and $\mu_{\mathrm{pol}, \beta}:=\tfrac{1}{d}+\tfrac{3-\tau}{\beta}$.
We also define two \emph{growth exponents}. 
If $\alpha\in(1,2)$ or $\mu\in(\mu_{\mathrm{expl}}, \mu_{\log})$, we define
\begin{align}\label{eq:Delta_0}
    \Delta_0 := \Delta_0(\alpha, \beta,\mu, \tau) := \frac{1}{1-\log_2(\min\{\alpha, \tau-1+\mu\beta\})} =\min\{\Delta_\alpha, \Delta_\beta\}> 1,
\end{align}
with $\Delta_\alpha=1/(1-\log_2\alpha)$ and $\Delta_\beta=1/(1-\log_2(\tau-1+\mu\beta)$.
$\Delta_0>1$ follows since when $\alpha\in(1,2)$ then $\Delta_\alpha>1$, while  when $\mu \in( \mu_{\mathrm{expl}}, \mu_{\log})$ then $\tau-1+\mu\beta >\tfrac{\tau+1}{2} >1$ and also $\tau-1+\mu\beta<2$, so $\log_2(\tau-1+\mu\beta)$ is positive but less than $1$. If both $\alpha>2$ and $\mu > \mu_{\log}$, we define
\begin{align}\label{eq:eta_0}
    \eta_0 := \eta_0(\alpha,\beta,\mu,\tau) := \begin{cases}
	1 & \mbox{ if $\mu>\mu_{\mathrm{pol}}$,}\\
	\min\left\{d(\mu-\mu_{\log}), \mu/\mu_{\mathrm{pol},\alpha}\right\} & \mbox{ if $\mu\le\mu_{\mathrm{pol}}$,}
	\end{cases}
\end{align}
and note that $\eta_0>0$ for all $\mu>\mu_{\log}$, and $\eta_0<1$ exactly when $\mu< \mu_{\mathrm{pol}}$ by \eqref{eq:mu_pol_log}. We often write $\eta_\beta:=d(\mu-\mu_{\log})$ and $\eta_\alpha:=\mu/\mu_{\mathrm{pol},\alpha}$.
The formulas can be naturally extended by taking limits and hold also when $\alpha = \infty$ or $\beta=\infty$, which we elaborate in Section~\ref{sec:threshold} below. 
We first formulate the main results for the infinite models IGIRG and SFP. We denote by~$\calC_{\infty}$ the unique infinite component of IGIRG/SFP.
\begin{restatable}{theorem}{PolylogRegime}
\label{thm:polylog_regime}
    Consider $1$-FPP in Definition \ref{def:1-FPP} on the graphs IGIRG or SFP of Definition \ref{def:girg} satisfying the assumptions given in \eqref{eq:power_law}--\eqref{eq:F_L-condition} with $\tau\in(2,3), \alpha>1, \mu>0$. When either $\alpha\in(1,2)$ or  $\mu\in(\mu_{\mathrm{expl}},\mu_{\log})$ or both hold, then for any $\eps>0$,
 \begin{align*}
        \lim_{|x|\to \infty}\pr\big( d_{\calC}(0,x) \le (\log |x|)^{\Delta_0+\varepsilon} \mid 0, x \in \calC_{\infty}  \big) =1.
    \end{align*}
\end{restatable}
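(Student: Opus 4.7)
The plan is to carry out the hierarchical construction sketched in the introduction in the quenched setting, where the weighted vertex set $\widetilde\calV=(V,w_V)$ is fixed and only the edge indicators and the transmission-time variables $L_{xy}$ remain random.

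\textbf{Net construction and quenching.} First I would reveal $\widetilde\calV$ and show that, restricted to a ball of radius $|x|^{1+\varepsilon}$ around the midpoint of $0$ and $x$, with probability tending to $1$ there exists a pseudorandom net $\calN$ in the sense of \eqref{eq:net-heuristics}. This is a multi-scale Chernoff plus union-bound argument: one quantises over dyadic centres, dyadic radii $r$ and dyadic weight windows $[w,2w]$ in the ranges of interest, applies concentration of $|\calV\cap B_r(v)\times[w,2w]|$ around its mean (computed from \eqref{eq:power_law} and the Poisson or $\mathbb Z^d$ structure), and takes a union bound over the polynomially-many test boxes. Gateway vertices $u_0,u_x\in\calN\cap\calC_\infty$ near $0$ and $x$ are then selected; the cost of reaching them from $0$ and $x$ is handled by the endgame step below.

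\textbf{Hierarchy of $3$-edge bridges.} Set $\gamma:=\tfrac12\min\{\alpha,\tau-1+\mu\beta\}$; by the hypothesis of the theorem either $\alpha\in(1,2)$ or $\mu<\mu_{\log}=(3-\tau)/\beta$, so $\gamma\in(1/2,1)$, and by construction $\log 2/\log(1/\gamma)=\Delta_0$. Choose $K:=\lceil\log_{1/\gamma}\log|x|\rceil$ so that $|x|^{\gamma^K}$ is polylog in $|x|$. At level $k$ one carries $2^k$ endpoint pairs at Euclidean distance $\Theta(|x|^{\gamma^k})$, and for each pair $(y,y')$ one seeks a $3$-edge ``bridge'' $y\to a\to b\to y'$ with $a,b\in\calN$ of weight in a level-dependent window $[W_k,2W_k]$, $a$ close to $y$, $b$ close to $y'$, and each of the three edges present with transmission cost at most $c$ for a universal constant $c$. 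Using \eqref{eq:connection_prob}, \eqref{eq:F_L-condition} and \eqref{eq:cost}, the probability that a specific candidate bridge is successful (conditional on $\widetilde\calV$) factorises into three edge terms; maximising the expected bridge count over $W_k$ balances the $\alpha$-cost of the long $a\text{--}b$ edge against the penalty $W_k^{2\mu\beta}$ in the $L$-probability and the scarcity $W_k^{-(\tau-1)}$ of heavy vertices. The optimum yields precisely the exponent $\min\{\alpha,\tau-1+\mu\beta\}$, which is what produces the scale ratio $\gamma$.

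\textbf{Multi-round exposure and endgame.} To prevent dependencies between edges used at different levels, I would split each edge indicator and each $L_{xy}$ into $K+1$ independent thinned copies with slightly rescaled parameters, and query only the $k$-th copy at level $k$. The net density \eqref{eq:net-heuristics} guarantees that, even after the $K$-fold split, enough fresh candidate vertices remain in every window in every region so that the per-task bridge-count concentrates above $1$. A union bound over the $\sum_{k=0}^K 2^k=O((\log|x|)^{\Delta_0})$ tasks shows that whp every bridge succeeds, yielding a $u_0$-$u_x$ path of total cost at most $3c\cdot 2^{K+1}=(\log|x|)^{\Delta_0+o(1)}$. The leftover short gaps (the last-level gaps of polylog Euclidean length and the connections $0\to u_0$, $u_x\to x$) are closed by weight-increasing paths that exploit the scale-free structure to climb to a high-weight hub in $O(\log\log|x|)$ steps and back down, each step of cost $O(1)$, contributing only a lower-order term to the total cost.

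The main obstacle is exactly the dependency structure that motivates the novel techniques: edges with cheap transmission are anchored at atypically high-weight vertices, whose other incident edges then carry large $(W_xW_y)^\mu$ penalties, so a naive recursive search cannot be iterated independently across levels and the FKG inequality is unavailable. The quenched reduction removes vertex-weight randomness, multi-round exposure restores edge independence across levels, and the pseudorandom net gives quantitative control on how many vertices of each weight window are available in every region. The delicate verification is that a single sequence $(W_k)_{k\le K}$ can simultaneously exploit whichever of the two mechanisms --- $\alpha$-cheap long edges between low-weight vertices, or $(\tau-1+\mu\beta)$-cheap edges between high-weight vertices --- is dominant, so that the resulting per-level success probabilities compound into the sharp exponent $\Delta_0$.
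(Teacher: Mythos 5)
Your overall architecture --- quenching onto a pseudorandom net, multi-round exposure in place of FKG, a depth-$K$ hierarchy of $3$-edge bridges with scale ratio $\gamma=\tfrac12\min\{\alpha,\tau-1+\mu\beta\}$ and $K\approx\log_{1/\gamma}\log|x|$, and gap-filling via weight-increasing paths --- is exactly the paper's proof (Sections 2--5 and Corollary~\ref{cor:computations-polylog}), and your optimisation of the bridge-endpoint weights, balancing the $\alpha$-cost of the long edge against the $(W_xW_y)^{\mu\beta}$ penalty and the $W^{-(\tau-1)}$ scarcity, is precisely what produces $\Delta_0=\min\{\Delta_\alpha,\Delta_\beta\}$.

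The genuine gap is in your endgame. First, the steps of a weight-increasing path do not have cost $O(1)$: an edge between vertices of weights $W_k$ and $W_{k+1}$ carries the penalty $(W_kW_{k+1})^\mu$, and for such an edge to exist with conditional probability close to $1$ one must tolerate a cost polynomial in the target weight (the paper's Claim~\ref{claim:cheap-path-to-larger-weight} requires $U\ge K^{2\mu}$ per step). This happens to be harmless here only because the hub weight needed to close a level-$K$ gap is $e^{O(\sqrt{\log\log|x|})}=(\log|x|)^{o(1)}$, but the cost accounting must be redone on that basis. Second, and more seriously, you cannot launch a weight-increasing path from $0$ or $x$ themselves: they are conditioned only to lie in $\calC_\infty$, typically have weight $O(1)$, and need not possess any incident edge of bounded cost, so the probability that your first step succeeds is bounded away from $1$ rather than tending to $1$. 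The paper resolves this with a separate soft argument: for every $q$ there is a constant $C(q)$ such that, with probability $\ge 1-q$, a path of cost and deviation at most $C(q)$ connects $0$ to a nearby vertex of an infinite backbone $\calC_\infty^M$ (vertices of weight in $[M,2M]$ joined by edges of cost at most $M^{3\mu}$), inside which cost-distances are \emph{linear} in Euclidean distance --- a result imported from the companion paper for $d\ge 2$ and proved by a bespoke interval-by-interval renormalisation for $d=1$ --- and these properties are established uniformly over all candidate attachment points, which is necessary because the vertex $y_0^\star$ you must reach is the outcome of the preceding selection procedure and its neighbourhood is therefore already biased. Without some version of this step, the remaining $e^{O(\sqrt{\log\log|x|})}$ of Euclidean distance between $0$ and the hierarchy is not bridged, and the conditional probability in the theorem does not tend to $1$.
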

The proof of Theorem~\ref{thm:polylog_regime} is valid when $\mu < \mu_{\mathrm{expl}}$, however, then the model is explosive \cite[Theorem~1.1]{komjathy2020stopping}, and the bound is not sharp. 
With the restriction $\mu>\mu_{\mathrm{expl}}$, we conjecture that Theorem~\ref{thm:polylog_regime} is actually sharp, i.e., that a corresponding lower bound with exponent $\Delta_0 - \eps$ also holds. See Section \ref{sec:discussion} below for results on polylogarithmic lower bounds. 
The exponent $\Delta_0>1$ intuitively corresponds to stretched exponential ball-growth, where the number of vertices in cost-distance at most $r$ scales as $\exp(r^{1/\Delta_0})$. Trapman in~\cite{trapman2010growth} showed that strictly exponential ball growth, i.e., $\Delta_0=1$ is possible for long-range percolation when $\alpha=1$ under additional constraints. 
This is consistent with our formula for $\Delta_0$, since $\Delta_0 \to 1$ as $\alpha\to 1$.
We leave the lower bound in this phase for future work. Slightly related is the work \cite{hao2021graph} that treats polylogarithmic graph distances in the same model class but in a different parameter regime (finite variance degrees), however, the proof techniques there neither extend to FPP nor to infinite variance degree underlying graphs.

\begin{remark}\label{remark:structure-polylog}\emph{Structure of near-optimal paths in the polylog phase.} The proof reveals two different types of paths with polylogarithmic cost-distances present in the graph. When $\alpha<2$, randomly occurring long edges on low-weight vertices cause the existence of paths of cost at most $(\log |x|)^{\Delta_\alpha+o(1)}$ with $\Delta_\alpha=1/(1-\log_2(\alpha))$. The closest long edge of order $|x|$ lands at distance $|x|^{\alpha/2}$ from $0$ and $x$ respectively, resulting in $\Delta_
\alpha$ after iteration. 
When $\mu<\mu_{\log}$, there are also paths using a cheap yet long edge (of order $|x|$) between two high-weight vertices (weight roughly $|x|^{d/2}$) that lie within distance $|x|^{(\tau-1+\mu\beta)/2+o(1)}$ from $0$ and $x$ respectively, and these cause the existence of paths of cost at most $(\log|x|)^{\Delta_\beta+o(1)}$ with $\Delta_\beta=1/(1-\log_2(\tau-1+\mu\beta))$. $\Delta_\beta$ is the outcome of an optimisation: we minimise the distance between the high-weight vertices to $0$ and $x$, while maintaining that an edge with constant cost exists between them. The minimal distance possible is of order $|x|^{(\tau-1+
\mu\beta)/2+o(1)}$:  the tail exponent $\tau-1$ of the weight distribution \eqref{eq:power_law}, and $\mu\beta$, the penalty exponent in \eqref{eq:cost} times the behaviour of the cdf of $L$ in \eqref{eq:F_L-condition} both play a role.
 \end{remark}
When we increase $\mu$ above $\mu_{\log}$ and $\alpha$ above $2$, we enter a new universality class and cost distances become  polynomial:
\begin{restatable}{theorem}{PolynomialRegime}
\label{thm:polynomial_regime}
	Consider $1$-FPP in Definition \ref{def:1-FPP} on the graphs IGIRG or SFP of Definition \ref{def:girg} satisfying the assumptions given in \eqref{eq:power_law}--\eqref{eq:F_L-condition} with $\tau\in(2,3)$.
	 When $\alpha>2$ and $\mu > \mu_{\log}$ both hold, then for any $\eps>0$,
\begin{align*}
    \lim_{|x|\to \infty}\pr\left(d_{\calC}(0,x) \le|x|^{\eta_0+\varepsilon}   \mid 0,x \in \calC_{\infty}\right) =1.
    \end{align*}
\end{restatable}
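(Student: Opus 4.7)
The plan is to explicitly construct a path from $0$ to $x$ of cost at most $|x|^{\eta_0 + \varepsilon}$ a.a.s.\ using the budget-travel-plan with $3$-edge bridges outlined in the introduction, iterated a constant number of times on a pseudorandom net. First, I establish the good event $\calE_{\mathrm{net}}$ that a pseudorandom net $\calN$ as in \eqref{eq:net-heuristics} exists in a box containing $0$ and $x$ with total density at least $1/4$, and that $0, x \in \calC_{\infty}$. By the multi-scale analysis announced in Section~\ref{sec:exposure}, this event has probability tending to $1$ as $|x|\to\infty$. I condition on a realisation $(V, w_V)$ of $\widetilde{\calV}$ in $\calE_{\mathrm{net}}$; under this conditioning, edges are independent.

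Next, I build a single bridge at the largest scale. Given endpoints $y_0, y_x$ at distance $r$, a target weight $w^\star = w^\star(r)$, and a shrink-parameter $\gamma \in (0,1)$, I search among pairs $(a, b)$ with $a \in \calN \cap (B_{r^\gamma}(y_0) \times [w^\star, 2w^\star])$ and $b \in \calN \cap (B_{r^\gamma}(y_x) \times [w^\star, 2w^\star])$ for a $3$-edge path $y_0 - a - b - y_x$ of total cost at most $c(r)$. By \eqref{eq:net-heuristics}, the pool of candidate pairs has size of order $r^{2d\gamma} (w^\star)^{-2(\tau - 1)}$ (times a slowly varying factor). Conditional on $(V, w_V)$ the edges are independent, so a direct first-moment/Chebyshev computation with \eqref{eq:connection_prob} and Assumption~\ref{assu:L} shows that a valid bridge exists w.h.p.\ provided $c$ exceeds a threshold obtained by optimising $w^\star$ and $\gamma$ against three penalties: the $\beta$-cost of the three edges, the $\alpha$-decay of the long connection probability $ab$, and the $(\tau-1)$-decay of the weight density. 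Taking $r = |x|$, the optimisation yields exactly $c(|x|) = |x|^{\eta_0 + o(1)}$, and the two candidates in $\eta_0 = \min\{\eta_\beta, \eta_\alpha\}$ from \eqref{eq:eta_0} correspond to which penalty is binding: $\eta_\beta = d(\mu-\mu_{\log})$ when Assumption~\ref{assu:L} is the bottleneck, and $\eta_\alpha = \mu/\mu_{\mathrm{pol},\alpha}$ when spatial decay is.

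Third, I iterate the bridge construction $K = K(\varepsilon)$ times, producing a hierarchy at scales $r_k = |x|^{\gamma^k}$ with per-bridge cost $r_k^{\eta_0 + o(1)}$. Because $\eta_0 > 0$ and $\gamma < 1$, the geometric sum of bridge costs is dominated by the first, giving total $|x|^{\eta_0 + o(1)}$. After $K$ steps the remaining gaps have length at most $|x|^{\gamma^K} \leq |x|^{\varepsilon/2}$, and I close each of them by a short weight-increasing path inside a tiny ball, using scale-freeness exactly as sketched at the end of Section~\ref{sec:intro}; each contributes only $|x|^{o(1)}$ to the cost.

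The main obstacle is the dependency structure: the cheapness of an edge $ab$ is correlated with the weights $w_a, w_b$, which in turn affect the edges incident to $a, b$ used elsewhere in the construction and in subsequent iterations. The resolution is the multi-round exposure promised in Section~\ref{sec:exposure}: at each iteration I only reveal the status of edges between vertex-weight strata that are freshly used by the current bridge, so conditioning on the event ``bridge $k$ exists'' does not perturb the distribution of edges available for bridge $k+1$. Orchestrating this bookkeeping consistently across all $K$ iterations, and performing the explicit optimisation that produces the two candidate exponents $\eta_\beta$ and $\eta_\alpha$, will constitute the bulk of the technical effort.
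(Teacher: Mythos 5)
Your overall strategy is the one the paper follows: weak pseudorandom nets (Proposition~\ref{lem:nets-exist}, Lemma~\ref{lem:weak-nets-exist}), multi-round exposure to replace FKG (Proposition~\ref{prop:multi-round-exposure}), an iterated three-edge-bridge hierarchy (Section~\ref{sec:hierarchy}), and an optimisation producing the two candidate exponents $\eta_\beta$ and $\eta_\alpha$ whose minimum is $\eta_0$ (Corollary~\ref{cor:computations-polynomial}). However, two steps as you describe them would not go through.

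First, the bridge anchoring. You write the bridge as a three-edge path $y_0\text{--}a\text{--}b\text{--}y_x$, i.e.\ anchored at the current endpoints themselves. Taken literally this is inconsistent with your later claim that gaps of length $|x|^{\gamma^K}$ remain after iterating (if every bridge already reaches $y_0$ and $y_x$ there is nothing left to close), and at the top scale with $y_0=0$ it fails outright: $0$ is a single fixed vertex of typically constant weight, so the event that this one specific edge from $0$ to a vertex of weight $w^\star=|x|^{\Theta(1)}$ exists \emph{and} has cost at most $|x|^{\eta_0}$ is a single trial of polynomially small probability (the cost alone is $L\cdot(w^\star)^{\mu}$, forcing $L\le |x|^{\eta_0-\mu z/2+o(1)}$), with no combinatorial freedom to boost it. The paper's bridge is instead $x'aby'$ where $x'$ and $y'$ are \emph{new low-weight net vertices} within distance $D^\gamma$ of $y_0$ and $y_x$ (Lemma~\ref{lem:triple-edge-bridge}); all four vertices are optimised over, the low weight of $x',y'$ is what makes the next iteration possible (condition \emph{(H1)} of Definition~\ref{def:hierarchy}), and the gaps $|y_0-x'|,|y_x-y'|$ are precisely what the recursion fills.

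Second, connecting $0$ and $x$ to the path. Your gap-closing step (``a short weight-increasing path inside a tiny ball'') works for gaps between net vertices whose weights have been lifted to $\olw\approx(\text{gap length})^{d/2}$ — this is what Lemma~\ref{lem:hierarchy-final-weights} and the common-neighbour Claim~\ref{claim:common-neighbour} do — but it does not apply to the two gaps whose endpoint is $0$ or $x$ itself. A typical vertex conditioned only on lying in $\calC_\infty$ has no cheap edge to a nearby moderate-weight vertex with high probability, so one cannot start a weight-increasing path from it; this is exactly why the paper needs Section~\ref{sec:endpoints}, which routes $0$ and $x$ into the moderate-weight subgraph $G_M$ and invokes the linear cost-distance result for $\calC_\infty^M$ from the companion paper (Lemma~\ref{lem:new-external}, or Lemma~\ref{lem:new-external-1d} when $d=1$), together with Claim~\ref{claim:two-in-Cinfty} to handle the conditioning on $\{0,x\in\calC_\infty\}$. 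Your proposal is silent on this step, and it is not absorbed by the machinery you do describe.
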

In the accompanying~\cite{komjathy2022one2} we prove the corresponding lower bound, which implies:
\begin{corollary}[Polynomial Regime]
\label{cor:polynomial_regime}
	Consider $1$-FPP in Definition \ref{def:1-FPP} on the graphs IGIRG or SFP satisfying the assumptions given in \eqref{eq:power_law}--\eqref{eq:F_L-condition} with $\tau\in(2,3)$.
	When $\alpha>2$ and $\mu > \mu_{\mathrm{log}}$ both hold, then for any $\eps>0$,
	 \begin{equation*}
    \lim_{|x|\to \infty}\pr\left( |x|^{\eta_0-\varepsilon}\le d_{\calC}(0,x) \le|x|^{\eta_0+\varepsilon}   \mid 0,x \in \calC_{\infty}\right) =1.
\end{equation*}
\end{corollary}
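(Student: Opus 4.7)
The plan is to execute the \emph{budget travel plan} outlined in the introduction: recursively construct a hierarchy of 3-edge bridges connecting $0$ and $x$ at geometrically shrinking scales, where each bridge uses a cheap long edge between two high-weight vertices flanked by two cheap short edges to nearby low-weight vertices. In the polynomial regime the $k$-th bridge has cost roughly $|x|^{\gamma^{k-1}(\eta_0+o(1))}$, so the geometric series is dominated by the first bridge and the total cost is $|x|^{\eta_0+\varepsilon}$. The first task is to pass to the quenched setting: using a multi-scale analysis one shows that with probability tending to $1$ the random weighted vertex set $\widetilde\calV$ contains, inside a box of side $\Theta(|x|)$ around the segment from $0$ to $x$, a pseudorandom net $\calN$ satisfying the density bound \eqref{eq:net-heuristics} for every relevant radius and weight-window. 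Conditioning on $\widetilde\calV = (V,w_V)$ admitting such a net, the edge indicators become independent Bernoullis with known parameters.

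Second, I would construct the first bridge $\pi_1 = y_0 a b y_x$ via multi-round edge exposure. Fix a small $\gamma\in(0,1)$ and let $B_0,B_x$ be Euclidean balls of radius $|x|^\gamma$ around $0,x$. Pick a weight scale $w^*$ and a cost threshold $T = |x|^{\eta_0+o(1)}$, chosen to balance the probability $\min\{1,(w^{*2}/|x|^d)^\alpha\}$ that a long $ab$-edge exists against the probability $(T/w^{*2\mu})^\beta$ that its transmission time lies below $T$; the two possible optimisers account for $\eta_\alpha$ and $\eta_\beta$ and their minimum is $\eta_0$. In Round~1 I expose only the indicators of potential $ab$-edges between high-weight net vertices near $0$ and near $x$; the net density guarantees via a second-moment or bipartite Chernoff argument that a cheap pair exists with high probability. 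In Round~2, using the (still untouched) edges from the selected $a,b$ to low-weight net vertices in $B_0,B_x$, I pick $y_0,y_x$ with $\calC(y_0 a), \calC(b y_x) \le T$. The crucial point, and the reason the quenched viewpoint is necessary, is that the two rounds involve disjoint edge-sets and are independent conditional on $\widetilde\calV$, which sidesteps the failure of FKG-type decoupling described in Section~\ref{sec:intro}.

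Third, one iterates: $\pi_1$ reduces the problem to connecting $0$ with $y_0$ and $x$ with $y_x$, pairs at Euclidean distance $\le |x|^\gamma$, handled by the same construction at the smaller scale. After $O(1)$ iterations (depending only on $\varepsilon$) the residual gaps are of size at most $|x|^{\varepsilon'}$ with $\varepsilon'$ arbitrarily small, and can be closed using Theorem~\ref{thm:polylog_regime} applied at that scale (cost $|x|^{o(1)}$, negligible beside $|x|^{\eta_0}$) or, when $\mu>\mu_{\mathrm{pol}}$ so $\eta_0=1$, by the trivial bound of linearly many near-neighbour-scale edges. The main technical obstacle is organising the multi-round exposure rigorously: one must verify that after Round~1 enough fresh low-weight net vertices remain in each ball and that the conditional distribution of Round~2 edges still satisfies the bounded form \eqref{eq:connection_prob}, while simultaneously optimising $(w^*,\gamma,T)$ across all sub-ranges of $\mu\in(\mu_{\log},\mu_{\mathrm{pol}}]$ so that $\eta_0 = \min\{\eta_\alpha,\eta_\beta\}$ is genuinely attained, including at the boundary where $\eta_\alpha=\eta_\beta$ and both constraints bind.
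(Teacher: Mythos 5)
Your proposal addresses only half of the statement. Corollary~\ref{cor:polynomial_regime} is two-sided: it asserts both $d_{\calC}(0,x)\le |x|^{\eta_0+\eps}$ and $d_{\calC}(0,x)\ge |x|^{\eta_0-\eps}$. Everything you describe — nets, multi-round exposure, the hierarchy of 3-edge bridges, the optimisation yielding $\eta_0=\min\{\eta_\alpha,\eta_\beta\}$ — is an upper-bound construction, and it does indeed track the paper's proof of Theorem~\ref{thm:polynomial_regime}. But the lower bound is nowhere in your argument, and it is not a routine complement: showing that \emph{no} path beats $|x|^{\eta_0-\eps}$ requires controlling all long edges simultaneously (the paper's Remark~\ref{remark:structure-polynom} notes that in this phase every long edge has cost polynomial in the distance it bridges), and this is done via a renormalisation-group argument in the companion paper~\cite{komjathy2022one2} with techniques the authors describe as entirely different from the upper-bound machinery. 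In the paper, the corollary is literally ``Theorem~\ref{thm:polynomial_regime} plus the lower bound of~\cite{komjathy2022one2}''; a proof proposal that never produces or cites a lower bound does not establish the corollary.

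Within the upper-bound sketch there is also a concrete flaw in the gap-closing step. You propose to close the residual gaps by ``Theorem~\ref{thm:polylog_regime} applied at that scale,'' but that theorem requires $\alpha\in(1,2)$ or $\mu<\mu_{\log}$, and in the polynomial regime both hypotheses fail by assumption ($\alpha>2$ and $\mu>\mu_{\log}$), so it cannot be invoked at any scale. The fallback of ``linearly many near-neighbour-scale edges'' also does not exist: IGIRG/SFP has no nearest-neighbour edges, which is exactly why the paper closes gaps with weight-increasing paths (Claim~\ref{claim:cheap-path-to-larger-weight}), common neighbours between the resulting high-weight endpoints (Claim~\ref{claim:common-neighbour}), and finally the linear-cost component $\calC_\infty^M$ of Section~\ref{sec:endpoints} to attach $0$ and $x$ themselves. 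You would need to replace your gap-closing step with an argument of this kind.
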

Corollary~\ref{cor:polynomial_regime} together with Theorem~\ref{thm:polylog_regime}  implies that the phase transition is proper at $\mu_{\log}$ and at $\alpha =2$: distances increase from at most polylogarithmic to polynomial. Moreover, when $\mu\ge \mu_{\mathrm{pol}}$ and the dimension $d\ge 2 $, in \cite{komjathy2022one2} we also prove \emph{strictly} linear cost-distances (both upper and lower bounds). This, together with Theorem~\ref{thm:polynomial_regime}, implies that there is another phase transition at $\mu_{\mathrm{pol}}$, from sublinear ($\eta_0 < 1$) to linear ($\eta_0=1$) cost-distances.  See Table~\ref{table:summary} for a summary. 
We find it remarkable that 1-FPP shows polynomial distances with exponent \emph{strictly less than one} in a spread-out parameter regime $\mu\in(\mu_{\log}, \mu_{\mathrm{pol}})$. This implies polynomial ball-growth faster than the dimension for 1-FPP, which is rare in spatial models, see Section~\ref{sec:discussion}.

\begin{remark}\label{remark:structure-polynom}\emph{Structure of near-optimal paths in the polynomial phase.} The proof reveals two different types of paths with polynomial cost-distances present in the graph. When $\mu\le \mu_{\mathrm{pol,\alpha}}$, there are a few very long edges (of order $|x|$) with endpoints polynomially near $0$ and $x$, emanating from vertices with weight $|x|^{1/(2\mu_{\mathrm{pol}, \alpha})}$, and these results in paths with cost at most $|x|^{\eta_\alpha+o(1)}$ (the second term in \eqref{eq:eta_0}). Since there are only few such edges, the optimisation effect of choosing the one with smallest cost is negligible and $\beta$ does not enter the formula. Further, when $\mu\le \mu_{\mathrm{pol},\beta}$ in \eqref{eq:mu_pol_log}, there are many long edges (of order $|x|$) with respective endpoints polynomially near $0$ and $x$ on vertices with weight roughly $|x|^{d/2}$, and when we optimise to choose the one with cheapest cost, the effect of $F_L$, i.e.,\ $\beta$ in \eqref{eq:F_L-condition}, enters the formula, and we obtain a path with cost at most $|x|^{d(\mu-\mu_{\log})+o(1)}$, the first term in \eqref{eq:eta_0}. The proof of the lower bound in \cite{komjathy2022one2} shows that in this phase \emph{all} long edges near $0,x$ have polynomial costs in the Euclidean distance they bridge, which explains the qualitative difference between 1-FPP and classical FPP.
\end{remark}

The next theorem describes in which sense the results stay valid for finite-sized models:
  
\begin{restatable}{theorem}{FiniteGraph}
\label{thm:finite_graph}
Consider 1-FPP in Definition \ref{def:1-FPP} on the graph GIRG of Definition \ref{def:girg} satisfying the assumptions given in \eqref{eq:power_law}--\eqref{eq:F_L-condition} with $\tau\in(2,3), \alpha>1, \mu>0$.  Let $\calC_{\max}^{(n)}$ be the largest component in $Q_n$. Let $u_n,v_n$ be two vertices chosen uniformly at random from $\calV\cap Q_n$. 

\noindent (i) When either $\alpha\in(1,2)$ or $\mu\in(\mu_{\mathrm{expl}},\mu_{\log})$ or both hold, then for any $\varepsilon>0$,    
	\begin{align}\label{eq:finite-polylog}
		 \lim_{n\to \infty}\pr\left( d_{\calC}(u_n,v_n) \le (\log |u_n-v_n|)^{\Delta_0+\varepsilon} \ \mid \  u_n,v_n \in \calC_{\max}^{\scriptscriptstyle{(n)}} \right)=1.
	\end{align}
(ii) When $\alpha>2$ and $\mu>\mu_{\log}$ both hold, then for any $\varepsilon>0$,
    \begin{align}\label{eq:finite-polynomial}
		 \lim_{n\to \infty}\pr\left( d_{\calC}(u_n,v_n) \le |u_n-v_n|^{\eta_0+\varepsilon} \ \mid \  u_n,v_n \in \calC_{\max}^{\scriptscriptstyle{(n)}} \right)=1.
	\end{align}
\end{restatable}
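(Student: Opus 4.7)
The plan is to reduce Theorem~\ref{thm:finite_graph} to Theorems~\ref{thm:polylog_regime} and~\ref{thm:polynomial_regime} via the coupling $G_n = G[Q_n]$ from Definition~\ref{def:girg}. Since $G_n$ is a subgraph of the IGIRG $G$, the naive inequality $d_\calC^{G_n}(u_n, v_n) \geq d_\calC^{G}(u_n, v_n)$ goes the wrong way for an upper bound, so I would verify that the cheap paths realising Theorems~\ref{thm:polylog_regime} and~\ref{thm:polynomial_regime} can be taken to lie entirely inside $Q_n$, thereby also witnessing the upper bound on $d_\calC^{G_n}(u_n, v_n)$.

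First I would show that, for two uniform vertices $u_n, v_n \in \calV \cap Q_n$, conditioning on $\{u_n, v_n \in \calC_{\max}^{(n)}\}$ is asymptotically equivalent to conditioning on $\{u_n, v_n \in \calC_\infty\}$, and that $|u_n - v_n| \to \infty$ in probability (an elementary Poisson, respectively lattice, calculation). For scale-free geometric graphs with $\tau \in (2,3)$, the largest component in $Q_n$ coincides with $\calC_\infty \cap Q_n$ up to an $o(n)$-sized set of sporadic components, by standard uniqueness-of-the-giant arguments; see analogous statements in~\cite{komjathy2020explosion, bringmann2019geometric}. Hence a uniform vertex in $\calC_{\max}^{(n)}$ lies in $\calC_\infty$ with probability $1 - o(1)$ and vice versa, and the probability of each event converges to $\theta^2$ for $\theta$ the density of $\calC_\infty$.

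Next, I would verify that the hierarchy construction behind Theorems~\ref{thm:polylog_regime} and~\ref{thm:polynomial_regime} produces a path contained in a work-box $B$ of diameter $O(|u_n - v_n|)$ around the segment $[u_n, v_n]$. At each recursion level the three-edge bridge is sought in balls whose centres lie between the current endpoints and whose radii shrink geometrically, so no vertex involved is ever far from $[u_n, v_n]$. When $u_n, v_n$ are at macroscopic distance from $\partial Q_n$, $B \subset Q_n$ and every edge of the constructed path belongs to $G_n$, so the IGIRG cost estimates transfer directly.

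The hard part will be the boundary effect: a constant proportion of uniform pairs $(u_n, v_n)$ has an endpoint within $n^{1/d - \delta}$ of $\partial Q_n$, so this regime cannot be conditioned away. The remedy is to reprove the pseudorandom-net density estimate~\eqref{eq:net-heuristics} for boxes that may intersect $\partial Q_n$: since only a constant fraction of volume is lost for balls of radius up to the distance to $\partial Q_n$, Poisson concentration still yields constant density well within the loose thresholds of~\eqref{eq:net-heuristics}, and the multi-scale and multi-round-exposure steps then run with possibly worse constants. Once this boundary-tolerant localisation is in place, Theorems~\ref{thm:polylog_regime} and~\ref{thm:polynomial_regime} applied with $|u_n - v_n| \to \infty$ yield~\eqref{eq:finite-polylog} and~\eqref{eq:finite-polynomial}.
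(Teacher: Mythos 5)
Your overall strategy is the paper's: pass from $\calC_{\max}^{(n)}$ to $\calC_\infty$, invoke the infinite-volume upper bounds, and check that the witnessing path stays inside $Q_n$. However, your treatment of the boundary — which you correctly identify as the crux — contains an error and then an unnecessary detour. The claim that ``a constant proportion of uniform pairs has an endpoint within $n^{1/d-\delta}$ of $\partial Q_n$, so this regime cannot be conditioned away'' is false: the volume fraction of $Q_n$ within distance $n^{1/d-\delta}$ of the boundary is $O(n^{-\delta})\to 0$, and more generally the fraction within distance $\zeta n^{1/d}$ is $O(\zeta)$, which is made arbitrarily small by choosing $\zeta$ small. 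This is exactly how the paper proceeds: Lemmas~\ref{lem:polylog-deviation} and~\ref{lem:polynomial-deviation} give, for \emph{every} $\zeta>0$, a path with $\mathrm{dev}(\pi_{x_n,y_n})\le \zeta|x_n-y_n|+C$; fixing a target error $q$ one takes $\zeta\lls q,\mpar$, restricts to the event $\calA_{\mathrm{pos}}$ that $|x_n-y_n|\ge\log n$ and both points lie at distance at least $2\sqrt{d}\zeta n^{1/d}$ from $\partial Q_n$ (probability $\ge 1-q/2$), and on that event the path is contained in $Q_n$. No boundary-tolerant version of the nets or of the multi-round exposure is needed, and since you neither carry out that reconstruction nor correctly justify why it would be required, this is a genuine gap in your argument as written: the key quantitative input you are missing is that the linear coefficient $\zeta$ in the deviation bound can be taken arbitrarily small, not merely $O(1)$.

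Two smaller points. First, for the conditioning swap the paper only needs that $\pr(u_n,v_n\in\calC_{\max}^{(n)})$ and $\pr(u_n,v_n\in\calC_\infty)$ differ by a bounded factor (via $|\calC_{\max}^{(n)}|\ge\rho|\calV\cap Q_n|$ a.a.s.\ and $\calC_{\max}^{(n)}\subseteq\calC_\infty$ a.a.s.); your stronger ``and vice versa'' claim is not required and would need additional justification. Second, one also passes from uniformly chosen vertices to uniformly chosen positions conditioned to be vertices (a vanishing total-variation adjustment for the Poisson point process), which your sketch glosses over but which is harmless.
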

The lower bound in Corollary~\ref{cor:polynomial_regime} also transfers to finite GIRGs, since GIRG is defined as a subgraph of IGIRG. We refer to~\cite{komjathy2022one2} for details. The proofs of Theorems \ref{thm:polylog_regime}, \ref{thm:polynomial_regime}, and \ref{thm:finite_graph} also reveal that the paths realising the upper bounds deviate only sublinearly from the straight line between the two vertices, cf.\ Definition~\ref{def:deviation} and Lemmas~\ref{lem:polylog-deviation} and~\ref{lem:polynomial-deviation} for more details.

\subsubsection{Limit Cases and Extensions}\label{sec:threshold}

Theorems~\ref{thm:polylog_regime}--\ref{thm:finite_graph} can be extended to interesting cases that may informally be described as $\alpha = \infty$ or $\beta = \infty$. In the case $\alpha = \infty$, all connection probabilities are either constant or zero,
and we replace the condition~\eqref{eq:connection_prob} by
\begin{align}\label{eq:alpha_infty}
 h(x,w_1,w_2)
 \begin{cases} 
 \ = 0,\quad  & \text{if } \tfrac{w_1w_2}{|x|^d} < c', \\ 
 \ \ge \underline{c}\quad & \text{if }\tfrac{w_1w_2}{|x|^d} \ge c'',
\end{cases}
\end{align}
for some constants $\underline{c} \in(0,1] $ and $c'' \ge c' > 0$. For the sake of simplicity we will assume $c''=1$ in all our proofs, however the results still hold for general $c''$. Models satisfying \eqref{eq:alpha_infty} are called threshold (or zero temperature) models, and include \emph{hyperbolic random graphs} \cite{krioukov2010hyperbolic} when the dimension is one. The correspondence between GIRGs and threshold hyperbolic random graphs was established in~\cite[Theorem 2.3]{bringmann2019geometric}. For models where \eqref{eq:alpha_infty} holds, we extend the definitions~\eqref{eq:mu_pol_log}-\eqref{eq:Delta_0} in the natural way to $\alpha=\infty$, since $\lim_{\alpha\to \infty}\mu_{\mathrm{pol},\alpha}=1/d$:
\begin{align}\label{eq:alpha-infty-definitions}
    \mu_{\log}:=\frac{3-\tau}{\beta}, \quad \mu_{\mathrm{pol}}:= \frac{1}{d}+\frac{3-\tau}{\beta},\quad 
    \eta_0 := \begin{cases}
	1 & \mbox{ if $\mu>\mu_{\mathrm{pol}}$,}\\
	  d\cdot(\mu-\mu_{\log})  & \mbox{ if $\mu\le\mu_{\mathrm{pol}}$,}
	\end{cases}
\end{align}
and, when $\mu\in(\mu_{\mathrm{expl}},\mu_{\log})$,
\begin{align}\label{eq:alpha-infty-Delta_0}
    \Delta_0 := \frac{1}{1-\log_2(\tau-1+\mu\beta)} >0.
\end{align}
The case $\beta = \infty $ captures when the cdf of the edge transmission variable $L$ in \eqref{eq:F_L-condition} is flatter near $0$ than any polynomial, and we replace~\eqref{eq:F_L-condition} by the condition that 
\begin{align}\label{eq:beta_infty}
	\lim_{t\to 0} F_L(t)/t^{\beta} = 0 \mbox{ for all }0<\beta <\infty.
\end{align}
In particular, this condition is satisfied if $F_L$ has no probability mass around zero, for example\footnote{For $\mu=0, L\equiv 1$,  the cost-distance $d_{\calC}(x,y)$ then equals the \emph{graph-distance} between $x$ and $y$.~\cite{komjathy2022one2} contains as special cases the linear lower bound on graph-distances by Berger~\cite{berger2004lower} for long-range percolation (LRP) and  by Deprez, Hazra, and W\"uthrich~\cite{deprez2015inhomogeneous} for SFP, see \cite[Proposition 2.4]{komjathy2022one2}.} when $L \equiv 1$. When $\beta=\infty$, using that $\tau\in(2,3)$ we replace~\eqref{eq:mu_pol_log}-\eqref{eq:eta_0} naturally by
\begin{align}\label{eq:beta-infty-definitions}
    \mu_{\mathrm{expl}} := \mu_{\log}:=0, \quad \mu_{\mathrm{pol}}:= \frac{\alpha-(\tau-1)}{d(\alpha-2)}, \quad
    \eta_0 := \begin{cases}
	1 & \mbox{ if $\mu>\mu_{\mathrm{pol}}$,}\\
	\mu/\mu_{\mathrm{pol}} & \mbox{ if $\mu\le\mu_{\mathrm{pol}}$,}
	\end{cases}
\end{align}
and, when $\alpha\in(1,2)$,
\begin{align}\label{eq:beta-infty-Delta_0}
    \Delta_0 := \frac{1}{1-\log_2(\alpha)} >0.
\end{align}
Finally, when both $\alpha=\beta=\infty$ we replace~\eqref{eq:mu_pol_log} and \eqref{eq:eta_0} by
\begin{align}\label{eq:alpha-beta-infty-definitions}
    \mu_{\mathrm{expl}} := \mu_{\log}:=0, \quad \mu_{\mathrm{pol}}:= \tfrac{1}{d}, \quad \eta_0 := \min\{1,d\mu\},
\end{align}
and in that case we do not define $\Delta_0$, since the polylogarithmic case is vacuous when $\alpha=\beta=\infty$ (see Remark~\ref{rem:alpha-beta-infty-polyllog}).
Our main results still hold for these limit regimes. We remark that the corresponding lower bounds also hold~\cite[Theorem~1.10]{komjathy2022one2}. 
\begin{theorem}
[Extension to threshold GIRGs and $\beta=\infty$]~
\label{thm:threshold_regimes}
\begin{enumerate}[(a)]
    \item Theorems~\ref{thm:polylog_regime},~\ref{thm:polynomial_regime} and \ref{thm:finite_graph} still hold for $\alpha=\infty$ if we replace definitions~\eqref{eq:mu_pol_log}-\eqref{eq:eta_0} by definitions~\eqref{eq:alpha-infty-definitions}-\eqref{eq:alpha-infty-Delta_0}. 

    \item Theorems~\ref{thm:polylog_regime},~\ref{thm:polynomial_regime} and \ref{thm:finite_graph} still hold for $\beta=\infty$ if we replace definitions~\eqref{eq:mu_pol_log}-\eqref{eq:eta_0} by definitions~\eqref{eq:beta-infty-definitions}-\eqref{eq:beta-infty-Delta_0}.
    \item Theorems~\ref{thm:polynomial_regime} and \ref{thm:finite_graph} still hold for $\alpha=\beta=\infty$ if we replace definitions~\eqref{eq:mu_pol_log}-\eqref{eq:eta_0} by definition~\eqref{eq:alpha-beta-infty-definitions}.
\end{enumerate}
\end{theorem}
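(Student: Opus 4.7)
The plan is to revisit the proofs of Theorems~\ref{thm:polylog_regime}, \ref{thm:polynomial_regime}, and \ref{thm:finite_graph}, identify the precise role of $\alpha$ and $\beta$ in each step, and verify that the arguments degenerate correctly in each of the three limit regimes. Throughout, $\alpha$ enters only through long-edge counting bounds derived from the lower half of \eqref{eq:connection_prob}, while $\beta$ enters only through the estimate that the minimum of $k$ iid copies of $L$ is of order $k^{-1/\beta}$. In each limit regime one of these mechanisms degenerates, and the adjusted exponents in \eqref{eq:alpha-infty-definitions}--\eqref{eq:alpha-beta-infty-definitions} are precisely the values obtained by taking this limit in the optimisation that produces $\eta_0$ and $\Delta_0$.

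For part (a) ($\alpha=\infty$), I would rerun the hierarchical bridge-path construction but anchor each bridge at vertices with $w_1 w_2 \asymp |x|^d$, where \eqref{eq:alpha_infty} still guarantees a constant lower bound $\underline c$ on the edge probability. Since $\mu_{\mathrm{pol},\alpha}\to 1/d$ as $\alpha\to\infty$, this corresponds to the limiting bridge-weight $|x|^{d/2}$, and the $\alpha$-dependent term $\eta_\alpha=\mu/\mu_{\mathrm{pol},\alpha}$ in \eqref{eq:eta_0} collapses to $d\mu$, which always exceeds $\eta_\beta=d(\mu-\mu_{\log})$; hence $\eta_0=\eta_\beta$, matching \eqref{eq:alpha-infty-definitions}. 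The polylogarithmic case is analogous: the mechanism producing $\Delta_\alpha$ (random long edges on low-weight vertices) vanishes because $\min\{1,w_1w_2/|x|^d\}^\alpha\to 0$ whenever $w_1w_2<|x|^d$, leaving only the $\Delta_\beta$ branch, matching \eqref{eq:alpha-infty-Delta_0}. The pseudorandom net (Section~\ref{sec:exposure}) concerns only the weighted vertex distribution and is independent of $\alpha$.

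For part (b) ($\beta=\infty$), Assumption~\ref{assu:L} fails for every finite $\beta$, so optimising the $L$-factor over $k$ candidate edges no longer yields a polynomial gain $k^{-1/\beta}$; instead the minimum is only $o(1)$. The bridge cost is then dominated by the penalty factor $(W_xW_y)^\mu$ alone, and optimising bridge weights subject to the constant-probability condition from \eqref{eq:connection_prob} gives cost $|x|^{\mu/\mu_{\mathrm{pol},\alpha}+o(1)}$. In \eqref{eq:eta_0} this corresponds to dropping the $\eta_\beta$ branch (equivalently, to $\mu_{\log}=0$), leaving $\eta_0=\mu/\mu_{\mathrm{pol},\alpha}$ as in \eqref{eq:beta-infty-definitions}; analogously $\Delta_0=\Delta_\alpha$, matching \eqref{eq:beta-infty-Delta_0}. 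Wherever the original proof extracts a small $L$-value, I would instead bound the corresponding $L$-factor by a deterministic constant, absorbing the $o(1)$ discrepancy into the $\varepsilon$ in the final exponent.

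Part (c) ($\alpha=\beta=\infty$) applies both simplifications together: bridges are anchored at $w_1w_2\asymp|x|^d$, the $L$-factors contribute $\Theta(1)$, and a single bridge has cost $\Theta(|x|^{d\mu})$, yielding $\eta_0=\min\{1,d\mu\}$ as in \eqref{eq:alpha-beta-infty-definitions}; the polylogarithmic regime is vacuous since both mechanisms for cheap polylogarithmic paths are absent. The main obstacle in executing this plan is the careful auditing of the multi-scale counting lemmas underlying the net density bound \eqref{eq:net-heuristics} and the bridge construction: each currently invokes \eqref{eq:connection_prob} or Assumption~\ref{assu:L}, and must be re-derived under \eqref{eq:alpha_infty} or \eqref{eq:beta_infty}. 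While each individual replacement is routine (a polynomial tail becomes a hard threshold, or a polynomial minimum becomes sub-polynomial), ensuring that the modifications compose cleanly across all scales of the hierarchy without degrading any union bound is the delicate step.
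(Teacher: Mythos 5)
Your proposal is correct and follows essentially the same route as the paper: the paper proves Theorem~\ref{thm:threshold_regimes} not as a separate argument but by carrying the convention $\infty\cdot 0=0$ and the threshold/flat-at-zero conditions \eqref{eq:alpha_infty}, \eqref{eq:beta_infty} through every building-block claim and hierarchy lemma (each explicitly states validity for $\alpha=\infty$ and/or $\beta=\infty$, e.g.\ requiring $U(KM)^{-\mu}\ggs\mpar$ when $\beta=\infty$ so the $L$-factor contributes a constant, and taking $z=d$ when $\alpha=\infty$ so edges appear with probability $\ge\underline c$), and then reads off the limiting exponents in Claims~\ref{claim:polynomial-infinite}--\ref{claim:polynomial-large-mu} and Remark~\ref{rem:alpha-beta-infty-polyllog} exactly as you compute them. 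The "careful auditing" you flag as the delicate step is precisely what the paper does, just built into the lemma statements from the outset rather than performed post hoc.
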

Theorem~\ref{thm:threshold_regimes}(a) implies the analogous result for hyperbolic random graphs (HypRG) by setting $d=1$ in \eqref{eq:alpha-infty-definitions}, except for some minor caveats. In Definition \ref{def:girg}, the number of vertices in GIRG is Poisson distributed with mean $n$, while in the usual definition of HypRG~\cite{krioukov2010hyperbolic,gugelmann2012random} and GIRG~\cite{bringmann2019geometric} the number of vertices is exactly $n$. In HypRG the vertex-weights have an $n$-dependent distribution converging to a limiting distribution \cite{komjathy2020explosion}. However, these differences may be overcome by coupling techniques presented in e.g.~\cite{komjathy2020explosion}: a model with exactly $n$ vertices can be squeezed between two GIRGs with Poisson intensity $1-\sqrt{4\log n/n}$ and  $1+\sqrt{4\log n/n}$, and one can couple $n$-dependent and limiting vertex-weights to each other, respectively, but we avoid spelling out the details and refer the reader to \cite[Claims 3.2, 3.3]{komjathy2020explosion}.

\subsection{Discussion}\label{sec:discussion}
Here we discuss our results in context with related results about (inhomogeneous) first passage percolation and graph distances on spatial random graphs. 

{\bf Long-range first passage percolation.} The work on long-range first passage percolation (LR-FPP) \cite{chatterjee2016multiple} is closest to our work. In that model, the underlying graph is the \emph{complete graph} of $\Z^d$, and the edge transmission time on any edge $uv$ is exponentially distributed with mean $|u-v|^{d\alpha'+o(1)}$, so $\beta=1$, the process is Markovian, and the penalty depends on the Euclidean distance of $u$ and $v$. This choice eliminates the correlations coming from the presence/absence of underlying edges, and the growth is strictly governed by the long-range transmission times. 
As $\alpha'$ grows, \cite{chatterjee2016multiple} finds the same sub-explosive phases for transmission times in LR-FPP that we find for 1-FPP in Table \ref{table:summary}. The main difference is that the explosive phase is absent in LR-FPP, and is replaced by a `super-fast' phase there where transmission times are $0$ almost surely. Moreover, the behaviour on phase boundaries are different. Using the symmetries in their model, \cite{chatterjee2016multiple} proves that whenever transmission times in LR-FPP are strictly positive, then they must be at least logarithmic. In contrast, in general 1-FPP on IGIRG and SFP we also see doubly-logarithmic distances, for example for graph-distances ($L\equiv1, \mu=0$). We summarise the results on LR-FPP in Table \ref{table:phases-top-FPP}.
Nevertheless, in 1-FPP, the cost function $\cost{xy}$ in \eqref{eq:cost} could also depend on $|x-y|$, i.e., take the form $L_{xy}(W_x W_y)^\mu|x-y|^\zeta$. The result of \cite{komjathy2020stopping} on explosion carries through to this case without much effort \cite{reubsaet2022}, with the model being explosive if and only if $\mu+\zeta/d < (3-\tau)/(2\beta)=\mu_{\mathrm{expl}}$, with $\tau\in(2,3)$. Extrapolating our results to the spatial penalty, we conjecture that we see polylogarithmic distances when $\mu+\zeta/d\in (\mu_{\mathrm{expl}}, \mu_{\log})$ and strictly polynomial distances when $\mu+\zeta/d\in (\mu_{\log}, \mu_{\mathrm{pol}})$.  
\vskip0.5em

\begin{table}[ht!]
\begin{tabular}{l||l|l|l|l}
\cellcolor{grey}\textbf{SFP/LRP} with & \cellcolor{grey}\textbf{Graph-distance} & \cellcolor{grey}\textbf{Growth} & \cellcolor{grey}{\textbf{Upper bound}} & \cellcolor{grey}{\textbf{Lower bound}}  
  
\tabularnewline\hline
$\tau\in(2,3)$ & $\tfrac{(2\pm o(1))\log \log |x|}{|\log(\tau-2)|}$ & doubly- & \cite{deijfen2013scale, van2017explosion} & {\cite{van2017explosion}} 
\tabularnewline
&& logarithmic &  &  
\tabularnewline\hline
$\tau>3$ and & $(\log |x|)^{\Delta\pm o(1)}$ & poly- &  {SFP: \cite{hao2021graph},} & SFP: \cite{hao2021graph} 
\tabularnewline
$\alpha \in (1,2)$ & for some $\Delta>0$ & logarithmic & LRP: \cite{biskup2004scaling,trapman2010growth} & \quad (partly open) 
\tabularnewline
&&&  &  LRP:\cite{biskup2004scaling, biskup2019sharp,trapman2010growth} 
\tabularnewline\hline
$\tau>3$ and & $|x|^{\eta\pm o(1)}$, & polynomial &SFP: open, & SFP: open  
\tabularnewline
$\alpha=2$ & for some $\eta<1$ && LRP: \cite{baumler2023distances} & LRP: \cite{baumler2023distances} 
\tabularnewline\hline
$\tau>3 $ and & $\Theta(|x|)$ & linear & partly open$^\ddag$ \cite{antal1996chemical} & \cite{berger2004lower, deprez2015inhomogeneous} 
\tabularnewline
$\alpha>2$ & & &  &
\tabularnewline\hline\hline
\cellcolor{grey}\textbf{LRFPP} with & \cellcolor{grey}\textbf{Cost-distance} &\cellcolor{grey}\textbf{Growth}&  \cellcolor{grey}\textbf{Upper bound}&\cellcolor{grey}\textbf{Lower bound} 
\tabularnewline\hline
$\alpha' < 1$ & $0$ & instantaneous & \multicolumn{2}{c}{\cite{chatterjee2016multiple}}

\tabularnewline\hline
$\alpha' \in(1, 2)$ & $(\log |x|)^{\Delta_{\alpha'}\pm o(1)}$ for & poly- & \multicolumn{2}{c}{\cite{chatterjee2016multiple}}
\tabularnewline
& $\Delta_{\alpha'}=1/(1-\log_2\alpha')$ &  logarithmic & \multicolumn{2}{c}{}

\tabularnewline\hline
$\alpha'\in (2,2+1/d)$ & $|x|^{d (\alpha'-2)\pm o(1)}$ & polynomial & \multicolumn{2}{c}{\cite{chatterjee2016multiple}}

\tabularnewline\hline
$\alpha'>2+1/d$ & $\Theta(|x|)$ & linear & \multicolumn{2}{c}{\cite{chatterjee2016multiple}}

\tabularnewline\hline\hline
\cellcolor{grey}\textbf{IGIRG/SFP} & \cellcolor{grey}\textbf{Cost-distance} & \cellcolor{grey}\textbf{Growth}& \cellcolor{grey}\textbf{Upper bound} & \cellcolor{grey}\textbf{Lower bound}
\tabularnewline
\cellcolor{grey}with~$\tau\in(2,3)$ &\cellcolor{grey}&\cellcolor{grey}& \cellcolor{grey} & \cellcolor{grey}
\tabularnewline\hline
$\mu<\mu_{\mathrm{expl}}$ & converges & explosion & {\cite{komjathy2020stopping}} &{\cite{komjathy2020stopping}}
\tabularnewline
& in distribution &&  &
\tabularnewline\hline
$\mu\in(\mu_{\mathrm{expl}},\mu_{\log})$ & $(\log |x|)^{\Delta_0+ o(1)}$, & poly- & Theorem~\ref{thm:polylog_regime} & open
\tabularnewline
or $\alpha\in(1,2)$ & $\Delta_0$ as in~\eqref{eq:Delta_0} & logarithmic & \ & 
\tabularnewline\hline
$\mu\in\big(\mu_{\log}, \mu_{\mathrm{pol}})$ & $ |x|^{\eta_0\pm o(1)}$, & polynomial & {Theorem~\ref{thm:polynomial_regime}} & {\cite{komjathy2022one2}}
\tabularnewline
and $\alpha>2$ & $\eta_0$ as in~\eqref{eq:eta_0} &&  &
\tabularnewline\hline
$\mu>\mu_{\mathrm{pol}}$ & $\Theta(|x|)$ & linear & {\cite{komjathy2022one2}}, $d\ge 2$ & {\cite{komjathy2022one2}}
\tabularnewline
and $\alpha>2$ &&& &
\end{tabular}
\caption[caption]{\raggedright Known results about the universality classes of graph-distances on long-range percolation \textbf{LRP}, scale-free percolation \textbf{SFP}, long-range first-passage percolation \textbf{LRFPP} and infinite geometric inhomogeneous random graphs \textbf{IGIRG}. \\$^\ddag$An upper bound is only  known for high enough edge-density or all nearest-neighbour edges present.}
\vskip-1.5em
\label{table:phases-top-FPP}
\end{table}

{\bf Qualitative difference between one-dependent FPP and graph distances.} Some phases of 1-FPP in Table \ref{table:summary} are also phases for \emph{graph-distances} in spatial models in general. 
However, while the polynomial phase is spread-out in 1-FPP, this phase is essentially absent for graph distances in models. Indeed, the polynomial phase occurs when long edges all have polynomial spreading times in the Euclidean distance they bridge, both in 1-FPP here and in LR-FPP in \cite{chatterjee2016multiple}. Thus, transmission times in 1-FPP are not equivalent to graph distances in any inhomogeneous percolation on the underlying graph. 
Table~\ref{table:phases-top-FPP} summarises known results on 1-FPP, LR-FPP, and graph distances in spatial graphs. Now we elaborate on each phase.
\vskip0.5em

\textbf{The polylogarithmic phase.}
Theorem \ref{thm:polylog_regime} proves polylogarithmic cost-distances in 1-FPP when $\tau\in(2,3)$, and either $\mu\in (\mu_{\mathrm{exp}}, \mu_{\mathrm{log}})$ or $\alpha\in(1,2)$. The results here, in \cite{komjathy2020stopping} and the accompanying \cite{komjathy2022one2} (Corollary \ref{cor:polynomial_regime}) together imply that $\mu_{\mathrm{exp}}$ and $\mu_{\mathrm{log}}$ are true phase-transition points, separating this phase from both the explosive and the polynomial phases.
Even though we do not have a matching lower bound, we conjecture that this phase is truly polylogarithmic, and the exponent $\Delta_0$ in \eqref{eq:Delta_0} is sharp. The exponent $\Delta_0$ also depends on the product $\mu\beta$, which does not allow to match it easily to exponents for graph-distances:
For long-range percolation, where each edge $(u,v)\in\Z^d\times \Z^d$ is present independently with probability $\Theta(|u-v|^{-d\alpha})$, Biskup and Lin \cite{ biskup2019sharp} show that graph distances grow polylogarithmically with exponent $\Delta_{\alpha} = 1/(1-\log_2 \alpha)$ when $\alpha\in(1,2)$. This coincides with our upper bound in Theorem \ref{thm:polylog_regime} if $\alpha\le \tau-1+\mu\beta$. The same type of paths are used in both cases, passing through only low-degree vertices (and typical edge-costs on them for 1-FPP). For scale-free percolation, Hao and Heydenreich proved in~\cite[Theorem~1.1]{hao2021graph} that graph distances are also polylogarithmic -- with non-sharp exponents -- in scale-free percolation when $\alpha\in(1,2)$ and additionally $\tau>3$  (using our parametrisation of SFP). %Their exponent $\Delta_{\mathrm{SFP}}$ agrees with the exponent $\Delta_{\alpha}$ if $\alpha<(\tau-1)/2$, but when 
%$\tau-2 < \alpha$ they obtain $\Delta_{\mathrm{SFP}}\le  1/(1-\log_2(\tau-2))< \Delta_\alpha$, realised by paths on high-degree vertices, and they give a (non-matching) polylogarithmic lower bound. 
In an ongoing work a subset of us determines the exact exponent for graph distances in SFP. The lower-bound methods in~\cite{biskup2019sharp} or~\cite{hao2021graph} do not transfer to 1-FPP.
\vskip0.5em

\textbf{The linear phase.} Linear distances are common in supercritical spatial graph models with bounded edge-lengths. For example, Random Geometric Graphs exhibit linear distances~\cite{penrose2003random}, and so does supercritical percolation on grids of dimension at least $2$ \cite{antal1996chemical}. Assuming high enough edge-density, a renormalisation argument to percolation on $\Z^d$ gives that SFP and LRP for $\tau>3$ and $\alpha >2$ also have at most linear graph-distances for $d\geq 2$. The corresponding lower bound was shown by Berger for LRP~\cite{berger2004lower} and by Deprez \emph{et al.}~for SFP~\cite{deprez2015inhomogeneous}. Our lower bound for 1-FPP contains these as special cases, and holds universally for classical FPP for any positive edge-transmission time-distribution~\cite[Corollary 1.12]{komjathy2022one2}. 
\vskip0.5em

\textbf{The strictly polynomial phase.} The phase where intrinsic distances scale as $|x-y|^{\eta_0+o(1)}$ with $\eta_0<1$ (the result of Theorem \ref{thm:polynomial_regime}) is quite rare in spatial settings and we only know two examples. One is in one-dimensional LRP at a boundary line in the parameter space, when $\alpha=2$~\cite{baumler2023distances}. The other is for long-range first passage percolation (LR-FPP) in \cite{chatterjee2016multiple}, mentioned at the beginning of this section. 
There are some similarities to 1-FPP: LR-FPP is Markovian, i.e., $\beta=1$ in \eqref{eq:F_L-condition}, and has  strictly polynomial growth when $\alpha'\in(2,2+1/d)$, see Table \ref{table:phases-top-FPP}. Using exponential $L$ in 1-FPP, the length of  the parameter interval $(\mu_{\log}, \mu_{\mathrm{pol}})$ with polynomial growth is also exactly $1/d$ for $\mu$ when $\alpha>2+1/d$, but it is longer when $\alpha<2+1/d$, which shows that the penalty $\alpha'$ of LR-FPP plays a slightly different role as the long-range parameter $\alpha$ in \eqref{eq:connection_prob} here.
\vskip0.5em

\textbf{Gaps at approaching the phase boundaries.} 
Here we discuss what happens as the  parameters $\tau, \alpha, \mu, \beta$ approach the phase boundaries of growth. 

Polylogarithmic distances with exponent $\Delta_0$ heuristically imply \emph{stretched exponential ball-growth}, where the number of vertices within intrinsic distance $r$ scales as $\exp(r^{1/\Delta_0})$.
Our upper bound exponent $\Delta_0=\min\{\Delta_\alpha, \Delta_\beta\}$ in \eqref{eq:Delta_0} approaches $1$ as $\alpha \downarrow 1$, and so does the exponent $\Delta_{\alpha}$ of LRP \cite{biskup2004scaling}, which also partly governs SFP. This means that as $\alpha\downarrow 1$ we approach exponential growth. 
 In LRP, strictly exponential ball growth only occurs when $\alpha=1$ and the connectivity function has a suitably chosen slowly varying correction term $\ell(\cdot)$, i.e., $p(x,y)= \ell(|x-y|)/|x-y|^{\alpha d}$, see  Trapman~\cite{trapman2010growth}. Strictly exponential growth is a natural barrier, since (age-dependent) branching processes with finite first moments exhibit at most exponential growth, 
and non-Markovian FPP can be dominated by such branching processes. 
Interestingly, when $\alpha>2$ and we approach the phase transition of explosion by letting $\mu\downarrow\mu_{\mathrm{expl}}=(3-\tau)/(2\beta)$, then $\Delta_0$ in \eqref{eq:Delta_0} does not converge to $1$, but to $1/(2-\log_2(\tau+1))=:\Delta_{\tau}$. So, for the whole range $\tau \in (2,3)$, $\Delta_{\tau}\ge 1/(2-\log_2(3)) > 2.4 > 1$. As $\tau \uparrow 3$, $\Delta_{\tau}$ approaches $\infty$, which is natural, since already graph-distances are linear when $\tau >3$ and $\alpha >2$~\cite{deprez2015inhomogeneous}. This leaves two possibilities: either our upper bound $\Delta_0$ is not sharp for $\alpha>2$; or the ball growth jumps directly from subexponential ($\Delta_0 >1$) into the explosive phase. If the latter should be the case, it would be interesting to understand better how such a jump could happen. Such jumps at phase boundaries may occur. This paper together with~\cite{komjathy2022one2} proves a gap in the \emph{polynomial regime} as $\tau$ crosses the threshold $3$.
The limits of $\lim_{\tau\uparrow 3}\mu_{\textrm{pol}}=1/d$ and $\lim_{\tau\uparrow 3}\eta_0=\mu d$ exist and are in $(0,1)$ in \eqref{eq:mu_pol_log} and \eqref{eq:eta_0}. So if we fix some $\mu < 1/d$ and let $\tau \uparrow 3$, the cost-distances grow polynomially with exponents bounded away from one (e.g., they approach $1/2$ from below for $\mu  = 1/(2d)$). But as soon as $\tau > 3$ is reached, the exponent ``jumps'' to $1$ and distances become strictly linear~\cite[Theorem~1.11]{komjathy2022one2}. So the parameter space is discontinuous in $\eta_0$ and $\mu_{\textrm{pol}}$ with respect to $\tau$. 

Some important questions are centred around such gaps. The \emph{gap conjecture} in geometric group theory is about the ball growth of finitely generated groups: it states that there are no groups with growth between polynomial and stretched exponential of order $\exp(\Theta(\sqrt{n}))$ \cite{grigorchuk1990growth}. While the polynomial side is understood by Gromov's theorem \cite{gromov1981groups}, the conjecture remains open. We find it intriguing to discover a deeper connection between phases of intrinsic growth in spatial random graphs (``stochastic lattices'') and group theory  (``deterministic lattices'').
\vskip0.5em

\textbf{Organisation.} We start by moving to the quenched setting. In Section \ref{sec:nets} we develop the pseudorandom nets, and in Section \ref{sec:exposure} the multi-round exposure of edges, with the main result in Proposition~\ref{prop:multi-round-exposure}. In Section \ref{sec:lemmas} we collect some connectivity-estimates that serve as our building blocks, while in Section \ref{sec:hierarchy} we carry out the `budget travel plan' and build a hierarchical path that only uses vertices of the pseudorandom net and connects vertices $y_0^\star, y_x^\star$ very close to $0$ and $x$, respectively. In Section \ref{sec:endpoints} we connect $0,x$ to these vertices with low cost, which is a nontrivial task itself, and prove the main theorems.
 
\subsubsection{Notation}\label{sec:notation}
Throughout, we consider simple and undirected graphs with vertex set $\calV \subseteq \R^d$. For a graph $G=(\calV,\calE)$ and a set $A\subseteq \R^d$, $G[A]$ stands for the induced subgraph of $G$ with vertex set $\calV \cap A$. For two vertices $x,y\in \calV$, we denote the edge between them by $xy$, and for a set $V\subseteq \calV$ we write $V^{(2)} := \{\{x,y\} : x,y\in V, x\neq y\}$ for the set of possible edges among vertices in $V$. For a path $\pi$, $\calE(\pi)$ is the set of edges forming $\pi$, and $|\pi|$ is the number of edges of $\pi$. Generally the size of a discrete set $S$ is $|S|$, while of a set $A\subseteq \R^d$, $\mathrm{Vol}(A)$ is its Lebesgue measure. Given a cost function $\calC: \calE \to [0,\infty]$ on the edges, the cost of a set of edges $\calP$ is $\cost{\calP}:=\sum_{e\in\calE(\calP)}\cost{e}$. We define $\calC(xx):=0$ for all $x\in \calV$. We define the \emph{cost-distance} between vertices $x$ and $y$ as
\begin{align}\label{eq:cost_distance}
    d_{\calC}(x,y):=\inf\{\cost{\pi} : \pi \textnormal{ is a path from } x \textnormal{ to } y \textnormal{ in $G$}\}.
\end{align}
We define the graph distance $d_G(x,y)$ similarly, where all edge-costs are set to $1$. 
We denote the Euclidean norm of $x \in \mathbb R^d$ by $|x|$, the Euclidean ball with radius $r\geq 0$ around $x$ by $B_r(x) := \{y\in \R^d : |x-y|\le r\}$, and the set of vertices in this ball by $\calB_r(x):=\{y\in\calV : |x-y|\le r\} = B_r(x) \cap \calV$. (The minimal notation difference is intentional). The \emph{graph-distance ball} and \emph{cost-distance ball} (or \emph{cost-ball} for short) around a vertex~$x$ are the vertex sets $\calB_r^G(x):=\{y\in\calV : d_G(x,y)\le r\}$ and $\calB_r^{\calC}(x):=\{y\in\calV : d_{\calC}(x,y)\le r\}$, respectively. We set $B_r := B_r(0)$, and do similarly for $\calB_r$, $\calB_r^G$, $\calB_r^{\calC}$ if $0$ is a vertex. We define $\partial B_r(x) := B_r(x)\setminus \{y\in \R^d : |x-y| < r\}$, and use similar definitions for $\partial \calB_r$, $\partial \calB_r^G$ and $\partial \calB_r^{\calC}$. In particular, $\partial \calB_1^G(v)$ is the set of neighbours of $v$.
In Definition \ref{def:girg}, $\widetilde v = (v,w_v)$ stands for a weighted vertex. For some $A \subset \R^d$, we simply write $\widetilde{v} \in A$ to indicate that $(v, w_v) \in A \times [1, \infty)$, i.e., we implicitly project $\widetilde{v}$ onto Euclidean space. 

The set of \emph{model parameters} are $\mpar:= \{d, \tau, \alpha, \mu, \beta, \underline{c}, \overline{c}, c_1, c_2, t_0\}$. For parameters $a,b >0$ (model parameters or otherwise), we use ``for all $a\ggs b$'' as shortcut for ``$\forall b>0:\, \exists a_0 = a_0(b):\, \forall a\ge a_0$''. We also say ``$a \ggs b$'' to mean that $a \ge a_0(b)$. We use $a \lls b$ analogously, and may use more than two parameters. For example, ``for $a\ggs b,c$'' means ``$\forall b,c>0:\, \exists a_0=a_0(b,c):\, \forall a \ge a_0$''. A measurable function $\ell:(0,\infty) \to (0,\infty)$ is said to be \emph{slowly varying} at infinity if $\lim_{x\to \infty} \ell(cx)/\ell(x) =1$ for all $c>0$. We denote by $\log$ the natural logarithm, by $\log_2$ the logarithm with base $2$, and by $\log^{*k}$ the $k$-fold iterated logarithm, e.g. $\log^{*3}x = \log\log\log x$. For $n \in \N$ we write $[n]:= \{1,\ldots,n\}$.

\vskip-2em

\section{Moving to the quenched setting: pseudorandom nets}\label{sec:nets}

In proving the upper bounds (Theorems~\ref{thm:polylog_regime} and~\ref{thm:polynomial_regime}), we will construct cheap paths along the lines of the `budget travel plan' in Section \ref{sec:intro}, which is an iterative scheme of finding long 3-edge bridge-paths to connect two far-away vertices. Since low-cost events in 1-FPP are not increasing, we develop a technique that replaces the FKG-inequality. Moving to the quenched setting, we will first expose all vertex positions and weights (above some threshold weight, in the case of IGIRG); then, low-cost edge existence events become independent. To be able to work with \emph{fixed realisations} of the vertex set, we find (with high probability as $|x|\to\infty$) a \emph{pseudorandom} subset of the vertices that behaves regularly enough inside a box around $0,x$, as in \eqref{eq:net-heuristics}, which we call a \emph{net}.
We formalise the notion of the nets now.

For a set $A\subset \R^d$ we write $\mathrm{Vol}(A)$ for its Lebesgue measure (volume), while for a discrete set $\calA\subseteq (0,\infty)$ we write $|\calA|$ for the cardinality (size) of the set.  Recall the slowly varying function $\ell(\cdot)$ from \eqref{eq:power_law}. Recall that weighted vertices are of the form $\tilde v=(v, w_v)$.
\begin{definition}[Net]\label{def:net}
    Let $G=(\calV,\calE)$ be an IGIRG or SFP in Definition \ref{def:girg}. Let $\calR \subseteq (0,\infty)$ be a set with $|\calR|<\infty$, let $w_0 \in [1,\infty)$, and let $f\colon \calR\to [w_0,\infty)$. Let $Q \subseteq \R^d$
    be Lebesgue measurable. An \emph{\newnet{\calR}{w_0}{f} for $Q$} is a set $\calN \subseteq \widetilde{\calV} \cap Q\times[1,\infty)$ of size at least $\mathrm{Vol}(Q)/4$ such that for all $\tilde v \in \calN$, $r \in \calR$, and all $w \in [w_0,f(r)]$,  
    \begin{equation}\label{eq:net-defining-crit}
      \left| \big\{\widetilde u\in \calN\cap B_r(v) \times [w/2, 2w] \big\}\right| \ge  r^d\cdot \ell(w)w^{-(\tau-1)}/(2d)^{d+\tau+5}.
    \end{equation}
\end{definition}
It may seem very strong that we require \eqref{eq:net-defining-crit} for infinitely many $w$. We discretise $[w_0, f(r)]$ into a finite set of subintervals $(I_j)_{j\le j_{\max}}$ in a smart way. Then we ensure that  \eqref{eq:net-defining-crit} holds with $[w/2, 2w]$ replaced by $I_j$ on the left hand side and the $\ell(w)w^{-(\tau-1)}$ replaced by $\mathbb P(W\in I_j)$ on the right hand side, and then this will imply that \eqref{eq:net-defining-crit} also holds for all values  $w\in [w_0, f(r)]$. Now we set $f(r)$ only a little less than the typical largest vertex weight in a ball of radius $r$ (roughly $r^{d/(\tau-1)}$), and  $w_0$ is a large constant, starting with $w_0$: 

\begin{definition}[Strong net]\label{def:net-constants} Fix $\ell(w)$ from \eqref{eq:power_law}.
    We define $w_0$ to be the smallest integer in $[1,\infty)$ such that for all $w \ge w_0$ and all $t \in [1/2,2]$,
    \begin{equation}\label{eq:nets-ell-bound-0}
        \ell(w)w^{-(\tau-1)} < 2^{-\tau-8} \quad \mbox{and}\quad 0.99 \le \ell(t w)/\ell(w) \le 1.01
    \end{equation}
    both hold.
     For all $\delta>0$, and $R>0$ we define the function
    \begin{equation}\label{eq:nets-f-def}
    f_{R,\delta}(r) = r^{\frac{d}{\tau-1}} \big(1 \wedge \inf\big\{\ell(x)\colon x \in [w_0,r^{d/(\tau-1)}]\big\}\big)^{\frac{1}{\tau-1}}\cdot \Big(\frac{1}{(2d)^{2\tau+d+8}\log(16 R/\delta)}\Big)^{\frac{1}{\tau-1}}.
    \end{equation}
     On fixing $\delta>0$, $w_0$ satisfying \eqref{eq:nets-ell-bound-0}, and $R:=|\calR|$ in \eqref{eq:nets-f-def}, we call an \newnet{\calR}{w_0}{f_{R,\delta}} in Definition \ref{def:net} for a set $Q \subseteq \R^d$ an \emph{$(\delta, \calR)$-net for $Q$}. 
\end{definition}
Note that $w_0$ satisfying \eqref{eq:nets-ell-bound-0} must exist since $\ell$ is a slowly-varying function, and so Potter's bound \cite{bingham1989regular} ensures the first inequality.
In order to find an $(\delta,\calR)$-net, and specify the values $r\in \calR$, for technical reasons it is convenient to assume that the values in $\calR$ grow at least exponentially with base depending on $|\calR|$. 
In the main proofs, we later choose $|\calR|$ to be a either a constant or doubly logarithmic in the distance of the two vertices we want to connect via a cheap path. The specific condition is the following.

\begin{definition}\label{def:well-spaced} Fix $\delta \in(0,1)$.
    We say a set $\calR \subseteq (0,\infty)$ is \emph{$\delta$-well-spaced} if $|\calR|=:R<\infty$ and the following hold, writing $\calR = \{r_1,\ldots,r_R\}$ with $r_1 < \ldots < r_R$:
      \begin{align}
        r_1 &\ge 24d\big(\log(4R/\delta)\big)^{1/d} \vee w_0^{(\tau-1)/d} \vee \inf\{r \colon f_{R,\delta}(r) \ge w_0\}; \label{eq:nets-small-r}\\
         \frac{r_i}{r_{i-1}} &\ge 6R^{1/d}\Big(\frac{\log(2R/\delta)}{\delta}\Big)^{1/d} \quad \forall i\in [2,R]. \label{eq:nets-radii}
    \end{align}
       
\end{definition}

We now state the main result of the section. Heuristically, a box $Q$ contains an $(\delta,\calR)$-net with sufficiently high probability, and we can also condition on the presence of a few vertices in the net. The condition $t\le 1/\delta$ in the following is added to avoid a vacuous statement.

\begin{restatable}{proposition}{NetsExist}\label{lem:nets-exist}
    Consider IGIRG or SFP with $\tau > 2$. Let $\delta \in (0,1/16), \xi > 0$, and $Q \subseteq \R^d$ be a cube of side length $\xi$. Let $R\in \N$ and $\calR=\{r_1, \ldots, r_R\}$ with $0<r_1<\ldots<r_R$ be a $\delta$-well-spaced set such that $r_R = \xi\sqrt{d}$. Let $x_1,\ldots,x_t \in Q$ with $t \le \min\{1/\delta,(r_1/4\sqrt{d})^d\}$. Then
 \begin{align}
    \pr&(\text{$Q$ contains an $(\delta,\calR)$-net $\calN$})\ge 1-\delta/R\label{eq:netsexist-1};\\
    \pr&(\text{$Q$ contains an $(\delta,\calR)$-net $\calN$}, x_1,\ldots,x_t \in \calN \mid x_1,\ldots,x_t \in \calV ) \ge 1-t\delta. \label{eq:strong-netsexist}
    \end{align}
\end{restatable}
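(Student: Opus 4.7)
I would construct $\calN$ by a \emph{deletion procedure}: start with $\widetilde \calV \cap Q$ and throw out every weighted vertex $\widetilde v$ for which the count bound \eqref{eq:net-defining-crit} fails at some pair $(r, w) \in \calR \times [w_0, f_{R,\delta}(r)]$. Since $|\widetilde\calV\cap Q| \ge \mathrm{Vol}(Q)/2$ holds deterministically for SFP and, for IGIRG, with failure probability $\le \delta/(4R)$ by Poisson concentration (using $\mathrm{Vol}(Q)\gtrsim 1$ from \eqref{eq:nets-small-r}), it suffices to show that the number of \emph{bad} vertices is at most $\mathrm{Vol}(Q)/4$ except on an event of probability $\le \delta/(2R)$. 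The first simplification is to discretise the continuum in $w$: set $I_j := [2^j w_0, 2^{j+1} w_0]$ and $j_{\max}(r) := \lfloor \log_2(f_{R,\delta}(r)/w_0)\rfloor$. Every window $[w/2,2w]$ with $w\in[w_0, f_{R,\delta}(r)]$ contains some $I_j$ with $j\le j_{\max}(r)$, and by \eqref{eq:nets-ell-bound-0} together with \eqref{eq:power_law} we can replace $\ell(w)w^{-(\tau-1)}$ by $\mathbb P(W\in I_j)$ at a constant multiplicative cost. The net property then reduces to a finite system of inequalities of the form $|\widetilde\calV\cap B_r(v)\times I_j| \gtrsim r^d\, \mathbb P(W\in I_j)$ over triples $(\widetilde v, r, j)$.

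\textbf{Concentration and union bound.} For a fixed anchor $u\in Q$ and pair $(r, I_j)$, the count $|\widetilde\calV \cap B_r(u)\times I_j|$ is Poisson (IGIRG) or binomial (SFP) with mean at least $c_d\, r^d\, \mathbb P(W\in I_j)$, uniformly in $u\in Q$ since $r\le r_R=\xi\sqrt d$ forces $\mathrm{Vol}(B_r(u)\cap Q)\ge c_d r^d$. The specific form of $f_{R,\delta}$ in \eqref{eq:nets-f-def} is tuned so that whenever $2^{j+1}w_0\le f_{R,\delta}(r)$ this mean is at least $(2d)^{2\tau+d+8}\log(16R/\delta)$, so a standard Chernoff estimate yields a lower-tail failure probability of at most $(\delta/(16R))^{C}$ for any chosen constant $C$. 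To lift the bound from anchors to \emph{every} $v\in Q$ I partition $Q$ into anchor cubes of side $r_1/(4\sqrt d)$; for $v$ in such a cube $A$ with centre $z_A$, the ball $B_r(v)$ is sandwiched between $B_{r\pm r_1/4}(z_A)$, whose volumes differ by a constant factor because $r\ge r_1$ by \eqref{eq:nets-small-r}. The number of anchors is at most $(4\sqrt d\,\xi/r_1)^d$, which telescoped via \eqref{eq:nets-radii} is bounded by $(\log(16R/\delta)/\delta)^{O(R)}$; multiplying by $|\calR|\cdot j_{\max}(r_R)$ for the scale and weight unions and taking $C$ large enough drives the total failure probability below $\delta/(2R)$. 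The surviving vertices form an $(\delta,\calR)$-net, proving \eqref{eq:netsexist-1}.

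\textbf{Conditional version and main obstacle.} For \eqref{eq:strong-netsexist}, Slivnyak--Mecke (for IGIRG) or the deterministic identity $\calV=\Z^d$ (for SFP) gives that conditionally on $x_1,\dots,x_t\in\calV$, the process on $\calV\setminus\{x_1,\dots,x_t\}$ together with all of its weights has the same law as an unconditioned copy. Apply the previous step to this restricted process to produce, outside an event of probability $\delta/(2R)$, a set $\calN^\star\subseteq \widetilde\calV\setminus\{\tilde x_1,\dots,\tilde x_t\}$ satisfying \eqref{eq:net-defining-crit}. The condition $t\le (r_1/(4\sqrt d))^d$ guarantees that re-injecting the $x_i$ shifts any ball-count at scale $r\ge r_1$ by at most the negligible additive amount $t$, which is dominated by the mean, so $\calN^\star$ remains a net. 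It remains to check that each $x_i$ is itself good; the per-anchor Chernoff bound applied with anchor containing $x_i$ shows this fails with probability at most $\delta$ for each $i$, and a union bound over $i\le t\le 1/\delta$ yields \eqref{eq:strong-netsexist}. The main obstacle throughout is calibrating the $\log(16R/\delta)$-factor in \eqref{eq:nets-f-def} and the well-spacing \eqref{eq:nets-small-r}--\eqref{eq:nets-radii} so that they simultaneously absorb the spatial covering (superpolynomial in $R/\delta$), the $R$-fold scale union, and the logarithmic dyadic weight union: this is exactly why the exponents $(2d)^{d+\tau+5}$ and $(2d)^{2\tau+d+8}$ and the precise form of $f_{R,\delta}$ are chosen the way they are.
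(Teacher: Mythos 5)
Your proposal has two genuine gaps, one structural and one quantitative.

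\textbf{The deletion procedure is circular and does not yield the net property.} Condition \eqref{eq:net-defining-crit} counts vertices of $\calN$ \emph{itself} inside $B_r(v)\times[w/2,2w]$, not vertices of $\widetilde\calV$. If you delete every vertex whose counts over $\widetilde\calV$ are deficient, the surviving set $\calN$ only satisfies $|\widetilde\calV\cap B_r(v)\times[w/2,2w]|\gtrsim r^d\ell(w)w^{-(\tau-1)}$ for $v\in\calN$, which is weaker: the deleted vertices may be concentrated precisely inside $B_r(v)\times[w/2,2w]$ for a surviving $v$, so the count over $\calN$ can drop below the threshold. Iterating the deletion creates a cascade that your argument does not control. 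This self-consistency problem is exactly what the paper's construction is built to solve: it defines a bottom-up hierarchy of good boxes and $i$-good vertices (Definitions~\ref{def:nets-partition}, \ref{def:nets-good-box}) in which a level-$i$ box counts only vertices lying in good sub-boxes at all lower levels, requirement \emph{(B1)} forces all but a $2\delta/R$ fraction of sub-boxes to be good, and the key identity \eqref{eq:good-also} ($\calG_i\cap B^i(v)=\calG_R\cap B^i(v)$ for $R$-good $v$) guarantees that the vertices counted at level $i$ are exactly the ones that survive into $\calN=\widetilde\calG_R$. The losses from bad sub-boxes are then absorbed into the telescoping $(1-2i\delta/R)$ prefactors in \eqref{eq:i-good-total}--\eqref{eq:i-good-weight}.

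\textbf{The union bound over anchors fails.} Your per-anchor Chernoff failure probability is $(\delta/(16R))^{C}$ where $C$ is \emph{fixed} by the constant $(2d)^{2\tau+d+8}$ in \eqref{eq:nets-f-def}; you cannot ``take $C$ large enough'' afterwards. Meanwhile the number of anchor cubes of side $r_1/(4\sqrt d)$ in $Q$ is of order $(\xi\sqrt d/r_1)^d=(r_R/r_1)^d$, and \eqref{eq:nets-radii} is a \emph{lower} bound on the ratios $r_i/r_{i-1}$, so it gives a lower bound of order $\big(6^dR\log(2R/\delta)/\delta\big)^{R-1}$ on the number of anchors and no upper bound at all (the ratios may be arbitrarily large). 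The product of anchor count and per-anchor failure is therefore not $\le\delta/(2R)$; it diverges as $R$ grows, and in the applications $R\sim\log\log\xi$. What a first-moment argument does give is that only a small \emph{fraction} of anchors is bad, which suffices for the cardinality bound $|\calN|\ge\mathrm{Vol}(Q)/4$ but not for the ``for every $v\in\calN$'' requirement. The paper avoids any global union bound over anchors: each box is good with probability $1-\delta/R$ independently of its siblings, the next level only needs \emph{most} sub-boxes good (a binomial concentration event, \eqref{eq:aib}--\eqref{eq:induction-error-1}), and the final conclusion needs only the single top-level box $Q$ to be good. Your Slivnyak--Mecke reduction and the ``$+t$'' perturbation for the conditional statement are fine in spirit, but they inherit both gaps above.
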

The rest of this section is devoted to proving Proposition~\ref{lem:nets-exist}. All remaining definitions and lemmas are used only within this section. We now give the setting throughout this section. 

\begin{setting}\label{set:R-of-section}
Consider IGIRG or SFP with $\tau > 2$ in Definition \ref{def:girg}. Let $\delta \in (0,1/16)$, $\xi > 0$, $Q \subseteq \R^d$ be a $d$-dimensional cube of side length $\xi$, and $\calR = \{r_1,\ldots,r_R\} \subseteq (0,\infty)$ a $\delta$-well-spaced set with $r_1 < \ldots < r_R = \xi\sqrt{d}$. 
\end{setting}

Shortly we shall carry out a multi-scale analysis. We partition $Q\times [w_0, f(r_R)]$ into hyper-rectangles. On the weight-coordinate, we cover the interval $[w_0,f(r_R)]$ of weights with a set of disjoint intervals $(I_j)_{j=1, \ldots, j_{\max}}$ so that the first interval is of length $w_0$, and each consecutive interval is twice as long as the previous one.
On the space-marginal, we partition $Q$ into nested boxes $B$. The side lengths of these nested boxes will be close to $r_1, \ldots, r_R$, with some minor perturbation so that they can form a proper nested partition: we write $r_i' \approx r_i$ for the side length of the $i$'th level of boxes. 
A depiction and extended example can be found in Figure~\ref{fig:i-good} on page~\pageref{fig:i-good} below, after the formal definition.

After fixing this partitioning of $Q\times [w_0,f(r_R)]$, we look at $\widetilde {\calV}\cap (Q\times [w_0,f(r_R)])$. 
For each $i \in [R]$, we show that with probability close to $1$ there is a dense subset of ``good'' boxes $B$ of side length $r_i'$, in the sense that $B \times I_j$ contains the right number of vertices for all $I_j$ with $\max(I_j) \approx f(r_i)$. We choose $r_i'<r_i/\sqrt{d}$ to ensure that for all vertices $v$ in a box of side length $r_i'$, the entire box will be contained in $B_{r_i}(v)$ -- the ball of radius $r_i$ around $v$ -- this will allow us to take the net to be the set of all vertices which lie in good boxes of all side lengths $r_1',\ldots,r_R'$.
We start with the space marginal and now formally define the nested boxes.

\begin{definition}\label{def:nets-partition} Given $\calR=\{r_1, \ldots, r_R\}$, and $Q$ as in Setting~\ref{set:R-of-section}, an \emph{$\calR$-partition} of $Q$ is a collection of partitions $\widehat{\calP}(\calR):=\{\calP_1,\ldots,\calP_R\}$ of $Q$ into boxes with the following properties:
    \begin{enumerate}[(P1)]
        \item\label{item:P1} For all $i \in [R]$, every box in $\calP_i$ has the same side length $r_i'$ with 
        \begin{equation}\label{eq:ri-bound}
        r_i'\in [r_i/(2\sqrt{d}), r_i/\sqrt{d}].
        \end{equation}
        \item\label{item:P2} For all $i \in [R-1]$, every box in $\calP_{i+1}$ is partitioned into exactly $(r_{i+1}')^d/(r_i')^d$ boxes in $\calP_i$.
             \item\label{item:P3} We have $\calP_R = \{Q\}$.
             \item\label{item:P4} For $x \in Q, i \!\in\! [R]$, write $B^i(x)$ for the box in $\calP_i$ containing $x$. Then $B^i(x) \subseteq B_{r_i}(x)$.
 \end{enumerate}
\end{definition}
Observe that \emph{(P2)} ensures that the partition $\calP_i$ is a refinement of the partition of $\calP_{i+1}$, i.e., that every box in $\calP_{i+1}$ can be partitioned exactly into sub-boxes in $\calP_i$. Also, $r_R=\xi\sqrt{d}$ ensures that \emph{(P1)} and \emph{(P3)} can be simultaneously satisfied for $i=R
$.
\begin{claim}\label{lem:nets-partition-exists} Suppose $\calR$ and $Q$ satisfy Setting~\ref{set:R-of-section}. Then an $\calR$-partition $\widehat{\calP}(\calR)$ of $Q$ exists.
\end{claim}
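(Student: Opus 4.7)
The plan is to construct the partitions $\calP_R, \calP_{R-1}, \dots, \calP_1$ by backward induction on $i$. First I would set $\calP_R := \{Q\}$ as forced by (P3); this gives a single box of side length $r_R' := \xi$, and since $r_R = \xi\sqrt{d}$ in Setting~\ref{set:R-of-section}, we have $r_R' = r_R/\sqrt{d}$, exactly the upper endpoint of the window demanded by (P1). Then, given $\calP_{i+1}$ with common box side length $r_{i+1}' \in [r_{i+1}/(2\sqrt{d}),\, r_{i+1}/\sqrt{d}]$, I would subdivide each box of $\calP_{i+1}$ into $k_i^d$ congruent sub-boxes of side $r_i' := r_{i+1}'/k_i$ for some positive integer $k_i$; property (P2) is then immediate by construction, and the inductive hypothesis on $\calP_{i+1}$ is reset for $\calP_i$ provided $r_i'$ lies in the right window.

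The only substantive step is to show that such an integer $k_i$ exists. The requirement $r_i' \in [r_i/(2\sqrt{d}),\, r_i/\sqrt{d}]$ is equivalent to $k_i \in [r_{i+1}'\sqrt{d}/r_i,\, 2r_{i+1}'\sqrt{d}/r_i]$, an interval of length $r_{i+1}'\sqrt{d}/r_i \ge r_{i+1}/(2r_i)$. The well-spacing condition \eqref{eq:nets-radii} forces $r_{i+1}/r_i \ge 6R^{1/d}(\log(2R/\delta)/\delta)^{1/d} \ge 6$ (using $R \ge 1$ and $\delta < 1/16$), so this interval has length at least $3$ and its left endpoint is at least $3$; it therefore contains a positive integer, which I pick as $k_i$.

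Property (P4) then follows from (P1) at no extra cost, since a box of side $r_i' \le r_i/\sqrt{d}$ has Euclidean diameter $r_i'\sqrt{d} \le r_i$ and thus lies in $B_{r_i}(x)$ for every $x$ it contains. The hard part, such as it is, is really just the existence of the integer split factor $k_i$, and this was built in by design: the quantitative spacing in Definition~\ref{def:well-spaced} was tailored precisely to leave enough multiplicative slack between consecutive radii for an integer $k_i$ to sit inside the required window. The only other point worth flagging is the compatibility between (P3) and the upper end of (P1) at $i=R$, which is ensured by the stipulation $r_R = \xi\sqrt{d}$ in Setting~\ref{set:R-of-section}.
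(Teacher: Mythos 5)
Your proof is correct and follows essentially the same backward induction as the paper: the paper's explicit choice $r_{i-1}' = r_i'/\lceil \sqrt{d}\,r_i'/r_{i-1}\rceil$ is precisely the smallest admissible integer split factor in the interval you exhibit, and both arguments use the well-spacing condition \eqref{eq:nets-radii} (together with the inductive lower bound $r_{i+1}' \ge r_{i+1}/(2\sqrt{d})$) to guarantee that interval is nonempty. The only detail the paper additionally spells out is the allocation of points on box boundaries (via half-open boxes) so that the sub-boxes genuinely partition $Q$; your ``subdivide into congruent sub-boxes'' glosses over this harmlessly.
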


\begin{proof}
    We prove that given  $r_1, \ldots, r_R $, there exist side lengths $r_1',\ldots,r_R'$ that satisfy \emph{(P\ref{item:P1}) -- (P\ref{item:P4})}, i.e., that $r_{i+1}'/r_i'$ is an integer, \eqref{eq:ri-bound} holds, and $r_R'=\xi$. Clearly \eqref{eq:ri-bound} implies that for any vertex $v$ in a box $B$ of side-length $r_i'$, $B\subseteq B_{r_i}(v)$, hence \emph{(P\ref{item:P4})} follows directly once we satisfy \eqref{eq:ri-bound} and allocate box-boundaries uniquely.
    We proceed by induction on $i$, starting from $i=R$ and decreasing $i$. We take $r_{R}' := r_{R}/\sqrt{d}=\xi$, then \eqref{eq:ri-bound} is satisfied immediately and \emph{(P\ref{item:P2})--(P\ref{item:P3})} are vacuous. Suppose we have found $r_i',\ldots,r_{R}'$ satisfying \emph{(P\ref{item:P1})--(P\ref{item:P3})} for some $2 \le i \le R$. Let 
\begin{equation}\label{eq:def-ri-prime}
        r_{i-1}' = \frac{r_i'}{\lceil \sqrt{d} r_{i}'/r_{i-1} \rceil}.
    \end{equation}
    This choice of $r_{i-1}'$ divides $r_i'$, hence \emph{(P2)} can be satisfied, and $r_{i-1}' \le r_i'/(\sqrt{d}r_i'/r_{i-1}) =  r_{i-1}/\sqrt{d}$. Moreover,
\begin{equation}\label{eq:nets-partition-exists}
        r_{i-1}' \ge \frac{r_i'}{1 + \sqrt{d} r_i'/r_{i-1}} = \frac{r_{i-1}}{r_{i-1}/r_i'+\sqrt{d}}.
    \end{equation}
   Since \eqref{eq:ri-bound} holds for $i$ (by the inductive assumption), $r_i' \ge r_i/2\sqrt{d}$. Since $R$ is well-spaced, $r_{i-1} \le r_i/2$ by~\eqref{eq:nets-radii}; hence $r_{i-1}/r_i' \le \sqrt{d}$. It follows from~\eqref{eq:nets-partition-exists} that $r_{i-1}' \ge r_{i-1}/2\sqrt{d}$, and so \eqref{eq:ri-bound} holds also for $i-1$ and the induction  is advanced.
       
Given these $r_1',\ldots,r_R'$, we find an $\calR$-partition of $Q$ by taking $\calP_R = \{Q\}$ and iteratively forming each layer $\calP_{i-1}$ by taking the unique partition of each box in $\calP_i$ into $(r_i')^d/(r_{i-1}')^d$ sub-boxes of side length $r_{i-1}'$. We first define each partition box to be of the form $\prod_{j=1}^d[a_j, b_j)$, this allocates each point except $d$ of the $d-1$-dim faces of $\partial Q$ uniquely. 
Finally, we allocate the points $x\in\partial Q$ in $\calP_i$ to the box in $\calP_i$ that contains $x$ in its closure, this box is unique except on $d-2$ dimensional faces. Here we again use half-open $d-1$-dim boxes to determine the $d-2$ dim boundaries, and so on until only the corner-points are left which we allocate arbitrarily (but consistently across different $i$).
\end{proof}
We continue with the weight-marginal and cover the interval $[w_0,f(r_i)]$ of weights with a collection of intervals, forming later the weight-coordinate of the hyper-rectangles:
\begin{definition}[Base-$2$-cover]\label{def:base-2-cover}
    Given a closed interval $J = [a,b] \subset \R_+$, let $j_{\max}:=\lfloor\log_2(b/a)\rfloor+1$, and define $I_j:=[2^{j-1} a, 2^{j}a)$ for $j\in[j_{\max}]$. 
    Then $J\subseteq \bigcup_{j=1}^{j_{\max}}I_j$
    and we call $I=\{I_j\}_{j\le j_{\max}}$ the \emph{base-$2$-cover} of $[a,b]$.
    For each $x\in [a,b]$ we define $I(w):=\{I_j: w\in I_j\}$ to be the unique interval that contains $x$, and write $I(w)=[w_-, w_+)$ for its endpoints.
\end{definition}
For each $w\!\in\![a,b]$, $w/2\le w_-$ and $ w_+\le 2x$, so $I(w)\subseteq [w/2, 2w]$; by the definition of $I_{j_{\max}}$, we also have $b \in I_{j_{\max}}$. We define the hyperrectangle-covering of the box $Q$ including vertex-weights now. Recall vertex-weight distribution $W$ from \eqref{eq:power_law}, and $f(r)$ from Def.~\ref{def:net}.

\begin{definition}[Hyperrectangles]\label{def:hyperrectangles}
Consider Setting~\ref{set:R-of-section} and Definitions~\ref{def:nets-partition}, \ref{def:base-2-cover}. Let $\widehat {\calP}(\calR):=\{\calP_1, \ldots, \calP_R\}$ be an $\calR$-partition of the cube $Q$ with $\calR=\{r_1, \ldots, r_R\}$, $r_i'$ be the side-lengths in $\calP_i$, and let $I=\{I_j\}_{j\le j_{\max}}$ be a base-$2$-cover of $[w_0, f(r_R)]$. 
    Let $j_{\star}(i)$ be the index of the interval that contains $f(r_i)$, i.e.,
    \begin{equation}\label{eq:jstar}
        f(r_i) \in I(f(r_i))=:I_{j_\star(i)}.
    \end{equation}  
    Then we say that the collection $\calH(\calR):=\big\{ B_i\times I_j: B_i\in \calP_i, 1 \le j \le j_{\star}(i) \big\}$
    is a \emph{hyperrectangle-cover of $Q\times [w_0, f(r_R)]$}. For all $i \in [R]$ and all $A \subset [w_0,f(r_R)]$, we define
    \begin{equation}\label{eq:measure-hyperrectangle}
        \mu_i(A) :=(r_i')^d\cdot \mathbb P(W\in A).
    \end{equation}
\end{definition}
When we cover with boxes in $\calP_i$ on the spatial coordinate, the number $j_{\star}(i)$ of weight intervals in $\calH(\calR)$ depends on $i$. In particular, for smaller side-length we do not include intervals of very large weights. This is because there are too few (or no) vertices of large weight in a typical box of small side-length, so we cannot control their number. We illustrate a hyperrectangle cover on Figure \ref{fig:i-good} in dimension $1$.
In GIRG, $\mu_i(A)$ is the expected number of vertices with weights in $A$ in any box in $\calP_i$. In SFP, $\mu_i(A)$ is only roughly the expectation,  since e.g. a box touching the boundary $\partial Q$ in $\calP_i$ may not contain exactly $(r_i')^d$ vertices. By Definition~\ref{def:nets-partition} \emph{(P\ref{item:P4})}, all vertices in the hyperrectangle $B^i(v) \times I_j$ are within distance $r_i$ of $v$. Hence once we control the number of vertices in a dense set of hyperrectangles in all partitions $i\in[R]$,  we can find a net.
We now define a hyperrectangle being ``good'', with respect to a realisation of $\widetilde{\calV}$. Recall that $I(w)$ denotes the interval $I_j$ that contains $w$ in Definition \ref{def:base-2-cover}. 

\begin{definition}\label{def:nets-good-box} 
Consider the setting of Def.~\ref{def:hyperrectangles}, and let a $ \calH(\calR)$ be a hyperrectangle-cover of $Q\times [w_0, f(r_R)]$. Consider a realisation of the weighted vertex set  $\widetilde {\calV} =\big((v, w_v)\big)_{v\in \calV}$.

We recursively define when we call a vertex $\widetilde v\in \widetilde{\calV}$ and a box $B\in \widehat{\calP}(\calR)$ good. Every vertex is $1$-good. For all $i \in [R]$, we say a vertex $\tilde v=(v, w_v)\in \widetilde{\calV}$  is \emph{$i$-good} if the boxes $B^1(v),\ldots,B^{i-1}(v)$ are all good (which we define next). Denote the set of $i$-good (weighted) vertices by $\widetilde{\calG}_i:=\{\widetilde v \in {\widetilde \calV} \ i\text{-good}\}$ and $\calG_i:=\{v: \widetilde v \in \widetilde{\calG}_i\}$.
 We say that a box $B \in \calP_i$ is $i$-\emph{good} or simply good if the following conditions all hold:
    \begin{enumerate}[(B1)]
         \item\label{item:B1} Either $i=1$, or $B$ contains at least $1-\tfrac{2\delta}{R}\cdot (r_i'/r_{i-1}')^d$ many $i-1$-good sub-boxes $B'\in\calP_{i-1}$. 
        \item\label{item:B2} The total number of $i$-good vertices in $B$ satisfies 
   \begin{equation}\label{eq:i-good-total}
       |\calG_i \cap B | \in \Big[\Big(\frac{1}{2} - \frac{2(i-1)\delta}{R}\Big)(r_i')^d,  2(r_i')^d\Big].
   \end{equation}
        \item\label{item:B3} For all $w \in [w_0,f(r_i)]$, the number of $i$-good vertices in $B$ with weight in $I(w)$ satisfies 
   \begin{equation}\label{eq:i-good-weight}
     |\widetilde{\calG_i}\cap (B\times I(w))| \in \Big[\frac{1}{8} \Big(1 - \frac{2i\delta}{R}\Big)\mu_i(I(w)),  8\mu_i(I(w))\Big].
   \end{equation}
   \end{enumerate}
Finally, we say that the realisation $\widetilde {\calV}$ is \emph{good} wrt the hyperrectangle-cover $\widetilde {\calP}(\calR)$ if $Q$ is $R$-good. 
\end{definition}
The above definition is \emph{not circular}; the definition of $i$-good vertices depends only on the definition of good boxes in $\calP_{i-1}$, i.e., one level lower, and then the definition of a good box in $\calP_i$ depends only on the number of $i$-good vertices in it (and their weights) and its number of good subboxes in $\calP_{i-1}$. For $i=1$, the (longer) definition of $1$-good vertices is vacuous, so every vertex is indeed $1$-good which we emphasised in the definition.
Further, $\calG_R\subseteq \calG_{R-1}\subseteq \ldots \subseteq  \calG_1=\calV\cap Q$, since each $i$-good vertex is also $(i\!-\!1)$-good for all $i\le R$. See Figure~\ref{fig:i-good} for a graphical depiction of $i$-good vertices and boxes.
\begin{figure}[t]
    \centering
    \includegraphics[trim=0.75cm 0.1cm 0.2cm 0.1cm,clip,width=0.7\textwidth]{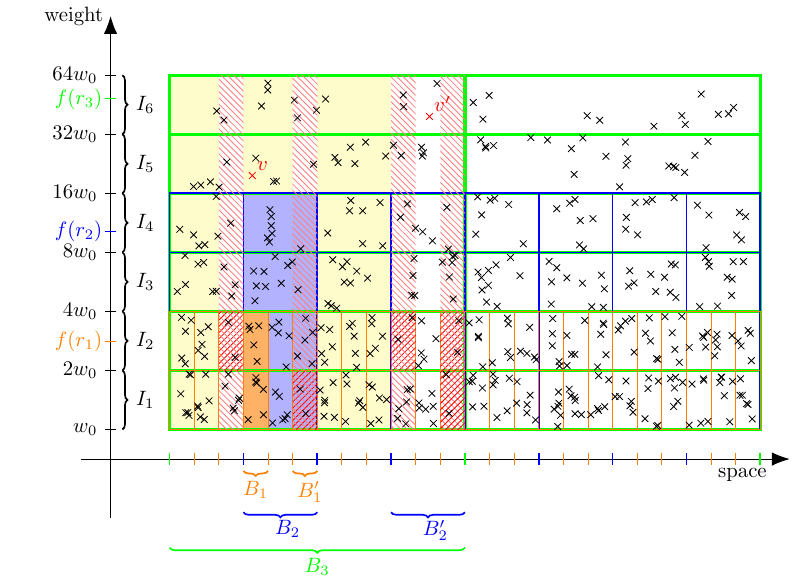}
    \caption[Hyperrectangle-cover and definition of $i$-good boxes.]{Hyperrectangle-cover and definition of $i$-good boxes. In this figure, $d=1$, $R=3$, $r_2'/r_1' = 3$, and $r_3'/r_2' = 4$, and the requirement of \emph{(B1)} for $i > 1$ is ``all but at most \textit{one} sub-box $B' \in \calP_{i-1}$ of $B$ is good''. The hyperrectangle-cover is denoted by coloured-boundary rectangles. The spatial dimension on the $x$ axis is covered by nested intervals, where (blue) boxes in $\calP_2$ contain $3$ level-1 (orange) boxes and (green) boxes in $\calP_3$ contain $4$ level-2 boxes. The weight dimension on the $y$ axis is covered by a base-2-cover $I_1,\ldots,I_6$. Hyperrectangles above $f(r_1)$ (e.g. $B_1\! \times\! I_3$) and above $f(r_2)$ (e.g. $B_2 \!\times\! I_5$), are not included in $\calP_1, \calP_2$, respectively, since they contain too few vertices for concentration. 
    Good boxes are shaded and bad boxes are hatched or get no colour. Box $B_1$ is good because its two hyper-rectangles $B_1\! \times\! I_1$ and $B_1\! \times\! I_2$ (filled orange) contain the right number of vertices, making all vertices in $B_1$ $2$-good, including those with weights above $I_1 \cup I_2$. Box $B_1'$ is bad (light hatching), since it contains too few vertices in $B_1\!\times\!I_1$ (cross-hatching). Box $B_2$ is good, because it only contains one bad sub-box ($B_1'$) in $\calP_1$, and because its four hyperrectangles $B_2\! \times\! I_1,\ldots,B_2 \!\times\! I_4$ (filled blue) all contain the right number of $2$-good vertices in total. Since $B_1$ and $B_2$ are both good, vertex $v$ is $3$-good. Box $B_2'$ is ``doubly'' bad (filled white): it contains two level-$1$ bad sub-boxes, and the hyperrectangle $B_2' \times I_3$ contains too few $1$-good vertices. Thus no vertex in $B_2'$ is $3$-good, including $v'$. Still, $B_3$ is $3$-good: it contains enough $3$-good vertices in total, and only one bad level-$2$ sub-box ($B_2'$).  }
    \label{fig:i-good}
\end{figure}
Before we relate goodness to our overall goal of finding an $(\delta,\calR)$-net, we give some easy algebraic bounds which we need multiple times in the rest of the section.  We defer the proof to the appendix. Recall that $I(w)=I_j$ iff $w\in I_j$ (cf. Def.~\ref{def:base-2-cover}).  
\begin{restatable}{claim}{ClaimPolylogRegime}
\label{claim:nets-mu-bound}
Consider Setting~\ref{set:R-of-section} and Definitions~\ref{def:nets-partition}, \ref{def:hyperrectangles}. Suppose $\widehat{\calP}(\calR)=
\{\calP_1,\ldots,\calP_R\}$ is an $\calR$-partition of $Q$, and for all $i \in [R]$, let $r_i'$ be the side length of boxes in $\calP_i$. Then for all $i \in [R]$ and all $w \in [w_0,f(r_R)]$, we have 
    \begin{align}
        r_i^d\ell(w)w^{-(\tau-1)}/(2d)^{\tau+d+1} &\le \mu_i(I(w))= (r_i')^d \cdot \pr(W\in I(w)) \le 2^\tau r_i^d\ell(w)w^{-(\tau-1)}, \label{eq:mui-bound}\\
        r_i^d \ell(f(r_i)) f(r_i)^{-(\tau-1)} &\ge (2d)^{2\tau+d+8}\log(16R/\delta). \label{eq:z-bound}
    \end{align}
\end{restatable}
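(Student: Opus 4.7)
The plan is to unpack the relevant definitions and then reduce both inequalities to short algebraic verifications, with the key analytic input being the slow-variation control \eqref{eq:nets-ell-bound-0} and the bounds on $r_i'$ from \eqref{eq:ri-bound}.

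\textbf{Proof of \eqref{eq:mui-bound}.} First, since $I(w) = I_j = [2^{j-1} w_0, 2^{j}w_0)$ is the unique base-$2$ interval containing $w$, I write $I(w) = [w_-, w_+)$ with $w_+ = 2w_-$, so that $w_- \in [w/2, w]$ and $w_+ \in [w, 2w]$. The CDF formula \eqref{eq:power_law} gives
\begin{equation*}
  \pr(W \in I(w)) = \frac{\ell(w_-)}{w_-^{\tau-1}} - \frac{\ell(w_+)}{w_+^{\tau-1}}.
\end{equation*}
Since $w \ge w_0$ and $w_\pm/w \in [1/2, 2]$, the second half of \eqref{eq:nets-ell-bound-0} yields $\ell(w_\pm) \in [0.99\,\ell(w),\,1.01\,\ell(w)]$. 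For the upper bound, I drop the negative term and estimate $\pr(W \in I(w)) \le 1.01 \cdot 2^{\tau-1}\,\ell(w)\,w^{-(\tau-1)}$; combined with $(r_i')^d \le r_i^d/d^{d/2} \le r_i^d$ this gives the desired $2^\tau r_i^d \ell(w) w^{-(\tau-1)}$ upper bound (using $1.01 \cdot 2^{\tau-1} \le 2^\tau$). For the lower bound, using $w_+ = 2 w_-$ I factorise
\begin{equation*}
  \pr(W \in I(w)) \ge w_-^{-(\tau-1)}\,\ell(w)\big(0.99 - 1.01 \cdot 2^{-(\tau-1)}\big) \ge 0.485 \cdot \ell(w)\,w^{-(\tau-1)},
\end{equation*}
where the second inequality uses $\tau > 2$ (so that $2^{-(\tau-1)} < 1/2$) and $w_- \le w$. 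Multiplying by $(r_i')^d \ge r_i^d/(2\sqrt d)^d$ and comparing with the target constant $1/(2d)^{\tau+d+1}$ reduces to checking $0.485 \cdot 2^{\tau+1}\,d^{\tau+d/2+1} \ge 1$, which holds with ample room for all $\tau > 2$ and $d \ge 1$.

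\textbf{Proof of \eqref{eq:z-bound}.} I set $\ell^\wedge(r) := 1 \wedge \inf\{\ell(x):x \in [w_0, r^{d/(\tau-1)}]\}$ and $C := 1/\big((2d)^{2\tau+d+8}\log(16R/\delta)\big)$, so that \eqref{eq:nets-f-def} reads $f(r)^{\tau-1} = r^d\,\ell^\wedge(r)\,C$. Substituting,
\begin{equation*}
  \frac{r^d\,\ell(f(r))}{f(r)^{\tau-1}} = \frac{\ell(f(r))}{\ell^\wedge(r)}\cdot (2d)^{2\tau+d+8}\log(16R/\delta),
\end{equation*}
so the claim reduces to showing $\ell(f(r)) \ge \ell^\wedge(r)$. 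For this I need $f(r) \in [w_0, r^{d/(\tau-1)}]$. The lower bound $f(r) \ge w_0$ follows from $r \ge r_1$ together with the third clause of \eqref{eq:nets-small-r}. The upper bound $f(r) \le r^{d/(\tau-1)}$ follows from $\ell^\wedge(r) \le 1$ and $C \le 1$, where the latter uses $\delta < 1/16$ and $R \ge 1$ to see $\log(16R/\delta) > 1$, combined with $(2d)^{2\tau+d+8} \ge 1$. Once $f(r)$ lies in this interval, the infimum definition of $\ell^\wedge$ gives $\ell(f(r)) \ge \ell^\wedge(r)$, completing the proof.

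\textbf{Expected obstacles.} None of the steps is conceptually difficult once the notation is unpacked; the only subtlety worth flagging is that the lower bound in \eqref{eq:mui-bound} crucially uses $\tau > 2$ to keep $0.99 - 1.01 \cdot 2^{-(\tau-1)}$ bounded away from zero. If one tried to push the argument to $\tau \in (1,2]$ the telescoping subtraction would fail and one would need a different decomposition of $I(w)$ into finer sub-intervals. Since the paper restricts to $\tau > 2$ throughout, this is not an issue here.
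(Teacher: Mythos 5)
Your treatment of \eqref{eq:mui-bound} is correct and essentially identical to the paper's: same decomposition $I(w)=[w_-,w_+)$ with $w_+=2w_-$, same use of \eqref{eq:nets-ell-bound-0} to replace $\ell(w_\pm)$ by $\ell(w)$ up to factors $0.99,1.01$, same telescoping lower bound relying on $\tau>2$, and same use of \eqref{eq:ri-bound} to convert $(r_i')^d$ into $r_i^d$. The algebra checks out.

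For \eqref{eq:z-bound} your reduction to $\ell(f(r_i))\ge \ell^\wedge(r_i)$, and hence to $f(r_i)\in[w_0,r_i^{d/(\tau-1)}]$, is exactly the paper's final computation, and your verification of the upper endpoint $f(r_i)\le r_i^{d/(\tau-1)}$ is fine. The gap is the lower endpoint. You assert that $f(r_i)\ge w_0$ ``follows from $r_i\ge r_1$ together with the third clause of \eqref{eq:nets-small-r}.'' That clause only says $r_1\ge\inf\{r\colon f_{R,\delta}(r)\ge w_0\}$, which does not imply $f(r)\ge w_0$ for $r\ge r_1$ unless the set $\{r\colon f(r)\ge w_0\}$ is an up-set, i.e.\ unless $f$ is (at least along the radii of $\calR$) non-decreasing. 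This is not obvious: as $r$ grows, the factor $r^d$ in $f(r)^{\tau-1}$ increases, but the factor $1\wedge\inf\{\ell(x)\colon x\in[w_0,r^{d/(\tau-1)}]\}$ can only decrease, since the interval over which the infimum is taken grows. The paper spends the bulk of its proof of this claim establishing precisely the chain $f(r_R)\ge f(r_{R-1})\ge\cdots\ge f(r_1)\ge w_0$: it bounds the ratio $f(r_j)/f(r_{j-1})$ by iterating the bound $\ell(tw)/\ell(w)\ge 99/100$ from \eqref{eq:nets-ell-bound-0} roughly $\log_2\bigl((r_j/r_{j-1})^{d/(\tau-1)}\bigr)$ times to show the infimum drops by at most $(r_{j-1}/r_j)^{d/(\tau-1)^2}$, and then uses $\tau>2$ (so $1-1/(\tau-1)>0$) together with the well-spacedness condition $r_j\ge 2r_{j-1}$ from \eqref{eq:nets-radii} to conclude $f(r_j)\ge f(r_{j-1})$. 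You need to supply this monotonicity argument (or an equivalent quantitative lower bound on $\inf\{\ell(x)\colon x\in[w_0,r_i^{d/(\tau-1)}]\}$) before you can invoke \eqref{eq:nets-small-r} to get $f(r_i)\ge w_0$ for all $i\in[R]$.
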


We now show that given that the box $Q$ is good with respect to a hyperrectangle cover, we can find an $(\delta,\calR)$-net for $Q$ (see Def.~\ref{def:net}, \ref{def:net-constants}). 

\begin{lemma}\label{lem:good-implies-net}
Consider Setting~\ref{set:R-of-section} and Definitions~\ref{def:nets-partition}, \ref{def:hyperrectangles}, \ref{def:nets-good-box}, i.e., consider a hyper-rectangle cover $ {\calH}(\calR)$ of $Q\times [w_0, f(r_R)]$. Consider a realisation of $\widetilde {\calV}$ for which $Q$ is $R$-good. Then $\widetilde{\calG}_R$, the set of all $R$-good vertices, forms an $(\delta,\calR)$-net for $Q$.
\end{lemma}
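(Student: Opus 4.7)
The plan is to verify directly the two defining conditions of an $(\delta,\calR)$-net from Definition~\ref{def:net} for the set $\calN := \widetilde{\calG}_R$: namely, the size bound $|\calN| \ge \mathrm{Vol}(Q)/4$, and the density condition~\eqref{eq:net-defining-crit} at every $\tilde v \in \calN$, every $r \in \calR$, and every $w \in [w_0,f(r)]$. Both bounds will be harvested from Definition~\ref{def:nets-good-box}\emph{(B\ref{item:B2})}--\emph{(B\ref{item:B3})} via one structural observation about how $i$-goodness interacts with the nested partition.

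For the size bound, I apply \emph{(B\ref{item:B2})} to the box $Q$, which is $R$-good by assumption, with $i=R$. Combining with $r_R' = \xi$ (forced by \emph{(P\ref{item:P3})}, \emph{(P\ref{item:P1})}, and $r_R = \xi\sqrt{d}$) and $\delta \le 1/16$, this yields
\[
|\widetilde{\calG}_R \cap Q| \;\ge\; \bigl(\tfrac{1}{2} - \tfrac{2(R-1)\delta}{R}\bigr)\xi^d \;\ge\; \bigl(\tfrac{1}{2}-2\delta\bigr)\mathrm{Vol}(Q) \;\ge\; \tfrac{3}{8}\mathrm{Vol}(Q),
\]
which more than suffices. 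For the density bound, fix $\tilde v = (v,w_v) \in \widetilde{\calG}_R$, $r = r_i \in \calR$, and $w \in [w_0,f(r_i)]$. The key observation is that every vertex $\tilde u \in \widetilde{\calG}_i$ lying in the same level-$i$ box $B^i(v)$ is automatically $R$-good: indeed, $u \in B^i(v)$ forces $B^j(u) = B^j(v)$ for every $j \ge i$, and the boxes $B^{i+1}(v),\ldots,B^{R-1}(v)$ are all good because $v$ itself is $R$-good. Consequently $\widetilde{\calG}_R \cap (B^i(v)\times[1,\infty)) = \widetilde{\calG}_i \cap (B^i(v)\times[1,\infty))$.

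Since $v$ is $(i{+}1)$-good, the box $B^i(v)$ is $i$-good, so \emph{(B\ref{item:B3})} applies with the weight-interval $I(w)$ of the base-$2$-cover. Using $I(w)\subseteq[w/2,2w]$ (Definition~\ref{def:base-2-cover}), \emph{(P\ref{item:P4})} to get $B^i(v)\subseteq B_{r_i}(v)$, the observation above, and the lower bound $\mu_i(I(w)) \ge r_i^d\ell(w)w^{-(\tau-1)}/(2d)^{\tau+d+1}$ from Claim~\ref{claim:nets-mu-bound}, I obtain
\[
|\widetilde{\calG}_R \cap (B_{r_i}(v)\times[w/2,2w])| \;\ge\; \tfrac{1}{8}\bigl(1-\tfrac{2i\delta}{R}\bigr)\cdot \frac{r_i^d\ell(w)w^{-(\tau-1)}}{(2d)^{\tau+d+1}}.
\]
Since $\delta \le 1/16$ gives $1 - 2i\delta/R \ge 7/8$, and $(2d)^4 \ge 16$ for every $d \ge 1$, the prefactor dominates $(2d)^{-4}$, which exactly absorbs the gap between the exponents $\tau+d+1$ and $\tau+d+5$ demanded by~\eqref{eq:net-defining-crit}. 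This completes the density check and hence the lemma.

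I do not expect any real obstacle: the lemma is a direct unpacking of the definitions, and the only non-mechanical step is the observation that an $i$-good vertex in the particular box $B^i(v)$ inherits $R$-goodness for free from $v$. All numerical constants in Definitions~\ref{def:net-constants}, \ref{def:nets-good-box}, and the factor $(2d)^{-(d+\tau+5)}$ in~\eqref{eq:net-defining-crit} have evidently been chosen precisely to make the above chain of inequalities close with room to spare, so the only care needed is a careful bookkeeping of the losses incurred at \emph{(B\ref{item:B3})} and at Claim~\ref{claim:nets-mu-bound}.
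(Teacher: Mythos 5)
Your proof is correct and follows essentially the same route as the paper's: the size bound from \emph{(B2)} at $i=R$, the key observation that an $i$-good vertex in $B^i(v)$ inherits $R$-goodness from $v$ via the nesting of the partitions (the paper's equation~\eqref{eq:good-also}), and then \emph{(B3)} combined with Claim~\ref{claim:nets-mu-bound} with the same constant bookkeeping. No issues.
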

\begin{proof}
    Suppose that $Q$ is $R$-good. The side length of $Q$ equals $r_R'$ by \emph{(P3)} in Def.~\ref{def:nets-partition}, and $\delta\in(0,1/16)$ in Setting \ref{set:R-of-section}, hence we may apply  \emph{(B\ref{item:B2})} in Def.~\ref{def:nets-good-box} for $i=R$ to get
        \[
        |\widetilde{\calG}_R| \ge \Big(\frac{1}{2} - \frac{2(R-1)\delta}{R}\Big)\mathrm{Vol}(Q) \ge \Big(\frac{1}{2} - 2\delta\Big)\mathrm{Vol}(Q) > \mathrm{Vol}(Q)/4,
    \] 
    hence the cardinality assumption in Definition~\ref{def:net} is satisfied for $\widetilde{\calG}_R$.
    To show that $\widetilde{\calG}_R$ satisfies Definitions \ref{def:net} with $f$ from Definition \ref{def:net-constants}, we first show that for all $v\in \calG_R$,
      \begin{equation}\label{eq:good-also}
        \calG_i \cap B^i(v)= \calG_R \cap B^i(v).
     \end{equation}
 Given an $i$-good vertex $u \in B^i(v)$ we show that $u$ is also $R$-good, i.e.,\ that $B_j(u)$ is good for all $j \in [R]$. By the definition of $i$-goodness (Def.~\ref{def:nets-good-box}), $B_j(u)$ is good for all $j \le i-1$, and $B_R(u) = Q$ is good by hypothesis. Consider now a  $j \in [i+1, R-1]$. By Def.~\ref{def:nets-partition}, the partition $\calP_i$ is a refinement of the partition $\calP_j$, so $B^j(u) = B^j(v)$. Since $v$ is $R$-good, it follows that $B^j(u)$ is good. So, $B^j(u)$ is good for all $j \in [R]$, so $u$ is $R$-good, showing \eqref{eq:good-also}.

Recall now that $B^i(v)$ is the box in $\calP_i$ containing $v$, and $I(w)\subseteq [w/2, 2w]$ is the interval containing $w$ in the base-$2$-cover of $[w_0, f(r_i)]$.
We now show that 
\begin{align*}
| \widetilde {\calG}_R \cap (B_{r_i}(v) \times [w/2,2w])| \ge |\widetilde {\calG}_R \cap (B^i(v)\times I(w))|=|\widetilde {\calG}_i \cap (B^i(v)\times I(w))|.
\end{align*}
Indeed, $B^{i}(v)\subseteq B_{r_i}(v)$ by Def.~\ref{def:nets-partition} \emph{(P3)} and $I(w)\subseteq [w/2, 2w]$ by Def.~\ref{def:base-2-cover}, and since all $i$-good vertices in $B^i(v)$ are also $R$-good by \eqref{eq:good-also}, the last inequality follows.
Now we  apply, on the rhs $|\widetilde{\calG_i} \cap (B^i(v) \times I(w))|$ above, the lower bound from Def.~\ref{def:nets-good-box} \emph{(B\ref{item:B3})}, i.e., \eqref{eq:i-good-weight}, to obtain
\begin{align*}
    \big| \widetilde{\calG}_R\cap (B^{i}(v)\times [w/2,2w])\big|         &\ {\buildrel \eqref{eq:i-good-weight} \over \ge}\  \frac18\Big(1-\frac{2i\delta}{R}\Big) \mu_i(I(w)) \\&\  {\buildrel \eqref{eq:mui-bound} \over \ge} \frac{1}{8} \Big(1-\frac{2i\delta}{R}\Big)r_i^d\ell(w)w^{-(\tau-1)}/(2d)^{\tau+d+1}.
\end{align*}
Observing that $\delta<1/16$ and $i\le R$ ensures that the prefactor on the rhs of the last row is at least $1/8\cdot 1/2=1/2^4$, establishing \eqref{eq:net-defining-crit} for all $w\le f(r_i)$, as required.
\end{proof}

A lower bound on the probability that any given box in an $\calR$-partition is good, together with Claim~\ref{lem:nets-partition-exists} and Lemma~\ref{lem:good-implies-net}, will yield the proof of Proposition~\ref{lem:nets-exist}. The bound is  by induction on $i$ together with Chernoff bounds.
Recall $I$ and $I(w)$ from Def.~\ref{def:base-2-cover}, applied to the interval $[w_0,f(r_R)]$ for $\calR=\{r_1, \ldots, r_R\}$.
Recall that \eqref{eq:i-good-weight} of Def.~\ref{def:nets-good-box} \emph{(B3)} is required only when $w\in [w_0, f(r_i)]$, and that $j_\star(i)$ in \eqref{eq:jstar} is  the index of $I_j$ that contains $f(r_i)$.
We now describe a gradual revealment of vertex-weights.   

\begin{definition}\label{def:filtration}
Consider Setting~\ref{set:R-of-section} and Definitions~\ref{def:nets-partition}, \ref{def:hyperrectangles}.
    Suppose $\calH(\calR)$ is a hyperrectangle-cover of $Q\times [w_0, f(r_R)]$. Let $i \in [R]$ and $B \in \calP_i$, and let $\widetilde{\calV}$ be a realisation of the weighted vertices in Definition \ref{def:girg}. We define
   \begin{align}
 \calF_i(B):=
 \begin{cases}
    \calV\cap B  & \text{when }i=1 \text{ and } B\in \calP_1,\\
    \calF_1(B)\cup \Big(\widetilde{\calV} \cap (B\times \cup_{j\le j_\star(i-1)}I_j)\Big) & \text{when } i>1\text{ and } B\in \calP_i.\label{eq:fib-2}
 \end{cases}
 \end{align}
\end{definition}
$\calF_1(B)$ reveals the number and positions of vertices in $B$, while $\calF_i(B)$ reveals the precise weights \textit{only} of vertices whose weight falls in the interval $\cup_{j\le j_\star(i-1)}I_j \supseteq [w_0, f(r_{i-1})]$. The index shift in \eqref{eq:fib-2}, and the fact that $\calR$ is $\delta$-well-spaced, means that vertex weights between $w_0 2^{j_\star(i-1)}\approx f(r_{i-1})$ and $f(r_i)$ are not revealed in $\calF_i(B)$. Also, vertex weights in $[1,w_0]$ are not revealed at all; since $w_0$ is large, $\pr(W\le w_0)$ is large and most vertex weights will not be revealed by exposing $\calF_i(B)$.
The filtration generated by $\calF_i(B)$ determines whether or not boxes in $ \cup_{j\le i-1} \calP_{j}$ are good, and whether or not a \emph{vertex} is $i$-good (see Def.~\ref{def:nets-good-box}). So, $\calF_i(B)$ determines whether or not $B\in \calP_i$ satisfies  Def.~\ref{def:nets-good-box} \emph{(B\ref{item:B1}) -(B\ref{item:B2})}, but it leaves \emph{(B\ref{item:B3})} undecided for weights slightly below $f(r_i)$. The next lemma treats \emph{(B\ref{item:B3})}, with $\calF_i(B)$ exposed.
\begin{lemma}\label{lem:nets-good-likely-b3}
Consider Setting~\ref{set:R-of-section} and Definitions~\ref{def:nets-partition}, \ref{def:hyperrectangles}.
    Let $\calH(\calR)$ be a hyperrectangle-cover of the cube $Q$. Let $i \in [R]$ and let $B \in \calP_i$. Let $F$ be a realisation of $\calF_i(B)$ that satisfies Definition \ref{def:nets-good-box} (B\ref{item:B1}) and (B\ref{item:B2}) for $B$. Then independently of other boxes in $\calP_i$, uniformly for all such $F$, 
    \begin{equation}\label{eq:b-good-b3}
        \pr(B\mbox{ is good}\mid \calF_i(B) = F) \ge 1 - \delta/(2R).
    \end{equation}
\end{lemma}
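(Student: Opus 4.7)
Fix a realisation $F$ of $\calF_i(B)$ satisfying (B\ref{item:B1}) and (B\ref{item:B2}). The plan is to split the verification of (B\ref{item:B3}) at level $i$ into a deterministic part coming from ``low-weight'' intervals $I_j$ with $j \le j_\star(i-1)$ (whose counts are determined by $F$) and a random part from ``high-weight'' intervals $I_j$ with $j \in (j_\star(i-1), j_\star(i)]$, which a Chernoff bound will control. The starting observation is that $\calF_i(B)$ actually determines the whole set $\widetilde\calG_i \cap B$: by induction on $k = 1,\dots,i-1$, the goodness of each sub-box $B' \in \calP_k$ of $B$ is $\calF_i(B)$-measurable, since (B\ref{item:B1})--(B\ref{item:B3}) at level $k$ only involve positions in $B$ and counts of $k$-good vertices in intervals $I_j$ with $j \le j_\star(k) \le j_\star(i-1)$ -- all revealed by $\calF_i(B)$. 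Consequently, for each $j \le j_\star(i-1)$ the count $|\widetilde\calG_i \cap (B \times I_j)|$ is also determined by $F$.

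\textbf{Deterministic part.} For $j \le j_\star(i-1)$, every $i$-good vertex lies in a good sub-box $B' \in \calP_{i-1}$, and inside such $B'$ being $i$-good coincides with being $(i-1)$-good. Summing (B\ref{item:B3}) at level $i-1$ over good sub-boxes of $B$ (each contributing a count in $[\tfrac18(1-\tfrac{2(i-1)\delta}{R})\mu_{i-1}(I_j),\, 8\mu_{i-1}(I_j)]$), combined with the good-sub-box count $[(1-\tfrac{2\delta}{R})(r_i'/r_{i-1}')^d,\, (r_i'/r_{i-1}')^d]$ from (B\ref{item:B1}), the identity $(r_i'/r_{i-1}')^d\mu_{i-1}(I_j) = \mu_i(I_j)$, and $(1-\tfrac{2(i-1)\delta}{R})(1-\tfrac{2\delta}{R}) \ge 1 - \tfrac{2i\delta}{R}$, yields (B\ref{item:B3}) at level $i$ for these $j$ automatically -- no randomness used.

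\textbf{Random part.} For $j \in (j_\star(i-1), j_\star(i)]$, condition on $\calF_i(B) = F$. The weights of the non-revealed vertices in $B$ are conditionally i.i.d. with law $W$ conditioned on $W \notin [w_0, 2^{j_\star(i-1)}w_0)$, and each such vertex's $i$-goodness is already determined by $F$ because its weight lies outside $[w_0, f(r_{i-1})]$ and so enters no (B\ref{item:B3})-count at any level $\le i-1$. Letting $N$ be the number of non-revealed $i$-good vertices in $B$ and $p_j := \pr(W \in I_j)/\pr(W \notin [w_0, 2^{j_\star(i-1)}w_0))$, one finds $|\widetilde\calG_i \cap (B \times I_j)| \sim \mathrm{Bin}(N, p_j)$. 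From (B\ref{item:B2}), the deterministic part, and $\pr(W \ge w_0) < 2^{-\tau-8}$ from \eqref{eq:nets-ell-bound-0}, we get $N/(r_i')^d \in [\tfrac14, 2]$ and $p_j/\pr(W \in I_j) \in [1, 1+O(2^{-\tau})]$, so $\E[\mathrm{Bin}(N,p_j)]$ sits within a constant factor of $\mu_i(I_j)$. By \eqref{eq:z-bound} and slow variation of $\ell$ from \eqref{eq:nets-ell-bound-0}, $\mu_i(I_j)$ dominates $\log(R/\delta)$ with a huge implicit constant, and a Chernoff bound makes the probability of $\mathrm{Bin}(N,p_j)$ leaving $[\tfrac18(1-\tfrac{2i\delta}{R})\mu_i(I_j),\, 8\mu_i(I_j)]$ decay polynomially in $R/\delta$ with sufficiently large exponent. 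A union bound over the at most $O(\log r_R)$ relevant intervals yields \eqref{eq:b-good-b3}. The case $i=1$ is analogous with $\calF_1(B) = \calV \cap B$, $N = |\calV \cap B|$, and no (B\ref{item:B1})-contribution; independence across $B \in \calP_i$ is immediate since $\calF_i(B)$ for disjoint $B$ involves disjoint vertex data.

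\textbf{Main obstacle.} The delicate point is the Chernoff argument: the asymmetric target window $[\tfrac18(1-\tfrac{2i\delta}{R})\mu_i(I_j),\, 8\mu_i(I_j)]$ must simultaneously absorb the drift of $\E[\mathrm{Bin}(N,p_j)]$ away from $\mu_i(I_j)$ as $N/(r_i')^d$ varies across the (B\ref{item:B2})-interval, and the Chernoff tails together with their union-bound overhead over $O(\log r_R)$ intervals. The factor-of-$8$ slack on either side of the conditional mean is generous enough, and the exponent in \eqref{eq:z-bound} is explicitly tuned to beat both the $1/R$ target in \eqref{eq:b-good-b3} and the doubly-logarithmic overhead -- but carefully tracking all the factors of $2$, $d$, and $(2d)^{\mathrm{exponent}}$ is what the proof really comes down to.
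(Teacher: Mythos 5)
Your decomposition is essentially the one the paper uses: intervals $I_j$ with $j \le j_\star(i-1)$ are handled deterministically by summing (B\ref{item:B3}) at level $i-1$ over the good sub-boxes guaranteed by (B\ref{item:B1}), and intervals with $j > j_\star(i-1)$ are handled by identifying the conditional law of the count as a binomial with mean comparable to $\mu_i(I_j)$ and applying Chernoff. Those parts are fine (your opening claim that $\calF_i(B)$ determines all of $\widetilde\calG_i\cap B$ is slightly overstated --- it determines which vertices are $i$-good but not the weights of the unrevealed ones --- but your Random part treats this correctly).

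The genuine gap is in the final union bound. You bound each failure probability by a quantity ``polynomial in $R/\delta$ with sufficiently large exponent'' and then union-bound over the ``$O(\log r_R)$ relevant intervals.'' This does not close: the number of weight intervals $j \le j_\star(i)$ is $\Theta(\log f(r_i)) = \Theta(\log r_i)$, and $r_R = \xi\sqrt{d}$ where $\xi$ is the side length of $Q$; in the applications $\xi = |x| \to \infty$ while $R$ and $\delta$ stay bounded (e.g.\ $R$ is a constant in the polynomial regime). So $(\delta/R)^{C}\cdot\log r_R$ is not $O(\delta/R)$ for any fixed $C$ --- no power of $\delta/R$ beats a factor that is unbounded in terms of $R$ and $\delta$. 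The ``doubly-logarithmic overhead'' you invoke is not the right count; the overhead is genuinely logarithmic in $\xi$. The missing idea is that the per-interval bounds are far from uniform: by~\eqref{eq:nets-ell-bound-0} and $\tau>2$, the quantity $\ell(2^{j-1}w_0)(2^{j-1}w_0)^{-(\tau-1)}$, and hence the Chernoff exponent $\mu_i(I_j)/96$, grows by a factor of at least $3/2$ each time $j$ decreases by one. The sum $\sum_{j\le j_\star(i)} \exp(-\mu_i(I_j)/96)$ is therefore dominated by its single largest term at $j = j_\star(i)$, regardless of how many terms there are, and that worst term is exactly what~\eqref{eq:z-bound} is calibrated to control (giving roughly $\delta/(8R)$, with the remaining terms summing to a geometric tail). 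Replacing your flat union bound by this domination argument repairs the proof.
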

\begin{proof}
    Recall $I_j$ from Def.~\ref{def:base-2-cover}, and let $B\in \calP_i$.   
    Let $a(I_j) := (1-2i\delta/R)\mu_i(I_j)/8, b(I_j) := 8\mu_i(I_j)$ the required lower and upper bounds in \eqref{eq:i-good-weight}. 
    Let $\Xi_j(B)=|\widetilde{\calG_i}\cap (B\times I_j)|$;
    thus (B3) holds for $B\times I_j$ iff $\Xi_j(B) \in [a(I_j),b(I_j)]$.
    Since $B$ satisfies \emph{(B1)--(B2)} on $F$, by a union bound,
    \begin{equation}\label{eq:nets-exist-b3-sum}
        \pr\big(B\mbox{ is good}\mid \calF_i(B) = F\big) \ge 1 - \sum_{I_j: j\le j_\star(i)} \pr\big(\Xi_j(B) \notin [a(I_j),b(I_j)] \mid \calF_i(B) = F \big).
    \end{equation}
    We proceed by bounding each term above. 
    By the definition of $\calF_i(B)$ in \eqref{eq:fib-2}, we already exposed $\Xi_j(B)$ when $i > 1$ and $j\le j_*(i-1)$; the latter is equivalent to $\min(I_j) \le f(r_{i-1})$.
    
    \medskip\noindent\textbf{Case 1: $\boldsymbol{i \!>\! 1}$ and $\boldsymbol{j\!\le\! j_\star(i\!-\!1)}$.} We first show that $\Xi_j \ge a(I_j)$ holds deterministically on $\{\calF_i(B)=F\}$. The goodness of each sub-box $B'\in \calP_{i-1}$ of $B\in \calP_i$ \emph{is revealed} by $\calF_i(B)$. If $B'\in \calP_{i-1}$ is a good box, all vertices in $\calG_{i-1} \cap B'$ are also $i$-good by Definition \ref{def:nets-good-box}. So
    \begin{equation}\label{eq:nets-good-likely-1}
        \Xi_j(B) = |\widetilde{\calG_i}\cap (B\times I_j)|\ge \sum_{B'\in \calP_{i-1}\colon B' \text{ good}} |\widetilde{\calG}_{i-1}\cap (B'\times I_j)|.
    \end{equation}
    Since $j\le j_\star(i-1)$, $\min(I_j) \le f(r_{i-1})$, so we may apply \eqref{eq:i-good-weight} to the good subboxes:
    \begin{equation}\label{eq:nets-good-likely-2}
       |\widetilde{\calG}_{i-1}\cap (B'\times I_j)|\ge  \Big(1-\frac{2(i-1)\delta}{R}\Big)\frac{\mu_{i-1}(I_j)}{8} = \Big(1-\frac{2(i-1)\delta}{R}\Big)\frac{\mu_i(I_j)}{8}\Big(\frac{r_{i-1}'}{r_i'}\Big)^d,
    \end{equation}
    where $\mu_i(I_j)/\mu_{i-1}(I_j)=(r_i'/r_{i-1}')^d$  follows from  \eqref{eq:measure-hyperrectangle}.
    By \emph{(B\ref{item:B1})} holding on $F$, $B$ contains at least $(1-2\delta/R)(r_i'/r_{i-1}')^d$ good sub-boxes in $\calP_{i-1}$. Combining that with~\eqref{eq:nets-good-likely-1}--\eqref{eq:nets-good-likely-2} yields 
    \begin{equation*}
    \begin{aligned}
        \Xi_j(B)&\ge \Big(1-\frac{2\delta}{R}\Big) \cdot \Big(1-\frac{2(i-1)\delta}{R}\Big) \frac{\mu_i(I_j)}{8}
        \ge \Big(1-\frac{2i\delta}{R}\Big) \frac{\mu_i(I_j)}{8} = a(I_j).
        \end{aligned}
    \end{equation*}
    We show $\Xi_j(B)=|\widetilde{\calG_i}\cap (B\times I_j)| \le b(I_j)$ also holds a.s.. $B$ contains $(r_i'/r_{i-1}')^d$ sub-boxes in $\calP_{i-1}$ by Def.~\ref{def:nets-partition} \emph{(P\ref{item:P2})}. If a sub-box is bad, it contains no $i$-good vertices. If it is good, \emph{(B\ref{item:B3})} holds and it contains at most $8\mu_i(I_j)(r_{i-1}'/r_i')^d$ $i$-good vertices with weights in $B\!\times\! I_j$. We obtain $\Xi_j(B) \le 8\mu_i(I_j) = b(I_j)$. So overall we have shown that
    \begin{equation}\label{eq:nets-exist-b3-term-1}
        \mbox{if $i>1$ and $j \le j_\star(i-1)$:}\quad \pr\big(\Xi_j(B) \notin [a(I_j),b(I_j)] \mid \calF_i(B) = F\big) = 0.
    \end{equation}

    \noindent\textbf{Case 2: $\boldsymbol{i > 1}$ and $\boldsymbol{j> j_\star(i-1)}$.} Define the set   
      \begin{equation}\label{eq:calS}
        \calS=\widetilde{\calG_i}\cap \Big(B\times \big([1,w_0) \cup \bigcup\nolimits_{j> j_\star(i-1)} I_j\big)\Big),
    \end{equation}
  the $i$-good vertices in $B$ with weights \emph{not revealed} by $\calF_i(B)$. $\calF_i(B)$ does reveal the positions of vertices in $B$, and all weighted vertices not in $\calS$; thus $\calF_i(B)$ reveals $|\calS|$, and the position of vertices in $\calS$.  
By Def.~\ref{def:girg}, vertex weights are independent of vertex positions and of each other; hence for a vertex $v \in \widetilde{\calV}$, conditioning on $\calF_i(B)$ is equivalent to either exposing its weight (if $w_v\in \cup_{j\le j_\star(i-1)}I_j$) or conditioning on $w_v \notin \cup_{j\le j_\star(i-1)} I_j$ (otherwise). Since $|\calS|$ is determined by $\calF_i(B)$, $\Xi_j(B)$ is binomially distributed on $\calF_i(B)$, when $j> j_\star(i-1)$, with parameters
    \begin{equation}\label{eq:withinS-distribution}
        \Xi_j(B) \mid \calF_i(B)\ {\buildrel d \over =} \ \mathrm{Bin}\Big(|\calS|, \mathbb P(W\in I_j)/\mathbb P(W\notin \cup_{j\le j_\star(i-1)} I_j) \Big).
    \end{equation} 
   We next bound the conditional expectation of $\Xi_j(B)\mid \calF_i(B)$, starting with the upper bound. 
Using the lower bound~\eqref{eq:nets-ell-bound-0} on $w_0$,
   the success probability of the binomial in \eqref{eq:withinS-distribution} is
  \begin{align}\label{eq:successprob}
        \frac{\pr(W\in I_j)}{\mathbb P(W\notin \cup_{j\le j_\star(i-1)} I_j)} \le \frac{\pr(W\in I_j)}{\pr(W\in [1,w_0))} = \frac{\pr(W\in I_j)}{1-\ell(w_0)w_0^{\tau-1}}\le 2\pr(W\in I_j).
    \end{align}
    Since Def.~\ref{def:nets-good-box} \emph{(B\ref{item:B2})} holds for $B$ on $\calF_i(B)=F$, by~\eqref{eq:i-good-total} there are at most $2(r_i')^d$ $i$-good vertices in $B$, so $|\calS| \le 2(r_i')^d$.
    Recalling the definition of $\mu_i(I_j)$ from \eqref{eq:measure-hyperrectangle}, we thus obtain
    \begin{equation}\label{eq:nets-b3-bound-1}
        \E\big[\Xi_j(B) \mid \calF_i(B) = F\big] \le 4(r_i')^d\pr(W\in I_j) = 4\mu_i(I_j) = b(I_j)/2.
    \end{equation}
    We next prove the corresponding  lower bound. By Def.~\ref{def:nets-partition} \emph{(P\ref{item:P2})}, $B$ contains $(r_i'/r_{i-1}')^d$ sub-boxes in $\calP_{i-1}$. Clearly by \eqref{eq:calS}, $|\calS|=|\calG_i\cap B|-|\cup_{j\le j_\star(i-1)} \widetilde{\calG_i}\cap (B\times I_j)|$, both terms revealed by $F$. We can bound $|\calG_i\cap B|$ from below using \eqref{eq:i-good-total} in Def.~\ref{def:nets-good-box} \emph{(B\ref{item:B2})}. Since there are no $i$-good vertices in bad boxes $B'\in \calP_{i-1}$, we can bound $|\cup_{j\le j_\star(i-1)} \widetilde{\calG_i}\cap (B\times I_j)|$ from above by applying \eqref{eq:i-good-weight} to each good sub-box $B'\in \calP_{i-1}$ of $B$, yielding 
    \begin{equation}\label{eq:nets-b3-bound-2}
        |\calS|\ge \Big(\frac{1}{2}-\frac{2(i-1)\delta}{R}\Big)(r_i')^d - \Big(\frac{r_i'}{r_{i-1}'}\Big)^d\cdot\sum_{j \le j_\star(i-1)} 8\mu_{i-1}(I_j).
    \end{equation}
Using that $I_j=[2^{j-1}w_0, w_0)$, Claim~\ref{claim:nets-mu-bound} with $w=2^{j-1}w_0$ yields
    \begin{align}
    \begin{split}\label{eq:geometric}
        \big(r_i'/r_{i-1}'\big)^d\sum_{j \le j_\star(i-1)} 8\mu_{i-1}(I_j)&\le  2^{\tau+3}(r_i')^d\sum_{j \le j_\star(i-1)} \ell(2^{j-1}w_0)(2^{j-1}w_0)^{-(\tau-1)}\\
        &< 2^{\tau+3}(r_i')^d\sum_{j = 0}^\infty \ell(2^jw_0)(2^jw_0)^{-(\tau-1)},
    \end{split}
    \end{align}
  where we switched indices in the last row.  By the lower bound~\eqref{eq:nets-ell-bound-0} on $w_0$ and since $\tau>2$, for all $j \ge 0$ we have $\ell(2^{j+1}w_0)(2^{j+1}w_0)^{-(\tau-1)} < \tfrac{2}{3}\ell(2^jw_0)(2^jw_0)^{-(\tau-1)}$, so the sum on the rhs is bounded above termwise by a geometric series. It follows from~\eqref{eq:nets-b3-bound-2} that
    \[
        |\calS| \ge \Big(\frac{1}{2} - \frac{2(i-1)\delta}{R} - 2^{\tau+5}\ell(w_0)w_0^{-(\tau-1)}\Big)(r_i')^d \ge  (r_i')^d/4,
    \]
    where we used in the last step that $i \le R$ and $\delta < 1/16$, and the lower bound~\eqref{eq:nets-ell-bound-0} on $w_0$. The success probability of the binomial in \eqref{eq:withinS-distribution} is at least $\pr(W\in I_j)$. We defined $a(I_j)= (1-2i\delta/R)\mu_i(I_j)/8$ at the beginning of the proof, so
    \begin{equation}\label{eq:nets-b3-bound-3}
        \E(\Xi_j(B) \mid \calF_i(B) = F) \ge (r_i')^d\pr(W\in I_j)/4 = \mu_i(I_j)/4 \ge 2a(I_j).
    \end{equation}
    Combining \eqref{eq:nets-b3-bound-1} with \eqref{eq:nets-b3-bound-3} yields that  $\mathbb E(\Xi_j(B)\mid \calF_i(B)=F) \in [2a(I_j), b(I_j)/2]$, which allows us to bound $\pr(\Xi_j(B) \notin [a(I_j),b(I_j)])$ with standard Chernoff bounds. By~\eqref{eq:nets-b3-bound-1} and Theorem~\ref{thm:chernoff} applied with $\varepsilon=1/2$, we have shown that
    \begin{equation}\label{eq:nets-exist-b3-term-2a}
    \begin{aligned}
        \mbox{for $i > 1, j > j_\star(i-1)$:}\quad \pr\big(\Xi_j(B) \notin [a(I_j), b(I_j)] \mid \calF(B) = F\big) &\le 2\exp(-a(I_j)/6) \\
        & \le 2\exp(-\mu_i(I_j)/96). 
        \end{aligned}
    \end{equation}
    
    \medskip\noindent
    \textbf{Case 3: $\boldsymbol{i\!=\!1}$.} When $i=1$, we set $j_\star(i-1):=0$ naturally, since in $\calF_1(B)$ we revealed the total number of vertices in $B\in \calP_1$, which are all $1$-good by Def.~\ref{def:nets-good-box}. Conditioned on $\calF_1(B)=F$, \eqref{eq:i-good-total} in \emph{(B\ref{item:B2})} is satisfied and $(r_1')^d/4 \le |\calS| \le 2(r_1')^d$ directly. The rest of our previous calculations from Case 2 with $j>j_\star(0)=0$ all carry through for estimating the lhs of \eqref{eq:i-good-weight} in \emph{(B\ref{item:B3})}. We obtain that \eqref{eq:nets-exist-b3-term-2a} holds also for $i=1$ and all $j\ge j_\star(0)=0$.

       \medskip\noindent\textbf{Combining the cases:} By~\eqref{eq:nets-exist-b3-term-1}, \eqref{eq:nets-exist-b3-term-2a}, and Case 3,  for all $i$ and $j \le j_\star(i)$ the bound in \eqref{eq:nets-exist-b3-term-2a} holds. 
    Combining that with~\eqref{eq:nets-exist-b3-sum}, for all $B\in \calP_i$ and $ \calF_i(B)=F$ satisfying \emph{(B1),(B2)}, 
    \begin{equation*}
    \begin{aligned}
        p_i:=&\ \pr\big(B\in \calP_i \mbox{ is $i$-good}\mid \calF_i(B) = F\big) \ge 1 - 2\sum_{j \le j_\star(i)} \exp(-\mu_i(I_j)/96)\\
        &\ge 1 - 2\sum_{j \le j_\star(i)} \exp\Big({-}(2d)^{-(\tau+d+8)}r_i^d\ell(2^{j-1}w_0 )(2^{j-1}w_0)^{-(\tau-1)}\Big),
        \end{aligned}
    \end{equation*}
where the second line follows from  $\mu_i(I_j)=\mu_i(I(2^{j-1}w_0))$ in Claim~\ref{claim:nets-mu-bound}.
    By the lower bound~\eqref{eq:nets-ell-bound-0} on $w_0$ and the fact that $\tau>2$, for all $j\ge 1$ we have $ \ell(2^{j-1}w_0)(2^{j-1}w_0)^{-(\tau-1)} \ge \tfrac{3}{2}\ell(2^{j}w_0)(2^{j}w_0)^{-(\tau-1)}$. Using the same geometric-sum argument as below~\eqref{eq:geometric} except now from the reversed viewpoint, writing $z:=w_02^{j_\star(i)-1}$, the lower endpoint of $I_{j_\star(i)}$, we have
    \begin{align}\label{eq:nets-likely-pen}
        p_i &\ge 1 - 2\sum_{t=0}^\infty \exp\Big({-}\frac{1}{(2d)^{\tau+d+8}}r_i^d\Big(\frac{3}{2}\Big)^t\ell(z)z^{-(\tau-1)}\Big).
    \end{align}
    Since $z$ is the lower endpoint of $I_{j_\star(i)}=I(f(r_i))$, we have $z \le f(r_i) \le 2z$.  Hence by~\eqref{eq:nets-ell-bound-0}
    \[
        \ell(z)z^{-(\tau-1)} \ge 2^{-\tau}\ell(f(r_i))f(r_i)^{-(\tau-1)}.
    \]
    Using this bound in \eqref{eq:nets-likely-pen} and combining it with \eqref{eq:z-bound} from Claim~\ref{claim:nets-mu-bound}, we obtain that
    \begin{align*}
        p_i \ge 1 - 2\sum_{t=0}^\infty \exp\Big({-}\Big(\frac{3}{2}\Big)^t\log(16R/\delta)\Big)
        \ge 1 - \frac{\delta}{8R} - 2\sum_{t=1}^\infty (\delta/16R)^t \ge 1 - \frac{\delta}{2R},
    \end{align*}
    where we used that $(3/2)^t \ge t$ for all $t \ge 1$, and $\delta < 1/16$. Independence across boxes in the same $\calP_i$ is immediate, since whether $B$ satisfies \emph{(B\ref{item:B3})} conditioned on $\calF_i(B)=F$ satisfying \emph{(B\ref{item:B1}), (B\ref{item:B2})} only depends on vertices in $B$.  
\end{proof}
The next lemma gets rid of the conditioning in Lemma \ref{lem:nets-good-likely-b3} on the filtration:
\begin{lemma}\label{lem:nets-good-likely}
Consider Setting~\ref{set:R-of-section} and Definitions~\ref{def:nets-partition}, \ref{def:hyperrectangles}.
    Let $\calH(\calR)$ be a hyperrectangle-cover of the cube $Q$.
 Let $t \le (r_1/4\sqrt{d})^d$, and let $x_1,\ldots,x_t$ be a (possibly empty) sequence of points in $\R^d$. Then for each $B\in \cup_i\calP_i$
 \begin{equation}\label{eq:nets-good-likely}
  \pr\big(B \mbox{ is good}\mid x_1,\ldots,x_t \in \calV\big) \ge 1 - \delta/R.
 \end{equation}
\end{lemma}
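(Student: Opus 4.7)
The plan is to induct on $i \in [R]$, proving the lemma for every box $B \in \calP_i$ (and every admissible $x_1,\ldots,x_t$) simultaneously. Since Lemma~\ref{lem:nets-good-likely-b3} already tells us that conditional on any realisation $F$ of $\calF_i(B)$ satisfying \emph{(B1)} and \emph{(B2)}, the box is good with conditional probability at least $1-\delta/(2R)$, it suffices to prove that \emph{(B1)} and \emph{(B2)} themselves hold with probability at least $1-\delta/(2R)$; a union bound then yields the desired $1-\delta/R$. The conditioning on $x_1,\ldots,x_t\in\calV$ will be handled by passing to the (stochastically equivalent) description in which the Poisson vertex set is taken to be $\calV\cup\{x_1,\ldots,x_t\}$, where $\calV$ is unconditional. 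Each $x_j$ then lies in a unique sub-box at every level, and the subsequence of probes inside any sub-box of $\calP_1$ has length at most $t\le(r_1/4\sqrt d)^d$, so the inductive hypothesis can be applied to sub-boxes without overshooting the probe budget.

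For the base case $i=1$, property \emph{(B1)} is vacuous. In SFP the number of vertices in $B\in\calP_1$ is deterministic and lies in $[(r_1')^d/2,2(r_1')^d]$ by~\eqref{eq:nets-small-r}. In IGIRG the count is $\mathrm{Bin}((r_1')^d)+s$ or $\mathrm{Poisson}((r_1')^d)+s$ with $s\le t\le(r_1/4\sqrt d)^d\le(r_1')^d/2^d$, and standard Chernoff bounds for Poisson tails, combined with $(r_1')^d\gtrsim\log(R/\delta)$ by~\eqref{eq:nets-small-r}, yield \emph{(B2)} with the required probability.

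For the inductive step $i>1$, the key observation is that once \emph{(B1)} holds for $B$ and the inductive \emph{(B2)} holds for all its sub-boxes, then \emph{(B2)} for $B$ follows \emph{deterministically}: the upper bound $|\calG_i\cap B|\le 2(r_i')^d$ is immediate because bad sub-boxes contribute no $i$-good vertices, and writing $N:=(r_i'/r_{i-1}')^d$ for the number of sub-boxes the lower bound follows from
\[
|\calG_i\cap B| \ge (1-2\delta/R)N \cdot \bigl(\tfrac{1}{2}-\tfrac{2(i-2)\delta}{R}\bigr)(r_{i-1}')^d \ge \bigl(\tfrac{1}{2}-\tfrac{2(i-1)\delta}{R}\bigr)(r_i')^d,
\]
by a direct expansion using $\delta<1/16$. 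Hence only \emph{(B1)} needs to be controlled probabilistically. The goodness events of distinct sub-boxes $B'\in\calP_{i-1}$ of $B$ are independent since they are determined by disjoint Poisson populations (the $x_j$ are distributed deterministically among them), and each fails with probability at most $\delta/R$ by the inductive hypothesis. The number of bad sub-boxes is therefore stochastically dominated by $\mathrm{Bin}(N,\delta/R)$, and by~\eqref{eq:nets-radii} we have $N\ge 3^d R\log(2R/\delta)/\delta$, so the mean $\delta N/R$ exceeds $3^d\log(2R/\delta)$. A standard multiplicative Chernoff bound then gives $\pr(\text{number of bad sub-boxes}>2\delta N/R)\le\delta/(2R)$, establishing \emph{(B1)}.

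The main technical obstacle is the careful bookkeeping of the slack constants $2(i{-}1)\delta/R$ in \emph{(B2)} and $2i\delta/R$ in \emph{(B3)}, which were engineered precisely to afford one extra factor $\delta/R$ of slack at each step — exactly what the Chernoff bound on bad sub-boxes consumes and what Lemma~\ref{lem:nets-good-likely-b3} needs in order to push the analysis to the next level. The only other delicate point is ensuring that the $t$ deterministic probes do not exhaust the slack in \emph{(B2)}, which is exactly guaranteed by the assumed bound $t\le(r_1/4\sqrt d)^d$ together with $r_i'\ge r_1'$ and the factor-of-$2^d$ room in the lower bound $(r_1')^d/2$.
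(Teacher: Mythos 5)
Your proposal is correct and follows essentially the same route as the paper's proof: induction on the level $i$, a Chernoff bound on the (conditionally Poisson/binomial, plus at most $t$ planted points) vertex count for the base case, a Chernoff bound on the binomially distributed number of bad sub-boxes (using the well-spacedness condition~\eqref{eq:nets-radii}) to get \emph{(B1)}, the same deterministic derivation of \emph{(B2)} from \emph{(B1)} together with the inductive \emph{(B2)} on good sub-boxes, and finally Lemma~\ref{lem:nets-good-likely-b3} plus a union bound to absorb \emph{(B3)}. The bookkeeping of the $2(i-1)\delta/R$ slack and the role of the probe budget $t\le(r_1/4\sqrt d)^d$ match the paper's argument.
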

\begin{proof}
    We prove the statement by induction on $i$, the base case being $i=1$. Consider a box $B \in \calP_1$. By Def.~\ref{def:nets-good-box}, \emph{(B\ref{item:B1})} holds vacuously. We next consider \emph{(B\ref{item:B2})}. Every vertex in $B$ is $1$-good, and so $|\calG_1\cap B|=|\calV \cap B|$. In SFP, $\calV$ is deterministic and $|\calG_1\cap B|\in[(r_1')^d/2, 2(r_1')^d]$ holds with certainty by Def.~\ref{def:nets-partition} \emph{(P\ref{item:P1})}. In (I)GIRG, $|\calG_1\cap B|$ is a Poisson variable with mean $(r_1')^d$, so by a standard Chernoff bound (Theorem~\ref{thm:chernoff} with $\varepsilon=1/2$), and using $r_i' \in [r_i/(2\sqrt{d}), r_i/\sqrt{d}]$ in Def.~\ref{def:nets-partition} \emph{(P\ref{item:P1})}, and the bound \eqref{eq:nets-small-r} on $r_1$ in Def.~\ref{def:well-spaced},
  \begin{align*}
         p_1':=\pr\Big( |\calG_1\cap B| \in[t+\tfrac12(r_1')^d, t+\tfrac32(r_1')^d]\mid x_1, \dots, x_t\in \calV\Big) \ge 1 - 2\mathrm{e}^{-(r_1/24d)^d} \ge 1-\delta/(2R),
    \end{align*}
 so the lower bound in \eqref{eq:i-good-total} in Def.~\ref{def:nets-good-box} \emph{(B\ref{item:B2})} is satisfied.  Moreover, since $t \le (r_1/4\sqrt{d})^d$, by Def.~\ref{def:nets-partition} \emph{(P\ref{item:P1})}, $\tfrac32(r_1')^d + t \le 2(r_1')^d$, the upper bound in Def.~\ref{def:nets-good-box} \emph{(B\ref{item:B2})} also holds, so independently for all boxes $B\in \calP_1$, and regardless on where $x_1, \ldots, x_t$ fall,
    \begin{align*}
        \pr\big(B \in \calP_1 \mbox{ satisfies \emph{(B2)}}\mid x_1,\ldots,x_t \in \calV\big) \ge p_1'\ge 1 - \delta/(2R).
    \end{align*}
Lemma~\ref{lem:nets-good-likely-b3} ensures that \emph{(B\ref{item:B3})} holds with probability at least $1-\delta/(2R)$ conditioned on any realisation where \emph{(B\ref{item:B1}),(B\ref{item:B2})} holds. A union bound proves \eqref{eq:nets-good-likely} for $B\in \calP_1$.  
   
    Now we advance the induction. Suppose that \eqref{eq:nets-good-likely} holds for each $B\in\cup_{j\le i-1}\calP_i$, and let $B \in \calP_i$. $B$ contains $(r_i'/r_{i-1}')^d$ sub-boxes in $\calP_{i-1}$ (by Def.~\ref{def:nets-partition}), and by induction these sub-boxes of $B$ are good \emph{independently} (regardless of the positions and weights of $x_1,\ldots,x_t \in\calV$), so the number of bad sub-boxes of $B$ is binomial  with mean at most $(r_i'/r_{i-1}')^d\delta/R$. 
    Let 
    \begin{equation}\label{eq:aib}
        \calA_{i}(B):=\Big\{\big|\{B'\in \calP_{i-1}\colon B'\subseteq B,\ B' \mbox{ not $(i-1)$-good}\}\big| < 2(r_i'/r_{i-1}')^d\delta/R\Big\}.
    \end{equation}
Then, $\calA_{i}(B)$ implies Def.~\ref{def:nets-good-box}\emph{(B\ref{item:B1})}. A Chernoff bound (Theorem~\ref{thm:chernoff} with $\varepsilon=1$) yields that
    \[
        \pr\Big(\neg \calA_{i}(B) \mid x_1,\ldots,x_t\in\calV\Big) \le \exp\Big({-}\frac{\delta}{3R}\cdot \Big(\frac{r_i'}{r_{i-1}'}\Big)^d \Big).
    \]
    By Def.~\ref{def:nets-partition}~\emph{(P\ref{item:P1})}, $(r_i'/r_{i-1}')^d \ge 2^{-d} (r_i/r_{i-1})^d$, so by Def.~\ref{def:well-spaced} \eqref{eq:nets-radii}, regardless of the positions of $x_1,\ldots,x_t\in\calV$, and \emph{independently} across different boxes in $\calP_i$:  
 \begin{align}\label{eq:induction-error-1}
          \pr\Big(\neg \calA_{i}(B)\mid x_1,\ldots,x_t\in\calV\Big) &\le  \exp\Big({-}\frac{\delta}{3R} \cdot 3^d R \cdot  \frac{\log (2R/\delta)}{\delta}\Big)
        \le \frac{\delta}{2R}.
    \end{align}
    We now show that $\calA_{i}(B)$ implies \emph{(B\ref{item:B2})} as well, by inductively applying \eqref{eq:i-good-total} to the good sub-boxes of $B$. Consider an $(i-1)$-good vertex $v$ in a good sub-box $B'\in \calP_{i-1}$ of $B$. Since $v$ is $(i-1)$-good, $B^1(v),\ldots,B^{i-2}(v)$ must all be good; since $B^{i-1}(v)=B'$ is also good, it follows that $v$ is in fact $i$-good. Thus, for all good $B'\in \calP_{i-1}:$ $\calG_{i-1}\cap B'=\calG_{i}\cap B'$ holds. Since $B'$ is $(i\!-\!1)$-good, it satisfies~\eqref{eq:i-good-total}, and we obtain:
 \[
    |\calG_{i}\cap B| \ge \sum_{\text{good } B'\subset B} |\calG_{i-1}\cap B'|\ge \big|\{B' \in \calP_{i-1}\colon B'\subset B,\ B'\mbox{ good}\}\big|\Big(\frac{1}{2} - \frac{2(i-2)\delta}{R}\Big)(r_{i-1}')^d.
 \]
 On $\calA_i(B)$ in \eqref{eq:aib}, there are $(1-2\delta/R)(r_i'/r_{i-1}')^d$ good sub-boxes of $B$, so a.s. on $\calA_i(B)$
    \begin{align*}
        |\calG_{i}\cap B| &\ge  \Big(1-\frac{2\delta}{R}\Big)\Big(\frac{r_i'}{r_{i-1}'}\Big)^d \Big(\frac{1}{2} - \frac{2(i-2)\delta}{R}\Big)(r_{i-1}')^d \ge \Big(\frac{1}{2}-\frac{2(i-1)\delta}{R}\Big)(r_i')^d.
    \end{align*}
     Vertices in bad sub-boxes cannot be $i$-good by Definition \ref{def:nets-good-box}. So \eqref{eq:i-good-total} similarly implies that on $\calA_{i}(B)$ there are at most $2(r_i')^d$ $i$-good vertices in $B$, and thus $\calA_i(B)$ in \eqref{eq:aib} implies both \emph{(B\ref{item:B1})--(B\ref{item:B2})} for $B$; it follows from~\eqref{eq:induction-error-1} that
    \begin{equation}\label{eqn:boxes-B1B2}
        \pr\big(\mbox{\emph{(B\ref{item:B1})} and \emph{(B\ref{item:B2})} hold for }B \mid x_1,\ldots,x_t \in \calV\big) \ge 1 - \delta/(2R).
    \end{equation}
    By Lemma~\ref{lem:nets-good-likely-b3}, $B$ is good (i.e., \emph{(B\ref{item:B3})} also holds) with probability at least $1-\delta/(2R)$ for all $\calF_{i-1}(B)=F$  with \emph{(B\ref{item:B1})--(B\ref{item:B2})} holding for $B$; these events and $x_1,\ldots,x_t \in \calV$ are all determined by $\calF_{i-1}(B)$. So, a union bound on \eqref{eq:b-good-b3} and \eqref{eqn:boxes-B1B2} yields that  independently across boxes in $\calP_i$, and regardless of the vertices $x_1,\ldots,x_t\in \calV$, \eqref{eq:nets-good-likely} holds.
This advances the induction and finishes the proof.
\end{proof}
\begin{proof}[Proof of Proposition \ref{lem:nets-exist}]
    Recall Setting~\ref{set:R-of-section}, and consider an $\calR$-partition $\calP_1,\ldots,\calP_R$ of $Q$, (which exists by Claim~\ref{lem:nets-partition-exists}), and let $\calH(\calR)$ be the associated hyperrectangle cover of $Q\times [w_0, f(r_R)]$.
    By Lemma~\ref{lem:nets-good-likely}, conditioned on $x_1,\ldots,x_t \in \calV$, each box $B \in \calP_1 \cup \dots \cup \calP_R$ is good with probability at least $1 - \delta/R$. Let $\calA$ be the event that all boxes in $\{B^i(x_j)\colon i \in [R], j \in [t]\}$ are good; a union bound over $i=1, \ldots, R$ and $1,\ldots,t$ implies that
    \[
        \pr(\calA\mid x_1,\ldots,x_t\in\calV) \ge 1-t\delta.
    \]
    In particular, if $\calA$ occurs then 
    $B^R(x_1)=Q$ is also good, so by Lemma~\ref{lem:good-implies-net}, the set $\widetilde{\calG}_R=:\calN$ of all $R$-good vertices forms an $(\delta,\calR)$-net of $Q$ with $x_1,\ldots,x_t \in \calN$ as required, showing \eqref{eq:strong-netsexist}. 
    To obtain \eqref{eq:netsexist-1}, note that Lemma~\ref{lem:nets-good-likely} implies that $Q\subset \calP_R$ is good with probability at least $1-\delta/R$, and then again Lemma \ref{lem:good-implies-net} finishes the proof.
\end{proof}
\textbf{Weak nets.} After having established pseudorandom nets in a general and strong form (possibly better for re-use), we now give a relaxed version that suffices here. The following version -- \emph{weak nets} -- only retains three parameters: the box $Q$; an error parameter $\eps >0$; and a lower bound $w_1$ on the weights considered for the net. 
Recall the definition of an $(\calR,w_0, f)$-net from Def.~\ref{def:net} and the choice of $w_0$ and of the function $f(r)=f_{R, \delta}(r)$ from Def.~\ref{def:net-constants}, yielding an $(\delta,\calR)$-net.

\begin{definition}[Weak net]\label{def:weak-net}
    Let $Q \subseteq \R^d$ be a box of side length $\xi$, $\eps >0$, and $w_1 \geq w_0$. A \emph{weak $(\eps,w_1)$-net} for $Q$ is a set $\calN \subseteq \widetilde{\calV} \cap Q\times[1,\infty)$ of size at least $\mathrm{Vol}(Q)/4$ such that for all $\widetilde v \in \calN$, \emph{all} $r \in [(\log\log\xi\sqrt{d})^{4/\eps}, \xi\sqrt{d}]$ and all $w \in [w_1, r^{d/(\tau-1)-\eps}]$:
    \begin{equation}\label{eq:net-defining-crit-eps}
        |\calN\cap (B_r(v)\times [w/2, 2w])| \ge r^{d(1-\eps)}\cdot \ell(w)w^{-(\tau-1)}.
    \end{equation}
\end{definition}
In a weak $(\eps,w_1)$-net we allow an error of order $r^{-d\eps}$ on the rhs of \eqref{eq:net-defining-crit-eps} compared to the constant factor in a $(\delta,\calR)$-net in \eqref{eq:net-defining-crit}, and here the smallest radius $r$ also grows with $\xi$. 

\begin{lemma}\label{lem:weak-nets-exist}
    Consider IGIRG or SFP with $\tau\!>\!2$ in Def.~\ref{def:girg}. Then for all $\eps\in(0,1/2)$, and for all $\xi$ sufficiently large relative to $\eps$,  and $t \le \min\{1/\eps,\log\log\xi\}$ the following holds. Consider a cube $Q \subseteq \R^d$ of side length $\xi$, and let $x_1,\ldots,x_t \in Q$, and $w_0$ from Def.~\ref{def:net-constants}, then
    \begin{align*}
        \pr(\text{$Q$ contains a weak $(\eps,w_0)$-net $\calN$, and }  x_1,\ldots,x_t \in \calN \mid x_1,\ldots,x_t \in \calV ) \ge 1-t\eps. 
    \end{align*}
\end{lemma}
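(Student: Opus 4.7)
The plan is to derive Lemma~\ref{lem:weak-nets-exist} directly from Proposition~\ref{lem:nets-exist} by picking a suitable finite set $\calR$ of radii and arguing that any resulting strong $(\delta,\calR)$-net is automatically a weak $(\eps,w_0)$-net. The guiding observation is monotonicity: if $v\in\calN$ and $r_i\in\calR$ satisfies $r_i\le r$, then $B_{r_i}(v)\subseteq B_r(v)$, so the strong-net bound \eqref{eq:net-defining-crit} at scale $r_i$ immediately lower-bounds the number of points in $B_r(v)\times[w/2,2w]$. Both losses -- the constant $1/(2d)^{d+\tau+5}$ and the replacement $r_i^d\mapsto r^{d(1-\eps)}$ -- are absorbed as soon as $r_i\ge r^{1-\eta}$ for some $\eta<\eps$ and $r$ is not too small.

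\textbf{Construction.} Set $\eta:=\min\{\eps/4,\,\eps(\tau-1)/(2d)\}>0$ and $\delta:=\min\{\eps/2,\,1/32\}$. Fix $r_R:=\xi\sqrt d$ and define the radii backward by $r_{k-1}:=r_k^{1-\eta}$, stopping at the largest $R$ for which $r_1\le(\log\log\xi\sqrt d)^{4/\eps}$. From $r_k=r_R^{(1-\eta)^{R-k}}$ one reads off $R=O_{\eps,\tau,d}(\log\log\xi)$. The smallest ratio is $r_2/r_1=r_1^{\eta/(1-\eta)}\ge(\log\log\xi)^{c}$ for a positive constant $c=c(\eps,\eta,d)>1/d$, which dominates the well-spacedness floor $6R^{1/d}(\log(2R/\delta)/\delta)^{1/d}=O((\log\log\xi)^{1/d}(\log\log\log\xi)^{1/d})$; the conditions \eqref{eq:nets-small-r} on $r_1$ hold trivially for $\xi$ large. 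Thus $\calR$ is $\delta$-well-spaced, and the hypotheses $t\le 1/\delta$ and $t\le(r_1/4\sqrt d)^d$ of Proposition~\ref{lem:nets-exist} hold for $\xi$ large. The proposition produces a $(\delta,\calR)$-net $\calN\ni x_1,\dots,x_t$ with conditional probability at least $1-t\delta\ge 1-t\eps$.

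\textbf{Verification of the weak-net inequality.} Fix $v\in\calN$, $r\in[r_1,r_R]$, $w\in[w_0,r^{d/(\tau-1)-\eps}]$ and let $r_i\in\calR$ be the largest element with $r_i\le r$. Since $r\le r_{i+1}=r_i^{1/(1-\eta)}$ one gets $r_i\ge r^{1-\eta}$. The choice $\eta\le\eps(\tau-1)/(2d)$ and the formula~\eqref{eq:nets-f-def} for $f_{R,\delta}$ give $r^{d/(\tau-1)-\eps}\le f_{R,\delta}(r_i)$ for $\xi$ large (the relevant gap is of polynomial order $r_i^{\Omega(\eps)}$ against polylog-in-$\log\log\xi$ factors from $\log(16R/\delta)^{1/(\tau-1)}$ and Potter's bound applied to the infimum of $\ell$). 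Hence \eqref{eq:net-defining-crit} applies at $(r_i,w)$, and combined with $B_{r_i}(v)\subseteq B_r(v)$ it yields
\begin{equation*}
  |\calN\cap B_r(v)\times[w/2,2w]|\;\ge\;\frac{r_i^d\,\ell(w)w^{-(\tau-1)}}{(2d)^{d+\tau+5}}\;\ge\;\frac{r^{d(1-\eta)}\ell(w)w^{-(\tau-1)}}{(2d)^{d+\tau+5}}.
\end{equation*}
The leftover factor $r^{d(\eps-\eta)}\ge r_1^{d\eps/2}\ge(\log\log\xi)^{2d}$ absorbs $(2d)^{d+\tau+5}$ for $\xi$ large, producing \eqref{eq:net-defining-crit-eps}.

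\textbf{Main obstacle.} The delicate point is the tension between the well-spacedness floor $r_{i+1}/r_i\gtrsim R^{1/d}(\log R)^{1/d}$ (forced by the Chernoff unions inside Proposition~\ref{lem:nets-exist}) and the requirement that, for \emph{every} $r$ in a continuous range spanning $[r_1,\xi\sqrt d]$, \emph{some} $r_i\in\calR$ lies within ratio $r^{\eta}$ of $r$. A naive geometric spacing $r_{i+1}/r_i\equiv C$ fails near $r_1$: the absorbable tolerance $r^{d\eps}$ is only polylogarithmic in $\xi$ there, but $R$ radii spanning $[r_1,\xi\sqrt d]$ force $C^d\gtrsim\log\xi$, overwhelming this tolerance. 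The stretched-exponential spacing $r_{k+1}=r_k^{1/(1-\eta)}$ bypasses this because its \emph{local} ratio $r_k^{\eta/(1-\eta)}$ is matched exactly to the polylog-in-$\xi$ scale needed near $r_1$, while the doubly-exponential growth in $k$ still traverses $[r_1,\xi\sqrt d]$ in only $O(\log\log\xi)$ steps, keeping $R$ small enough to control the Chernoff loss.
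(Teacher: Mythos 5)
Your proof is correct and follows essentially the same route as the paper: choose radii $r_i=(\xi\sqrt d)^{(1-\Theta(\eps))^{R-i}}$ so that $R=O(\log\log\xi)$ and consecutive ratios beat the well-spacedness floor, invoke Proposition~\ref{lem:nets-exist}, and then pass from the strong-net bound at the largest $r_i\le r$ to the weak-net bound at $r$, letting the slack $r^{d\eps-d\eta}\ge(\log\log\xi)^{\Omega(1)}$ absorb the constant $(2d)^{d+\tau+5}$ and the $f_{R,\delta}$ corrections. The only differences are cosmetic parameter choices (your $\delta=\min\{\eps/2,1/32\}$ versus the paper's $\delta=\eps$, and "largest $R$" should read "smallest $R$" in your stopping rule), and your closing remark about why uniform geometric spacing would fail is a correct observation the paper leaves implicit.
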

The condition $t\le 1/\eps$ is there to avoid a vacuous statement, and below we set $t=2$. Note that the condition~\eqref{eq:net-defining-crit-eps} never counts vertices of weight less than $w_1/2$. So, we can decide whether a weak $(\eps,w_1)$-net $\calN$ exists by uncovering only the set of vertices of weight at least $w_1/2$ (beyond $x_1,\ldots,x_t \in \calN$). For IGIRG, this set is independent of the set of vertices of weight smaller than $w_1/2$. This is the main reason for introducing the parameter $w_1$.
\begin{proof}[Proof of Lemma~\ref{lem:weak-nets-exist}]
    Let $\eps\in(0,1/2)$, set $\eta := 1 - \eps/2$, and define $\calR = \{r_1,\ldots,r_R\}$ as
    \begin{align}
       R &:= 2 + \lfloor(\log\log\xi\sqrt{d} - \log^{*4}\xi\sqrt{d}-\log\tfrac{4}{\eps})/\log(1/\eta)\rfloor,     \label{eq:weak-net-choices-1}\\
       r_i &:= (\xi\sqrt{d})^{\eta^{R-i}}, \quad\mbox{for} \quad i\in[R].    \label{eq:weak-net-choices-2}
    \end{align}
     We next prove that $\calR$ is $\eps$-well-spaced (Def. \ref{def:well-spaced}). Let $1-a\in[0,1)$ be the fractional part of the expression inside the $\lfloor \cdot \rfloor$ in \eqref{eq:weak-net-choices-1}. Then using that $\lfloor x\rfloor=x-1+a$,
    \begin{equation}\label{eq:weak-nets-exist-r1}
        r_1 = (\xi\sqrt{d})^{\eta^{R-1}} = (\xi\sqrt{d})^{\eta^{a}4\log^{*3}\xi\sqrt{d}/(\eps\log\xi\sqrt{d})} = (\log\log\xi\sqrt{d})^{(1-\eps/2)^{a}4/\eps}.
    \end{equation}
    Since $\xi$ is large relative to $\eps$, Def.~\ref{def:well-spaced} \eqref{eq:nets-small-r} holds for $\eps=\delta$. For all $i \in [R]$, since $\eta=1-\eps/2$:
    \[
        r_i/r_{i-1} = (\xi\sqrt{d})^{\eta^{R-i}(1-\eta)} \ge (\xi\sqrt{d})^{\eta^{R-1}\eps/2} = r_1^{\eps/2} = (\log\log\xi\sqrt{d})^{2(1-\eps/2)^{a}}.
    \]
    Since $a\le 1$, $2(1-\eps/2)^{a}>1$, for all $\eps<1/2$, so Def.~\ref{def:well-spaced} \eqref{eq:nets-radii} holds even in $d=1$, and $\calR$ is $\eps$-well-spaced as claimed. Moreover, since $t \le \log\log\xi$ by hypothesis, by~\eqref{eq:weak-nets-exist-r1} we have $t \le (r_1/4\sqrt{d})^d$. By Proposition~\ref{lem:nets-exist}, with probability at least $1-t\eps$, conditioned on $x_1,\ldots,x_t\in \calV$, $Q$ contains a (strong) $(\eps,\calR)$-net $\calN$, using Def.~\ref{def:net-constants}.
    We now prove that a strong net is also a weak net. Let $r \in [(\log\log\xi\sqrt{d})^{4/\eps}, \xi\sqrt{d}]$ and $w \in [w_0,r^{d/(\tau-1)-\eps}]$ as in Def.~\ref{def:weak-net} (this interval is non-empty since $\tau\in(2,3), \eps<1/2$), and consider a vertex $v \in \calN$. By~\eqref{eq:weak-nets-exist-r1} since $a\ge 0$, we have $r_1 \le r \le r_R$. Let $r_j$ be such that $r\in [r_j,r_{j+1})$; thus $r^{\eta} \le r_j \le r$. Since $\xi$ is large relative to $\eps$, we have $w \le f_{R,\eps}(r_j)$ (using Def.~\ref{def:net-constants}) for $f$); thus by the definition of a strong $(\eps, \calR)$-net in Def.~\ref{def:net}, $|\calN \cap (B_{r_j}(v)\times[w/2, 2w])|\ge \ell(w)w^{-(\tau-1)}r_j^d/(2d)^{d+\tau+5}$. Since $\xi$ is large relative to $\eps$ and $r \ge r_j \ge r^{\eta}$,  the required inequality in Def.~\ref{def:weak-net} follows since
    \[
       |\calN \cap (B_{r}(v)\times[w/2, 2w])| \ge  \ell(w)w^{-(\tau-1)}r^{d\eta}/(2d)^{d+\tau+5} \ge \ell(w)w^{-(\tau-1)}r^{d(1-\eps)}.
\qedhere    \]
\end{proof}
\vskip-2em

\section{Multi-round exposure with dependent edge-costs}\label{sec:exposure}
Now with the pseudorandom nets at hand, we may switch to the quenched setting and reveal the realisation of the vertex set $\widetilde\calV=(V,w_V)$. We shall now reveal edges adaptively to construct a fast-transmission path between $0,x$, according to the `budget travel plan' in Section \ref{sec:intro}, see Fig.~\ref{fig:intuition_hierarchy}. 
In particular, we need to find low-cost edges in spatial regions which depend on the previous low-cost edges we have found. When studying graph distances in Biskup~\cite{biskup2004scaling} this is not a major obstacle, but with the presence of edge-costs we run into conditioning issues. To overcome these, we develop a multiple-round exposure -- essentially an elaborate edge-sprinkling method on the quenched vertex set -- where in each round we reveal multiple edges, which we explain now heuristically. 
We construct \emph{$1/R$-percolated SFP/IGIRGs} $G_1, \ldots, G_R$ on the fixed vertex set $(V,w_V)$, where in each $G_i$ the probability that an edge $uv$ exists is given by $\pr(uv\in\calE(G_i)) = h(u-v,w_u,w_v)/R$, where $h$ is the connection probability from~\eqref{eq:connection_prob}. To each edge $uv$ in $G_i$ we also assign $L_{uv}^{(i)}$, i.i.d.\ from the r.v.~$L$ determining the costs in \eqref{eq:cost}. For each pair of vertices $u,v\in (V, w_V)$ we also draw a r.v.~$Z_{uv}$ uniformly in $[R]$. Then we construct a single weighted graph $G$ on $(V, w_V)$ from the collection $(G_1, G_2, \ldots, G_R)$ by setting $\calE(G):=\{uv : uv \textnormal{ is present in } G_{Z_{uv}}\}$ with transmission costs $\cost{uv} = L_{uv}^{(Z_{uv})}\cdot(w_uw_v)^{\mu}$. We show that this is an alternative construction of graphs in Def.~\ref{def:girg}. Given $(V, w_V)$, we then use edges in $G_i\cap G$ in the $i$th round of edge-revealment, independent of edges in earlier rounds. 

In a classical random graph setting, multiple-round exposure means to couple the base graph model $\calG$ to a suite of sparser but independent random subgraphs $\calG_1,\ldots,\calG_R \subseteq \calG$, and taking the $i$th edge of a path from the $i$th ``round of exposure'' $\calG_i$. In the edge-weighted setting we design a (slightly more restrictive) construction incorporating \emph{independent} edge-cost variables on $\calG_1,\ldots,\calG_R \subseteq \calG$ that we describe in Prop.~\ref{prop:multi-round-exposure} after some preliminary definitions. Recall that for a set $V$, we write $V^{(2)} := \{\{x,y\}\colon x,y \in V, x\ne y\}$ for the set of \emph{possible} edges of a graph with vertex set $V$. For future re-usability, we formulate Prop.~\ref{prop:multi-round-exposure} in a  general class of random graph models, as set out below.

\begin{definition}\label{def:CIRG}
    A \emph{conditionally-independent edge-weighted vertex-marked random graph model (CIRG model)} $\calG$ consists of distributions for a random vertex set $\calV$ which is a.s.\ countable, a random parameter set $\calW$, a random edge set $\calE \subseteq \calV^{(2)}$, and random edge costs $\cost{xy}$ for each possible edge $\{x,y\} \in \calV^{(2)}$. Conditioned on any given realisation of $(\calV,\calW)=(V, w_V)$, all costs $\cost{xy}$ and all events $xy \in \calE$ are independent across $\{x,y\} \in \calV^{(2)}$. If $G \sim \calG$, we say that $G$ is a realisation of a \emph{CIRG}, or simply a CIRG. We write $\{\calG\mid V, w_V\}$ for the distribution of $G$ conditioned on $(\calV, \calW)=(V, w_V)$.
\end{definition}

First passage percolation (1-FPP) on GIRG and SFP in Def.~\ref{def:girg}--\ref{def:1-FPP} are both CIRG models, with $\calV$ either a PPP on $\R^d$ or $\Z^d$, and $\calW$ the vertex-weights, and $\calC(xy)$ in \eqref{eq:cost} the edge-weights. For other graph models $\calW$ could contain extra randomness. We now define the analogue of a single ``round of exposure'' for a CIRG model:
\begin{definition}[$\theta$-percolated CIRG]\label{def:percolated}
    Let $G \sim \calG$ be a CIRG from Definition~\ref{def:CIRG}. Then for all $\theta\in(0,1)$ the \emph{$\theta$-percolation} of $G$ is the subgraph $G^{\theta}$ of $G$ which includes each  $e\in \calE(G)$ independently with probability $\theta$, and we write $\calG^{\theta}$ for its law. We call $\calG^{\theta}$ the \emph{$\theta$-percolation} of $\calG$, and $\theta$ the \emph{percolation probability}.
\end{definition}

\begin{remark}[$\theta$-percolated CIRGs are CIRGs]\label{remark:cirg} An alternative construction of $\calG^\theta$ is to sample the realisation of $(V,w_V)\sim (\calV, \calW)$ first, and then sample each edge $e$ with probability $\theta\mathbb P(e\in \calE(G)\mid V, w_V)$.
  So the CIRG model class is closed under $\theta$-percolation.
  \end{remark}
We now set out a specific coupling between a base CIRG model and percolated CIRGs, that will serve as graphs forming the rounds of exposure. Recall that $[r]:=\{1,2, \ldots, r\}$.

\begin{definition}[Exposure setting of $G$]\label{def:exposure-setting}
    Let $\calG$ be a CIRG model from Def.~\ref{def:CIRG}. Fix $r\!\in\!\N$ and  $\theta_1,\ldots,\theta_r \in [0,1]$ satisfying $\sum_{i\in[r]}\theta_i\!=\! 1$. We define the \emph{exposure setting} of $\calG$ with \emph{percolation probabilities} $\theta_1,\ldots,\theta_r$ as follows.
    Reveal the realisation of $(\calV, \calW)=:(V, w_V)$, and let $(Z_{uv})_{uv\in V^{(2)}}$ be iid random variables with $\pr(Z_{uv}=i)=\theta_i$ for all $i \in [r]$. Take $G_1^\star, \ldots, G_{r}^\star$ to be conditionally iid realisations of CIRGs, with shared distributions $G_i^\star \sim \{\calG \mid V, w_V\}$, and respective edge costs $\calC_i(e)$ for $e\in \calE(G_i^\star)$, independent across $i\le r$.  Let $G_i^{\theta_i}$ be the subgraph of $G_i^\star$ with edge set $\calE(G_i^{\theta_i}):=\{e \in \calE(G_i^\star)\colon Z_{e}=i\}$ and edge costs $\{\calC_i(e): e\in \calE(G_i^{\theta_i})\} $.
\end{definition}
The following claim reconstructs $G$ from the percolated versions.
\begin{claim}[Realisation of a CIRG in the exposure setting]\label{claim:exposure-coupling}
    Let $\calG$ be a CIRG model from Def.~\ref{def:CIRG}. Let $\theta_1,\ldots,\theta_r$ be the percolation probabilities, and consider $(G_i^{\theta_i})_{i\le r}$ in Definition \ref{def:exposure-setting}. Then marginally, each $G_i^{\theta_i}$ is a $\theta_i$-percolated CIRG. Define now $G$ as the graph with vertex set and parameters $(V,w_V)$, and with edge set $\calE(G):= \cup_{i\in[r]}\calE(G_i^{\theta_i})$, and with edge costs $\{\calC(e):=\calC_{Z_e}(e): e\in \calE(G)\}$. Then $G \sim \{\calG\mid V, w_V\}$.
\end{claim}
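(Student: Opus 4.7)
The plan is to verify the two claims in turn; both reduce to standard conditional-independence bookkeeping, exploiting the defining property of a CIRG (Definition~\ref{def:CIRG}) that conditional on $(\calV,\calW)$, all edge-presence indicators and all edge costs are mutually independent.

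\textbf{Marginal claim for $G_i^{\theta_i}$.} First I would fix $i \in [r]$ and work throughout conditionally on $(\calV,\calW)=(V,w_V)$. By construction $G_i^\star \sim \{\calG \mid V,w_V\}$, so the indicators $(\mathbf{1}\{e \in \calE(G_i^\star)\})_{e \in V^{(2)}}$ and the costs $(\calC_i(e))_{e \in V^{(2)}}$ are mutually independent. The selector variables $(Z_e)_{e \in V^{(2)}}$ are iid and independent of $G_i^\star$, so for each $e$,
\begin{equation*}
    \pr\bigl(e \in \calE(G_i^{\theta_i}) \,\big|\, V,w_V\bigr)
    = \pr(Z_e=i)\cdot \pr\bigl(e\in\calE(G_i^\star)\,\big|\,V,w_V\bigr)
    = \theta_i \cdot \pr\bigl(e\in\calE(\calG)\,\big|\,V,w_V\bigr),
\end{equation*}
and these events remain conditionally independent across $e$ because both the $Z_e$ and the edge-indicators of $G_i^\star$ are. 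Combining with Remark~\ref{remark:cirg} gives that $G_i^{\theta_i}$, unconditionally, has the same law as $\calG^{\theta_i}$.

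\textbf{Reconstruction of $G$.} Now I would again fix a realisation $(V,w_V)$ of $(\calV,\calW)$ and verify that the law of $G$ constructed in the claim matches $\{\calG\mid V,w_V\}$ at the level of joint edge-indicators and edge-costs. Crucially, since the $(Z_e)_{e\in V^{(2)}}$ are iid and the CIRGs $G_1^\star,\ldots,G_r^\star$ are conditionally independent given $(V,w_V)$, the events $\{e\in\calE(G_{Z_e}^\star)\}$ and the cost variables $\calC_{Z_e}(e) = \calC(e)$ depend, for different $e$, on disjoint blocks of the independent random inputs $(G_i^\star,\calC_i,Z_e)$. Hence the families $(\mathbf{1}\{e\in\calE(G)\})_{e}$ and $(\calC(e)\mathbf{1}\{e\in\calE(G)\})_e$ are conditionally independent across $e\in V^{(2)}$.

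\textbf{Single-edge check.} It then remains only to verify, for a single edge $e$, that the joint law of $(\mathbf{1}\{e\in\calE(G)\},\calC(e))$ matches the one prescribed by $\{\calG\mid V,w_V\}$. Conditioning on $Z_e=i$ and using independence of $Z_e$ from $(G_i^\star,\calC_i)_{i\in[r]}$, the conditional law of $(\mathbf{1}\{e\in\calE(G_i^\star)\},\calC_i(e))$ is precisely the single-edge law under $\{\calG\mid V,w_V\}$, which by assumption does not depend on $i$. Averaging over $i$ with weights $\theta_i$ summing to one gives the desired joint law; a union bound over edges together with the conditional independence just established yields $G\sim\{\calG\mid V,w_V\}$. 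The only mild subtlety is the cost of an absent edge, which is irrelevant since $\calC(e)$ is only defined on $\calE(G)$ in the statement, so no further check is needed.
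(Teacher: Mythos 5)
Your proof is correct and follows essentially the same route as the paper's: decompose the presence of an edge $e$ over the value of $Z_e$ (so that $\pr(e\in\calE(G)\mid V,w_V)=\sum_i\theta_i\,\pr(e\in\calE(G_i^\star)\mid V,w_V)$, using that each edge lands in at most one $G_i^{\theta_i}$), and conclude via conditional independence across edges; you are in fact slightly more explicit than the paper in checking the joint law of the cost together with the presence indicator. The only blemish is terminological: the final step is a product over edges of matching marginals under conditional independence, not a ``union bound.''
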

\begin{proof}
    That $G_i^{\theta_i}$ is marginally a $\theta_i$-percolated CIRG, i.e., that it has law $\calG^{\theta_i}$, is immediate since $\pr(e\in \calE(G_i^{\theta_i})\mid V, w_V)=\pr(Z_e=i)\cdot \pr(e\in G_i^\star\mid V, w_V)$, and now one can integrate over the realisations $(V, w_V)$. To see that $G$ has distribution $\{\calG\mid V, w_V\}$ we argue as follows. Since $Z_{uv}$ takes a single value in $[r]$ each possible edge $e=uv$ appears in at most one of $G_1^{\theta_1}, \ldots, G_r^{\theta_r}$. Hence the union $\cup_{i\in[r]}\calE(G_i^{\theta_i})=\calE(G)$ is \emph{disjoint}, and using that $G_1^\star, \ldots, G_r^\star$ all have law $\{\calG \mid V, w_V\}$,
    \[ \begin{aligned}\pr\big(uv \in \calE(G) \mid V, w_V\big) &= \sum_{i\in[r]} \pr(Z_{uv}=i)\pr\big(uv \in \calE(G_i^\star) \mid V, w_V\big)\\
    &= \sum_{i\in[r]} \theta_i\pr\big(uv \in  \calE(G_1^\star)\mid V, w_V \big) = \pr\big(uv \in \calE(G_1^\star)\mid V, w_V\big), \end{aligned} \]
and $G_1^\star\sim \{\calG \mid V, w_V\}$.  Further, edges are present in $G$ independently (conditional on $(V,w_V)$) since the variables $Z_e$ and $\calE(G_i^\star)$ are conditionally independent. 
\end{proof}
For two collections of random variables we write $\calX\in \sigma(\calX')$, if all elements in $\calX$ are measurable with respect to the $\sigma$-algebra generated by the variables in $\calF'$, i.e., they are a deterministic function of elements in $\calX'$. In the next definition we formalise multi-round exposure with edge-cost constraints, in the setting of CIRGs with $(\calV, \calW)=(V, w_V)$ already exposed, which guarantees that edge presence and edge costs are independent by Def.~\ref{def:CIRG}.

\begin{definition}[Iterative cost construction]\label{def:iter-construct}
    Let $G_1,\ldots,G_r$ be edge-weighted CIRGs on a common realisation of the vertex set and parameters $(\calV, \calW)=(V,w_V)$. Write $\calC_i$ for the cost function of $G_i$. An \emph{iterative cost construction} $\mathrm{Iter}$ is a sequence of tuples $(G_1,\calE_1,\calF_1,\calU_1),\ldots,$ $(G_r,\calE_r,\calF_r,\calU_r)$ satisfying the following properties:
    \begin{enumerate}[(i)]
        \item\label{item:iter1} $\calF_i$ is a finite collection of tuples (``allowed sets'') of potential edges without repetition (i.e.\ of $\binom{V}{2}$), in $\sigma(V, w_V, \calE_1,\ldots,\calE_{i-1})$.  In the $i$th round of revealment, we will reveal a tuple of edges $\calE_i \in \calF_i$ chosen among $\calF_i$:
        \item\label{item:iter2} Define the \emph{round-$i$ marginal cost} of an edge by
        \begin{align}\label{eq:marginal-cost}
            \mcost_i(e) = \begin{cases}
                0 & \mbox{ if $e$ appears in some tuple $\calE_1,\ldots,\calE_{i-1}$,}\\
                \calC_i(e) & \mbox{ otherwise.}
            \end{cases}
        \end{align}
     \item\label{item:iter3}   Either $\calE_i=\mathtt{None}$ or $\calE_i$ is a tuple of edges of $G_i$ together with their round-$i$ marginal cost of the form $\calE_i=((e_1,\mcost_i(e_1)), \ldots  (e_t,\mcost_i(e_t)))$ for some $t$.
        \item\label{item:iter4} Each $\calU_i = \calU_i(V,w_V,\calE_1,\ldots,\calE_i)$ (``cost constraint'') is a (list of) event(s) or constraint(s), measurable wrt $\sigma(V, w_V, \calE_1,\ldots,\calE_i)$. 
        \item\label{item:iter5} $\calE_i$ is chosen in the following way. We first fix a deterministic ordering of all tuples on $V^{\scriptscriptstyle{(2)}}$ that may appear in $\calF_i$ (the \emph{canonical ordering}). Then we define $\calE_i$ to be the first element $(e_1,\ldots,e_t)$ in this ordering such that $e_j \in \calE(G_i) \cup \calE_1 \cup \dots \cup \calE_{i-1}$ for all $j$ and $\calU_i(V,w_V,\calE_1,\ldots,\calE_i)$ occurs. If no such set of edges exists, we set $\calE_k = \textnormal{\texttt{None}}$ for all $k \ge i$.
    \end{enumerate}
    We call $r$ the number of \emph{rounds}, and $G_i$ the \emph{round-$i$ graph}. The construction is \emph{successful} if $\calE_i \ne \textnormal{\texttt{None}}$ for all $i \in [r]$. We define the event of seeing a given outcome up until round $i$ as:
    \begin{equation}\label{eq:ait-event}
        \calA_{\mathrm{Iter}}(V,w_V,E_1,\ldots,E_i) := \{(\calV,\calW) = (V,w_V)\} \cap \bigcap_{j\in [i]} \{\calE_j = E_j\};
    \end{equation}
    we omit $V$ and $w_V$ from $\calA_{\mathrm{Iter}}$ when they are clear from context.
\end{definition}
In further sections, $\calU_i$ represents the upper bounds we require on the round-$i$ marginal costs of the edges that we reveal in the $i$'th round, and $\calU_i$ may depend on $(\mathrm{mcost}_j(\calE_j))_{j\le i}$. The marginal cost \eqref{eq:marginal-cost} of an edge drops to zero when an edge has been already chosen in previous rounds, so bounding total marginal cost rather than total cost corresponds to counting the cost of each chosen edge exactly once; this is fine in our application, since if an edge appears twice in a walk then we can pass to a cheaper sub-path in which it appears at most once.

We illustrate Definition~\ref{def:iter-construct} on toy example. Take $G_1$ and $G_2$ to be two conditionally independent realisations of SFP in a box $B$ with the same $(V, w_V)$, equipped with edge-costs as in \eqref{eq:cost}. Take $\calF_1$ to be the set of all possible pairs of vertex-disjoint triangles in $B$, i.e., $\calF_1$ is the collection of all $6$ possible edges of the form $\{v_1v_2, v_2v_3, v_3v_1, v_4v_5, v_5v_6, v_6v_4\}$ with $v_1,\ldots,v_6$ all distinct. Take $\calU_1$ to be the constraint that each of the two triangles individually has total cost at most $1$. The canonical ordering is arbitrary and typically not important. The first round reveals edge presence and costs in $G_1$: it reveals tuples in $\calF_1$ sequentially in the canonical ordering until it finds $\calE_1$, two (random) triangles $T_1$ and $T_2$ in $G_1$, or else outputs $\mathtt{None}$. Take $\calF_2$ to be the collection of all possible paths of any length from $T_1$ to $T_2$, and take $\calU_2$ to be the constraint that $\calE_2\in \calF_2$ has total marginal cost at most $1$. The second round reveals edge presence and costs in $G_2$: it reveals tuples in $\calF_2$ sequentially. If successful, we have found two triangles of cost at most $1$ joined by a path $\pi_{T_1T_2}$ of marginal cost at most $1$. The path $\pi_{T_1T_2}$ may reuse an edge from $T_1$ or $T_2$, nevertheless, the total cost of the construction is still at most $3$ as each edge's cost only counts once in the construction. 

 The next proposition replaces the FKG-inequality for iterative cost constructions. Below, edges in the $\theta_i$-percolated CIRGs $(H_i)_{i\le r}$ are conditionally \emph{independent}, in contrast to Def.~\ref{def:exposure-setting} where edges in $(G_i^{\theta_i})_{i\le r}$ are dependent through $Z_e$. The proof is via coupling: when two graph-collections use the \emph{same vertex set}, one can carry out the \emph{same} iterative cost construction on them, using the same $(\calF_i, \calU_i)$ as long as tuples chosen in previous rounds agree.   
\begin{proposition}[Multi-round exposure]\label{prop:multi-round-exposure}
    Let $G\sim\{\calG\mid V, w_V\}$ be a CIRG model with a fixed realisation $(V,w_V)$ of $(\calV,\calW)$. Let $\theta_1,\ldots,\theta_r \in [0,1]$ with $\sum_{i\in[r]}\theta_i = 1$, and let $H_1,\ldots,H_r$ be independent with distributions $\{\calG^{\theta_i}\mid V, w_V\}$ for all $i\le r$ as in Definition~\ref{def:percolated}. Consider an iterative construction $\mathrm{Iter}_G = (G,\calE_1^G,\calF_1,\calU_1),\ldots,(G,\calE_r^G,\calF_r,\calU_r)$ with all rounds using $G$, and let $\mathrm{Iter}_H = (H_1,\calE_1^H,\calF_1,\calU_1),\ldots,(H_r,\calE_r^H,\calF_r,\calU_r)$ be the iterative cost construction on $H_1,\ldots,H_r$ using the same $(\calF_i,\calU_i)_{i\le [r]}$. Then, taking the minimum over all possible realisations $(E_1,\ldots,E_{r-1})$ of $(\calE_1^H, \ldots, \calE_r^H)$ not containing $\mathtt{None}$,
 \begin{align}
    \begin{split}\label{eq:multi-round-exp-goal}
        &\pr\big(\mathrm{Iter}_G \textnormal{ succeeds} \mid  V,w_V\big) \\
        &\qquad\qquad \ge 
        \min_{E_1,\ldots,E_{r-1} \nsupseteq \mathtt{None}} \prod_{i\in[r]}\pr\big(\calE_i^H \ne \mathtt{None} \mid \calA_{\mathrm{Iter}_H}(V,w_V,E_1,\ldots,E_{i-1})\big).
    \end{split}
    \end{align}
\end{proposition}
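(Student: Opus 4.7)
The plan is to realise $G$ via the exposure setting of Definition~\ref{def:exposure-setting} and Claim~\ref{claim:exposure-coupling}, writing $G = \bigcup_{i\le r} G_i^{\theta_i}$ on the fixed $(V, w_V)$ with edge costs $\calC(e) = \calC_{Z_e}(e)$. By construction the edges of distinct $G_i^{\theta_i}$'s are disjoint, and each $G_i^{\theta_i}$ is marginally distributed as $H_i$, although the family $(G_i^{\theta_i})_{i\le r}$ is correlated through the allocation variables $(Z_e)$ unlike the independent family $(H_i)_{i\le r}$. The argument then proceeds by comparing $\mathrm{Iter}_G$ to an auxiliary iterative construction $\mathrm{Iter}^\dagger$ on the $(G_i^{\theta_i})$'s, and then $\mathrm{Iter}^\dagger$ to $\mathrm{Iter}_H$.

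First I would introduce $\mathrm{Iter}^\dagger$ which uses $G_i^{\theta_i}$ with cost function $\calC_i$ in round $i$, keeping the same $(\calF_i, \calU_i)$ and canonical orderings as $\mathrm{Iter}_G$ and $\mathrm{Iter}_H$. Any new edge $e$ chosen in round $i$ of $\mathrm{Iter}^\dagger$ satisfies $Z_e = i$, so $\calC(e) = \calC_i(e)$; hence marginal costs and the constraint events $\calU_i$ evaluate identically in $\mathrm{Iter}^\dagger$ and in the restriction of $\mathrm{Iter}_G$ obtained by replacing each $\calF_i$ with the subcollection $\calF_i^\dagger := \{T \in \calF_i : T\setminus (\calE_1 \cup \cdots \cup \calE_{i-1}) \subseteq \calE(G_i^{\theta_i})\}$. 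A round-by-round induction matching the canonical orderings on the restricted subcollection then yields the first comparison $\pr(\mathrm{Iter}_G\text{ succeeds} \mid V, w_V) \ge \pr(\mathrm{Iter}^\dagger\text{ succeeds} \mid V, w_V)$.

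Second, I would decompose $\pr(\mathrm{Iter}^\dagger\text{ succeeds} \mid V, w_V) = \E\big[\prod_{i=1}^r p_i^\dagger\big]$ via the tower property, with $p_i^\dagger := \pr(\calE_i^\dagger \ne \mathtt{None} \mid V, w_V, \calE_1^\dagger, \ldots, \calE_{i-1}^\dagger)$, and lower bound each factor by the corresponding $\pi_i(V, w_V, E_1, \ldots, E_{i-1}) := \pr(\calE_i^H \ne \mathtt{None} \mid \calA_{\mathrm{Iter}_H}(V, w_V, E_1, \ldots, E_{i-1}))$ via a stochastic-domination argument. To do so I enlarge the conditioning to the full exposure-setting randomness $(Z_e, \ind{e \in G_j^\star})$ for edges $e$ in tuples examined by the canonical exploration in rounds $j < i$; the remaining randomness defining $G_i^\star$ on unexposed edges together with the unexposed $(Z_e)$'s is independent of this history, while the enlarged conditioning only records events like ``$Z_e \ne i$'' or ``$\calU_{i'}$ failed for an earlier canonical tuple'', neither of which lowers the conditional density of unexposed edges in $G_i^{\theta_i}$ below the unconditional $\theta_i p_e$ that defines $\calG^{\theta_i}$. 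Taking the minimum over $(E_1, \ldots, E_{r-1})$ of the resulting product then yields~\eqref{eq:multi-round-exp-goal}.

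The main obstacle is the first comparison: although any sequence of valid choices for $\mathrm{Iter}^\dagger$ uses edges in $G$ and satisfies $\calU_i$ identically, $\mathrm{Iter}_G$'s canonical first tuple in round $i$ may fall outside $\calF_i^\dagger$ and differ from $\mathrm{Iter}^\dagger$'s choice, propagating via the history-dependent $(\calF_j, \calU_j)$ to later rounds. The fix is to implement the canonical exploration of both constructions as a step-by-step edge exposure that reveals $(Z_e, \ind{e \in G_j^\star})$ only on demand, which preserves the coupling via an inclusion of the allowed tuples round-by-round and also supplies the independence needed for the stochastic-domination claim in the second step.
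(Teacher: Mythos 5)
Your overall architecture matches the paper's: realise $G = \bigcup_{i\le r} G_i^{\theta_i}$ via the exposure setting of Definition~\ref{def:exposure-setting}, pass to an intermediate construction that searches $G_i^{\theta_i}$ in round $i$ (the paper's $\mathrm{Iter}_G^-$, your $\mathrm{Iter}^\dagger$), and compare to the independent $H_i$ through the allocation variables $Z_e$. However, your first comparison, $\pr(\mathrm{Iter}_G\textnormal{ succeeds}\mid V,w_V) \ge \pr(\mathrm{Iter}^\dagger\textnormal{ succeeds}\mid V,w_V)$, is \emph{false in general}, and no step-by-step exposure schedule can repair it: the two algorithms are different deterministic functions of the same randomness, and the history-dependent $(\calF_j,\calU_j)$ may punish $\mathrm{Iter}_G$ for its greedier earlier choice. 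Concretely, take $r=2$, $\theta_1=\theta_2=1/2$, trivial constraints $\calU_i$, $\calF_1=\{(ab),(cd)\}$ with $(ab)$ first in the canonical order, and $\calF_2=\{(ef)\}$ if $\calE_1=(ab)$ but $\calF_2=\{(gh)\}$ if $\calE_1=(cd)$, where $ab$, $cd$, $gh$ lie in $\calE(G)$ with probability $1$ and $ef$ with small probability $p$. Then $\mathrm{Iter}_G$ always chooses $(ab)$ and succeeds with probability $p$, while $\mathrm{Iter}^\dagger$ chooses $(cd)$ with probability $1/4$ (namely when $Z_{ab}=2$ and $Z_{cd}=1$) and then succeeds with conditional probability $1/2$, so $\pr(\mathrm{Iter}^\dagger\textnormal{ succeeds}) \ge 1/8 > p$. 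Since your second step bounds $\pr(\mathrm{Iter}^\dagger\textnormal{ succeeds})$ from below by the right-hand side of~\eqref{eq:multi-round-exp-goal}, the broken first link invalidates the whole chain.

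The proposition survives precisely because its right-hand side is not $\pr(\mathrm{Iter}_H\textnormal{ succeeds})$ but a \emph{minimum over histories} of a product of per-round conditional probabilities, and that decomposition must be applied first, to $\mathrm{Iter}_G$ itself: show by backward induction over rounds that $\pr(\mathrm{Iter}_G\textnormal{ succeeds}\mid V,w_V) \ge \min_{E_1,\ldots,E_{r-1}}\prod_{i}\pr(\calE_i^G\ne\mathtt{None}\mid\calA_{\mathrm{Iter}_G}(V,w_V,E_1,\ldots,E_{i-1}))$, and only then compare, \emph{for each fixed common history} $E_1,\ldots,E_{i-1}$, the round-$i$ conditional success probabilities of the successive constructions. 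Once the histories are pinned to agree, the divergence you identify never arises; one only needs that a tuple valid for the sparser round-$i$ search yields \emph{some} valid tuple for the richer one, which is a one-round statement. Your stochastic-domination step (that conditioning on the history does not push $\pr(Z_{e_1}=\cdots=Z_{e_k}=i)$ below $\theta_i^k$ for edges $e_1,\ldots,e_k$ unused in earlier rounds) is the right idea and is where the real work lies — note the history also carries \emph{positive} information, namely acceptance of the earlier tuples $E_j$, i.e.\ $Z_e=j$ for their edges, which is harmless only because those edges are disjoint from $e_1,\ldots,e_k$ — but it cannot rescue the unconditional comparison of success probabilities in your first step.
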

\begin{proof}[Proof of Proposition \ref{prop:multi-round-exposure}]
    Let $G \sim \{\calG\mid V, w_V\}$. By repeated conditioning, and taking the minimum over all possible successful realisations, 
    \begin{align}
    \begin{split}\label{eq:multi-round-exp-0}
        &\pr\big(\mathrm{Iter}_G \textnormal{ succeeds} \mid V,w_V\big)
        = \prod_{i \in [r]} \pr\big(\calE_i^G \ne \mathtt{None} \mid V,w_V, \calE_1,\ldots,\calE_{i-1} \nsupseteq \mathtt{None}  \big)\\
        &\qquad\qquad \ge \min_{E_1,\ldots,E_{r-1} \nsupseteq \mathtt{None}} \prod_{i\in[r]}\pr\big(\calE_i^G \ne \mathtt{None} \mid \calA_{\mathrm{Iter}_G}( V, w_V, E_1,\ldots,E_{i-1})\big).
    \end{split}
    \end{align}
      Claim~\ref{claim:exposure-coupling} ensures that using the edge-disjoint graphs $G_i^{\theta_i}, i\le r$, we can realise $G$ as
       \begin{equation}\label{eq:G-construction}
      G = \cup_{i\le r} G_i^{\theta_i},
      \end{equation}
      where marginally $G_i^{\theta_i}\,{\buildrel d \over =}\, H_i$, but $G_i^{\theta_i}$ are dependent through the variables $(Z_{uv})_{uv\in V^{(2)}}$.
     Let $\mathrm{Iter}_G^{-} = (G_1^{\theta_1},\calE_1^-,\calF_1,\calU_1),\ldots,(G_r^{\theta_r},\calE_r^-,\calF_r,\calU_r)$ be the iterative cost construction on $(G_1^{\theta_1},\ldots,G_r^{\theta_r})$ that uses the same possible tuples and constraints $(\calF_i, \calU_i)$ as $\mathrm{Iter}_G$, but differs in that $\calE_i^{-}$ is chosen from $G_i^{\theta_i}$ in  round $i$ (while $\calE_i^G$ is chosen from $G$). By~\eqref{eq:multi-round-exp-0} and \eqref{eq:G-construction},
    \begin{align}\label{eq:multi-round-exp-1}
        \pr\big(\mathrm{Iter}_G \textnormal{ succeeds} \mid (V,w_V)\big) \ge \min_{E_1,\ldots,E_{r-1} \nsupseteq \mathtt{None}} \prod_{i\in[r]}\pr\big(\calE_i^- \ne \mathtt{None} \mid \calA_{\mathrm{Iter}_G^-}(E_1,\ldots,E_{i-1})\big).
    \end{align}  
 In order to prove~\eqref{eq:multi-round-exp-goal} from~\eqref{eq:multi-round-exp-1}, for each $i \in [r]$ and each possible outcome $E_1,\ldots,E_{i-1} \nsupseteq \mathtt{None}$, for each $i\le r$ we will couple $G_i^{\theta_i}$ conditioned on $\calA_{\mathrm{Iter}_G^-}(E_1,\ldots,E_{i-1})$ to $H_i$ conditioned on $\calA_{\mathrm{Iter}_H}(E_1,\ldots,E_{i-1})$; satisfying for all $e \in V^{(2)}$:
 \begin{align}
   \{e \in \calE(H_i)\}\cap\{ e\in \calE(G_i^{\theta_i})\}& \Longrightarrow  \calC_{H_i}(e)= \calC_{i}(e), \label{eq:multi-round-exp-coupling-1a}\\
         \{ e \in \calE(H_i) \} &\subseteq \{e \in \calE(G_i^{\theta_i}) \cup \calE_1^- \cup \dots \cup \calE_{i-1}^-\}. \label{eq:multi-round-exp-coupling-1b}
         \end{align}
  In words, if an edge appears in both graphs $H_i$ and $G_i^{\theta_i}$ then its cost agrees, and if an edge is in $H_i$ then either $e$ has been chosen already in previous rounds or $e$ is also in $G_i^{\theta_i}$. 
   Under the conditioning, $\calE_j^H = \calE_j^- =: E_j \ne \texttt{None}$ for all $j \le i-1$. Suppose $\calE_i^H =: E_i \ne \texttt{None}$. Then by~\eqref{eq:multi-round-exp-coupling-1b} every set $E \in E_i \subseteq \calE(H_i) \cup E_1 \cup \dots \cup E_{i-1}$ is also contained in $\calE(G_i^{\theta_i}) \cup E_1 \cup \dots \cup E_{i-1}$, and by \eqref{eq:marginal-cost} their round-$i$ marginal costs in $\mathrm{Iter}_G^-$ are equal to those in $\mathrm{Iter}_H$. Hence $E_i$ provides a valid choice for $\calE_i^-$, and $\calE_i^- \ne \texttt{None}$ holds also. Thus
    \begin{equation}\label{eq:multi-round-exp-2}
        \pr\big(\calE_i^- \ne \mathtt{None} \mid \calA_{\mathrm{Iter}_G^-}(E_1,\ldots,E_{i-1})\big) \ge \pr\big(\calE_i^H \ne \mathtt{None} \mid \calA_{\mathrm{Iter}_H}(E_1,\ldots,E_{i-1})\big),
    \end{equation}
    and the result follows from~\eqref{eq:multi-round-exp-1}.
    Now we provide the coupling achieving \eqref{eq:multi-round-exp-coupling-1a}--\eqref{eq:multi-round-exp-coupling-1b}. 
     Def.~\ref{def:exposure-setting} uses the independent graphs $G_i^\star\sim\{\calG \mid V, w_V\}$, and obtains $G_i^{\theta_i}$ as a dependent thinning of $G_i^\star$ using $(Z_{uv})_{uv \in V^{(2)}}$ (independent across different $uv$). For each $uv\in V^{(2)}$ sample iid uniform $U_{uv}^{(i)}\sim U[0,1]$ and realise the presence of $uv$ in $H_i$ (resp.\ $G_i^{\theta_i}$) as 
     \begin{equation}\label{eq:partial-coupling}
     \ind{uv\in H_i}=\ind{uv\in G_i^\star}\ind{U_{uv}^{(i)}\le \theta_i}, \qquad \ind{uv\in G_i^{\theta_i}}=\ind{uv\in G_i^\star}\ind{Z_{uv}=i}.
     \end{equation}
 Then $H_1,\ldots,H_r$ are \emph{independent} $\theta_i$-percolations of $G_1^\star,\ldots,G_r^\star$ respectively, and $H_1,\ldots,H_r$ are themselves independent as required in the statement, since $G_1^\star,\ldots,G_r^\star$ are independent conditionally on $(V, w_V)$. Note that \eqref{eq:partial-coupling} is only a \emph{partial coupling}, since we can still specify the joint distribution of $(U_{uv}^{(i)})_{i\le r}, Z_{uv}$.
 By this partial coupling, $H_i$ and $G_i^{\theta_i}$ are both subgraphs of $G_i^\star$, and hence if an edge $e$ is in both subgraphs, then $\calC_{H_i}(e)=\calC_i(e)$ (the cost of $e$ in $G_i^\star$), so~\eqref{eq:multi-round-exp-coupling-1a} holds. We now make this into a full coupling to satisfy \eqref{eq:multi-round-exp-coupling-1b}. 
 Fix $i$ and $E_1,\ldots,E_{i-1}$. For \eqref{eq:multi-round-exp-coupling-1b}, we first claim the following distributional identities hold:
  \begin{align}
            \calE(H_i)\mid \calA_{\mathrm{Iter}_H}(V, w_V, E_1,\ldots,E_{i-1})\ &{\buildrel d \over =}\ \calE(H_i)\mid (V, w_V),\label{eq:dist-id1}\\
            \calE(G_i^\star)\mid \calA_{\mathrm{Iter}_G^-}(V, w_V, E_1, \ldots, E_{i-1})\  &{\buildrel d \over = }\  \calE(G_i^\star)\mid (V, w_V).\label{eq:dist-id2}
        \end{align}
Indeed, by Def.~\ref{def:iter-construct} and $\mathrm{Iter_H}$ in Prop.~\ref{prop:multi-round-exposure}, the event $\calA_{\mathrm{Iter}_H}(V, w_V, E_1,\ldots,E_{i-1})$ is measurable wrt $\sigma(H_1,\ldots,H_{i-1} \mid V, w_V)$, and $H_1, \ldots, H_{i-1}$ are \emph{independent} of $H_i$ conditioned on $(V, w_V)$, so \eqref{eq:dist-id1} holds. By \eqref{eq:partial-coupling} and by the conditional independence of $G_1^\star, \ldots, G_i^{\star}$,  $\calA_{\mathrm{Iter}_G^-}(V, w_V, E_1, \ldots, E_{i-1})$ is in $\sigma(G_1^{\theta_1}, \ldots, G_{i-1}^{\theta_{i-1}}\mid V, w_V) \subseteq \sigma(G_1^{\star}, \ldots, G_{i-1}^{\star}, (Z_e)_{e\in V^{(2)}} \mid V, w_V)$, so \eqref{eq:dist-id2} follows similarly.
By Strassen's theorem, to prove that a coupling in~\eqref{eq:multi-round-exp-coupling-1b} exists, it now suffices to prove that for all $k\ge 1$ and all $e_1,\ldots,e_k \in V^{(2)}\setminus (E_1 \cup \dots \cup E_{i-1})$,
    \begin{equation}\label{eq:multi-round-exp-coupling-3}
    \begin{aligned}
        \pr\big(e_1,\ldots,e_k \in \calE(G_i^{\theta_i}) &\mid \calA_{\mathrm{Iter}_{G^-}}(V, w_V, E_1,\ldots,E_{i-1})\big) \\
        &\quad\ge \pr\big(e_1,\ldots,e_k \in \calE(H_i) \mid \calA_{\mathrm{Iter}_{H}}(V, w_V, E_1,\ldots,E_{i-1})\big)\\
        &\quad=\theta_i^k \cdot \pr\big(e_1,\ldots,e_k \in \calE(G_i^\star) \mid \calA_{\mathrm{Iter}_{G}^-}(V, w_V, E_1,\ldots,E_{i-1})\big),
    \end{aligned}
    \end{equation}
    where the last row follows from \eqref{eq:partial-coupling}--\eqref{eq:dist-id2}.
    Dividing both sides by the second factor of the rhs and applying~\eqref{eq:partial-coupling} yields
    \begin{equation*}
    \begin{aligned}
        \pr\big(Z_{e_1}=i,\ldots,Z_{e_k} = i \mid \{e_1,\ldots,e_k \in \calE(G_i^\star)\} \cap \calA_{\mathrm{Iter}_{G}^-}(V, w_V, E_1,\ldots,E_{i-1})\big) \ge \theta_i^k.
        \end{aligned}
    \end{equation*}
    Since $\sigma(G_i^\star \mid V,w_V)$ is independent of $\sigma(G_1^\star,\dots,G_{i-1}^\star,(Z_e)_{e \in V^{(2)}}\mid V,w_V)$, we see it suffices to prove that for all $k\ge 1$ and all $e_1,\dots,e_k \in V^{(2)} \setminus (E_1 \cup \dots \cup E_{i-1})$,
    \begin{equation}\label{eq:multi-round-exp-coupling-4}
    \begin{aligned}
        \pr\big(Z_{e_1}=i,\ldots,Z_{e_k} = i \mid \calA_{\mathrm{Iter}_{G}^-}(V, w_V, E_1,\ldots,E_{i-1})\big) \ge \theta_i^k.
        \end{aligned}
    \end{equation}
    We next express $\calA_{\mathrm{Iter}_{G}^-}(V, w_V, E_1,\ldots,E_{i-1})$ in terms of simpler events, using Def.~\ref{def:iter-construct}\eqref{item:iter5}.
    Let $\underline t_{j,s}$ be the $s$-th tuple in the canonical ordering of $\calF_j(V,w_V,E_1,\ldots,E_{j-1})$, with $E_j=:\underline t_{j,s_j^\star}$ the $s^\star_j$th tuple.
    For each tuple $\underline t_{j,s}$ we collect the edges in $\{e_1,\dots, e_k\}\cap \underline t_{j,s}$, and define
 \begin{equation}\label{eq:a-and-c}
 \begin{aligned}
   A_{j,s}'&:=\{\exists e \in \underline t_{j,s}\cap \{e_1, \dots, e_k\}: Z_{e}\neq j\},\\
   B_{j,s}&:= \{\exists e\in \underline t_{j,s}, e\notin\{e_1, \dots e_k\}, Z_e\neq j\},\\
 C_{j,s}&:=  \big(\neg\,\calU_j(V,w_V,E_1,\ldots,E_{j-1},\underline t_{j,s}) \cup \{\exists e\in \underline t_{j,s}: e \notin \calE(G_j^\star)\} \big),
\end{aligned}
 \end{equation}
 with the idea that if $t_{j,s}\cap \{e_1, \dots, e_k\}=\emptyset$ then $A_{j,s}'=\Omega$. Let now $A=\{Z_{e_1}=i, \dots, Z_{e_k}=1 \}$, then clearly  $A\subseteq A_{j,s}'$. Further, for $j\le i-1$,  the edges and costs of $G_i^\star$ are (conditionally on $(V, w_V)$) independent from those of $G_1^\star,\ldots,G_{i-1}^\star$, so $C_{j,s}$ is independent of $A$. $B_{j,s}$ is also independent of $A$ since it concerns different edges than $\{e_1,\dots, e_k\}$. Then, by Def.~\ref{def:iter-construct}\eqref{item:iter5}, $\underline t_{j,s}$ is not chosen as $E_j$ iff $A'_{j,s}\cup B_{j,s}\cup C_{j,s}$  holds, while $\underline t_{j,s_{j}^\star}=E_j$ is chosen in round $j$ if $\neg A_{j,s_j^\star}\cap \neg B_{j,s_j^\star}\cap \neg C_{j,s_j^\star}$ holds. 
 Hence
    \begin{equation*}
\calA_{\mathrm{Iter}_{G}^-}(V, w_V, E_1,\ldots,E_{i-1}) = \bigcap_{j \le i-1}  \Big(\neg A_{j,s_j^\star}\cap \neg B_{j,s_j^\star}\cap \neg C_{j,s_j^\star} \cap \bigcap_{s < s_j^\star} \big( A'_{j,s}\cup B_{j,s}\cup C_{j,s} \big)\Big).
    \end{equation*}
    Since $e_1,\dots,e_k \notin E_1,\dots,E_{i-1}$, the event $A_{j,s_j^\star}$ is also independent of $Z_{e_1},\dots,Z_{e_k}$, so the event $B:=\cap_{j\le i-1}(\neg A_{j,s_j^\star}\cap \neg B_{j,s_j^\star}\cap \neg C_{j,s_j^\star}) $ above is independent of $Z_{e_1},\dots,Z_{e_k}$ and thus of $A$, and since $A\subseteq \cap_{j\le i-1}\cap_{s<s_j^\star}A_{j,s}'$, 
    the lhs of ~\eqref{eq:multi-round-exp-coupling-4} equals
    \[
\frac{\pr(A\cap B)}{\pr(B\cap \cap_{s\le s_j^\star} (A_{j,s}'\cup B_{j,s}\cup C_{j,s}))} \ge \frac{\pr(A\cap B)}{\pr(B)}= \pr(A)=\theta_i^k,
    \]
    since $Z_{e_\ell}$ are independent across $\ell\le k$.
    This yields the rhs of \eqref{eq:multi-round-exp-coupling-4}, finishing the proof.
\end{proof}

\section{Building blocks: finding cheap edges}\label{sec:lemmas}
In this section, we return from CIRGs to GIRGs and state a few ancillary claims that we shall use to construct the different parts of the low-cost path between $0$ and $x$. We work in the quenched setting with the realisation of vertices and their weights $\widetilde{\calV}=(V, w_V)$ \emph{exposed}, taking the role of $(V,w_V)$ for CIRGs of Definition \ref{def:CIRG}.
All claims here concern $\theta$-percolated SFP/IGIRG as in Definition \ref{def:percolated}, so that we can later use them on the graphs $H_i$ of the multi-round exposure Proposition \ref{prop:multi-round-exposure}. Since the proofs are straightforward but contain some long calculations, we defer them to Appendix~\ref{app:proof-building-lemmas}. We first set out some common notation for Sections \ref{sec:lemmas} and~\ref{sec:hierarchy}.

\begin{setting}(The setting)\label{set:hierarchy-common}
Consider $1$-FPP in Definition \ref{def:1-FPP} on the graphs IGIRG or SFP satisfying the assumptions given in \eqref{eq:power_law}--\eqref{eq:F_L-condition} with $d\ge 1, \alpha \in (1,\infty], \tau\in(2,3)$.
 Let $\underline{c}$, $\overline{c}$, $h$, $L$, $c_1$, $c_2$, and $\beta$ be as in~\eqref{eq:power_law}--\eqref{eq:F_L-condition}; we allow $\beta = \infty$ and $\alpha=\infty$.
    Fix a realisation $(V, w_V)$ of $\widetilde{\cal V}$, and let $G \sim \{\calG\mid V, w_V\}$, and for a $\theta\in(0,1]$, let $G'$ be a $\theta$-percolation $G'$ of $G$. For brevity we write $\pr( \cdot \mid V,w_V)$ for $\pr( \cdot \mid \widetilde{\calV}(G') = (V,w_V))$. Let $x \in V$, and let $Q$ be a cube of side length $\xi$ containing $0$ and $x$. Let $\delta \in (0,1)$, $w_0\ge 1$, and assume that $(V, w_V)$ is such that $Q$ contains a weak $(\delta/4,w_0)$-net $\calN$ with $0,x \in \calN$ given in Definition \ref{def:weak-net}. Finally, let $\gamma \in(0,1)$.
\end{setting}

We now define a function of crucial importance for the optimisation of the exponents $\Delta_0, \eta_0$ in \eqref{eq:Delta_0} \eqref{eq:eta_0}. The first claim joins two Euclidean balls with a low-cost edge in the net with endpoints having specified weights. 
For all $\gamma > 0$ and all $\eta, z \ge 0$, we define
\begin{align}\label{eq:Lambda-def}
    \Lambda(\eta, z):= 2d\gamma-\alpha(d-z)-z(\tau-1)+\big(0\wedge\beta(\eta-\mu z)\big).
\end{align}

\begin{restatable}[Single bridge-edge]{claim}{ClaimSingleBridgeEdge}
\label{claim:cheap-bridge}
    Consider Setting~\ref{set:hierarchy-common}. 
    Let $z \in [0,d]$ satisfy $2d\gamma > z(\tau-1)$. Let $c_H, \eta \ge 0$. Suppose that $0 < \delta\lls \gamma,\eta, z, c_H,\mpar$, and that $D \ggs\eta, z,c_H,\delta,w_0$. Assume that $D^\gamma \in[  (\log\log\xi\sqrt{d})^{16/\delta}, \xi\sqrt{d} ]$ and that $x, y \in \calN$ satisfy $|x-y| \le c_HD$, and let $\ulw\in[w_0 \vee 4(c_H+2)^d \vee 4000c_1^{-1/(\mu\beta)}, D^{\delta}]$ satisfy $F_L((\ulw/4000)^{\mu})\ge 1/2$. For $v \in \{x,y\}$, define
    \begin{align}\label{eq:calZ-def}
        \calZ(v)=\calZ_{\gamma,z,\ulw}(v):=\calN \cap \big(B_{D^\gamma}(v)\times [\ulw D^{z/2}/2, 2\ulw D^{z/2}]\big).
    \end{align}
    Again for each $v \in \{x,y\}$, let $Z_v \subseteq \calZ(v)$ with $|Z_v| \ge |\calZ(v)|/4$. Let
    \begin{align}\label{eq:N-gamma-eta-z}
        N_{\eta,\gamma,z, \ulw} (Z_x,Z_y) &:=\left\{ (a,b) \in Z_x\times Z_y, ab\in \calE(G'), \calC(ab) \le (\ulw/10)^{3\mu}D^\eta \right\}.
    \end{align}
    Then, (and also for $\alpha=\infty$ and/or $\beta=\infty$ under the convention that $\infty \cdot 0 = 0$ in~\eqref{eq:Lambda-def}),
    \begin{equation}\label{eq:lem-cheap-bridge}
        \pr\big(N_{\eta,\gamma,z, \ulw}(Z_x,Z_y) = \emptyset \mid V,w_V\big) \le \exp\Big({-}\theta \ulw^{-2(\tau-1)}D^{\Lambda(\eta,z)-2\gamma d\delta/3}\Big).
    \end{equation}
\end{restatable}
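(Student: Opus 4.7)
The key observation is that, conditioned on $(V,w_V)$, the events ``$ab \in \calE(G')$'' and the cost variables $L_{ab}$ are mutually independent across distinct pairs $\{a,b\}$, so the probability in \eqref{eq:lem-cheap-bridge} factorises as
\[
    \pr(N_{\eta,\gamma,z,\ulw}(Z_x,Z_y) = \emptyset \mid V,w_V) = \prod_{(a,b) \in Z_x \times Z_y} (1 - p_{ab}) \le \exp\Bigl(-\!\!\sum_{(a,b) \in Z_x \times Z_y}\!\! p_{ab}\Bigr),
\]
where $p_{ab} := \pr(ab \in \calE(G'),\ \calC(ab) \le (\ulw/10)^{3\mu}D^\eta \mid V, w_V)$. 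The task therefore reduces to showing $\sum_{a,b} p_{ab} \ge \theta \ulw^{-2(\tau-1)} D^{\Lambda(\eta,z) - 2\gamma d\delta/3}$.

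\textbf{Counting pairs via the net.} First I would apply the weak-net property of Definition~\ref{def:weak-net} to $v \in \{x,y\}$ with radius $r = D^\gamma$ and weight scale $w = \ulw D^{z/2}$; one must check that $(r, w)$ lies in the admissible range, which follows from the hypotheses $D^\gamma \ge (\log\log \xi \sqrt d)^{16/\delta}$, $\ulw \le D^\delta$, $z \le d$, and $2d\gamma > z(\tau-1)$ (so $w \le r^{d/(\tau-1) - \delta/4}$ once $\delta$ is small enough). This gives $|\calZ(v)| \ge D^{\gamma d(1-\delta/4)} \ell(\ulw D^{z/2}) (\ulw D^{z/2})^{-(\tau-1)}$; using $|Z_v| \ge |\calZ(v)|/4$ and absorbing the slowly varying function into a $D^{-\gamma d \delta/12}$ factor (valid for $D$ large relative to $\delta$), I obtain
\[
    |Z_x| |Z_y| \ge c \cdot \ulw^{-2(\tau-1)} D^{2\gamma d(1 - \delta/3) - z(\tau-1)},
\]
for a constant $c = c(\mpar)$.

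\textbf{Bounding the single-pair probability.} For any $(a,b) \in Z_x \times Z_y$, the definition of $\calZ$ yields $w_a, w_b \in [\ulw D^{z/2}/2, 2\ulw D^{z/2}]$, and $|a-b| \le 2D^\gamma + c_H D \le (c_H + 2) D$. Using \eqref{eq:connection_prob} and the range $\ulw^2 D^{z-d} / (4(c_H+2)^d) < 1$ (ensured by $\ulw \le D^\delta$, $z \le d$, $D$ large), the edge-presence probability in $G'$ is at least $\theta \underline c \bigl(\ulw^2 D^{z-d}/(4(c_H+2)^d)\bigr)^\alpha$. For the cost condition, the upper bound $w_a w_b \le 4 \ulw^2 D^z$ gives
\[
    \pr(\calC(ab) \le (\ulw/10)^{3\mu} D^\eta) = F_L\!\bigl((\ulw/10)^{3\mu} D^\eta (w_a w_b)^{-\mu}\bigr) \ge F_L\bigl((\ulw/4000)^\mu D^{\eta - \mu z}\bigr).
\]
Now I split into two cases. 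If $\eta - \mu z \ge 0$, the argument of $F_L$ is at least $(\ulw/4000)^\mu$, so the hypothesis $F_L((\ulw/4000)^\mu) \ge 1/2$ gives $F_L(\cdots) \ge 1/2$, matching the $0$ in $0 \wedge \beta(\eta - \mu z)$. If $\eta - \mu z < 0$, the argument is small (in particular, less than $t_0$ for $D$ large), so Assumption~\ref{assu:L} yields $F_L(\cdots) \ge c_1 (\ulw/4000)^{\mu\beta} D^{\beta(\eta - \mu z)}$, matching the $\beta(\eta - \mu z)$ term in $\Lambda$.

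\textbf{Assembling the exponent and handling limits.} Multiplying the lower bounds on $|Z_x||Z_y|$, the edge-presence probability, and $F_L$, and collecting the powers of $D$, yields
\[
    \sum_{a,b} p_{ab} \ge c' \theta \ulw^{-2(\tau-1) + 2\alpha + \mu\beta \mathbf 1[\eta < \mu z]} D^{\Lambda(\eta, z) - 2\gamma d \delta/3},
\]
where $c' = c'(\mpar, c_H)$; the additional $\ulw$ factors $\ulw^{2\alpha}, \ulw^{\mu\beta}$ are at least $1$ (since $\ulw \ge 1$) and may be dropped, and $c'$ is absorbed into a further loss of $D^{-\gamma d \delta / 100}$ in the exponent (valid for $D$ large relative to all parameters, using the lower bound $D^\gamma \ge w_0$, $c_H$, etc.). The conditions $\alpha = \infty$ or $\beta = \infty$ are handled by replacing the relevant lower bound: for $\alpha = \infty$, \eqref{eq:alpha_infty} forces $h$ constant in the regime $\ulw^2 D^z \ge |a-b|^d$, and conversely for $z < d$ the probability is $0$, consistent with $\Lambda = -\infty$. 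An analogous replacement treats $\beta = \infty$ via the condition $F_L((\ulw/4000)^\mu) \ge 1/2$ alone. The main technical obstacle is simply bookkeeping: ensuring that the multiplicative constants, the slowly-varying correction $\ell$, and the dependencies on $c_H$, $w_0$, $\ulw$ all get absorbed into the $-2\gamma d \delta/3$ slack in the exponent, which requires the explicit ``$D$ large relative to $\eta, z, c_H, \delta, w_0$'' hypothesis.
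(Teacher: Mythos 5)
Your proposal is correct and follows essentially the same route as the paper's proof: lower-bound $|Z_x|,|Z_y|$ via the weak-net property with $r=D^\gamma$, $w=\ulw D^{z/2}$, lower-bound the per-pair probability of a cheap edge by combining \eqref{eq:connection_prob} with a case split on the sign of $\eta-\mu z$ for $F_L$, and conclude by conditional independence given $(V,w_V)$ (the paper phrases this last step as stochastic domination by a binomial and explicitly accounts for the overlap $Z_x\cap Z_y$, which your product over ordered pairs glosses over, at a harmless cost of a factor $2$). The only slips are cosmetic: the ratio $\ulw^2D^{z-d}/(4(c_H+2)^d)$ is not $<1$ when $z=d$ (the minimum in \eqref{eq:connection_prob} is then attained at $1$, which is exactly the $\alpha(d-z)=0$ term $\Lambda$ requires), and your $\delta$-bookkeeping should reserve part of the total $2\gamma d\delta/3$ slack for absorbing the constants rather than spending all of it on the pair count.
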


Since $D\ggs \delta$, we can always ensure that the interval for $\ulw$ is non-empty.

The next claim finds a low-cost edge from a fix vertex in $\calN$ with weight roughly $M$ to some nearby vertex in $\calN$ with weight roughly $K$. We will use this claim later in two different ways, either $K$ being much lower than $M$; or $K$ being somewhat larger than $M$. 
\begin{restatable}[Single cheap edge nearby]{claim}{ClaimSingleCheapEdgeNearby}
\label{claim:cheap-edge-to-nice-vertex}\label{lem:CETNV}
    Consider Setting~\ref{set:hierarchy-common}. 
    Let $M > 1$, and let $x\in\calN$ be a vertex with $w_x\in [\tfrac12M, 2M]$. Let $U,D > 0$ and $K>w_0$, and define the event
    \begin{equation}\label{eq:akdu} \calA_{K,D,U}(x):=\left\{ \exists y\in \calN \cap (B_D(x)\times[\tfrac12K,2K]): xy\in \calE(G'), \ \cost{xy} \le U  \right\}. 
    \end{equation}
    Suppose that $\delta \lls \mpar$, that $K,M,D \ggs \delta,w_0$, and that 
    \begin{align}    
        (\log\log\xi\sqrt{d})^{16/\delta} &\le (D \wedge (KM)^{1/d})/4^{1/d} \le \xi\sqrt{d},\label{eq:KM-xi}\\
        K &\le D^{d/(\tau-1)-\delta} \wedge M^{1/(\tau-2+\delta\tau)}.\label{eq:K-D-M}
    \end{align}
    Then if $\beta = \infty$ and $U(KM)^{-\mu}\ggs \mpar$, or alternatively if $\beta < \infty$,  then     \begin{equation}\label{eq:cheap-edge-to-nice-vertex}
    \begin{aligned}
        \pr\big(\calA_{K,D,U}(x) &\mid V,w_V\big) 
        \ge 1 - \exp\Big({-}\theta K^{-(\tau-1)}(D^d \wedge KM)^{1-\delta}(1 \wedge (U(KM)^{-\mu})^\beta \Big).
        \end{aligned}
    \end{equation}
\end{restatable}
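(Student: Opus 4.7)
The plan is to reveal the candidate set $\calN\cap(B_D(x)\times[K/2,2K])$ via the weak-net guarantee, estimate the per-candidate success probability, and combine these through the conditional independence of edge presence and cost variables given $(V,w_V)$ furnished by Definition~\ref{def:CIRG}. \textbf{Step 1: Count candidates.} Applying Definition~\ref{def:weak-net} with $r=D$ and $w=K$ to the weak $(\delta/4,w_0)$-net $\calN$ --- the lower bound on $r$ comes from~\eqref{eq:KM-xi}, while $K>w_0$ and $K\le D^{d/(\tau-1)-\delta}\le D^{d/(\tau-1)-\delta/4}$ from~\eqref{eq:K-D-M} --- gives $N:=|\calN\cap(B_D(x)\times[K/2,2K])|\ge D^{d(1-\delta/4)}\ell(K)K^{-(\tau-1)}$.

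\textbf{Step 2: Per-candidate probability.} For every candidate $y$ one has $w_xw_y\in[KM/4,4KM]$ and $|x-y|\le D$, so by~\eqref{eq:connection_prob} and Definition~\ref{def:percolated}, $\pr(xy\in\calE(G')\mid V,w_V)\ge\theta\underline{c}\min\{1,KM/(4D^d)\}^{\alpha}$. Independently (by the CIRG property of $G'$), $\cost{xy}\le U$ reduces to $L_{xy}\le U/(w_xw_y)^{\mu}$, a threshold at least $U(4KM)^{-\mu}$. When $\beta<\infty$, splitting on whether this threshold exceeds $t_0$ and applying Assumption~\ref{assu:L} yields $\pr(L_{xy}\le U/(w_xw_y)^{\mu})\gtrsim 1\wedge(U(KM)^{-\mu})^{\beta}$ with a constant depending only on $c_1,t_0,\mu,\beta$; when $\beta=\infty$, the hypothesis $U(KM)^{-\mu}\ggs\mpar$ forces $F_L(U(4KM)^{-\mu})\ge 1/2$, matching the convention $1\wedge(\cdot)^{\beta}=1$. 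Hence the per-candidate success probability satisfies $p\gtrsim\theta\min\{1,KM/D^d\}^{\alpha}(1\wedge(U(KM)^{-\mu})^{\beta})$.

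\textbf{Step 3: Assembly and main obstacle.} By Definition~\ref{def:CIRG}, conditional on $(V,w_V)$ the success events for distinct $y$ are independent, so $\pr(\lnot\calA_{K,D,U}(x)\mid V,w_V)\le(1-p)^N\le\exp(-pN)$, and the claim follows once we verify $pN\ge\theta K^{-(\tau-1)}(D^d\wedge KM)^{1-\delta}(1\wedge(U(KM)^{-\mu})^{\beta})$. If $D^d\le KM$ the minimum in $p$ equals $1$, so the desired inequality reduces to absorbing $\ell(K)\ge K^{-\delta/8}$ and constants into the $\delta$-loss, turning $D^{d(1-\delta/4)}\ell(K)$ into $D^{d(1-\delta)}=(D^d\wedge KM)^{1-\delta}$ since $K,D\ggs\delta,w_0$. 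If $D^d>KM$ then $\min\{1,KM/(4D^d)\}^{\alpha}\cdot D^{d(1-\delta/4)}\ge 4^{-\alpha}(KM)^{\alpha}D^{d(1-\delta/4)-d\alpha}\ge 4^{-\alpha}(KM)^{1-\delta/4}$ because $\alpha\ge 1$, again absorbable into $(KM)^{1-\delta}$; the threshold limit $\alpha=\infty$ is handled the same way via~\eqref{eq:alpha_infty}. The main obstacle is the bookkeeping of polynomial prefactors and of the slowly varying $\ell(K)$ across the four sub-cases $\{\alpha,\beta\}$ finite versus $=\infty$; this is routine under $K,M,D\ggs\delta,w_0$ and $\delta\lls\mpar$, but it requires choosing how to split the available $\delta$ between the net-error, the slow-variation correction, and the connection-probability $\alpha$-power so that everything collapses into the single clean $(D^d\wedge KM)^{1-\delta}$ exponent asserted in~\eqref{eq:cheap-edge-to-nice-vertex}.
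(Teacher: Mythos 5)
There is a genuine gap in Step 3, in the case $D^d > KM$. You claim
\[
\min\{1,KM/(4D^d)\}^{\alpha}\cdot D^{d(1-\delta/4)}\;\ge\; 4^{-\alpha}(KM)^{\alpha}D^{d(1-\delta/4)-d\alpha}\;\ge\; 4^{-\alpha}(KM)^{1-\delta/4}
\]
``because $\alpha\ge 1$'', but the second inequality goes the wrong way. Since $\alpha>1$ and $D^d>KM$, we have $D^{d(1-\alpha)}=(D^d)^{1-\alpha}\le (KM)^{1-\alpha}$, so $(KM)^{\alpha}D^{d(1-\alpha)}\le KM$ — and it can be far smaller: e.g.\ for $\alpha=3$ and $D^d=(KM)^{10}$ one gets $(KM)^{-17}$. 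In words, when $D^d\gg KM$ the far-away candidates in $B_D(x)$ have connection probability decaying like $(KM/|x-y|^d)^\alpha$ with $\alpha>1$, so their contribution to the expected number of good edges is a convergent geometric sum dominated by the scale $|x-y|^d\asymp KM$; taking the single worst-case connection probability over the whole ball $B_D(x)$, as you do, destroys the bound. This is precisely why the paper's proof restricts from the start to the smaller ball $B_r(x)$ with $r=4^{-1/d}(D\wedge (KM)^{1/d})$: there $KM/(4r^d)\ge 1$, the minimum in \eqref{eq:connection_prob} sits at $1$, the connection probability is the constant $\theta\underline{c}$ (uniformly in $\alpha$, including $\alpha=\infty$), and the net count over $B_r(x)$ already yields the factor $(D^d\wedge KM)^{1-\delta}$. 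Note that the second constraint in \eqref{eq:K-D-M}, $K\le M^{1/(\tau-2+\delta\tau)}$, which your argument never uses, exists exactly to guarantee $K\le r^{d/(\tau-1)-\delta/4}$ at this smaller radius.

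A secondary issue in Step 1: applying Definition~\ref{def:weak-net} with $r=D$ requires $D\le\xi\sqrt{d}$, but \eqref{eq:KM-xi} only guarantees $(D\wedge(KM)^{1/d})/4^{1/d}\le\xi\sqrt{d}$; if the minimum is attained by $(KM)^{1/d}$, then $D$ may exceed $\xi\sqrt{d}$ and the net gives you nothing at radius $D$. Both problems disappear once you work at the radius $r=4^{-1/d}(D\wedge(KM)^{1/d})$ throughout.
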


The third claim builds cheap \emph{weight-increasing paths}, from a low-weight vertex in $\calN$ to a high-weight vertex in $\calN$. The proof is via repeated application of Claim~\ref{claim:cheap-edge-to-nice-vertex} (see Appendix~\ref{app:proof-building-lemmas}).
\begin{restatable}[Weight-increasing paths]{claim}{ClaimWeightIncreasingPath}
\label{claim:cheap-path-to-larger-weight}
    Consider Setting~\ref{set:hierarchy-common}. 
    Let $M > 1$, and let $y_0$ be a vertex in $\calN$ with weight in $[\tfrac12M, 2M]$. Let $K,D>1$, $U\ge K^{2\mu}$, and let
    \begin{align}\label{eq:cond-q} 
        q := \left\lceil\frac{\log(\log K / \log M)}{\log(1/(\tau-2+2d\tau\delta))}\right\rceil.
    \end{align}
    Let $\calA_{\pi(y_0)}$ be the event that $G'$ contains a path $\pi = y_0y_1\ldots y_q$ contained in $\calN \cap B_{qD}(y_0)$ such that $W_{y_q} \in [\tfrac12K,2K]$ and $\cost{\pi} \le q U$. Suppose that $\delta\lls \mpar$, that $K,M,D\ggs \theta,\delta,w_0$, and that $M \le K \le D^{d/2}$, $D \le \xi\sqrt{d}$, and $(M/2)^{2/d} \ge (\log\log\xi\sqrt{d})^{16/\delta}$. Then if $\beta = \infty$ and $U(KM)^{-\mu} \ggs \mpar$, or if $\beta < \infty$,  then   
    \begin{align}\label{eq:weight-increasing-path-error}
        \pr\big(\calA_{\pi(y_0)} \mid V,w_V\big) \ge 1-\exp({-}\theta M^\delta).
    \end{align}
\end{restatable}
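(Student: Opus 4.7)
Proof plan.

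I build the path $\pi$ edge-by-edge using Claim~\ref{claim:cheap-edge-to-nice-vertex}, geometrically amplifying the weight of the current endpoint at each step. Let $\rho := 1/(\tau - 2 + 2d\tau\delta) > 1$, and set the target weight sequence $M_0 := M$, $M_i := M_{i-1}^\rho$ for $1 \le i \le q-1$, and $M_q := K$. The choice of $q$ in~\eqref{eq:cond-q} ensures $M^{\rho^q} \ge K$, so the per-step growth bound $M_i \le M_{i-1}^{1/(\tau-2+\delta'\tau)}$ (with $\delta' := 2d\delta$) holds at every step, exactly matching hypothesis~\eqref{eq:K-D-M} of Claim~\ref{claim:cheap-edge-to-nice-vertex}.

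To handle the dependence between the rounds of edge exposure, view $G'$ itself as a CIRG (Remark~\ref{remark:cirg}) and invoke Proposition~\ref{prop:multi-round-exposure} with $r = q$ rounds of equal percolation probability $1/q$, producing independent graphs $H_1, \ldots, H_q$, each a $(\theta/q)$-percolation of the underlying CIRG. The iterative construction is: in round $i$, search for a single edge $y_{i-1}y_i \in \calE(H_i)$ with $y_i \in \calN \cap (B_D(y_{i-1}) \times [M_i/2, 2M_i])$ and marginal cost at most $U$. Since $y_{i-1} \in \calN$ inductively, the round-$i$ success event is exactly $\calA_{M_i,D,U}(y_{i-1})$ of Claim~\ref{claim:cheap-edge-to-nice-vertex}, applied to $H_i$ with parameters $\delta'$, $M := M_{i-1}$, $K := M_i$. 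The remaining hypotheses propagate routinely from those of Claim~\ref{claim:cheap-path-to-larger-weight}: $M_i \le K \le D^{d/2} \le D^{d/(\tau-1)-\delta'}$; $(M_iM_{i-1})^{1/d} \ge M^{2/d}$ combined with $(M/2)^{2/d} \ge (\log\log\xi\sqrt{d})^{16/\delta}$ handles~\eqref{eq:KM-xi}; and $U \ge K^{2\mu} \ge (M_iM_{i-1})^\mu$ makes $1 \wedge (U(M_iM_{i-1})^{-\mu})^\beta = 1$ for $\beta < \infty$ (for $\beta = \infty$, the extra hypothesis propagates via $U \ge K^{2\mu}$ and $M_iM_{i-1} \le K^{1+1/\rho}$, giving a power-of-$K$ cushion that is $\ggs \mpar$ for $K$ large). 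Since $M_iM_{i-1} \le K^2 \le D^d$, the $D^d \wedge M_iM_{i-1}$ term equals $M_iM_{i-1}$, making the round-$i$ failure probability at most $\exp\bigl(-(\theta/q) M_i^{-(\tau-1)}(M_iM_{i-1})^{1-\delta'}\bigr)$.

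Substituting $M_i = M_{i-1}^\rho$, the exponent in the failure bound reduces by a routine algebraic computation to $M_{i-1}^{\delta'(1-\delta'\tau)/(\tau-2+\delta'\tau)}$, which for $\delta$ small is at least $M_{i-1}^{c\delta}$ for some constant $c > 1$ depending only on $d, \tau$ (indeed, the exponent tends to $2d\delta/(\tau-2) > 2\delta$ as $\delta \to 0$). Using $M_{i-1} \ge M$ and Proposition~\ref{prop:multi-round-exposure}, the overall failure probability is at most $q\exp(-(\theta/q)M^{c\delta})$. The hypothesis $(M/2)^{2/d} \ge (\log\log\xi\sqrt{d})^{16/\delta}$ forces $q = O(\log\log\xi\sqrt{d})$ to be at most a small fractional power of $M$, so for $M$ sufficiently large this upper bound contracts to $\exp(-\theta M^\delta)$, as required. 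The constructed walk $\pi = y_0y_1\ldots y_q$ has each vertex in $\calN$, lies in $B_{qD}(y_0)$ by the triangle inequality, has $w_{y_q} \in [K/2, 2K]$, and total cost at most $qU$.

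The central delicate point is the exponent simplification: the geometric amplification rate $\rho$ is calibrated precisely so the per-step failure exponent retains an $M^{c\delta}$ with $c > 1$, leaving slack for the $q$-term union bound as $q$ grows with $\xi$. A secondary technicality is propagating Claim~\ref{claim:cheap-edge-to-nice-vertex}'s hypotheses (especially the $\ggs\mpar$ condition when $\beta = \infty$) through the iteration, which is rescued by the cushion $K \gg M$ built into the setup.
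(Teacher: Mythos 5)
Your proof is correct and follows the same core strategy as the paper's: iterate Claim~\ref{claim:cheap-edge-to-nice-vertex} along a geometrically amplified weight sequence $M_i \approx M^{\rho^i}$ with $\rho = 1/(\tau-2+2d\tau\delta)$, calibrated so that~\eqref{eq:K-D-M} holds at each step, and then carry out the same algebraic simplification of the per-step failure exponent. The one genuine difference is how the dependence between consecutive steps is handled. You invoke the multi-round exposure machinery of Proposition~\ref{prop:multi-round-exposure}, splitting $G'$ into $q$ independent $(\theta/q)$-percolations and paying a factor $1/q$ in each application of Claim~\ref{claim:cheap-edge-to-nice-vertex}; the paper instead observes that the target weight intervals $[\tfrac12 M_{i},2M_i]$ determine pairwise disjoint sets of possible edges (even where the last two intervals overlap, the ordered weight-pairs differ), so the per-step events are already conditionally independent given $(V,w_V)$ and each step keeps the full $\theta$. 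Your route is more mechanical and sidesteps that disjointness check, but it costs you the extra bookkeeping of absorbing the $q$ and $\log q$ factors into the exponent; you do this correctly, using $q = O(\log\log\xi) \le M^{O(\delta)}$ from the hypothesis $(M/2)^{2/d} \ge (\log\log\xi\sqrt{d})^{16/\delta}$ together with the slack $c>1$ in the per-step exponent $M^{c\delta}$, exactly where the slack is needed. Both arguments are sound, and your verification of the hypotheses of Claim~\ref{claim:cheap-edge-to-nice-vertex} (including the $\beta=\infty$ cushion) matches the paper's.
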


The last claim allows us to find a common neighbour for two vertices with roughly the same weight if the distance between them is not too large with respect to their weights.
\begin{restatable}[Common neighbour]{claim}{ClaimCommonNeighbour}
\label{claim:common-neighbour}
    Consider Setting~\ref{set:hierarchy-common}. 
    Let $\delta \lls \mpar$, let $c_H>0$, and let $D\ge w_0^{2/d}$ with $D \ggs c_H,\delta$ and $D\in[(\log\log\xi\sqrt{d})^{16/\delta}, \xi\sqrt{d}]$. Let $x_0,x_1\in\calN$ be vertices with $w_{x_0}, w_{x_1}\in[D^{d/2}, 4D^{d/2}]$ at distance $|x_0-x_1| \le c_H D$, and let $\calA_{x_0\star x_1}$ be the event that $x_0$ and $x_1$ have a common neighbour in $G'$, $v \in \calN \cap B_D(x_0)$ with $\cost{x_0v} + \cost{vx_1} \le D^{2\mu d}$. Then
        \begin{equation}\label{eq:common-neighbor}
        \pr\big(\calA_{x_0\star x_1} \mid V,w_V\big) \ge 1 - \exp\Big({-}\theta^2D^{(3-\tau-2\delta)d/2}\Big).
    \end{equation}
\end{restatable}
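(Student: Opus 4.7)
The plan is to identify polynomially many candidate common neighbours $v \in \calN$ of weight $w_v$ of order $D^{d/2}$ inside the ball $B_D(x_0)$, show each is independently a cheap common neighbour of $x_0$ and $x_1$ with probability of order $\theta^2$, and use conditional independence across candidates (whose defining events involve disjoint edges) to obtain the required exponential concentration. The scale $w \asymp D^{d/2}$ is dictated by the requirement that $w_{x_i} w_v \asymp |x_i - v|^d$, so the connection probabilities $\theta h(x_i - v, w_{x_i}, w_v)$ are of constant order while still leaving polynomially many vertices in the net.

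To enumerate candidates, apply Definition~\ref{def:weak-net} with radius $r = D$ and weight $w = D^{d/2}$. The hypotheses of the definition are satisfied: $D \in [(\log\log\xi\sqrt{d})^{16/\delta}, \xi\sqrt{d}]$ by the statement of the claim; $w_0 \le D^{d/2}$ since $D \ge w_0^{2/d}$; and $D^{d/2} \le D^{d/(\tau-1) - \delta/4}$ for $\delta \lls \mpar$ because $\tau < 3$. Slow variation of $\ell$ yields $\ell(D^{d/2}) \ge D^{-d\delta/8}$ for $D \ggs \delta$, and hence
\begin{equation*}
N := |\calN \cap (B_D(x_0) \times [D^{d/2}/2, 2D^{d/2}])| \ge D^{d(1-\delta/4)} \ell(D^{d/2}) (D^{d/2})^{-(\tau-1)} \ge D^{d(3-\tau)/2 - d\delta/2}.
\end{equation*}

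For each such candidate $v$, using $w_{x_0}, w_{x_1} \in [D^{d/2}, 4D^{d/2}]$, $w_v \in [D^{d/2}/2, 2D^{d/2}]$, $|x_0 - v| \le D$, and $|x_1 - v| \le (1+c_H) D$, the ratios $w_{x_i} w_v / |x_i - v|^d$ are bounded below by positive constants depending only on $c_H, d$. Hence each edge $x_i v$ is present in $G'$ with probability at least $\theta \underline{c} \cdot c_1$ for a constant $c_1 = c_1(c_H, d, \alpha)$. Each edge cost is at most $L \cdot 8^\mu D^{d\mu}$; requiring both $\cost{x_0 v}, \cost{x_1 v} \le D^{2\mu d}/2$ amounts to the two independent events $L_{x_i v} \le D^{d\mu}/(2 \cdot 8^\mu)$, which hold jointly with probability at least $F_L(D^{d\mu}/(2 \cdot 8^\mu))^2 \ge 1/4$ for $D \ggs \mpar$ (since $F_L$ is a CDF). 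By the conditional-independence property of CIRGs (Definition~\ref{def:CIRG}) given $(V, w_V)$, the four events --- two edges present, two cost variables bounded --- are independent, so
\begin{equation*}
\pr(v \text{ is a cheap common neighbour} \mid V, w_V) \ge c_0 \theta^2
\end{equation*}
for a constant $c_0 = c_0(c_H, \mpar) > 0$.

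For distinct candidates $v \ne v'$ the underlying edges and $L$-variables are disjoint, so the events that $v$ and $v'$ are cheap common neighbours are conditionally independent given $(V, w_V)$. Consequently,
\begin{equation*}
\pr(\neg \calA_{x_0 \star x_1} \mid V, w_V) \le (1 - c_0 \theta^2)^N \le \exp\bigl(-c_0 \theta^2 D^{d(3-\tau)/2 - d\delta/2}\bigr) \le \exp\bigl(-\theta^2 D^{(3-\tau-2\delta)d/2}\bigr),
\end{equation*}
where the last inequality uses $c_0 D^{d\delta/2} \ge 1$ for $D \ggs c_H, \delta$. The only real technical step is verifying the weak-net hypotheses at the critical weight scale $w = D^{d/2}$; absorbing the constant prefactor $c_0$ into the exponent via the extra slack $d\delta/2$ is routine.
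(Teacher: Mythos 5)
Your overall strategy is exactly the paper's: use the weak net to find polynomially many candidate vertices of weight $\asymp D^{d/2}$ in $B_D(x_0)$, show each is a cheap common neighbour with probability $\Theta(\theta^2)$, and exploit conditional independence of the disjoint edge pairs to get the exponential bound. However, there is a genuine gap in your lower bound on the connection probability: you take candidates with $w_v \in [D^{d/2}/2, 2D^{d/2}]$, so that $w_{x_1}w_v/|x_1-v|^d$ can be as small as $\tfrac{1}{2}(1+c_H)^{-d} < 1$, and you then claim $\pr(x_iv \in \calE(G')) \ge \theta\underline{c}\,c_1$ with $c_1 = c_1(c_H,d,\alpha)$. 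Setting~\ref{set:hierarchy-common} explicitly allows $\alpha = \infty$, where the connection probability is governed by~\eqref{eq:alpha_infty}: it is only bounded below by $\underline{c}$ when $w_1w_2/|x|^d \ge c''=1$, and may equal zero when the ratio is below $c'$. So when the ratio sits strictly below $1$, your constant $c_1$ degenerates (it is $(\text{const})^\alpha \to 0$ as $\alpha\to\infty$, and at $\alpha=\infty$ the probability is simply not controlled). Since Claim~\ref{claim:common-neighbour} feeds into Proposition~\ref{prop:path-from-hierarchy} and Theorem~\ref{thm:threshold_regimes}, which are asserted for $\alpha=\infty$, this case cannot be dropped.

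The fix is the one the paper uses: take the candidate weight window to be $[(c_H+1)^dD^{d/2}, 4(c_H+1)^dD^{d/2}]$ rather than $[D^{d/2}/2, 2D^{d/2}]$. Then $w_{x_i}w_v \ge (c_H+1)^dD^d \ge |x_i-v|^d$ for both $i=0,1$, the minimum in~\eqref{eq:connection_prob} is attained at $1$, and the edge probability is at least $\theta\underline{c}$ uniformly in $\alpha$, including $\alpha=\infty$. The weak-net hypotheses still hold at this slightly larger weight (one needs $2(c_H+1)^dD^{d/2} \le D^{d/(\tau-1)-\delta/4}$, which follows from $\tau<3$ and $D\ggs c_H,\delta$), and the rest of your argument — the count $N \ge D^{(3-\tau-\delta)d/2}$, the cost bound via $F_L(\cdot)\ge 1/2$ for $D\ggs c_H,\mpar$ (valid also for $\beta=\infty$), the independence across candidates, and the absorption of constants using the $\delta$-slack in the exponent — goes through unchanged.
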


\section{Budget travel plan: hierarchical bridge-paths}\label{sec:hierarchy}

In this section, we present the main construction for the upper bounds in Theorems~\ref{thm:polylog_regime} and~\ref{thm:polynomial_regime}.  This construction is a ``hierarchy'' of cheap bridging paths connecting $x$ and $y$ that we heuristically described in Section \ref{sec:intro} as the `budget travel plan'. Here we elaborate more on the heuristics before diving into proofs. 

Let $U$ be either polynomial in $|x-y|$ (when proving Theorem \ref{thm:polynomial_regime}) or sub-logarithmic in  $|x-y|$ (when proving Theorem~\ref{thm:polylog_regime}). We first find a 3-edge \emph{bridging-path} $\pi_1=x'aby'$ of cost at most $U$ between two vertices $x'$ and $y'$ with weights $w_{x'}, w_{y'}\in [w_{H_1},4w_{H_1}]$, such that $|x-x'|$ and $|y-y'|$ are both at most $|x-y|^\gamma$ for some $\gamma \in (0,1)$, see Figure~\ref{fig:intuition_hierarchy}(a). 
This reduces the original problem of connecting $x$ and $y$ to two instances of connecting two vertices at distance $|x-y|^{\gamma}$, at the additional cost of $U$. We then work recursively, applying the same procedure to find a bridging-path with endpoints near $x$ and $x'$ and another one with endpoints near $y'$ and $y$, with all four distances at most $|x-y|^{\gamma^2}$, and both bridging-paths having cost at most $U$, obtaining the second level of the hierarchy, see Figure~\ref{fig:intuition_hierarchy}(b). The endpoints of the bridging paths always have weight in $[w_{H_1},4w_{H_1}]$, hence iteration is possible. By repeating the process $R$ times we obtain a ``broken path'' of bridging-paths of cost $U(1+2+\dots +2^R)$ and $2^R$ gaps of length $|x|^{\gamma^R}$ between the bridging-paths. 
We call this ``broken path'' a \textit{hierarchy} after Biskup, who developed the one-edge bridge construction for graph distances in long range percolation in \cite{biskup2004scaling}. 
There are two reasons for having a \emph{bridging-path} instead of a single bridge-edge. Firstly, a typical single bridge-edge $ab$ has very high weights $w_a, w_b$, and even the cheapest edge out of $a$ and $b$ has high cost, which would cause high costs when filling the gaps. So instead we find an a-typical path of the form $(x'aby')$, with all three edges of cost $U/5$, and $x',y'$ having low weight in $[w_{H_1}, 4 w_{H_1}]$, giving a bridging path of length three. 
Secondly, to fill the $2^R$ gaps after $R$ iterations whp, the failure probability of finding a connecting path has to be extremely low, $o(2^{-R})$. 
In most regimes this is impossible via short paths (e.g. length two) and low enough failure probability. Instead, we find weight increasing paths $\pi_{x'x''}$ and $\pi_{y'y''}$ (as in Claim~\ref{claim:cheap-path-to-larger-weight}) of cost at most $U/5$ from each vertex $x'$ and $y'$ of the bridge paths $(x'aby')$ to respective vertices $x'', y''$ still near $a$ and $b$ but with much higher weights in $[w_{H_2}, 4w_{H_2}]$. The concatenated paths $(\pi_{x''x'},x'aby',\pi_{y'y''})$ then themselves form a second hierarchy (now with bridging paths of more than $3$ edges). Connecting all the new $2^R$ gaps whp is possible via paths of length two and cost $U'$, which is polynomial in the distance $|x-y|^{\gamma^{R}}$, using Claim~\ref{claim:common-neighbour}.
In the \emph{polylogarithmic case}, $U$ and $U'$ are sublogarithmic, and the factor $2^R$ is of order $(\log |x-y|)^{\Delta_0 + o(1)}$ and dominates the overall cost. The bottleneck in this regime is the number of gaps, whereas the bridge-paths have negligible costs.
In the \emph{polynomial case}, however, the cost of the first bridge $U=|x-y|^{\eta_0 + o(1)}$ dominates, and all other costs (even with the factors $2^i$) are negligible in comparison, causing the total cost to be polynomial in $|x-y|$. In both cases we could use and optimise level-dependent costs $U_i$, but that does not improve the statements of Theorems \ref{thm:polylog_regime}, \ref{thm:polynomial_regime}.

Let us now formally define the concept of a \emph{hierarchy} including edge-costs. 
In the rest of the paper, the symbol $\sigma$ denotes an index $\sigma=\sigma_1\sigma_2\ldots\sigma_R\in\{0,1\}^R$ indicating the place of a vertex in the hierarchy. This can be viewed as the Ulam-Harris labelling of the leaves of a binary tree of depth $R$, e.g. $\sigma = 1001$ corresponds to the leaf that we reach by starting at the root and then moving to the right child, the left child twice, and the right child again. We denote the string formed by concatenating $\sigma'$ to the end of $\sigma$ by $\sigma \sigma'$. We ``pad'' strings of length less than $R$ by adding  copies of their last digit or its complement via the $T$ and $T^c$ operations we now define (and discuss further below):

\begin{definition}[Binary strings]\label{def:strings}
For $\sigma=\sigma_1\ldots \sigma_i\in\{0,1\}^i$ for some $i\ge 1$, we define $\sigma T:=\sigma_1\ldots \sigma_i\sigma_i\in \{0,1\}^{i+1}$, while $\sigma T_0:=\sigma$, and $\sigma T_k:=(\sigma T_{k-1})T$ for any $k\ge 2$. Let $0_i:=0T_{i-1}$ and $1_i:=1T_{i-1}$ be the strings consisting of $i$ copies of $0$ and $1$, respectively. 
Fix an integer $R \ge 1$. Define the \emph{equivalence relation} $\sim_T$ on $\cup_{i=1}^R\{0,1\}^i$, where $\sigma\sim_T \sigma'$ if either $\sigma T_k=\sigma'$ or $\sigma'T_k=\sigma$ for some $k\ge 0$, with  $\{\sigma\}$ be the equivalence class of $\sigma$. Let
\begin{align*}
    \Xi_{i}:=\{ \sigma \in \cup_{j=i}^R\{0,1\}^j: \sigma_{i-1} \neq \sigma_i, \sigma_{j}=\sigma_i \ \forall j\ge i\}, \qquad \Xi_0:=\{\emptyset\},
\end{align*}
with $\emptyset$ the empty string. We say that $\{\sigma\}$ \emph{appears first on level $i$} if any (the shortest) representative of the class $\{\sigma\}$ is contained in $\Xi_i$. 

For $\sigma=\sigma_1\ldots \sigma_i\in\{0,1\}^i$ for some $i\ge 1$, we define $\sigma T^c:=\sigma_1\ldots \sigma_i(1-\sigma_i)\in \{0,1\}^{i+1}$. For $\sigma\in \Xi_i$, we say that 
$(\sigma T_{j-1})T^c\in \{0,1\}^{i+j}$ is the \emph{level-$(i+j)$ sibling} of $\{\sigma\}$. We say that two strings in level $i$ are \emph{newly appearing cousins} on level $i$ if they are of the forms $\sigma01$ and $\sigma 10$ for some $\sigma\in \{0,1\}^{i-2}$. 
\end{definition}

The inverse of the operator $T$ "cuts off" all but one of the identical last digits from a $\sigma \in \{0,1\}^R$, hence, each class $\{\sigma\}$ has exactly one representative in $\{0,1\}^R$, and the number of equivalence classes is $2^R$. For $i>1$, there are exactly $2^{i-1}$ equivalence classes that first appear on level $i$ (i.e. the shortest representative of the class is in $\Xi_i$), and (since $0, 1\in \Xi_1$) the total number of equivalence classes that appear until level $i$ is $2^i$. 
To show an example of the sibling relationship, e.g. $01111\sim 01$ belongs to $\Xi_2$, and the level-$3$ sibling of $\{01\}$ is $010$, and the level-$(2+j)$ sibling of $\{01\}$ is $01_j0$. Similarly, $010$ and $001$ are newly appearing level-$3$ cousins, and on level $i$, there are $2^{i-2}$ pairs of newly appearing cousins.

The hierarchy embeds each equivalence class $\{\sigma\}\in\cup_{j=1}^{R}\{0,1\}^R$ into the (weighted) vertex set of $G$ so that all cousins are joined by low-cost ``bridge'' paths, all siblings are close in Euclidean space, $0^R=x$ and $1^R=y$ are the vertices we start with, and the weights of all other vertices in the embedding are constrained. 
The Euclidean distances between siblings/cousins will decay doubly-exponentially in $i$. We formalise the embedding in the following definition. 

\begin{definition}[Hierarchy]\label{def:hierarchy}
    Consider Setting~\ref{set:hierarchy-common}.
    Let $y_0, y_1 \in\widetilde\calV$, $U,\olw,c_H\ge1$, and $R\ge2$ be an integer. Consider a set of vertices $\{y_{\sigma}\}_{\sigma\in\{0,1\}^R}$, divided into \emph{levels} $\calL_i:=\{y_{\sigma} \colon \sigma\in\Xi_i\}$ for $i\in \{1,\ldots,R\}$, satisfying that $y_{\sigma}=y_{\sigma'}$ if $\sigma\sim_T \sigma'$.
  We say that $\{y_{\sigma}\}_{\sigma\in\{0,1\}^R}\subset \widetilde \calV$ is a \emph{$(\gamma, U, \olw, c_H)$-hierarchy of depth $R$} with $\calL_1=\{y_0, y_1\}$ if it satisfies the following properties:
    \begin{enumerate}[(H1)]
        \item\label{item:H1} $W_{y_{\sigma}}\in [\olw, 4\olw]$ for all $\sigma\in\{0,1\}^R \setminus \Xi_1$.
        
        \item  \label{item:H2} $|y_{\sigma0}-y_{\sigma1}|\le c_H|y_0-y_1|^{\gamma^i}$ for all  $\sigma\in\{0,1\}^i$, $i=0,\ldots,R-1$.
        
        \item\label{item:H3}  There is a set $\{P_{\sigma} : \sigma\in\{0,1\}^i, 0\le i \le R-2\}$ of paths in $G$ such that for all $0 \le i \le R-2$ and all $\sigma \in \{0,1\}^i$, $P_{\sigma}$ connects $y_{\sigma01}$ to $y_{\sigma10}$. Moreover, we can partition $\bigcup_{\sigma\in\{0,1\}^R} \calE(P_\sigma)$ into sets $\{\calE^-(P_\sigma)\colon\sigma\in\{0,1\}^R\}$ in such a way that for all $\sigma$, we have $\calE^-(P_\sigma) \subseteq \calE(P_\sigma)$ and $\calC(\calE^-(P_\sigma)) \le U$. These paths $P_{\sigma}$ are called \emph{bridges}.
    \end{enumerate}
    Given a set $\calN \subseteq \widetilde{\calV}$, we say that a hierarchy $\{y_\sigma\}_{\sigma \in \{0,1\}^R}$ is \emph{fully contained in $\calN$} if both $\{y_\sigma\}_{\sigma \in \{0,1\}^R} \subseteq \calN$, and every vertex on the paths $P_{\sigma}$ in (H\ref{item:H3}) lies in $\calN$.
\end{definition}

    \begin{figure}[t]
    \centering
    \begin{tikzpicture}[scale=0.65]
    	\draw[dashed] (0,9)--(20,9);
    	\draw[dashed] (0,6)--(20,6);
    	\draw[dashed] (0,3)--(20,3);
    	\draw[dashed] (0,0)--(20,0);
    	\draw (0,7.5) node[left] {$\calL_1$};
    	\draw (0,4.5) node[left] {$\calL_2$};
    	\draw (0,1.5) node[left] {$\calL_3$};
    	
    	\filldraw (1.5,7.5) circle (3pt) node[above] {$y_0=y_{\{0\}}$};
    	\filldraw[red] (1.5,4.5) circle (2pt) node[above] {${\color{black}y_{00}=y_{\{0\}}}$};
    	\filldraw[red] (1.5,1.5) circle (2pt) node[above] {${\color{black}y_{000}=y_{\{0\}}}$};
    		\draw[red] (1.5, 7.5) -- (1.5, 4.5);
    	\draw[red] (1.5, 4.5) -- (1.5, 1.5);
    	\filldraw (19,7.5) circle (3pt) node[above] {$y_1=y_{\{1\}}$};
    		\filldraw[red] (19,4.5) circle (2pt) node[above] {${\color{black}y_{11}=y_{\{1\}}}$};
    		\filldraw[red] (19,1.5) circle (2pt) node[above] {${\color{black}y_{111}=y_{\{1\}}}$};
    			\draw[red] (19, 7.5) -- (19, 4.5);
    		\draw[red] (19, 4.5) -- (19, 1.5);
    	\draw[dotted] (1.5,7.5)--(19,7.5);
    	\draw (10,7.5) node[below] {$=\xi$};
    	
    	\filldraw (7,4.5) circle (3pt) node[above] {$y_{01}=y_{\{01\}}$};
    	\filldraw[red] (7,1.5) circle (2pt) node[above]{${\color{black}y_{011}=y_{\{01\}}}$};
    	\draw[red] (7, 4.5) -- (7, 1.5);
    	
        \filldraw (13,4.5) circle (3pt) node[above] {$y_{10}=y_{\{10\}}$};
         \filldraw[red] (13,1.5) circle (2pt) node[above] {${\color{black}y_{100}=y_{\{10\}}}$};
         \draw[red] (13, 4.5) -- (13, 1.5);
        \draw[dotted] (1.5,4.5)--(7,4.5);
    	\draw (4,4.5) node[below] {$\le 2\xi^{\gamma}$};
    
    	\draw[dotted] (13,4.5)--(19,4.5);
    	\draw (16,4.5) node[below] {$\le 2\xi^{\gamma}$};
    	\draw[thick, blue] (7,4.5) .. controls (10,3) .. (13,4.5);
    	\draw (10,3.5) node[above] {$d_{\calC}\le U$};
    	
    	\filldraw (3.5,1.5) circle (3pt) node[above] {$y_{001}$};
    	\filldraw (5,1.5) circle (3pt) node[above] {$y_{010}$};
    	\filldraw (15,1.5) circle (3pt) node[above] {$y_{101}$};
    	\filldraw (17,1.5) circle (3pt) node[above] {$y_{110}$};
    	\draw[dotted] (1.5,1.5)--(3.5,1.5);
    	\draw (2,1.5) node[below] {\tiny $\le 2\xi^{\gamma^2}$};
    	\draw[dotted] (5,1.5)--(7,1.5);
    	\draw (6,1.5) node[below] {\tiny $\le 2\xi^{\gamma^2}$};
    	\draw[dotted] (13,1.5)--(15,1.5);
    	\draw (14,1.5) node[below] {\tiny $\le 2\xi^{\gamma^2}$};
    	\draw[dotted] (17,1.5)--(19,1.5);
    	\draw (18,1.5) node[below] {\tiny $\le 2\xi^{\gamma^2}$};
    	\draw[thick, blue] (3.5,1.5) .. controls (4.25,1) .. (5,1.5);
    	\draw (4,1) node[below] {\tiny $d_{\calC}\le U$};
    	\draw[thick,blue] (15,1.5) .. controls (16,1) .. (17,1.5);
    	\draw (16,1) node[below] {\tiny $d_{\calC}\le U$};
    \end{tikzpicture}
    \caption{{A schematic representation of a $(\gamma, U, \olw, 2)$-hierarchy of depth $R\!=\!3$.  The horizontal axis represents the ($1$-dimensional, Euclidean) distances between the vertices, while the vertical axis shows the level of the hierarchy. The weights of all vertices except $y_0$ and $y_1$ are in the interval $[\olw, 4\olw]$. On level $1$, only the initial vertices $y_0, y_1$ appear. We "push down" $y_0=y_{00}, y_{1}=y_{11}$ to level $2$ (red)  and we find them their respective level-$2$ sibling vertices $y_{01}$ and $y_{10}$ within Euclidean distance $2\xi^\gamma$, so that there is path of cost at most $U$ between $y_{01}, y_{10}$ (blue). Then, we "push down" to level $3$ all vertices that appeared at or before level $2$, i.e., $y_{000}, y_{011}, y_{100}, y_{111}$ (red), and find for each of them their level-$3$ siblings, i.e., $y_{001}, y_{010}, y_{101}, y_{110}$, so that each vertex is within Euclidean distance $\le 2 \xi^{\gamma^2}$ from its level-$3$ sibling, and that there is a (blue) path of cost at most $U$ between the newly appearing cousins $y_{001}, y_{010}$ and between $y_{101}, y_{110}$. An intuitive representation is in Figure \ref{fig:intuition_hierarchy}.}  
    }
    \label{fig:hierarchy}
    \end{figure}
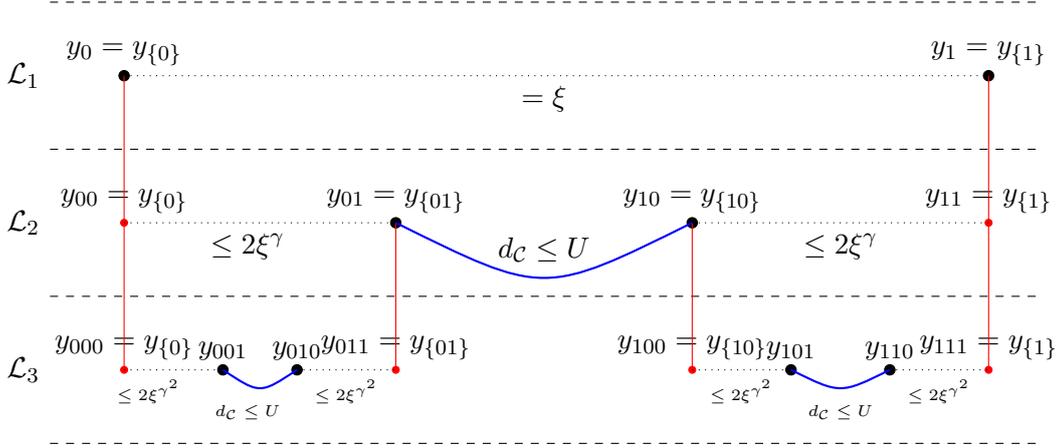

Condition~\emph{(H\ref{item:H3})} is slightly weaker than requiring each bridge to have cost at most $U$. We shall construct the hierarchy via an iterative construction in Def.~\ref{def:iter-construct}, using one round to embed each level $\calL_i$. We shall use Prop.~\ref{prop:multi-round-exposure} to estimate the success probability of the whole construction, which requires that we use marginal costs, and this gives the definition of $\calE^-(P_\sigma)$ in~\eqref{eq:marginal-cost}. 
Using marginal costs causes no problem, since our goal is to find a path $\pi$ between $y_0$ and $y_1$ of low cost. A path $\pi$ uses every edge in it once, so all the bridges $P_\sigma$ together will contribute to the cost of $\pi$ at most 
\begin{align*}
\calC(\pi) = \sum_{\sigma\in\{0,1\}^i, 0\le i \le R-2} \calC(\calE^-(P_\sigma)) \le (2^{R-1}-1) U.
\end{align*}
Later we also need that the the hierarchy stays close to the straight line segment between the starting vertices. To track this, we have the following definition:
\begin{definition}\label{def:deviation}
    Given $u,v \in \R^d$, let $S_{u,v}$ denote the line segment between $u,v$. For $x\in \R^d$ we define the deviation $\mathrm{dev}_{uv}(x):=\min\{|x-y|: y\in S_{u,v}\}$. Given a set of vertices $\calH$ in $\R^d$, we define the \emph{deviation of $\calH$ from $S_{uv}$} as $\mathrm{dev}_{uv}(\calH):=\max\{\mathrm{dev}_{uv}(x) \colon x\in \calH \}$. Finally, for a path $\pi=(x_1\dots x_k)$, let  the \emph{deviation of $\pi$} be $\mathrm{dev}(\pi) := \max\{\mathrm{dev}_{x_1x_k}(x_i) \colon i \in [k]\}$, i.e., the deviation of its vertex set from the segment between the endpoints.
\end{definition}

Next we describe the procedure used to find the hierarchy in $G$. We iteratively embed the levels $\Xi_i$ into the vertex set $\widetilde\calV$. We first embed $\Xi_1$, by setting $0\mapsto y_0$ and $1\mapsto y_1$, i.e., $\calL_1:=\{y_0, y_1\}$, the two given starting vertices. Observe that this embedding trivially satisfies condition \emph{(H\ref{item:H2})} for $i=0$, (i.e., $\sigma=\emptyset$ in \emph{(H\ref{item:H2})}) for all $c_H\ge1$. Conditions \emph{(H\ref{item:H1})} and \emph{(H\ref{item:H3})} do not concern $y_0$ and $y_1$. In round $i+1$ we then embed all $\sigma \in \Xi_{i+1}$. 
Given the embedding of $\cup_{j\le i}\Xi_j$ of vertices in level $\cup_{j\le i}\calL_j$, we will embed $\sigma\in \Xi_{i+1} $ by finding $\{y_{\sigma}\}_{\sigma\in\Xi_{i+1}}=\calL_{i+1}$ as follows. For each sibling pair $\sigma 0, \sigma 1 \in \{0,1\}^i$, by the equivalence relation $\sim_T$ in Def.~\ref{def:strings}, $y_{\sigma 00} = y_{\sigma 0}$ and $y_{\sigma 11} = y_{\sigma 1}$. We then search for a pair of vertices $a$ and $b$ close to $y_{\sigma 00}$ and $y_{\sigma 11}$ respectively, so that $ab$ is a low-cost edge (typically covering a large Euclidean distance), and both $a$ and $b$ have a low-cost edge to a nearby vertex with weight in $[\olw,4\olw]$; we embed these latter two vertices as $y_{\sigma 01}$ and $y_{\sigma10}$. The path $(y_{\sigma01}aby_{\sigma10})$ then constitutes the bridge-path $P_{\sigma}$ required by \emph{(H\ref{item:H3})}. See Figure \ref{fig:hierarchy} for a visual explanation. We formalise our goal for this iterative construction of bridges in the following definition and remark. 

\begin{definition}[Valid bridges]\label{def:valid-bridges}
    Consider Setting~\ref{set:hierarchy-common} and the notion of bridges in Definition~\ref{def:hierarchy}, and let $S$ be a set of edges of $G$. For any $D,U>0, w\ge 1$, we say that a path $P \subseteq \calN$ with endpoints $y,y'$ is a \emph{$(D,U,w)$-valid} bridge for $x_0$ and $x_1$ with respect to $S$ if:
    \begin{align}
        &w_y, w_{y'} \in [w,4w],  \label{eq:bridge-condition-b1}\\
        &|x_0-y|\le D, \quad \mbox{and}\quad |x_1-y'| \le D, \label{eq:bridge-condition-b2}\\
        & \calC(P\setminus S)\le U. \label{eq:bridge-condition-b3}
    \end{align}
\end{definition}

\begin{observation}\label{obs:valid-bridges}
    Consider Setting~\ref{set:hierarchy-common}. Fix any ordering on $\{0,1\}^R$, and let $\{y_\sigma\}_{\sigma \in \{0,1\}^R} \subseteq \calN$, $U > 0$, and $\olw \ge 1$. For all $0 \le i \le R-2$ and $\sigma \in \{0,1\}^i$, set $y_{\sigma'} := y_\sigma$ whenever $\sigma'\sim_T\sigma$. For all $i \le R-2$, let $D_{i} =|y_0-y_1|^{\gamma^{i}}$. 
  Suppose that for all $0 \le i \le R-2$ and all $\sigma \in \{0,1\}^i$, there exists a bridge $P_{\sigma}$ with endpoints $y_{\sigma01}$ and $y_{\sigma10}$ that is $(c_HD_{i+1}, U, \olw)$-valid for $y_{\sigma0},y_{\sigma1} \in \{0,1\}^{i+1}$ with respect to $S=\bigcup_{\sigma' < \sigma}\calE(P_{\sigma'})$. Then $\{y_\sigma\}_{\sigma \in \{0,1\}^R}$ is a $(\gamma,U,\olw, c_H)$-hierarchy of depth $R$ with first level $\{y_0,y_1\}$ (i.e., satisfying Definition \ref{def:hierarchy}).
\end{observation}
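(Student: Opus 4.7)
The plan is to verify the three defining properties (H\ref{item:H1})--(H\ref{item:H3}) of Definition~\ref{def:hierarchy} by directly unpacking the three validity conditions \eqref{eq:bridge-condition-b1}--\eqref{eq:bridge-condition-b3}. The only non-trivial bookkeeping comes from the equivalence relation $\sim_T$, which determines how vertex labels are reused across levels.

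For (H\ref{item:H1}) I would observe that every $\sigma\in\{0,1\}^R\setminus\Xi_1$ satisfies $\sigma\sim_T\sigma^\star$ for some $\sigma^\star\in\Xi_i$ with $i\ge 2$. Since $i\ge 2$, Definition~\ref{def:strings} forces $\sigma^\star$ to have the form $\sigma''01$ or $\sigma''10$ for some $\sigma''\in\{0,1\}^{i-2}$. In either case $y_{\sigma^\star}$ is an endpoint of the bridge $P_{\sigma''}$ supplied by hypothesis, so \eqref{eq:bridge-condition-b1} yields $W_{y_\sigma}=W_{y_{\sigma^\star}}\in[\olw,4\olw]$.

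For (H\ref{item:H2}) the case $i=0$ reduces to $|y_0-y_1|\le c_H|y_0-y_1|$, which holds since $c_H\ge 1$ by the preamble of Definition~\ref{def:hierarchy}. For $i\ge 1$ and $\sigma\in\{0,1\}^i$, I would write $\sigma=\sigma'\tau$ with $\sigma'\in\{0,1\}^{i-1}$, $\tau\in\{0,1\}$, and split on $\tau$. If $\tau=0$, then $\sigma 0=\sigma'00\sim_T\sigma'0$, so $y_{\sigma 0}=y_{\sigma'0}$ and $y_{\sigma 1}=y_{\sigma'01}$; applying \eqref{eq:bridge-condition-b2} to $P_{\sigma'}$, which by hypothesis is $(c_HD_i,U,\olw)$-valid for $y_{\sigma'0},y_{\sigma'1}$, gives $|y_{\sigma 0}-y_{\sigma 1}|=|y_{\sigma'0}-y_{\sigma'01}|\le c_HD_i=c_H|y_0-y_1|^{\gamma^i}$. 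The case $\tau=1$ is symmetric, using the other inequality in \eqref{eq:bridge-condition-b2}.

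For (H\ref{item:H3}) the bridges $P_\sigma$ themselves are supplied by the hypothesis, so only the partition of $\bigcup_{\sigma}\calE(P_\sigma)$ remains to be defined. Using the prescribed ordering on $\{0,1\}^R$, I would set $\calE^-(P_\sigma):=\calE(P_\sigma)\setminus\bigcup_{\sigma'<\sigma}\calE(P_{\sigma'})$. These sets are pairwise disjoint by construction, their union equals $\bigcup_\sigma\calE(P_\sigma)$, and the cost bound $\calC(\calE^-(P_\sigma))\le U$ is precisely the validity condition \eqref{eq:bridge-condition-b3} applied with $S=\bigcup_{\sigma'<\sigma}\calE(P_{\sigma'})$. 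I do not foresee a genuine obstacle here: the statement is essentially a translation from the bridge-validity language of Definition~\ref{def:valid-bridges} into the hierarchy language of Definition~\ref{def:hierarchy}, and the only point requiring care is the identification of $y_\sigma$ across levels via $\sim_T$, handled cleanly by the case split on the last bit of $\sigma$.
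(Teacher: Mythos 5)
Your proof is correct and follows essentially the same route as the paper's: (H\ref{item:H1}) from \eqref{eq:bridge-condition-b1}, (H\ref{item:H2}) from \eqref{eq:bridge-condition-b2} via the same index shift through $\sim_T$ (you go from level $i$ down to $i-1$ where the paper goes from $i$ up to $i+1$, which is the identical bookkeeping), and (H\ref{item:H3}) via the identical partition $\calE^-(P_\sigma):=\calE(P_\sigma)\setminus\bigcup_{\sigma'<\sigma}\calE(P_{\sigma'})$. Your treatment of (H\ref{item:H1}) is in fact slightly more explicit than the paper's one-line dismissal, since you verify that every class outside $\Xi_1$ has its shortest representative ending in $01$ or $10$ and is therefore a bridge endpoint.
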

\begin{proof}
Conditions \emph{(H\ref{item:H1})} of Definition~\ref{def:hierarchy} is immediate from the weight constraint \eqref{eq:bridge-condition-b1}. 
\emph{(H\ref{item:H2})} holds for the following reason. For $\sigma0, \sigma1 \in \{0,1\}^{i+1}$, $P_\sigma$ being a $(c_HD_{i+1}, U, \olw)$ valid bridge for $y_{\sigma0}, y_{\sigma1}$ implies by \eqref{eq:bridge-condition-b2} and $y_{\sigma0}=y_{\sigma00}, y_{\sigma1}=y_{\sigma11}$ that both $|y_{\sigma00}- y_{\sigma01}|, |y_{\sigma10}- y_{\sigma11}|$ are at most $c_H|y_0-y_1|^{\gamma^{i+1}}$ for all $\sigma\in \{0,1\}^i$. Setting now either $\sigma':=\sigma0$ or $\sigma':=\sigma1$, this is equivalent to 
$|y_{\sigma'0}-y_{\sigma'1}|\le c_H|y_0-y_1|^{\gamma^{i+1}}$ for all $\sigma'\in \{0,1\}^{i+1}, i\ge 0$, and this exactly corresponds to \emph{(H\ref{item:H2})}, since the inequality in \emph{(H\ref{item:H2})} holds for $i=0$ trivially.
Finally, condition \emph{(H\ref{item:H3})} follows from \eqref{eq:bridge-condition-b3} by setting $\calE^-(P_{\sigma}) := \calE(P_\sigma) \setminus \bigcup_{\sigma' < \sigma}\calE(P_{\sigma'})$.
\end{proof}

We now lower-bound the probability of finding a valid bridge between two fixed vertices. 

\begin{lemma}[3-edge bridges]\label{lem:triple-edge-bridge}
    Consider Setting~\ref{set:hierarchy-common}. 
    Let $z \in [0,d]$ and let $c_H,\eta \ge 0$.
    Suppose that $\delta\lls \gamma,\eta,z,c_H,\mpar$ and that $D\ggs \gamma,\eta,z,c_H,\delta,w_0$. Suppose further that $D^\gamma \in[4^{1/d}(\log\log\xi\sqrt{d})^{16/\delta}, \xi\sqrt{d} ]$ and that $\theta D^{\Lambda(\eta,z)-\sqrt{\delta}} > 1$. Suppose that $x_0, x_1 \in \calN$ satisfy $|x_0-x_1| \le c_HD$, and let $\ulw \ggs \delta,w_0$ satisfy $\ulw\in[(\log\log\xi\sqrt{d})^{16d/\delta}, D^{\delta}]$. Let $\calA(x_0,x_1)$ denote the event that $G'$ contains a bridge $P$ that is $(2D^\gamma,3\ulw^{3\mu}D^{\eta},\ulw)$-valid for $x_0$ and $x_1$ with respect to $\emptyset$, and $\mathrm{dev}_{x_0x_1}(P)\le 2D^\gamma$. Finally, suppose that
    \begin{align}\label{eq:p(.)}
        p(D, \ulw, \theta, \eta, z):= \theta \ulw^{-(\tau-1)}\left(D^{d\gamma} \wedge \ulw^2 D^{z/2}\right)^{1-\delta}\left(1 \wedge \ulw^{\mu\beta}D^{\eta\beta-\mu\beta z/2}\right) \ge 20^{\tau+\mu\beta}.
    \end{align}
    Then, with $\Lambda(\eta,z)$ from~\eqref{eq:Lambda-def},
    \begin{align}\label{eq:bridge-proba}
    	   \pr\big(\calA(x_0,x_1) \mid V,w_V\big)
    	   \ge 1 - 3\exp\left(-\big(\theta D^{\Lambda(\eta,z)-\sqrt{\delta}}\big)^{1/4}\right).
    \end{align}
   With the convention that $\infty \cdot 0 = 0$ in~\eqref{eq:Lambda-def}, the statement is also valid when $\alpha=\infty$ or $\beta=\infty$.
\end{lemma}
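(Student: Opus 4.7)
The strategy is to realise the bridge $P=(x_0',a,b,x_1')$ through three rounds of edge-exposure, applying Proposition~\ref{prop:multi-round-exposure} to $G'$ viewed as a CIRG (via Remark~\ref{remark:cirg}) with $\theta_1=\theta_2=\theta_3=1/3$. This yields independent auxiliary graphs $H_1,H_2,H_3$, each a $(\theta/3)$-percolation of $\calG$ on the fixed realisation $(V,w_V)$; the constant factor $1/3$ is absorbed into the success-probability constants of the building-block claims from Section~\ref{sec:lemmas}. The multi-round-exposure bound~\eqref{eq:multi-round-exp-goal} then reduces~\eqref{eq:bridge-proba} to controlling the failure of each round separately.

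Round~1 builds a pool of suitable ``launch pads'' on the $x_0$-side. For each candidate $a\in\calZ(x_0)$ defined by~\eqref{eq:calZ-def} with parameters $(\gamma,z,\ulw)$, I apply Claim~\ref{claim:cheap-edge-to-nice-vertex} to $H_1$ with $x=a$, $M=\ulw D^{z/2}$, $K=2\ulw$, radius $D^\gamma$, and budget $U=\ulw^{3\mu}D^\eta$, asking for a neighbour $x_0^{(a)}\in\calN\cap B_{D^\gamma}(a)$ of weight in $[\ulw,4\ulw]$ and edge-cost at most $\ulw^{3\mu}D^\eta$. Hypotheses~\eqref{eq:KM-xi}--\eqref{eq:K-D-M} follow from the standing assumptions $\ulw\in[(\log\log\xi\sqrt{d})^{16d/\delta},D^\delta]$ and $D^\gamma\in[4^{1/d}(\log\log\xi\sqrt{d})^{16/\delta},\xi\sqrt{d}]$ together with $\delta\lls\gamma,z,\mpar$. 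Since the tests for distinct $a$'s involve pairwise disjoint edge-sets they are conditionally independent given $(V,w_V)$, and each succeeds with probability at least $1-\exp(-cp(D,\ulw,\theta,\eta,z))$ for an absolute constant $c>0$. As $p\ge 20^{\tau+\mu\beta}$ is a large constant and $|\calZ(x_0)|\ge\ulw^{-(\tau-1)}D^{d\gamma-z(\tau-1)/2-O(\delta)}$ by the weak-net bound~\eqref{eq:net-defining-crit-eps}, a Chernoff inequality gives $|Z_x|\ge|\calZ(x_0)|/4$ except on an event far smaller than $\exp(-(\theta D^{\Lambda(\eta,z)-\sqrt\delta})^{1/4})$. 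Round~2 is the mirror image on the $x_1$-side in $H_2$, producing $Z_y\subseteq\calZ(x_1)$ with $|Z_y|\ge|\calZ(x_1)|/4$ and an equally strong error bound.

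Round~3 bridges $Z_x$ to $Z_y$ by a single long edge. Conditioning on the outcomes of the first two rounds and invoking Claim~\ref{claim:cheap-bridge} on $H_3$ with the sets $Z_x,Z_y$, I obtain a pair $(a,b)\in Z_x\times Z_y$ with $ab\in\calE(H_3)$ and $\calC(ab)\le(\ulw/10)^{3\mu}D^\eta$, except on an event of probability at most $\exp(-(\theta/3)\ulw^{-2(\tau-1)}D^{\Lambda(\eta,z)-2\gamma d\delta/3})$ by~\eqref{eq:lem-cheap-bridge}. Under $\ulw\le D^\delta$ and for $\delta$ small enough relative to $\gamma,\eta,z,\mpar$, this exponent dominates $(\theta D^{\Lambda(\eta,z)-\sqrt\delta})^{1/4}$, so combining the three rounds via~\eqref{eq:multi-round-exp-goal} yields the target tail~\eqref{eq:bridge-proba}.

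Assembling the three edges into $P=(x_0^{(a)},a,b,x_1^{(b)})\subseteq\calN$, the weight constraint~\eqref{eq:bridge-condition-b1} is immediate; the proximity bound~\eqref{eq:bridge-condition-b2} and the deviation bound $\mathrm{dev}_{x_0x_1}(P)\le 2D^\gamma$ both follow from the triangle inequality since $x_0^{(a)},a\in B_{2D^\gamma}(x_0)$ and $b,x_1^{(b)}\in B_{2D^\gamma}(x_1)$; and the actual-cost bound $\calC(P)\le 3\ulw^{3\mu}D^\eta$ implies~\eqref{eq:bridge-condition-b3} \emph{a fortiori} for marginal cost. The main technical burden will be parameter bookkeeping: verifying simultaneously all constraints in Claims~\ref{claim:cheap-bridge} and~\ref{claim:cheap-edge-to-nice-vertex} under the single set of hypotheses of the lemma, tracking how the $1/3$ factor from the percolation splitting combines with the pre-existing $\theta$, and confirming that the clean round-$3$ tail genuinely absorbs the (much smaller) Chernoff tails from rounds~$1$ and~$2$ into the single bound $3\exp(-(\theta D^{\Lambda(\eta,z)-\sqrt\delta})^{1/4})$.
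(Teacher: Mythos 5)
Your proof is essentially correct but takes a genuinely different route from the paper. The paper does \emph{not} invoke Proposition~\ref{prop:multi-round-exposure} inside Lemma~\ref{lem:triple-edge-bridge}: it works directly on $G'$ with the full $\theta$, and obtains the needed independence by choosing the launch-pad weight window $I^+=[5\ulw D^{z/2},20\ulw D^{z/2}]$ disjoint from the spoke-target window $I^-=[\ulw,4\ulw]$. Then the spoke events $\calA(v)$ (edges from $I^+$ to $I^-$) and the bridging edge (between two vertices in $I^+$) are functions of disjoint edge sets, hence conditionally independent given $(V,w_V)$, and the whole lemma reduces to one application of Claim~\ref{claim:cheap-edge-to-nice-vertex} per candidate plus one application of Claim~\ref{claim:cheap-bridge}. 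Your three-round exposure achieves the same decoupling more systematically, at the cost of replacing $\theta$ by $\theta/3$ in every building block and of formalising the rounds as an iterative cost construction (encoding the entire set $Z_x$ as the chosen tuple $\calE_1$ is legitimate but cumbersome). The $1/3$ is indeed absorbable: $p/3\ge 20^{\tau+\mu\beta-1}$ still makes each per-vertex success probability exceed $1/2$, and $1/3$ and $\ulw^{-2(\tau-1)}\ge D^{-2\delta(\tau-1)}$ are swallowed by the slack between $2\gamma d\delta/3$ and $\sqrt{\delta}$ in the round-3 exponent. The paper's route buys a cleaner constant and avoids the bookkeeping of Definition~\ref{def:iter-construct}; yours buys uniformity with the later hierarchy construction.

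One concrete point you must repair: you take the launch pads from $\calZ(x_0)$ as in~\eqref{eq:calZ-def} with parameter $\ulw$, i.e.\ weights in $[\ulw D^{z/2}/2,\,2\ulw D^{z/2}]$. When $z=0$ this window is $[\ulw/2,2\ulw]$, which overlaps the spoke-target window $[\ulw,4\ulw]$. Then a single potential edge $uv$ with both endpoints in the overlap can serve as the spoke of $u$ and as the spoke of $v$, so your assertion that "the tests for distinct $a$'s involve pairwise disjoint edge-sets" fails, and with it the independence underlying the Chernoff bound for $|Z_x|$; the overlap also muddies the requirement that the outer vertices of $P$ have weight in $[\ulw,4\ulw]$ while being distinct from $a,b$. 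The fix is exactly the paper's: shift the launch-pad window up by a constant factor (e.g.\ use $\ulw_{\ref{claim:cheap-bridge}}=10\ulw$, giving $I^+=[5\ulw D^{z/2},20\ulw D^{z/2}]$) so that $I^+\cap I^-=\emptyset$ for every $z\in[0,d]$. With that change, and after verifying the hypotheses of Claims~\ref{claim:cheap-edge-to-nice-vertex} and~\ref{claim:cheap-bridge} for the shifted parameters, your argument goes through.
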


\begin{proof}[Proof of Lemma \ref{lem:triple-edge-bridge}]
First, when $\beta=\infty$ and $\eta < \mu z$ then $\Lambda(\eta,z)=-\infty$ in \eqref{eq:Lambda-def}, and the condition $\theta D^{\Lambda(\eta,z)-\sqrt{\delta}} > 1$ cannot hold. Hence, we can wlog assume that if $\beta=\infty$ then $\eta \ge \mu z$.
We will first apply Claim~\ref{claim:cheap-edge-to-nice-vertex} to show that most vertices of weight roughly $\ulw D^{z/2}$ close to $x_0$ and $x_1$ are ``good'', i.e., they have a cheap edge to a vertex with weight in $[\ulw,4\ulw]$. We will then apply Claim~\ref{claim:cheap-bridge} to find a cheap edge between some pair of good vertices.

Formally, let $I^+ = [5\ulw D^{z/2}, 20\ulw D^{z/2}]$ and $I^- = [\ulw, 4\ulw]$. Note that $I_+\cap I_-=\emptyset$ for all $z\in[0,d]$. As in Claim~\ref{claim:cheap-edge-to-nice-vertex}, for all $v \in \calN$ let $\calA_{2\ulw,D^\gamma,\ulw^{3\mu}D^{\eta}}(v)=:\calA(v)$ be the event that there is an edge of cost at most $\ulw^{3\mu}D^{\eta}$ in $G'$ from $v$ to a vertex $y \in \calN \cap (B_{D^\gamma}(v) \times I^-)$. Let
\begin{align}\label{eq:Zi-new}
    Z_i := \big\{v \in \calN \cap (B_{D^\gamma}(x_i) \times I^+) \colon \calA(v)\mbox{ occurs}\big\}, \qquad i \in \{0,1\}.
\end{align}
As in~\eqref{eq:N-gamma-eta-z} of Claim~\ref{claim:cheap-bridge}, let $N_{\eta,\gamma,z,10\ulw}(Z_0,Z_1)$ be the set of all edges between $Z_0$ and $Z_1$ of cost at most $\ulw^{3\mu}D^\eta$ and then $I^+$ exactly corresponds to the weight interval $[5\ulw D^{z/2}, 20\ulw D^{z/2}]$ as required for $Z_0\subseteq \calZ(x_0), Z_1\subseteq \calZ(x_1)$ in \eqref{eq:calZ-def}. With $Z_i$ in \eqref{eq:Zi-new}, we now show that 
\begin{equation}\label{eq:triple-bridge-main}
    \pr\big(\calA(x_0,x_1) \mid V,w_V\big) \ge \pr\big(N_{\eta,\gamma,z,\ulw}(Z_0,Z_1) \ne \emptyset \mid V,w_V\big).
\end{equation}
Indeed, suppose there exists $(a,b) \in N_{\eta,\gamma,z,10\ulw}(Z_0,Z_1)$. Since $a \in Z_0$, there exists $x \in \calN \cap (B_{D^\gamma}(a)\times I^-)$ such that $(x,a)$ is an edge of cost at most $\ulw^{3\mu}D^\eta$. Likewise, since $b \in Z_1$, there exists $y \in \calN \cap (B_{D^\gamma}(b) \times I^-)$ such that $(y,b)$ is an edge of cost at most $\ulw^{3\mu}D^\eta$. Since $a \in B_{D^\gamma}(x_0)$ and $b \in B_{D^\gamma}(x_1)$, by the triangle inequality,  $x \in B_{2D^\gamma}(x_0)$ and $y \in B_{2D^\gamma}(x_1)$. Thus $xaby$ is a $(2D^\gamma, 3\ulw^{3\mu}D^\eta, \ulw)$-valid bridge with $\mathrm{dev}_{x_0x_1}\le 2D^\gamma$, as required by $\calA(x_0,x_1)$, showing \eqref{eq:triple-bridge-main}.

Now, for each $i \in \{0,1\}$, using \eqref{eq:calZ-def}, we set $\calZ(x_i) = \calN \cap (B_{D^\gamma}(x_i) \times I^+)$. For \eqref{eq:lem-cheap-bridge} to hold we need that $|Z_i| \ge |\calZ(x_i)|/4$. We prove this by showing that any given vertex in $v \in \calZ(x_i)$ lies in $Z_i$ with probability at least $1/2$, by recalling that in \eqref{eq:Zi-new}, $\calA(v)=\calA_{2\ulw,D^\gamma,\ulw^{3\mu}D^{\eta}}(v)=\calA_{K,D,U}(v)$ in Claim \ref{claim:cheap-edge-to-nice-vertex}. Hence we set $K = 2\ulw, M = 10\ulw D^{z/2}, U = \ulw^{3\mu}D^{\eta}$, $D_{\ref{claim:cheap-edge-to-nice-vertex}}=D^\gamma$, and all other variables to match their current values. We check the requirements of Claim~\ref{claim:cheap-edge-to-nice-vertex}:

By hypothesis in the statement of Lemma \ref{lem:triple-edge-bridge}, $\delta$ is small; $\ulw,D\ggs \delta,w_0$, and $D\ggs \gamma$. Since $M,K \ge \ulw$, it follows that $M,K,D^\gamma \ggs \delta, w_0$, as required above \eqref{eq:KM-xi}. Condition \eqref{eq:KM-xi} itself holds since  $(D^\gamma \wedge (20\ulw^2 D^{z/2})^{1/d}/4^{1/d} \ge D^\gamma/4^{1/d} \wedge \ulw^{1/d} \ge (\log\log\xi\sqrt{d})^{16/\delta}$ by hypothesis, and $(D^\gamma \wedge (20\ulw^2 D^{z/2})^{1/d}/4^{1/d} \le D^\gamma \le \xi\sqrt{d}$ by hypothesis. Condition \eqref{eq:K-D-M} holds since $K=2\ulw \le 2D^{\delta}$ by hypothesis, so since $\delta\lls \gamma$ and $D\ggs\delta$, we have $2\ulw \le D^{\gamma(d/(\tau-1)-\delta)}$, and similarly since $\tau\in(2,3)$ and $\delta \lls \mpar$, $K=2\ulw \le (2\ulw)^{1/(\tau-2+\tau\delta)} \le (10\ulw D^{z/2})^{1/(\tau-2+\delta\tau)}=M^{1/(\tau-2+\delta \tau)}$. Finally, if $\beta=\infty$, then below \eqref{eq:K-D-M} we need to check $U(KM)^{-\mu}\ggs \mpar$. Since wlog we assumed that  $\eta \ge \mu z$, clearly $\eta \ge \mu z/2$. Therefore, $U(KM)^{-\mu}=\ulw^{3\mu}D^{\eta}(20\ulw^2 D^{z/2})^{-\mu} = 20^{-\mu}\ulw^{\mu}D^{\eta-\mu z/2} \ge (\ulw/20)^{\mu}$, which is large since $\ulw$ is large by hypothesis.
Hence, all conditions of Claim~\ref{claim:cheap-edge-to-nice-vertex} are met and \eqref{eq:cheap-edge-to-nice-vertex} applies, and substituting $K = 2\ulw, M = 10\ulw D^{z/2}, U = \ulw^{3\mu}D^{\eta}$ there, the exponent on the rhs of \eqref{eq:cheap-edge-to-nice-vertex} in our setting becomes 
\[
    -2^{-(\tau-1)} \theta \ulw^{-(\tau-1)}\big(D^{d\gamma} \wedge 20\ulw^2D^{z/2}\big)^{1-\delta}\big(1 \wedge (\ulw/20)^{\mu\beta}D^{\eta\beta-\mu\beta z/2}\Big),
\]
where we recognise that this matches $p(\cdot)$ from \eqref{eq:p(.)} up to a factor of at most $20^{1-(\tau+\mu\beta)}$. Since we assumed $p(\cdot)\ge 20^{\tau+\mu\beta}$ in \eqref{eq:p(.)}, for any vertex $v \in \calZ(x_i)=\calN \cap (B_{D^\gamma}(x_i) \times I^+)$, 
\begin{equation}\label{eq:triple-bridge-binom-prob}
    \pr\big(v \in Z_i \mid V,w_V\big) = \pr(\calA(v) \mid V, w_V) \ge 1 - e^{-20} > 1/2.
\end{equation}
Since $I^+$ and $I^-$ are disjoint, the events $\calA(v), \calA(v')$ are functions of disjoint edge sets and are therefore mutually independent conditioned on $(V, w_V)$. Hence, for $i \in \{0,1\}$, $|Z_i|$ is dominated below by a binomial variable with mean $|\calZ(x_i)|/2$. By the standard Chernoff bound (Theorem~\ref{thm:chernoff} with $\lambda=1/2$), 
\begin{equation}\label{eq:triple-bridge-chernoff}
    \pr\big(|Z_i| < |\calZ(x_i)|/4 \mid V,w_V\big) \le e^{-|\calZ(x_i)|/16}.
\end{equation}
To bound $|\calZ(x_i)|$ below in~\eqref{eq:triple-bridge-chernoff}, we will use that $x_0,x_1 \in \calN$ and $\calN$ is a weak $(\delta/4,w_0)$-net as assumed in Setting \ref{set:hierarchy-common}, and apply \eqref{eq:net-defining-crit-eps}. We check if the conditions to apply \eqref{eq:net-defining-crit-eps} in Def.~\ref{def:weak-net} hold. Since $\calZ(x_i)=\calN \cap (B_{D^\gamma}(x_i) \times [5\ulw D^{z/2}, 20\ulw D^{z/2}])$, we set there $r=D^\gamma$ and $w=10\ulw D^{z/2}$, and we must bound $10\ulw D^{z/2}$ above and below. Recall that by hypothesis, $\theta D^{\Lambda(\eta,z)-\sqrt{\delta}} > 1$; this implies $\Lambda(\eta,z) > 0$ and hence $2d\gamma > z(\tau-1)$ using \eqref{eq:Lambda-def}. Since $\delta\lls \gamma,z$, we may therefore assume $z/2 \le d\gamma/(\tau-1) - 2\delta$. Also, we assumed $\ulw \le D^\delta$, so $10\ulw D^{z/2} \le 10D^{d\gamma/(\tau-1)-\delta} \le (D^{\gamma})^{d/(\tau-1)-\delta/4}$, where the second inequality holds since $\gamma<1$ and $D\ggs \delta$. Moreover, since $\ulw\ggs w_0$, we have $10\ulw D^{z/2} \ge w_0$. Thus all conditions in Def.~\ref{def:weak-net} are met, and \eqref{eq:net-defining-crit-eps} here becomes
\[
    |\calZ(x_i)| \ge D^{d\gamma(1-\delta/4)}\ell(10\ulw D^{z/2})(10\ulw D^{z/2})^{-(\tau-1)}\ge D^{d\gamma(1-\delta/4) - (\tau-1+\delta/4)(\delta + z/2)},
\]
where the second inequality holds by Potter's bound since $D^{\delta} \ge \ulw\gg \delta$. 
The exponent of $D$ on the right-hand side is
\[
    d\gamma-(\tau-1)z/2-\delta(d\gamma/4+z/8+\tau-1+\delta/4) \ge  d\gamma/2-(\tau-1)z/2 \ge \Lambda(\eta,z)/4,
\]
where we used $\delta\lls \gamma$ and then the formula of $\Lambda(\eta,z)$ in~\eqref{eq:Lambda-def}.
So, $|\calZ(x_i)| \ge D^{\Lambda(\eta,z)/4}$ in~\eqref{eq:triple-bridge-chernoff}, and since $D\ggs \delta$,
\begin{equation}\label{eq:triple-bridge-N-bound}
    \pr\big(|Z_i| < |\calZ(x_i)|/4 \mid V,w_V\big) \le \exp(-D^{\Lambda(\eta,z)/4}/16)\le \exp(-(\theta D^{\Lambda(\eta,z) - \sqrt{\delta}})^{1/4}).
\end{equation}
Returning to the event $\calA(x_0, x_1)$ in \eqref{eq:triple-bridge-main},
let $\calA'$ be the event that $|Z_i| \ge |\calZ(x_i)|/4$ for each $i \in \{0,1\}$, and suppose that $\calA'$ occurs. We apply Claim~\ref{claim:cheap-bridge}, conditioned on the values of $Z_0$ and $Z_1$, to lower-bound the rhs of~\eqref{eq:triple-bridge-main}. In the statement of Claim~\ref{claim:cheap-bridge}, we will take $x = x_0$, $y = x_1$, $Z_x = Z_0$, $Z_y = Z_1$, $\ulw_{\ref{claim:cheap-bridge}} = 10\ulw$, and all other variables to match their current values. The event $N_{\eta,\gamma,z,10\ulw}(Z_0,Z_1) \ne \emptyset$ of~\eqref{eq:triple-bridge-main} requires a low-cost edge between the set $Z_0$ and $Z_1$, connecting vertices with weights in $I_+$. Given $(V, w_V)$, the existence of such an edge $(u,v)$ is independent of the events $\calA(u), \calA(v)$ since in $\calA(\cdot)$ the other endpoint of the edge has weight  $I_-$, and $I_+\cap I_-=\emptyset$. We now check the requirements of Claim~\ref{claim:cheap-bridge}: it requires $z\in[0,d]$ that we assumed, and $2d\gamma > z(\tau-1)$. The latter holds since here we assume $\theta D^{\Lambda(\eta,z)-\sqrt{\delta}} > 1$ implying that $\Lambda(\eta,z) > 0$, so $2d\gamma > z(\tau-1)$ then follows from \eqref{eq:Lambda-def}. Second, here we assume $\ulw \ge (\log\log\xi\sqrt{d})^{16d/\delta} \ge (\log\log D^{\gamma})^{16d/\delta}$, and also $D\ggs \gamma,c_H,w_0$. So $\ulw \ge w_0 \vee 4(c_H+2)^d \vee 4000$ and $F_L((\ulw/4000)^{\mu})\ge 1/2$ as required above \eqref{eq:calZ-def}. The requirement on $D^\gamma$ here is more restrictive than in Claim \ref{claim:cheap-bridge}, so all requirements hold. Then, since here we have $10\ulw$, \eqref{eq:lem-cheap-bridge} turns into the following, which we then estimate by using that $\ulw \le D^\delta$, that $\delta\lls \mpar$ and that $D\ggs \delta$,
\begin{align*}
    \pr\big(N_{\eta,\gamma,z,10\ulw}(Z_0,Z_1) &= \emptyset \mid \calA',V,w_V\big) \le \exp\Big({-}\theta (10\ulw)^{-2(\tau-1)}D^{\Lambda(\eta,z)-2\gamma d\delta/3}\Big) \\
    &\le \exp\Big({-}\theta D^{\Lambda(\eta,z) - \sqrt{\delta}}\Big)
\le\exp\Big({-}\big(\theta D^{\Lambda(\eta,z) - \sqrt{\delta}}\big)^{1/4}\Big),
\end{align*}
since we assumed $\theta D^{\Lambda(\eta,z)-\sqrt{\delta}} > 1$.
Since $\calA'=\{Z_0\ge |\calZ(x_0)|/4,  Z_1\ge |\calZ(x_1)|/4\}$, combining this with~\eqref{eq:triple-bridge-N-bound} and a union bound, the result in \eqref{eq:bridge-proba} follows.
\end{proof}

We now construct a hierarchy by repeatedly applying Lemma~\ref{lem:triple-edge-bridge} to find a set of valid bridges as in Observation~\ref{obs:valid-bridges}, using an iterative construction (Def.~\ref{def:iter-construct}) to mitigate independence issues. We now define the second function that will be crucial in determining the optimal exponents $\Delta_0, \eta_0$ in \eqref{eq:Delta_0}, \eqref{eq:eta_0}.  For all $\eta > 0$, $z \ge 0$, and integers $R \ge 2$, we define
\begin{align}\label{eq:Phi-def}
    \Phi(\eta,z) &:= \Big[d\gamma \wedge \frac{z}{2}\Big] + \Big[0 \wedge \beta\Big(\eta - \frac{\mu z}{2}\Big)\Big].
\end{align}
Recall the $(\gamma, U, \olw, c_H)$-hierarchy of depth $R$ from Def.~\ref{def:hierarchy}, and $\Lambda(\eta,z)$ from \eqref{eq:Lambda-def}.
\begin{lemma}[Hierarchy with low weights $\ulw$]\label{lem:hierarchy-intermediate-weights}
    Consider Setting~\ref{set:hierarchy-common}, and let $y_0,y_1\in\calN$ with $|y_0-y_1|=\xi$. 
    Let $z\in[0,d]$, $\eta \ge 0$, and let $0<\delta\lls \gamma,\eta, z,\mpar$ be such that 
   $\Lambda(\eta,z) \ge 2\sqrt{\delta}$ and either $z=0$ or $\Phi(\eta,z)\ge\sqrt{\delta}$.
Let  $\xi\ggs \gamma,\eta,z,\delta, w_0$. Let $R\ge2$ be an integer satisfying
    \begin{align}
    \xi^{\gamma^{R-1}} &\ge (\log\log\xi\sqrt{d})^{16d/\delta^2} \qquad \mbox{and}\qquad R/\theta \le (\log\log\xi)^{1/\sqrt{\delta}},\label{eq:R-condition-in-prop}
\\
\text{and let }\qquad \ulw&:= \xi^{\gamma^{R-1}\delta}.\label{eq:ulw-in-prop}
    \end{align}
 Let $\calX_{low\textnormal{-}hierarchy}(R,\eta,y_0,y_1)$ be the event that $G'$ contains a $(\gamma,3\ulw^{3\mu}\xi^{\eta},\ulw,2)$-hierarchy $\calH_{low}$ of depth $R$ with first level $\calL_1 = \{y_0,y_1\}$,  fully contained in $\calN$, with $\mathrm{dev}_{y_0y_1}(\calH_{low})\le 4\xi^\gamma$. 
    Then 
\begin{align}\label{eq:hierarchy-intermediate-prob}
        \pr\big(\calX_{low\textnormal{-}hierarchy}(R,\eta,y_0,y_1)\mid V,w_V\big) \ge 1 - \exp\big({-}(\log\log\xi)^{1/\sqrt{\delta}}\big)=:1-\mathrm{err}_{\xi,\delta}.
    \end{align}
   With the convention that $\infty \cdot 0 = 0$, the statement is also valid when $\alpha=\infty$ or $\beta=\infty$.
\end{lemma}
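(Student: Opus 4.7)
The plan is to construct the hierarchy one level at a time via an iterative cost construction (Definition~\ref{def:iter-construct}) on $R-1$ conditionally independent CIRGs $H_1,\ldots,H_{R-1}$, each a $\theta':=\theta/(R-1)$-percolation of $G$ (equivalently a $1/(R-1)$-percolation of $G'$), and then apply Proposition~\ref{prop:multi-round-exposure} to transfer the success-probability bound to $G'$. In round $k\in[R-1]$ I reveal a single tuple containing the edges of all $2^{k-1}$ bridges $P_\sigma$ for $\sigma\in\{0,1\}^{k-1}$ inside $H_k$: the allowed set $\calF_k$ consists of admissible such bridge-families with vertices in $\calN$, and the constraint $\calU_k$ requires each $P_\sigma$ to be $(2D_k^\gamma,\,3\ulw^{3\mu}D_k^\eta,\,\ulw)$-valid (Definition~\ref{def:valid-bridges}) for its sibling pair $y_{\sigma 0},y_{\sigma 1}$ with respect to $\calE_1\cup\cdots\cup\calE_{k-1}$ and of deviation $\le 2D_k^\gamma$ from the segment joining that pair, where $D_k:=\xi^{\gamma^{k-1}}$. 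An inductive argument using property (H\ref{item:H2}) and the previous rounds' deviation constraints gives $|y_{\sigma 0}-y_{\sigma 1}|\le 2D_k=c_HD_k$ at the start of round $k$ with $c_H=2$, so Observation~\ref{obs:valid-bridges} converts a successful iterative construction into a $(\gamma,3\ulw^{3\mu}\xi^\eta,\ulw,2)$-hierarchy of depth $R$ fully contained in $\calN$ (using $D_k^\eta\le\xi^\eta$).

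For each bridge in round $k$ I invoke Lemma~\ref{lem:triple-edge-bridge} with $D=D_k$, $c_H=2$, percolation parameter $\theta'$, and the specified $\eta,z,\delta,\gamma,\ulw$. The hypotheses on $\xi$ together with the choice $\ulw=\xi^{\gamma^{R-1}\delta}$, the inequality $\gamma^{R-1}\le\gamma^{k-1}$, and $\xi^{\gamma^{R-1}}\ge(\log\log\xi\sqrt d)^{16d/\delta^2}$ make the largeness, net-radius and weight-range requirements $D_k^\gamma\in[4^{1/d}(\log\log\xi\sqrt d)^{16/\delta},\xi\sqrt d]$ and $\ulw\in[(\log\log\xi\sqrt d)^{16d/\delta},D_k^\delta]$ hold uniformly in $k\in[R-1]$. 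The two quantitative conditions $\theta'D_k^{\Lambda(\eta,z)-\sqrt\delta}>1$ and $p(D_k,\ulw,\theta',\eta,z)\ge 20^{\tau+\mu\beta}$ of Lemma~\ref{lem:triple-edge-bridge} are tightest at $k=R-1$: the first follows from $\Lambda\ge 2\sqrt\delta$, $R/\theta\le(\log\log\xi)^{1/\sqrt\delta}$ and the lower bound on $\xi^{\gamma^{R-1}}$, yielding $\theta'\xi^{\gamma^{R-2}\sqrt\delta}\ge(\log\log\xi)^{16d/(\gamma\delta^{3/2})-1/\sqrt\delta}\gg 1$; for the second, the leading exponent of $\xi^{\gamma^{R-2}}$ in $p$ equals $\Phi(\eta,z)+O(\delta)$ after a case split on which branch is active in each $\wedge$-min of \eqref{eq:p(.)}, so $\Phi\ge\sqrt\delta$ (or $z=0$ trivialising the first min) combined with the choice of $\ulw$ and $\theta$ makes $p$ much larger than $20^{\tau+\mu\beta}$.

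Granted these inputs, Lemma~\ref{lem:triple-edge-bridge} gives per-bridge failure probability at most $3\exp(-(\theta'D_k^{\Lambda-\sqrt\delta})^{1/4})$. A union bound over $\sum_{k\le R-1}2^{k-1}<2^R$ bridges, combined with Proposition~\ref{prop:multi-round-exposure} and the fact that $\Lambda-\sqrt\delta\ge\sqrt\delta$, yields
\begin{equation*}
\pr\bigl(\text{iterative construction in }G'\text{ fails}\mid V,w_V\bigr)\le 3\cdot 2^R\exp\Bigl(-\bigl(\theta'\xi^{\gamma^{R-2}\sqrt\delta}\bigr)^{1/4}\Bigr).
\end{equation*}
Since $\theta'\xi^{\gamma^{R-2}\sqrt\delta}\ge(\log\log\xi)^{16d/(\gamma\delta^{3/2})-1/\sqrt\delta}$ dwarfs $R\log 2\le(\log 2)(\log\log\xi)^{1/\sqrt\delta}$ in the exponent for small $\delta$, the total error is at most $\exp(-(\log\log\xi)^{1/\sqrt\delta})=\mathrm{err}_{\xi,\delta}$. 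The deviation bound follows by induction on rounds: after round $k$, each newly embedded level-$(k+1)$ vertex or bridge-interior point lies within $\le 2\xi^{\gamma^k}$ of a level-$k$ vertex or of the sibling segment between its anchors, so summing gives $\mathrm{dev}_{y_0y_1}(\calH_{low})\le 2\sum_{j=1}^{R-1}\xi^{\gamma^j}\le 2\xi^\gamma+2(R-2)\xi^{\gamma^2}\le 4\xi^\gamma$, where the last step uses $R\le(\log\log\xi)^{1/\sqrt\delta}\ll\xi^{\gamma-\gamma^2}$ for $\xi$ large. The main obstacle is the verification of $p(\cdot)\ge 20^{\tau+\mu\beta}$ at the worst level, which requires a careful case analysis on the four combinations of branches taken by the two $\wedge$-mins in \eqref{eq:p(.)}; the hypotheses $\Lambda\ge 2\sqrt\delta$ and $\Phi\ge\sqrt\delta$ (or $z=0$) are tuned exactly to cover these cases.
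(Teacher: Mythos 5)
Your proposal is correct and follows essentially the same route as the paper's proof: an $(R-1)$-round iterative cost construction (one round per level, all $2^{k-1}$ bridges of a level revealed in one tuple), Proposition~\ref{prop:multi-round-exposure} with equal percolation probabilities $\theta/(R-1)$, per-bridge application of Lemma~\ref{lem:triple-edge-bridge} at the worst scale $D=\xi^{\gamma^{R-2}}$, the same case split ($z=0$ versus $\Phi(\eta,z)\ge\sqrt\delta$) to verify $p(\cdot)\ge 20^{\tau+\mu\beta}$, and the same union bound and telescoping-sum deviation estimate. No gaps.
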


\begin{proof}
To construct a $(\gamma,3\ulw^{3\mu}\xi^{\eta},\ulw,2)$-hierarchy in $\calN$, we will use an iterative cost construction of $R-1$ rounds from Def.~\ref{def:iter-construct} on $G'$. Recall from Setting \ref{set:hierarchy-common} that $G'$ with given $V, w_V$ is a $\theta$-percolated CIRG. By Remark \ref{remark:cirg}, $G'$ is a CIRG itself (also when $\theta=\theta_n$) with distribution $\{\calG^{\theta}|V,w_V\}$. In the $i$'th round we will construct all bridges of the $i$'th level of the hierarchy at once, using Lemma~\ref{lem:triple-edge-bridge} $2^{i-1}$ times to find each bridge in the level. We will use Prop.~\ref{prop:multi-round-exposure} to deal with conditioning between rounds, and union bounds to deal with conditioning within rounds. 
For $2 \le i \le R$, in the $i$'th round we we will set the constraint $\calU_i$ so that the chosen set $\calE_i$ in Def.~\ref{def:iter-construct}\eqref{item:iter5} consists of a $(2\xi^{\gamma^{i-1}}, 3\ulw^{2\mu}\xi^{\eta}, \ulw)$-valid bridge for $\tilde y_{\sigma 0}$ and $\tilde y_{\sigma1}$ for all $\sigma \in \{0,1\}^{i-2}$,
where $\tilde y_{\sigma 0};$ and $\tilde y_{\sigma1}$ are (all) endpoints of bridges from the previous levels.
In other words, $\calE_i$ will contain all the necessary bridges at the $i$'th level of the hierarchy for $\calX_{low\textnormal{-}hierarchy}(R,\eta,y_0,y_1):=\calX_{low\textnormal{-}h}$. Since $\calL_1=\{y_0, y_1\}$ contains no bridges yet, we denote by $(G',\calE_2,\calF_2,\calU_2),\ldots,(G',\calE_R,\calF_R,\calU_R)$ the consecutive rounds of the iterative construction.
    
We next inductively define the valid edge tuples $\calF_i$ in Def.~\ref{def:iter-construct}\eqref{item:iter1} and the cost constraint event $\calU_i$ in Def.~\ref{def:iter-construct}\eqref{item:iter5}; and the vertices $\tilde y_{\sigma}$ for all $\sigma \in \{0,1\}^{i}$.
Assume that $E_1, \dots, E_{i-1}$ is already given, i.e., we constructed $(\calL_j)_{j\le i-1}$. For each $\sigma \in \{0,1\}^{i-2}$ consider the vertices $\tilde y_{\sigma0}, \tilde y_{\sigma1}\in \{0,1\}^{i-1}$. 
Set $D_{i}=\xi^{\gamma^i}$ as in Observation \ref{obs:valid-bridges}, and write $\calP(\sigma)$ for the set of all possible paths (i.e., sequence of vertices) contained in $\calN$ between all $y\in \calN \cap (B_{2D_{i-1}}(\tilde y_{\sigma0})\times [\ulw, 4\ulw])$ and all $y'\in \calN \cap (B_{2D_{i-1}}(\tilde y_{\sigma1})\times [\ulw, 4\ulw])$ (so that if $\widetilde P_{\sigma}\in\calP(\sigma)$, then $\widetilde P_{\sigma}$ satisfies both \eqref{eq:bridge-condition-b1}, \eqref{eq:bridge-condition-b2}
in Def.~\ref{def:valid-bridges}). Given $V, w_V, E_1, \dots, E_{i-1}$, define now a tuple $\underline t$ to be level-$i$ admissible if it contains exactly one such potential path from $\calP(\sigma)$ for each $\sigma\in\{0,1\}^{i-2}$, and let $\calF_i$ be the collection of all level-$i$ admissible tuples, with an arbitrary ordering.
Let $\calU_i$ be the event that each potential path in the chosen tuple is present in the graph under consideration, and has round-$i$ marginal cost at most $3\ulw^{3\mu}\xi^{\eta}$ in \eqref{eq:marginal-cost}.     
Following Def.~\ref{def:iter-construct}\eqref{item:iter5} and Prop \ref{prop:multi-round-exposure}, we set $\calE_i^{G'}$ to be the first tuple of bridges (in the ordering of $\calF_i$) for which $\calU_i$ occurs, and we set $\calE_i^{G'}$ to $\mathtt{None}$ if no such tuple exists. Moreover, we define $\tilde y_{\sigma00} := \tilde y_{\sigma0}$, $\tilde y_{\sigma11} := \tilde y_{\sigma1}$, and $\tilde y_{\sigma01}$ and $\tilde y_{\sigma10}$ to be the endpoints of the bridge $\widetilde P_\sigma$ in the chosen tuple $\calE_i^{G'}$, or $\texttt{None}$ if $\calE_i^{G'}=\texttt{None}$. 
This gives the iterative cost construction $I_{G'} = ((G',\calE_i^{G'},\calF_i,\calU_i)\colon i \in \{2,\ldots,R\})$.
Note that the criteria above for $\calP(\sigma), \calF_i, \calU_i$ exactly matches Observation \ref{obs:valid-bridges} with $c_H\!=\!2$ and marginal cost of each bridge $\widetilde P_\sigma$ at most $U\!=\!3\ulw^{3\mu}\xi^{\eta}$, implying \eqref{eq:bridge-condition-b3}, i.e., each chosen bridge $\widetilde P_\sigma\in E_i$ is $(2\xi^{\gamma^{i-1}}, 3\ulw^{3\mu}\xi^{\eta}, \ulw)$-valid for $y_{\sigma0},y_{\sigma1} \in \{0,1\}^{i-1}$ wrt the chosen edges in earlier rounds, i.e., wrt $S=\cup_{j\le i-1}E_j$. Since all vertices in $\{P_\sigma\}_\sigma$ are contained in a $2(\xi^\gamma+\xi^{\gamma^2}+\dots+\xi^{\gamma^{R-1}})\le 4\xi^\gamma$ ball around $y_0$ and $y_1$, respectively, the deviation requirement is also satisfied, and so by Obs.~\ref{obs:valid-bridges}, if $I_G'$ succeeds then $\{\tilde y_\sigma\}_{ \sigma\in\{0,1\}^R}$ is a $(\gamma,3\ulw^{3\mu}\xi^{\eta},\ulw,2)$-hierarchy as needed in $\calX_{low\textnormal{-}h}$.
   
Following Prop.~\ref{prop:multi-round-exposure}, let $r=R-1$ and $\theta_i\equiv 1/(R\!-\!1)$ there, and let $H_2, \ldots, H_{R}$ be independent $1/(R\!-\!1)$-percolations of $\{\calG^{\theta} \mid V, w_W\}$, i.e., with distribution $\{\calG^{\theta/(R-1)}\mid V, w_W\}$ from Def.~\ref{def:percolated}.   Then, the iterative cost construction $I_H = ((H_i,\calE_i^H,\calF_i,\calU_i)\colon i \in \{2,\ldots,R\})$ on $H_2,\ldots,H_{R}$ in Prop.~\ref{prop:multi-round-exposure} uses the same $\calF_i, \calU_i$ as $I_{G'}$.  Let us denote by $\calA_{I_H}(V, w_V, E_2,\ldots,E_{i-1})$ the event that $\calE_j^H = E_{j}$ for $2\!\le\! j\!\le\! i-1$. So applying Prop.~\ref{prop:multi-round-exposure}, \eqref{eq:multi-round-exp-goal} turns into
\begin{equation}
    \begin{aligned}\label{eq:hierarchy-intermediate-weights-0}
       \pr\big(\calX_{low\textnormal{-}h} &\mid V,w_V\big)\ge \pr\big(I_{G'} \textnormal{ succeeds} \mid V,w_V\big) \\
       &\ge 
        \min_{E_2,\ldots,E_{R} \ne \mathtt{None}} \prod_{i=2}^R\pr\big(\calE_i^H \ne \mathtt{None} \mid \calA_{I_H}(V, w_V, E_2,\ldots,E_{i-1})\big)\\
        &\ge1-\sum_{i=2}^R\max_{E_2,\ldots,E_{i-1} \ne \mathtt{None}}\pr\big(\calE_i^H = \mathtt{None} \mid \calA_{I_H}(V, w_V, E_2,\ldots,E_{i-1})\big),
  \end{aligned}
\end{equation}
by a union bound over all rounds.
We now break the rhs of~\eqref{eq:hierarchy-intermediate-weights-0} down into bridge existence events under simpler conditioning. 
For each $2 \le i \le R$ and $\sigma0, \sigma1\in \{0,1\}^{i-1}$, let 
\begin{equation}\label{eq:Ai-event-def}
\calA_{i}(\tilde y_{\sigma0},\tilde y_{\sigma1}):=\{ \exists \widetilde P_\sigma \in E_i: (2\xi^{\gamma^{i-1}},3\ulw^{2\mu}\xi^{\gamma^{i-2}\eta}, \ulw)\text{-valid for } \tilde y_{\sigma0}, \tilde y_{\sigma1} \text{ wrt } S=\emptyset\}.
\end{equation}
This is a stronger condition than what is required for a $(\gamma, 3\ulw^{3\mu}\xi^\eta, \ulw, 2)$-hierarchy to exist in Obs.~\ref{obs:valid-bridges}, since $\gamma^{i-2}\eta\le\eta$ and validity with respect to $\emptyset$ implies validity with respect to any set of edges. Conditioned on $ \calA_{I_H}(V, w_V, E_2,\ldots,E_{i-1})$ so that none of the $(E_j)_{j\le i-1}$ equals $\mathtt{None}$, the event $\calE_{i}^H = \mathtt{None}$ occurs only if for some pair $\sigma0, \sigma1 \in \{0,1\}^{i-1}$, the event $\neg\calA_{i}(\tilde y_{\sigma 0}, \tilde y_{\sigma1})$ occurs; hence by a union bound,~\eqref{eq:hierarchy-intermediate-weights-0} implies   \begin{align}\label{eq:hierarchy-intermediate-weights-2}
\pr\big(\calX_{low\textnormal{-}h}\! \mid\! V,w_V\big) \ge 1\!-\!\sum_{i=2}^R 2^{i-2}\!\!\!\!\!\!\!\!\!\max_{\substack{\sigma \in \{0,1\}^{i-2}\\E_2,\ldots,E_{i-1} \ne \mathtt{None}}}\!\!\!\!\!\pr\big(\neg\calA_{i}(\tilde y_{\sigma 0},\tilde y_{\sigma 1})\! \mid\! \calA_{I_H}(V, w_V, E_2,\ldots,E_{i-1})\big).
    \end{align}
    Recall that conditioned on $(V,w_V)$, the graphs $H_2, \dots H_{R-1}$ are iid $\{\calG^{\theta/(R-1)}\mid V, w_W\}$. So, the events in $\calA_{I_H}(V, w_V, E_2,\ldots,E_{i-1})$ are contained in the $\sigma$-algebra generated by $H_2,\ldots,H_{i-1}$, i.e., independent of $H_i$ and thus of $\neg\calA_{i}(\tilde y_{\sigma 0},\tilde y_{\sigma 1})$. Hence~\eqref{eq:hierarchy-intermediate-weights-2} simplifies to
    \begin{align}\label{eq:hierarchy-intermediate-weights-3}
        \pr\big(\calX_{low\textnormal{-}h} \mid V,w_V\big) \ge 1-\sum_{i=2}^R 2^{i-2}\max_{\tilde y_{\sigma0},\tilde y_{\sigma1}\ne\mathtt{None}}\pr\big(\neg\calA_i(\tilde y_{\sigma 0},\tilde y_{\sigma 1}) \mid V,w_V\big),
    \end{align}
    where the maximum is taken over all possible values of $(\tilde y_{\sigma0},\tilde y_{\sigma1})$ occurring in non-\texttt{None} $E_{i-1}$. 
    Finally, we will upper-bound the probabilities on the rhs of~\eqref{eq:hierarchy-intermediate-weights-3} using Lemma~\ref{lem:triple-edge-bridge}. Let $2 \le i \le R$, let $\sigma \in \{0,1\}^{i-2}$, and let $\tilde y_{\sigma0},\tilde y_{\sigma1}$ be a possible non-\texttt{None} realisation of the embedding.    Recall $D_i=\xi^{\gamma^{i}}$. Then the event \eqref{eq:Ai-event-def} requires a $(D_{i-2}^\gamma, 3\ulw^{3\mu}D_{i-2}^\eta, \ulw)$-valid bridge $P_\sigma$, which formally matches Lemma~\ref{lem:triple-edge-bridge} with $D:=D_{i-2}, \tilde y_{\sigma0}:=x_0, \tilde y_{\sigma1}:=x_1$ there and the graph $H_i\sim \{\calG^{\theta/(R-1)}\mid V, w_V\}$ in place of $G'$ there, i.e., with $\theta_{\ref{lem:triple-edge-bridge}}:=\theta/(R-1)$.

    We check the conditions of Lemma~\ref{lem:triple-edge-bridge} in order of their appearance.
     $z\in[0,d], \eta,\delta>0$ and $\delta\lls \gamma,\eta, z$ is assumed both here and there. The assumption $\xi\ggs \gamma,\eta, z, \delta, w_0$ here implies $D_{i-2}\ggs \gamma,\eta, z, \delta, w_0$ since by \eqref{eq:R-condition-in-prop}  $D_{i-2}^\gamma  \ge \xi^{\gamma^{R-1}} \ge (\log\log\xi\sqrt{d})^{16d/\delta^2}$. The latter also implies the requirement on $D^\gamma$ in Lemma \ref{lem:triple-edge-bridge}. Similarly, the upper bound requirement holds since $D_{i-2}^{\gamma} \le \xi \le \xi\sqrt{d}$. We now check whether $\theta D^{\Lambda(\eta,z)-\sqrt{\delta}} > 1$ holds in Lemma \ref{lem:triple-edge-bridge} for our choices.
     Since we assumed here $\Lambda(\eta,z) \ge 2\sqrt{\delta}$, and also \eqref{eq:R-condition-in-prop}, we estimate
     \begin{equation}\label{eq:inter-weights-theta-R}
        \tfrac{\theta}{R-1}D_{i-2}^{\Lambda(\eta,z)-\sqrt{\delta}} \ge (\log\log\xi)^{-1/\sqrt{\delta}}\cdot D_{i-2}^{\sqrt{\delta}} \ge (\log\log\xi)^{15/\sqrt{\delta}} > 1.
     \end{equation}
 Next we need to check whether $x_0=\tilde  y_{\sigma0}, x_1=\tilde y_{\sigma1}$ satisfies $|x_0- x_1|\le c_HD_{i-2}$. This is true since $\tilde y_{\sigma0},\tilde y_{\sigma1}$ are possible non-\texttt{None} values coming from chosen tuples $E_1, \dots, E_{i-1}$; and by construction of $\calP(\sigma)$ above, we required that $|\tilde y_{\sigma'00}-\tilde y_{\sigma'01}|, |\tilde y_{\sigma'10}-\tilde y_{\sigma'11}|\le 2D_{i-1}$ for all $\sigma'\in \{0,1\}^{i-2}$, which, when shifting indices yields exactly that $|\tilde y_{\sigma0}-\tilde y_{\sigma1}|\le 2D_{i-2}$ for all $\sigma\in\{0,1\}^{i-2}$. Next we check the criterion on $\ulw$ in Lemma \ref{lem:triple-edge-bridge}. Here, $\ulw$ is defined in \eqref{eq:ulw-in-prop}, hence, using \eqref{eq:R-condition-in-prop}, $\ulw = \xi^{\gamma^{R-1}\delta} \ge (\log\log\xi\sqrt{d})^{16d/\delta}$ as required. This also implies $\ulw \ggs \delta, w_0$ since $\xi \ggs \delta, w_0$. 
 Moreover, $\ulw = \xi^{\gamma^{R-1}\delta} \le D_{i-2}^{\delta}=\xi^{\gamma^{i-2}\delta}$ holds since $i-2\le R-2$ and $\gamma<1$. Next, we check \eqref{eq:p(.)}, which can be lower bounded by omitting the prefactor $\ulw^{\mu\beta}$ in the last factor (the minimum):
      \begin{align}\label{eq:p(.)-bound}
            p(D_{i-2},\ulw,\tfrac{\theta}{R-1},\eta,z) \ge \tfrac{\theta}{R-1}\ulw^{-(\tau-1)} \left(D_{i-2}^{d\gamma } \wedge \ulw^2 D_{i-2}^{z/2}\right)^{1-\delta} D_{i-2}^{[0 \wedge \beta(\eta-\mu z/2)]}.
        \end{align}
  We distinguish cases wrt $z$ to handle the minimum in the middle of the rhs. If $z=0$, then $\ulw^2D_i^{z/2} = \ulw^2 = \xi^{2\gamma^{R-1}\delta} \le \xi^{\gamma^{i-1}d} = D_{i-2}^{\gamma d}$, where the inequality holds because $i\le R$ and $\delta \lls \gamma$. Moreover $0 \le \eta-\mu z/2$ in that case, so when $z=0$, equation~\eqref{eq:p(.)-bound} becomes
        \begin{align}\label{eq:p-bound-z-0}
            p(D_{i-2},\ulw,\tfrac{\theta}{R-1},\eta,z) \ge \tfrac{\theta}{R-1}\ulw^{2(1-\delta)-(\tau-1)} = \tfrac{\theta}{R-1}\ulw^{3-\tau-2\delta} \ge \tfrac{\theta}{R-1}\ulw^{\sqrt{\delta}},
        \end{align}
        where the last inequality holds because $\delta \lls \mpar$. 
               If, however, $z\neq 0$, then we assumed that  $\Phi(\eta,z)\ge\sqrt{\delta}$ in \eqref{eq:Phi-def}. Using again $\ulw\ge1$, we lower bound~\eqref{eq:p(.)-bound} in this case
         \begin{align*}
            p(D_{i-2},\ulw,\tfrac{\theta}{(R-1)},\eta,z) \ge \tfrac{\theta}{R-1}\ulw^{-(\tau-1)} D_{i-2}^{(1-\delta)[d\gamma  \wedge z/2] + [0 \wedge \beta(\eta-\mu z/2)]}\ge \tfrac{\theta}{R-1} D_{i-2}^{\Phi(\eta,z)-\delta(\tau-1+d)},
        \end{align*}
        where we used that $\ulw\le D_{i-2}^\delta$ implies $\ulw^{-(\tau-1)} \ge D_{i-2}^{-\delta(\tau-1)}$ and $d\gamma \wedge z/2 \le d$ (since $\gamma<1$) to obtain the last inequality.
        Since $\delta$ is small, $\Phi(\eta,z)\ge\sqrt{\delta}$, and $\ulw \le D_{i-2}^{\delta}$, this implies
        \begin{align}\label{eq:p-bound-z-not-0}
            p(D_{i-2},\ulw,\tfrac{\theta}{R-1},\eta,z) \ge \tfrac{\theta}{R-1} D_{i-2}^{\delta} \ge \tfrac{\theta}{R-1}\ulw \ge \tfrac{\theta}{R-1}\ulw^{\sqrt{\delta}},
        \end{align}
the same lower bound as in \eqref{eq:p-bound-z-0} for $z=0$. Thus for all $z\in[0,d]$, using \eqref{eq:R-condition-in-prop} and \eqref{eq:ulw-in-prop},
        \begin{align*}
             p(D_{i-2},\ulw,\tfrac{\theta}{R-1},\eta,z) \ge \tfrac{\theta}{R-1}\ulw^{\sqrt{\delta}} \ge \tfrac{\theta}{R-1}(\log\log\xi)^{16/\sqrt{\delta}} \ge (\log\log\xi)^{15/\sqrt{\delta}} \ge 20^{\tau+\mu\beta},
                     \end{align*}
        where the last inequality holds because $\xi \ggs \delta,\mpar$. With this, all conditions of Lemma~\ref{lem:triple-edge-bridge} are satisfied, so combining \eqref{eq:bridge-proba} with~\eqref{eq:hierarchy-intermediate-weights-3} and then using the lower bound in \eqref{eq:inter-weights-theta-R} yields \begin{equation}\label{eq:hierarchy-intermediate-weights-4}
    \begin{aligned}
        \pr\big(&\calX_{low\textnormal{-}h} \mid V,w_V\big) \ge 1 - 3\sum_{i=2}^{R}2^{i-2}\exp\Big({-}\Big[\tfrac{\theta}{R-1} D_{i-2}^{\Lambda(\eta,z)-\sqrt{\delta}}\Big]^{1/4}\Big)\\
        & \ge 1 - 3\sum_{i=2}^{R}2^{i-2}\cdot \exp\big({-}(\log\log\xi)^{3/\sqrt{\delta}}\big)\ge 1 - 2^{R+1}\exp\big({-}(\log\log\xi)^{3/\sqrt{\delta}}\big).
        \end{aligned}
    \end{equation}
  Finally, in \eqref{eq:R-condition-in-prop} the estimate $R\le(\log\log\xi)^{1/\sqrt{\delta}}$ can be used to upper bound $2^{R+1}$, yielding the required inequality in \eqref{eq:hierarchy-intermediate-prob}.
\end{proof}

Lemma~\ref{lem:hierarchy-intermediate-weights} constructed a hierarchy with bridge endpoints $\tilde y_\sigma$ of weight roughly $\ulw = \xi^{\gamma^{R-1}\delta}$. This weight is too low to connect the final gaps (siblings) in the hierarchy via short paths. The next lemma extends this hierarchy to a new one with endpoints $y_\sigma$ of weight roughly $\olw := \xi^{d\gamma^{R-1}/2}$, where connecting the gaps is possible. The proof follows a very similar structure as for Lemma~\ref{lem:hierarchy-intermediate-weights}, with just two rounds of exposure, so we defer it to Appendix~\ref{app:two-rounds-proof}. 
Recall the $(\gamma, U, \olw, c_H)$-hierarchy of depth $R$ from Def.~\ref{def:hierarchy}, $\Lambda(\eta,z), \Phi(\eta,z)$ from \eqref{eq:Lambda-def} and \eqref{eq:Phi-def}, respectively.

\begin{restatable}[Hierarchy with high weights $\olw$]{lemma}{LemmaHierarchyHighWeights}
\label{lem:hierarchy-final-weights}
    Consider Setting~\ref{set:hierarchy-common}, and let $y_0,y_1\in\calN$ with $|y_0-y_1|=\xi$. 
    Let $z\in[0,d], \eta \ge 0$,
    and let $0<\delta\lls \gamma,\eta,z,\mpar$ be such that $\Lambda(\eta,z) \ge 2\sqrt{\delta}$ and either $z=0$ or $\Phi(\eta,z)\ge\sqrt{\delta}$.
    Let $\xi\ggs \gamma,\eta,z,\theta, \delta,w_0$. Let $R\ge2$ be an integer satisfying $\xi^{\gamma^{R-1}} \ge (\log\log\xi\sqrt{d})^{16d/\delta^2}$ and $R\le (\log\log\xi)^2$, and set
    \begin{align}\label{eq:c_H}
        \olw:= \xi^{\gamma^{R-1}d/2}, \qquad c_H:=8\bigg(1+\left\lceil\frac{\log(d/\delta)}{\log(1/(\tau-2+2d\tau\delta))}\right\rceil\bigg).
    \end{align}
    Let $\calX_{high\textnormal{-}hierarchy}(R,\eta,y_0,y_1)$
    be the event that $G'$ contains a $(\gamma,c_H\olw^{4\mu}\xi^{\eta},\olw,c_H)$-hierarchy $\calH_{high}$ of depth $R$ with first level $\calL_1 = \{y_0,y_1\}$, fully contained in $\calN$, and $\mathrm{dev}_{y_0y_1}(\calH_{high})\le 2c_H\xi^\gamma$. 
    Then
\begin{align}\label{eq:hierarchy-final-prob}
        \pr\big(\calX_{high\textnormal{-}hierarchy}(R,\eta,y_0,y_1) \mid V,w_V\big) \ge
    	1 - \exp\big({-}(\log\log\xi)^{13}\big);
    \end{align}
    under the convention that $\infty \cdot 0 = 0$, the statement is also valid when $\alpha=\infty$ or $\beta=\infty$.
\end{restatable}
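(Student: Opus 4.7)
\textbf{Proof plan for Lemma~\ref{lem:hierarchy-final-weights}.} The plan is to obtain $\calH_{high}$ by concatenating the low-weight hierarchy of Lemma~\ref{lem:hierarchy-intermediate-weights} with short weight-increasing paths from Claim~\ref{claim:cheap-path-to-larger-weight}, attached at each endpoint. To handle the conditioning this introduces, I would use Proposition~\ref{prop:multi-round-exposure} with $r=2$ and $\theta_1=\theta_2=1/2$, decomposing $G'$ into two conditionally independent $\theta/2$-percolations $H_1, H_2 \sim \{\calG^{\theta/2}\mid V,w_V\}$. Round~1 runs Lemma~\ref{lem:hierarchy-intermediate-weights} on $H_1$ with the given $\gamma,\eta,z,\delta,R,\ulw=\xi^{\gamma^{R-1}\delta}$ to produce a $(\gamma,3\ulw^{3\mu}\xi^{\eta},\ulw,2)$-hierarchy $\calH_{low}$ with endpoints $\{\tilde y_\sigma\}_{\sigma\in\{0,1\}^R}$ and bridges $\widetilde P_\sigma$, failing with probability at most $\mathrm{err}_{\xi,\delta}$. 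The hypotheses transfer directly: in particular the requirement $R/(\theta/2)\le (\log\log\xi)^{1/\sqrt{\delta}}$ of Lemma~\ref{lem:hierarchy-intermediate-weights} follows from $R\le(\log\log\xi)^2$ and $\xi\ggs \theta,\delta$.

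Round~2 uses $H_2$ to attach a weight-increasing path $\pi_\sigma$ to each $\tilde y_\sigma$ whose class $\{\sigma\}$ first appears at some level $i\ge 2$ (so at most $2^R-2$ such $\sigma$). For each such $\sigma$, I would apply Claim~\ref{claim:cheap-path-to-larger-weight} in $H_2$ with $M=2\ulw$, $K=2\olw$, $D=4\xi^{\gamma^{R-1}}$, and $U=\olw^{4\mu}\xi^\eta/(2q)$ where $q$ is the quantity in~\eqref{eq:cond-q}. With these choices $q$ is bounded by $\lceil \log(d/\delta)/\log(1/(\tau-2+2d\tau\delta))\rceil$, matching the constant $c_H/8-1$ from~\eqref{eq:c_H}. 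The hypotheses of Claim~\ref{claim:cheap-path-to-larger-weight} are all direct: $M\le K=2\xi^{\gamma^{R-1}d/2}\le D^{d/2}$; $D\le\xi\sqrt{d}$; $(M/2)^{2/d}\ge(\log\log\xi\sqrt{d})^{16/\delta}$ from $\xi^{\gamma^{R-1}}\ge(\log\log\xi\sqrt{d})^{16d/\delta^2}$; $U\ge K^{2\mu}$ for $\xi$ large; and in the $\beta=\infty$ case $U(KM)^{-\mu}\ge \olw^{2\mu}\xi^\eta/(2q\cdot 4^\mu)$ is large. Each application succeeds with probability at least $1-\exp(-(\theta/2)(2\ulw)^\delta)$, and since $\ulw^\delta=\xi^{\gamma^{R-1}\delta^2}\ge(\log\log\xi)^{16d}$, a union bound over the $\le 2^{R+1}\le 2^{(\log\log\xi)^2+1}$ classes yields a round-$2$ failure probability of at most $\exp(-(\log\log\xi)^{14})$, easily absorbing $\mathrm{err}_{\xi,\delta}$ to give~\eqref{eq:hierarchy-final-prob}.

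To assemble $\calH_{high}$, define $y_\sigma$ to be the high-weight endpoint $y_q$ of the path $\pi_\sigma$ (so $w_{y_\sigma}\in[\olw,4\olw]$, yielding~\emph{(H\ref{item:H1})}), set $y_{0_R}=y_0,y_{1_R}=y_1$, and take the new bridge $P_\sigma := \pi_{\sigma01}^{\mathrm{rev}} \cdot \widetilde P_\sigma \cdot \pi_{\sigma10}$ for each $\sigma\in\{0,1\}^i, 0\le i\le R-2$. For~\emph{(H\ref{item:H2})}, note that each $\pi_{\sigma'}$ lies in $B_{qD}(\tilde y_{\sigma'})$, so $|y_{\sigma0}-y_{\sigma1}|\le |\tilde y_{\sigma0}-\tilde y_{\sigma1}|+2qD\le 2\xi^{\gamma^i}+8q\xi^{\gamma^{R-1}}\le (2+8q)\xi^{\gamma^i}\le c_H\xi^{\gamma^i}$. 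The deviation bound follows similarly: $\mathrm{dev}_{y_0y_1}(\calH_{high})\le \mathrm{dev}_{y_0y_1}(\calH_{low})+qD\le 4\xi^\gamma+4q\xi^{\gamma^{R-1}}\le 2c_H\xi^\gamma$. For~\emph{(H\ref{item:H3})}, I assign each $\pi_\sigma$ (with $\sigma$ first appearing at level $i\ge 2$) to the marginal-cost set $\calE^-(P_{\sigma''})$ of the unique $\sigma''\in\{0,1\}^{i-2}$ with $\sigma\in\{\sigma''01,\sigma''10\}$. Then $\calC(\calE^-(P_{\sigma''}))\le 3\ulw^{3\mu}\xi^\eta+2qU=3\ulw^{3\mu}\xi^\eta+\olw^{4\mu}\xi^\eta\le 4\olw^{4\mu}\xi^\eta\le c_H\olw^{4\mu}\xi^\eta$, since $c_H\ge 8$. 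Full containment in $\calN$ is inherited from both $\calH_{low}\subseteq\calN$ and the claim that each $\pi_\sigma\subseteq\calN$.

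\textbf{Main obstacle.} The calculations are routine once the right multi-round-exposure decomposition and parameter choices are fixed. The genuine subtlety is the bookkeeping for~\emph{(H\ref{item:H3})}: the family of attached paths $\{\pi_\sigma\}$ lies in potentially overlapping balls, and when their edges are added to those of the $\widetilde P_\sigma$ one must partition the union so that every single edge is charged to exactly one $\calE^-(P_{\sigma''})$ with the resulting cost still at most $c_H\olw^{4\mu}\xi^\eta$. This requires committing to a consistent assignment rule (attaching $\pi_\sigma$ to the bridge $P_{\sigma''}$ introducing $\sigma$ as a new endpoint) and verifying that marginal-cost dropouts only help, which is exactly what the multi-round-exposure framework via marginal costs~\eqref{eq:marginal-cost} is designed to absorb cleanly.
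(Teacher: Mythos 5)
Your proposal follows essentially the same route as the paper's proof: a two-round exposure via Proposition~\ref{prop:multi-round-exposure} with $H_1,H_2\sim\{\calG^{\theta/2}\mid V,w_V\}$, Lemma~\ref{lem:hierarchy-intermediate-weights} in round one, and Claim~\ref{claim:cheap-path-to-larger-weight} with the identical choices $M=2\ulw$, $K=2\olw$, $D=4\xi^{\gamma^{R-1}}$ in round two, followed by the same union bound over the $\le 2^R$ endpoints and the same marginal-cost bookkeeping for \emph{(H\ref{item:H3})}. The only (immaterial) difference is that you rescale $U$ by $1/(2q)$ where the paper instead takes $U=\olw^{4\mu}\xi^{\eta}$ and absorbs the factor $q^\star$ into $c_H$.
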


The hierarchy constructed in Lemma~\ref{lem:hierarchy-final-weights} is a "broken path" formed by the bridge paths between the starting vertices $y_0, y_1\in \calN$. The next proposition connects the endpoints of bridge-paths and constructs a connected path via common neighbours using Claim \ref{claim:common-neighbour}, but not yet between $y_0, y_1$, only between $y_{0_{R-1}1}$ and $y_{1_{R-1}0}$, the closest vertices to $y_0, y_1$ in the hierarchy constructed in Lemma~\ref{lem:hierarchy-final-weights}. Connecting $y_0$ to $y_{0_{R-1}1}$ and $y_1$ to  $y_{1_{R-1}0}$ needs different techniques, since $y_0,y_1$ have typically lower weights than $\olw$ in \eqref{eq:c_H}, see Section \ref{sec:endpoints}. The proof, which again uses two rounds of exposure and otherwise consists of plugging in appropriate values for Lemma \ref{lem:hierarchy-final-weights} and Claim \ref{claim:common-neighbour}, can be found in Appendix~\ref{app:two-rounds-proof}.

\begin{restatable}[Path from hierarchy]{proposition}{PropositionPathFromHierarchy}
\label{prop:path-from-hierarchy}
    Consider Setting~\ref{set:hierarchy-common}, and let $y_0,y_1\in\calN$ with $|y_0-y_1|=\xi$. 
    Let $z\in[0,d], \eta \ge 0$.
    Let $0<\delta \lls \gamma,\eta,z, \mpar$ be such that $\Lambda(\eta,z) \ge 2\sqrt\delta$ and either $z=0$ or $\Phi(\eta,z)\ge\sqrt{\delta}$. Let $\xi\ggs \gamma,\eta,z,\theta,\delta,w_0$. 
    Let $R\ge2$ be an integer satisfying $\xi^{\gamma^{R-1}} \ge (\log\log\xi\sqrt{d})^{16d/\delta^2}$ and $R \le (\log\log\xi)^2$, let $\olw:= \xi^{\gamma^{R-1}d/2}$, and $c_H$ be as in~\eqref{eq:c_H}. Let $\calX_{high\textnormal{-}path}=\calX_{high\textnormal{-}path}(R,\eta,y_0,y_1)$ be the event that $G'$ contains a path $\pi_{y_0^\star, y_1^\star}$ fully contained in $\calN$ between some vertices $y^{\star}_0 \in \calN \cap (B_{c_H\xi^{\gamma^{R-1}}}(y_0) \times [\olw, 4\olw])$ and $y^{\star}_1 \in \calN \cap (B_{c_H\xi^{\gamma^{R-1}}}(y_1)\times [\olw, 4\olw])$ with cost $\cost{\pi_{y_0^\star y_1^\star}}\le c_H 2^R\olw^{4\mu}\xi^{\eta}$ and $\mathrm{dev}_{y_0y_1}(\pi_{y_0^\star y_1^\star})\le 3c_H\xi^\gamma$.     
    Then 
\begin{align}\label{eq:high-path}
&\pr\big(\calX_{high\textnormal{-}path} \mid V,w_V\big) \ge
    	1 - 2\exp\big({-}(\log\log\xi)^{13}\big);
    \end{align}
    under the convention that $\infty \cdot 0 = 0$, the statement is also valid when $\alpha=\infty$ or $\beta=\infty$.
\end{restatable}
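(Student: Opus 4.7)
The plan is to deploy the multi-round exposure of Proposition~\ref{prop:multi-round-exposure} with two rounds of percolation probability $\theta_1=\theta_2=1/2$, using round one to invoke Lemma~\ref{lem:hierarchy-final-weights} and build the hierarchy $\calH_{high}$, and round two to fill the remaining sibling gaps at level $R$ via Claim~\ref{claim:common-neighbour}. Writing $H_1,H_2$ for the two independent $\theta/2$-percolated copies of $G'$ on $(V,w_V)$, applying Lemma~\ref{lem:hierarchy-final-weights} to $H_1$ would produce, with error at most $\exp(-(\log\log\xi)^{13})$, a $(\gamma,c_H\olw^{4\mu}\xi^\eta,\olw,c_H)$-hierarchy fully contained in $\calN$ with $\mathrm{dev}_{y_0y_1}(\calH_{high})\le 2c_H\xi^\gamma$.

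Recall that the bridges $P_\sigma$ of $\calH_{high}$ link $y_{\sigma01}$ to $y_{\sigma10}$ for each $\sigma\in\{0,1\}^i$, $0\le i\le R-2$, so that in the canonical binary order of the $2^R$ leaves, consecutive leaves are either connected by a bridge or form a level-$R$ sibling pair $(y_{\sigma0},y_{\sigma1})$. Since $y_{0^R}=y_0$ and $y_{1^R}=y_1$ may carry arbitrary weights, I would set $y_0^\star:=y_{0^{R-1}1}$ and $y_1^\star:=y_{1^{R-1}0}$ and only fill the $2^{R-1}-2$ \emph{intermediate} sibling pairs, skipping the two endpoint pairs involving $y_0,y_1$. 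For each such intermediate pair I would invoke Claim~\ref{claim:common-neighbour} on $H_2$ with $D:=\xi^{\gamma^{R-1}}$: property (H\ref{item:H1}) forces both endpoints to have weights in $[\olw,4\olw]=[D^{d/2},4D^{d/2}]$, and (H\ref{item:H2}) guarantees distance at most $c_HD$ between them. This produces a two-edge sub-path in $\calN$ through a common neighbour of cost at most $D^{2\mu d}=\olw^{4\mu}$. The per-pair failure probability is stretched-exponentially small in $\log\log\xi$ since $D\ge(\log\log\xi)^{16d/\delta^2}$ and the exponent $(3-\tau-2\delta)d/2>0$, so a union bound over the $2^{R-1}-2\le 2^{(\log\log\xi)^2}$ pairs keeps the round-2 error below $\exp(-(\log\log\xi)^{13})$. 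Proposition~\ref{prop:multi-round-exposure} then transfers the whole analysis to $G'$ with combined error at most $2\exp(-(\log\log\xi)^{13})$.

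I would finally concatenate, in left-to-right leaf order between $y_0^\star$ and $y_1^\star$, the $2^{R-1}-1$ bridges and $2^{R-1}-2$ common-neighbour fillers, and shortcut repeated edges to produce a simple path $\pi_{y_0^\star y_1^\star}\subset\calN$. Its cost is at most the sum of the bridge marginal costs — Definition~\ref{def:hierarchy}(H\ref{item:H3}) ensures each edge of the underlying walk is charged at most once — plus the gap-filler costs, giving
\begin{align*}
\calC(\pi_{y_0^\star y_1^\star})\le (2^{R-1}-1)c_H\olw^{4\mu}\xi^\eta + (2^{R-1}-2)\olw^{4\mu}\le c_H\,2^R\olw^{4\mu}\xi^\eta.
\end{align*}
Since each common neighbour lies within $D=\xi^{\gamma^{R-1}}\le\xi^\gamma$ of a hierarchy vertex, the deviation grows by at most an additional $\xi^{\gamma^{R-1}}$, yielding $\mathrm{dev}_{y_0y_1}(\pi_{y_0^\star y_1^\star})\le 2c_H\xi^\gamma+\xi^{\gamma^{R-1}}\le 3c_H\xi^\gamma$. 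The hard part will be the conditional dependence of the round-2 events on the specific sibling pairs produced in round~1; decoupling this through the independent $\theta/2$-percolations supplied by Proposition~\ref{prop:multi-round-exposure} is precisely what allows the union bound over the $2^{R-1}-2$ gaps to go through despite the doubly-exponential number of leaves.
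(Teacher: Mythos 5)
Your proposal is correct and follows essentially the same route as the paper's own proof: two rounds of exposure with $\theta_1=\theta_2=1/2$, Lemma~\ref{lem:hierarchy-final-weights} in round one, Claim~\ref{claim:common-neighbour} with $D=\xi^{\gamma^{R-1}}$ to fill the $2^{R-1}-2$ intermediate level-$R$ sibling gaps in round two, and Proposition~\ref{prop:multi-round-exposure} plus a union bound to combine the errors. The cost and deviation accounting and the choice $y_0^\star=y_{0_{R-1}1}$, $y_1^\star=y_{1_{R-1}0}$ all match the paper.
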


\subsection{Cost optimisation of the constructed paths }\label{sec:choices}
In this section we apply Proposition~\ref{prop:path-from-hierarchy} and optimise the cost of the path $\pi_{y_0^\star y_1^\star}$ constructed there, yielding either polylogarithmic (Corollary~\ref{cor:computations-polylog}) or polynomial cost-distances (Corollary~\ref{cor:computations-polynomial}). The cost of $\pi_{y_0^\star y_1^\star}$ will dominate the cost of the eventual path between $0, x$. These corollaries are rather immediate: we choose appropriate values of $\gamma,\eta,z,R$, apply Proposition~\ref{prop:path-from-hierarchy}, and read off the cost of $\pi_{y_0^\star y_1^\star}$. There are four possible optimal choices of $\gamma,\eta, z,R$ depending on the model parameters, and verifying that the conditions of Setting~\ref{set:hierarchy-common} and Proposition~\ref{prop:path-from-hierarchy} hold for these choices and calculating the resulting path's cost requires some work. Thus, we defer a formal proof of Corollaries~\ref{cor:computations-polylog} and~\ref{cor:computations-polynomial} to Appendix~\ref{app:path-proofs}, and instead focus on why these four optimisers arise and what they mean on a qualitative level.

Thus, in Prop.~\ref{prop:path-from-hierarchy}, disregarding constant factors, our goal is to minimise the cost bound $\calC(\pi_{y_0^\star y_1^\star})\le  2^R\olw^{4\mu}|x|^\eta = 2^R|x|^{2\gamma^{R-1}d\mu + \eta}$ by choosing $\gamma, \eta,z,R$ optimally. To put our choices into context, we first summarise the construction of the path in Proposition~\ref{prop:path-from-hierarchy} (drawing on the proofs of Lemmas~\ref{lem:triple-edge-bridge}--\ref{lem:hierarchy-final-weights}). We first embed a hierarchy $\{y_\sigma\}_{\sigma \in \{0,1\}^R}$ by embedding its bridge-paths. Each bridge of $\{y_\sigma\}_{\sigma \in \{0,1\}^R}$ from $y_{\sigma 01}$ to $y_{\sigma 10}$ contains a single long edge obtained via Claim~\ref{claim:cheap-bridge} from a vertex near $y_{\sigma 01}$ to a vertex near $y_{\sigma 10}$ (see Lemma~\ref{lem:triple-edge-bridge}), which is then extended to first a 3-edge then a multiple-edge bridge-path using Claims~\ref{claim:cheap-edge-to-nice-vertex} and~\ref{claim:cheap-path-to-larger-weight}. We informally refer to this long edge as a \emph{bridging edge}. Having obtained the hierarchy, we then join pairs of level-$R$ siblings (which we informally call \emph{gaps}) using Claim~\ref{claim:common-neighbour}; combined with the bridge-paths, this forms $\pi_{y_0^\star y_1^\star}$. The roles of $\gamma, \eta,z, R$ in the construction are as follows:
$R$ is the depth of the hierarchy, and hence controls the number $2^R$ of gaps, while $\gamma$ controls the Euclidean length of the gaps and hence also the cost of joining them, with the total cost of joining a single gap being roughly $\olw^{4\mu} = |x|^{2d\mu\gamma^{R-1}}$. The exponent $\eta$ controls the cost of bridge-paths in the hierarchy. When $\eta > 0$  the total cost is dominated by the cost of the very first bridge-edge of length $\Theta(|x|)$ of cost $|x|^\eta$ and when $\eta = 0$ the total cost of all bridge-paths is negligible compared to the cost of joining gaps. Finally, $z$ controls the weights of the endpoints of bridging edges relative to the distance they span: when $z>0$, each endpoint of a round-$i$ bridging edge has weight roughly $(|x|^{d\gamma^{i-1}})^{z/2}$ (see Claim \ref{claim:cheap-bridge}); when $z=0$, these weights are subpolynomial in $|x|$ and the same across all~rounds.
The final cost of $\pi_{y_0^\star y_1^\star}$ is thus roughly $2^R|x|^{2\mu\gamma^{R-1}} + |x|^{\eta}$, which we have bounded above by roughly $2^R|x|^{2\mu\gamma^{R-1}}|x|^\eta$ in Proposition~\ref{prop:path-from-hierarchy} for convenience. 

From the many constraints in Proposition~\ref{prop:path-from-hierarchy}, the following are relevant when optimising the cost of the path. The requirement $\Lambda(\eta,z)>0$ ensures that low-cost bridging edges exist (Claim~\ref{claim:cheap-bridge}). The requirement that either $z=0$ or $\Phi(\eta, z) > 0$ ensures that among the many potential bridging edges a few can be extended to low-cost 3-edge bridge-paths in Lemma \ref{lem:hierarchy-intermediate-weights}. The requirement $\gamma<1$ ensures that boxes where we search for the bridging edge shrink in size, while $z \le d$ is a formal requirement for applying Claim~\ref{claim:cheap-bridge} to find bridging edges, which we tolerate because increasing $z$ above $d$ will never be optimal. Heuristically, the effect of increasing $z$ is to increase the probability that a given bridging edge exists at the price of increasing its expected cost; at $z=d$ the existence probability is already in the interval $[\underline{c}, \overline{c}]$ and cannot be increased further, (however the penalty would increase and the number of combinatorial option decrease by increasing $z$, which is never optimal).
The other constraints of Proposition~\ref{prop:path-from-hierarchy} and Setting~\ref{set:hierarchy-common} (such as $2d\gamma < \tau-1$ and $R \le (\log \log |x|)^2$) never turn out to be tight for optimal choices of $\eta, R,\gamma, z$.
Recall $\mu_{\log}, \mu_{\mathrm{pol}}$ from \eqref{eq:mu_pol_log}.

\begin{restatable}[Path with polylogarithmic cost]{corollary}{ComputationsPolyLog}\label{cor:computations-polylog}
Consider $1$-FPP in Definition \ref{def:1-FPP} on the graphs IGIRG or SFP satisfying the assumptions given in \eqref{eq:power_law}--\eqref{eq:F_L-condition} with $d\ge 1, \alpha \in (1,\infty], \tau\in(2,3), \mu>0$.
Let $\underline{c},\overline{c},h,L,c_1,c_2,\beta$ be as in~\eqref{eq:connection_prob}--\eqref{eq:F_L-condition}, 
we allow $\beta = \infty$ and/or $\alpha=\infty$.
Let $q,\eps,\zeta\in(0,1)$, let $0<\delta\lls \eps,q,\mpar$, and let $w_0>1$. Fix a realisation $(V, w_V)$ of $\widetilde{\cal V}$.  Let $x \in V$ with $|x|\ggs q, \delta,\eps,\zeta, w_0, \mpar$. Let $Q$ be a cube of side length $|x|$ containing $0$ and $x$, and assume that $(V, w_V)$ is such that $Q$ contains a weak $(\delta/4,w_0)$-net $\calN$ with $0,x \in \calN$ given in Definition \ref{def:weak-net}. Let $G \sim \{\calG\mid V, w_V\}$.
Let $\calX_{\mathrm{polylog}}(0,x)$ be the event that $G$ contains a path $\pi$, fully contained in $\calN$, with endpoints say $y_0^\star, y_x^\star$, with the following properties:
\begin{align}
&w_{y_0^\star}, w_{y_x^\star} \in [\olw,4\olw], \quad \mbox{where} \quad \olw \in [\log\log|x|, (\log|x|)^\eps],\label{eq:weight-crit-cor1}\\
&y_0^\star \in B_{\olw^{3/d}}(0) \qquad\qquad \mbox{and} \qquad y_x^\star \in B_{\olw^{3/d}}(x) \label{eq:dist-crit-cor1},\\
&\calC(\pi) \le (\log |x|)^{\Delta_0+\eps}, \qquad \mbox{and} \qquad \mathrm{dev}_{0x}(\pi)\le \zeta |x|,\label{eq:cost-crit-cor1}
\end{align}
where $\Delta_0$ is defined in ~\eqref{eq:Delta_0}, \eqref{eq:alpha-infty-Delta_0} or~\eqref{eq:beta-infty-Delta_0} depending on whether $\alpha,\beta<\infty$, $\alpha=\infty$ or $\beta=\infty$.
If either $\alpha\in(1,2)$ or $\mu\in(\mu_{\mathrm{expl}},\mu_{\log})$ or both hold, then $\pr(\calX_{\mathrm{polylog}}(0,x) \mid V,w_V) \ge 1-q$.
\end{restatable}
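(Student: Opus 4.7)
The plan is to invoke Proposition~\ref{prop:path-from-hierarchy} with $\theta = 1$ (so $G' = G$), $y_0 = 0$, $y_1 = x$, $\eta = 0$, and suitable choices of $\gamma \in (0,1)$, $z \in \{0, d\}$ and integer $R \ge 2$. With these choices, the proposition's cost bound on the path $\pi := \pi_{y_0^\star y_1^\star}$ becomes $c_H 2^R \olw^{4\mu}$, where $\olw = |x|^{\gamma^{R-1}d/2}$, and the task is to select $\gamma, z, R$ so that this quantity is at most $(\log|x|)^{\Delta_0 + \eps}$ while the corollary's weight and distance requirements on $y_0^\star, y_x^\star$ are also met.

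Write $m := \min\{\alpha, \tau - 1 + \mu\beta\}$ (using the natural conventions when $\alpha = \infty$ or $\beta = \infty$). The hypothesis forces $m < 2$, since $\alpha \in (1,2)$ yields $m \le \alpha < 2$, while $\mu < \mu_{\log}$ yields $\tau - 1 + \mu\beta < 2$. I will take $z = 0$ when $\alpha \le \tau - 1 + \mu\beta$ (so also $\alpha < 2$) and $z = d$ otherwise (so $\mu < \mu_{\log}$); in both sub-cases a direct computation from~\eqref{eq:Lambda-def} gives $\Lambda(0, z) = 2d\gamma - dm$, so setting $\gamma := m/2 + \delta'$ for a small $\delta' = \delta'(\eps) > 0$ yields $\Lambda(0, z) \ge 2\sqrt\delta$ for all sufficiently small $\delta$. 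When $z = d$, I will additionally check $\Phi(0, d) \ge \sqrt\delta$: since $\gamma > (\tau - 1 + \mu\beta)/2 > 1/2$, the minimum in~\eqref{eq:Phi-def} equals $d/2$, giving $\Phi(0, d) = d(1 - \mu\beta)/2$, which is positive because $\mu\beta < 3 - \tau < 1$ in the regime $\mu < \mu_{\log}$.

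Next, I will pick $R$ so that $\gamma^{R-1} \log|x| \approx C \log\log|x|$ with $C = C(\eps, d, \mu) > 0$ chosen small enough that $\olw^{4\mu} \le (\log|x|)^{\eps/2}$ and $\olw \in [\log\log|x|, (\log|x|)^{\eps}]$, and large enough (for $|x|$ large) that $|x|^{\gamma^{R-1}} \ge (\log\log|x|)^{16d/\delta^2}$; the bound $R \le (\log\log|x|)^2$ then comes for free. With this choice $R - 1 = (1 + o(1)) \log\log|x| / \log(1/\gamma)$, and hence
\[
c_H \cdot 2^R \cdot \olw^{4\mu} \le c_H (\log|x|)^{\log 2 / \log(1/\gamma) + o(1)} (\log|x|)^{\eps/2}.
\]
The identity $\log 2 / \log(1/\gamma) = 1/(1 - \log_2(m + 2\delta'))$ tends to $\Delta_0$ as $\delta' \to 0$, so choosing $\delta'$ sufficiently small in terms of $\eps$ makes the exponent on $\log|x|$ at most $\Delta_0 + \eps$, delivering the required cost bound.

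For the endpoint criteria, the proposition places $y_0^\star, y_x^\star$ in $B_{c_H |x|^{\gamma^{R-1}}}(\cdot)$ with weights in $[\olw, 4\olw]$; the distance bound $c_H |x|^{\gamma^{R-1}} \le |x|^{3\gamma^{R-1}/2} = \olw^{3/d}$ holds for $|x|$ large because $\gamma^{R-1}\log|x| \to \infty$, and the weight bound is immediate. The deviation bound $\mathrm{dev}_{0x}(\pi) \le 3 c_H |x|^{\gamma} \le \zeta |x|$ holds because $\gamma < 1$ and $|x|$ is large relative to $\zeta$. The failure probability is $2\exp(-(\log\log|x|)^{13}) \le q$ for $|x|$ large relative to $q$, from~\eqref{eq:high-path}. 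The main obstacle is the case split above: it must match the formula for $\Delta_0$ across the different regimes (including the $\alpha = \infty$ and $\beta = \infty$ limits, where one of the two values of $z$ is forced) and, in the $z = d$ case, the verification that the auxiliary $\Phi$-condition never becomes binding at the optimal $\gamma$. Once $\gamma = m/2 + \delta'$ is fixed with $\delta'$ tuned to $\eps$, each of these checks reduces to an explicit calculation in $\tau, \alpha, \mu, \beta$ using the regime assumptions.
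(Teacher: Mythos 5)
Your proposal is correct and follows essentially the same route as the paper: the paper also invokes Proposition~\ref{prop:path-from-hierarchy} with $\theta=1$, $\eta=0$, $\gamma=\tfrac12\min\{\alpha,\tau-1+\mu\beta\}+\eps'$, $z=0$ in the $\alpha$-dominated case and $z=d$ in the $\beta$-dominated case, and $R\approx\log\log|x|/\log(1/\gamma)$, with exactly your case split and the same verifications of $\Lambda(0,z)=2d\eps'>0$ and $\Phi(0,d)=d(1-\mu\beta)/2>0$ (see Claims~\ref{claim:polylog-low-alpha}--\ref{claim:polylog-low-mu} and Corollary~\ref{cor:polylog-combine}, packaged through the ``$(K,A)$-valid'' bookkeeping of Definition~\ref{def:valid} and Lemma~\ref{lem:valid-works}). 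The only difference is cosmetic: you tune $R$ so that $\olw$ is a small power of $\log|x|$, while the paper's choice in~\eqref{eq:R-def-long} lands $\olw$ between $e^{(\log^{*3}|x|)^2}$ and $e^{d\sqrt{\log\log|x|}/2}$; both satisfy~\eqref{eq:weight-crit-cor1}.
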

\begin{proof}[Sketch of proof]
Corollary~\ref{cor:computations-polylog} covers the polylogarithmic regime, which corresponds to solutions where $\eta = 0$ is possible -- such solutions exists when either $\alpha\in(1,2)$ or $\mu<\mu_{\log}$. When $\eta=0$, the cost of the path $\pi_{y_0^\star,y_x^\star}$ is dominated by the cost $2^R|x|^{2\mu\gamma^{R-1}}$ of joining gaps. Given $\gamma$, this has minimum $2^{(1+o(1))R}$ when setting $R = (1-o(1))\log\log |x|/\log(1/\gamma)$. To minimise the cost further, we must therefore minimise $\gamma\in(0,1)$ subject to the constraints $z\in[0,d], \Lambda(0,z)>0$, and either $z=0$ or $\Phi(0,z)>0$. This problem turns out to have two potentially optimal solutions corresponding to two possible strategies for finding bridging edges, with the optimal choice depending on the values of $\alpha,\tau,\beta,\mu$. One possible solution -- which only exists when $\alpha\in(1,2)$ -- takes $\gamma = \alpha/2 + o(1)$ and $z=0$, so that bridging edges are unusually long-range edges between pairs of low-weight vertices, yielding total path cost  $(\log |x|)^{\Delta_\alpha}$ with $\Delta_\alpha=1/(1-\log_2\alpha)$, see Claim~\ref{claim:polylog-low-alpha}.
The other possible solution -- which only exists when $\mu<\mu_{\log}$ -- takes $\gamma = (\tau-1+\mu\beta)/2 + o(1)$ and $z=d$, so that bridging edges are unusually low-cost edges between pairs of high-weight vertices and the total path cost is $(\log x)^{\Delta_\beta}$ with $\Delta_\beta=1/(1-\log_2(\tau-1+\mu\beta))$, see Claim~\ref{claim:polylog-low-mu}. The proof is in Appendix \ref{app:path-proofs}.
\end{proof}
\begin{remark}\label{rem:alpha-beta-infty-polyllog}
  If both $\alpha=\beta=\infty$, then the conditions of Corollary~\ref{cor:computations-polylog} cannot be satisfied. Indeed, when $\alpha=\infty$ then $\alpha\in (1,2)$ is not satisfied. Since $\mu_{\mathrm{expl}}= \mu_{\log} = 0$ by~\eqref{eq:beta-infty-definitions} when $\alpha=\beta=\infty$, so $\mu\in(\mu_{\mathrm{expl}},\mu_{\log})$ can also not be satisfied. 
\end{remark}

\begin{restatable}[Path with polynomial cost]{corollary}{ComputationsPolynomial}\label{cor:computations-polynomial}
  Consider $1$-FPP in Definition \ref{def:1-FPP} on the graphs IGIRG or SFP satisfying the assumptions given in \eqref{eq:power_law}--\eqref{eq:F_L-condition} with $d\ge 1, \alpha \in (1,\infty], \tau\in(2,3), \mu>0$.
Let $\underline{c},\overline{c},h,L,c_1,c_2,\beta$ be as in~\eqref{eq:connection_prob}--\eqref{eq:F_L-condition}, 
we allow $\beta = \infty$ and/or $\alpha=\infty$.
Let $q,\eps,\zeta\in(0,1)$, and let $0<\delta\lls \eps,q,\mpar$, and $w_0>1$. Fix a realisation $(V, w_V)$ of $\widetilde{\cal V}$.  Let $x \in V$ with $|x|\ggs q,\delta,\eps, \zeta,w_0,\mpar$. Let $Q$ be a cube of side length $|x|$ containing $0$ and $x$, and assume that $(V, w_V)$ is such that $Q$ contains a weak $(\delta/4,w_0)$-net $\calN$ with $0,x \in \calN$ given in Definition \ref{def:weak-net}. Let $G \sim \{\calG\mid V, w_V\}$.
Let $\calX_{\mathrm{pol}}(0,x)$ be the event that $G$ contains a path $\pi$, fully contained in $\calN$, with endpoints say $y_0^\star, y_x^\star$, with the following properties:
\begin{align}
&w_{y_0^\star}, w_{y_x^\star} \in [\olw,4\olw], \quad \mbox{where} \quad \olw \in [\log\log|x|, |x|^\eps],\label{eq:weight-crit-cor2}\\
&y_0^\star \in B_{\olw^{3/d}}(0) \qquad\qquad \mbox{and} \qquad y_x^\star \in B_{\olw^{3/d}}(x) \label{eq:dist-crit-cor2},\\
&\calC(\pi) \le |x|^{\eta_0+\eps}, \qquad \mbox{and} \qquad \mathrm{dev}_{0x}(\pi)\le \zeta |x|,\label{eq:cost-crit-cor2}
\end{align}
where $\eta_0$ is defined in~\eqref{eq:eta_0}, \eqref{eq:alpha-infty-definitions}, \eqref{eq:beta-infty-definitions}, or~\eqref{eq:alpha-beta-infty-definitions} depending on $\alpha,\beta<\infty$, $\alpha=\infty$, $\beta=\infty$, or $\alpha=\beta=\infty$.
  If both $\alpha>2$ and $\mu \in(\mu_{\log}, \mu_{\mathrm{pol}}]$ hold then $\pr(\calX_{\mathrm{pol}}(0,x) \mid V,w_V) \ge 1-q$.
\end{restatable}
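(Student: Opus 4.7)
The plan is to apply Proposition~\ref{prop:path-from-hierarchy} with $y_0 := 0$, $y_1 := x$, $\theta := 1$, and parameters $(\gamma, \eta, z, R)$ approximating the optimiser of the cost bound $c_H 2^R \olw^{4\mu}|x|^{\eta} = c_H 2^R |x|^{2d\mu\gamma^{R-1}+\eta}$. In contrast to the polylogarithmic regime, the minimum achievable $\eta$ is here strictly positive, so $|x|^\eta$ dominates; I will therefore take $R$ to be a \emph{constant} independent of $|x|$, large enough that $\olw^{4\mu} = |x|^{2d\mu\gamma^{R-1}} \leq |x|^{\eps/4}$ while $2^R$ is subpolynomial. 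This reduces the problem to minimising $\eta$ over $(\gamma, z) \in (0,1) \times [0,d]$ subject to $\Lambda(\eta,z) \geq 2\sqrt{\delta}$ and, when $z > 0$, $\Phi(\eta, z) \geq \sqrt{\delta}$.

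Inspection of~\eqref{eq:Lambda-def} rules out $z=0$ in the polynomial regime $\alpha > 2$: $\Lambda(\eta, 0) = 2d\gamma - \alpha d$ forces $\gamma \geq \alpha/2 \geq 1$, incompatible with $\gamma < 1$. Two sub-strategies remain, each saturating one of the candidate exponents in~\eqref{eq:eta_0}. \emph{Strategy A} (many cheap edges between high-weight endpoints) takes $z_A := d$ and $\eta_A := \eta_\beta = d(\mu - \mu_{\log})$: since $\eta_A < \mu d$ for $\beta < \infty$, direct computation gives $\Lambda(\eta_A, d) = 2d\gamma - d(\tau-1) + \beta(\eta_A - \mu d) = 2d(\gamma - 1)$, which vanishes as $\gamma \uparrow 1$; taking $\gamma = 1 - c\sqrt{\delta}$ and inflating $\eta$ by $c'\sqrt{\delta} \leq \eps/4$ delivers $\Lambda \geq 2\sqrt{\delta}$. \emph{Strategy B} (few long bridging edges between medium-weight endpoints) takes $z_B := d(\alpha-2)/(\alpha-\tau+1) \in (0, d)$ and $\eta_B := \mu z_B = \eta_\alpha$: since $\eta_B = \mu z_B$, the $\beta$-term in $\Lambda$ vanishes, and $\Lambda(\eta_B, z_B) = 2d\gamma - \alpha(d - z_B) - z_B(\tau-1) = 2d(\gamma - 1)$, again vanishing as $\gamma \uparrow 1$ and gaining slack analogously. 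In both strategies, $\Phi(\eta, z) \geq z/2 > 0$ whenever $\gamma \geq z/(2d)$ (automatic), and the same $O(\sqrt{\delta})$ inflation secures $\Phi \geq \sqrt{\delta}$. Since $\eta_0 = \min\{\eta_\alpha, \eta_\beta\}$, I pick whichever strategy realises $\eta_0$ and set $\eta := \eta_0 + \eps/4$.

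With $(\gamma, \eta, z)$ fixed (all constants in $|x|$), set $R := \lceil 1 + \log(8d\mu/\eps) / \log(1/\gamma)\rceil$, ensuring $\gamma^{R-1} \leq \eps/(8d\mu)$. This yields $\olw = |x|^{d\gamma^{R-1}/2}$ with $\olw \in [\log\log|x|, |x|^\eps]$ and $\olw^{4\mu} \leq |x|^{\eps/4}$ for $|x|$ large, giving~\eqref{eq:weight-crit-cor2}. Since $R$, $c_H$, and $(\gamma, \eta, z)$ are $|x|$-independent, the remaining hypotheses of Proposition~\ref{prop:path-from-hierarchy} -- namely $|x|^{\gamma^{R-1}} \geq (\log\log|x|\sqrt{d})^{16d/\delta^2}$ and $R \leq (\log\log|x|)^2$ -- hold for $|x| \ggs \eps, q, \delta, w_0, \mpar$. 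The proposition then provides, with probability at least $1 - 2\exp(-(\log\log|x|)^{13}) \geq 1-q$, a path $\pi$ fully contained in $\calN$ with endpoints $y_0^\star, y_x^\star$ satisfying $|y_0^\star| \leq c_H|x|^{\gamma^{R-1}} \leq |x|^{3\gamma^{R-1}/2} = \olw^{3/d}$ and analogously near $x$ (giving~\eqref{eq:dist-crit-cor2}), cost $\cost{\pi} \leq c_H 2^R \olw^{4\mu}|x|^\eta \leq |x|^{\eta_0 + \eps}$ (giving~\eqref{eq:cost-crit-cor2}), and deviation $\mathrm{dev}_{0x}(\pi) \leq 3c_H|x|^\gamma \leq \zeta|x|$.

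The main technical obstacle is handling the limit cases $\alpha = \infty$ and $\beta = \infty$ uniformly with the finite-parameter case. Strategy A degenerates when $\beta = \infty$ (the $\beta(\eta - \mu z)$ term in $\Lambda$ becomes $-\infty$ unless $\eta \geq \mu z$), while Strategy B degenerates when $\alpha = \infty$ (formally $z_B \to d$, collapsing B into A with $\eta = \mu d$). In each limit exactly one strategy remains viable, and the corresponding $\eta$ still coincides with $\eta_0$ as extended in~\eqref{eq:alpha-infty-definitions},~\eqref{eq:beta-infty-definitions}, and~\eqref{eq:alpha-beta-infty-definitions}; explicitly tracking the slack in $\Lambda$ and $\Phi$ across the four sub-cases is the bulk of the bookkeeping but introduces no new idea beyond direct substitution into~\eqref{eq:Lambda-def} and~\eqref{eq:Phi-def}.
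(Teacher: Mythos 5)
Your overall architecture coincides with the paper's: the same two optimisers (one with $z=d$, $\eta=\eta_\beta$; one with $z$ near $d(\alpha-2)/(\alpha-(\tau-1))$, $\eta=\eta_\alpha$), fed into Proposition~\ref{prop:path-from-hierarchy} with $\gamma$ close to $1$ and $R$ bounded, taking whichever realises $\eta_0=\min\{\eta_\alpha,\eta_\beta\}$. Your choice of a constant $R$ rather than the paper's $R=1/(\eps')^2$ is immaterial.

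However, Strategy B as you describe it has a genuine gap. You fix $z=z_B=d(\alpha-2)/(\alpha-\tau+1)$ exactly, take $\gamma=1-c\sqrt{\delta}$, and claim to "gain slack analogously" by inflating $\eta$ above $\mu z_B$. But at $\eta=\mu z_B$ the term $0\wedge\beta(\eta-\mu z)$ in~\eqref{eq:Lambda-def} is already at its cap of $0$, and any increase of $\eta$ leaves it at $0$; hence $\Lambda(\eta,z_B)=2d\gamma-\alpha(d-z_B)-z_B(\tau-1)=2d(\gamma-1)<0$ for every admissible $\gamma<1$, and the hypothesis $\Lambda\ge 2\sqrt{\delta}$ of Proposition~\ref{prop:path-from-hierarchy} fails. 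The slack must instead come from perturbing $z$ upward: since $\alpha>2>\tau-1$, increasing $z$ by $\Delta z$ increases $\Lambda$ by $(\alpha-(\tau-1))\Delta z>0$, so one takes $z=(\eta_\alpha+\sqrt{\eps'})/\mu=z_B+\sqrt{\eps'}/\mu$ (and $\eta=\mu z$ so the $\beta$-term stays at $0$), giving $\Lambda=\sqrt{\eps'}(\alpha-(\tau-1))/\mu-2d\eps'>0$; this is exactly what the paper does in Claim~\ref{claim:polynomial-large-mu}. A second, smaller slip: in Strategy A your assertion $\Phi(\eta,z)\ge z/2$ "automatically" ignores the second bracket of~\eqref{eq:Phi-def}, which equals $\beta\mu d/2-(3-\tau)d$ and can be negative; one needs the hypothesis $\mu>\mu_{\log}$ (i.e.\ $\mu\beta>3-\tau$) to conclude $\Phi\ge (\tau-2)d/2>0$. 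With these two repairs the argument goes through as in the paper.
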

\begin{proof}[Sketch of proof]
Corollary~\ref{cor:computations-polynomial} covers the polynomial regime, which corresponds to solutions where only $\eta > 0$ is possible, i.e. when $\alpha>2$ and $\mu>\mu_{\log}$. Here, on taking $R$ to be a suitably large constant, the cost bound on the path $2^R|x|^{2\mu\gamma^{R-1}}|x|^\eta=|x|^{\eta+o(1)}$, which is roughly the cost of the very first bridging edge. Our goal is thus to minimise $\eta$ under the constraints that $\Lambda(\eta,z)>0$, $z\in[0,d]$, $\gamma\in(0,1)$, and either $z=0$ or $\Phi(\eta,z)>0$.
\eqref{eq:Lambda-def} and \eqref{eq:Phi-def} show that both $\Phi$ and $\Lambda$ are increasing functions of $\gamma$; thus we can take $\gamma = 1-o(1)$. As in the polylogarithmic regime, this minimisation problem has two potentially optimal solutions. One possible solution -- which exists when $\mu\le\mu_{\mathrm{pol},\beta}$ -- takes $z=d$ and gives $\eta = \mu d-(3-\tau)d/\beta + o(1)$, so that bridging edges are unusually low-cost edges between pairs of high-weight vertices. The total path cost is then $|x|^{\eta_\beta+o(1)}$ with $\eta_\beta=d(\mu-(3-\tau)/\beta)$ (see Claim~\ref{claim:polynomial-small-mu}). The other possible solution -- which exists when $\mu\le \mu_{\mathrm{pol},\alpha}$ -- takes $z$ to be as small as possible, so that bridging edges are unusually long-range edges between pairs of relatively low-weight vertices. However, when $\alpha>2$, there are no bridging-edges between constant weight vertices, and the minimal $z$ where bridging-edges appear is $z = d(\alpha-2)/(\alpha - (\tau - 1)) + o(1)=1/\mu_{\mathrm{pol},\alpha}+o(1)$, i.e., between vertices of weight $|x|^{1/(2\mu_{\mathrm{pol},\alpha})+o(1)}$. This gives cost-exponent  $\eta_\alpha := \mu/\mu_{\mathrm{pol},\alpha}$ and total cost $|x|^{\mu/\mu_{\mathrm{pol},\alpha} + o(1)}$ (see Claim~\ref{claim:polynomial-large-mu}). Whenever a solution exists among the above two possibilities, it gives an exponent $\eta$ at most $1$. So, whenever $\mu\le \max\{\mu_{\mathrm{pol},\alpha}, \mu_{\mathrm{pol},\beta}\}$, we obtain the cost bound $|x|^{\min\{\eta_\beta, \eta_\alpha\}+o(1)}$, which gives the
definition of $\eta_0$ in \eqref{eq:eta_0}.
The proof is in Appendix \ref{app:path-proofs}.
\end{proof}
The goal of this section has been to prove Corollaries~\ref{cor:computations-polylog} and~\ref{cor:computations-polynomial}; now that this has been achieved, notation internal to this section will no longer be used.
\section{Connecting the endpoints $0, x$ to the path}\label{sec:endpoints}
The final step is to connect the initial vertices $0$ and $x$ to the respective endpoints $y_0^\star$ and $y_x^\star$ of the path constructed in Section~\ref{sec:hierarchy}. For $d\ge 2$, we consider the graph $G_M$ induced by the vertices of weight in $[M,2M]$ for some large constant $M$. By a result in our companion paper, this graph has an infinite component $\calC_\infty^M$ where cost-distances scale linearly with the Euclidean distance. We connect $0, y_0^\star$ to respective nearby vertices $u_0, u_0^\star\in G_M$, and then use that the cost-distance $d_{\calC}(u_0, u_0^\star)=\Theta(|u_0-u_0^\star|$) within $G_M$. We do the same for $y_x^\star$ and $x$. For $d=1$, we take a similar approach with larger value of $M$: we need $M$ to depend on $|x|$ to guarantee that $G_M$ contains a large connected subgraph in the section between $0$ and $x$.

The construction of the path $\pi_{y_0^\star, y_x^\star}$ already revealed information about the graph: the vertices $y_0^\star, y_x^\star$ are the outcomes of a selection procedure that might influence the graph around them. So, we ensure that cost-distances are linear in $G_M$ simultaneously for all `candidate' vertices for $u_0$ and $u_0^\star$ in Lemma~\ref{lem:new-external}.

When the graph is finite (e.g. GIRG $G_n$ in Def. \ref{def:girg}), we additionally use that (near)-shortest paths within $G_M$ have very small deviation from the straight line, so that when two vertices are not too close to the boundary of the box $Q_n$, the constructed path stays in $Q_n$. For this we use that the constructed paths have low deviation, see Def.~\ref{def:deviation}. 
We define the setting of this section. As in Theorems \ref{thm:polylog_regime}--\ref{thm:polynomial_regime}, we aim to bound $d_\calC(0,x)$. 

\begin{setting}\label{set:joining-common}
 Consider $1$-FPP in Definition \ref{def:1-FPP} on the graphs IGIRG or SFP satisfying the assumptions given in \eqref{eq:power_law}--\eqref{eq:F_L-condition} with $d\ge 1, \alpha \in (1,\infty], \tau\in(2,3), \mu>0$.
Let $\underline{c},\overline{c},h,L,c_1,c_2,\beta$ be as in~\eqref{eq:connection_prob}--\eqref{eq:F_L-condition}, 
we allow $\beta = \infty$ and/or $\alpha=\infty$. Let $G\sim \calG$, let $\calF_{0,x} := \{0,x \in \calV\}$, and let $\calC_\infty$ be the (unique) infinite component of~$G$.  

Let $q \in (0,1)$, and let $M \ggs q,\mpar$. Let $I_M := [M,2M]$, and let $G_M=(\calV_M,\calE_M)$ be the subgraph of $G$ with vertices $\calV_M:=\{(v,w_v)\in\widetilde \calV\colon w_v\in I_M\}$ and edges $\calE_M:=\{uv \colon u,v\in \calV_M,\, \cost{uv}\le M^{3\mu}\}$. Let $\calC_\infty^M$ be the infinite component of $G_M$ if it exists and is unique, and define $\calC_\infty^M:=\texttt{None}$ otherwise.
\end{setting}

The next claim is a technical necessity to remove conditioning on membership of $\calC_\infty$ later.

\begin{restatable}{claim}{twoinCinfty}\label{claim:two-in-Cinfty}
    Consider Setting~\ref{set:joining-common}. There exists $\rho > 0$ such that for all distinct $a,b \in \R^d$, we have $\pr(a,b \in \calC_\infty \mid a,b \in \calV) \ge \rho$.
\end{restatable}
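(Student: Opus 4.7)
My plan is to prove this via the Harris/FKG inequality after coupling to single-vertex Palm measures. Define $\theta_{\infty} := \pr(0 \in \calC_{\infty} \mid 0 \in \calV)$, which by translation invariance (of $\Z^d$ for SFP, and of the Palm distribution of the PPP for IGIRG) does not depend on the base point. By the existence and uniqueness of the infinite component in SFP/IGIRG for $\tau \in (2, 3)$, which is cited in the paper just before Definition~\ref{def:1-FPP}, we have $\theta_{\infty} > 0$. I will show that $\rho = \theta_{\infty}^{2}$ works.

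For SFP the conditioning is vacuous since $\calV = \Z^d$. Realise the graph on the product measure of IID weights $(W_{v})_{v \in \Z^{d}}$ and IID uniforms $(U_{e})_{e \in (\Z^{d})^{(2)}}$, declaring $uv \in \calE$ iff $U_{uv} \le h(u-v, W_{u}, W_{v})$. The events $\{a \in \calC_{\infty}\}$ and $\{b \in \calC_{\infty}\}$ are both monotone increasing in every $W_{v}$ and monotone decreasing in every $U_{e}$: raising weights or lowering $U$'s only adds edges, which can only enlarge or merge connected components, and by uniqueness of the infinite cluster a fixed vertex can only move into, never out of, $\calC_{\infty}$. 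Harris' inequality on the product measure then yields $\pr(a, b \in \calC_{\infty}) \ge \pr(a \in \calC_{\infty}) \pr(b \in \calC_{\infty}) = \theta_{\infty}^{2}$.

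For IGIRG, the Slivnyak-Mecke formula gives that under $\pr(\cdot \mid a, b \in \calV)$, the vertex set is distributed as $\{a, b\} \cup \calV_{0}$, where $\calV_{0}$ is a unit-intensity PPP independent of $a,b$. Using a common coupling, construct $G_{ab}$ on $\{a,b\} \cup \calV_{0}$, $G_{a}$ on $\{a\} \cup \calV_{0}$, and $G_{b}$ on $\{b\} \cup \calV_{0}$, all sharing the same weights of $\calV_{0}$ and the same edge variables among pairs in $\calV_{0}$. Since adding a single vertex can only enlarge each existing connected component (so by uniqueness an infinite component in a subgraph lies inside the infinite component of the supergraph), $\{a \in \calC_{\infty}(G_{a})\} \subseteq \{a \in \calC_{\infty}(G_{ab})\}$ and likewise for $b$. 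Hence
\[
 \pr(a, b \in \calC_{\infty} \mid a, b \in \calV) \;\ge\; \pr\bigl(a \in \calC_{\infty}(G_{a}),\; b \in \calC_{\infty}(G_{b})\bigr).
\]
To bound the right-hand side, condition on the shared randomness $\calS := (\calV_{0}, (W_{v})_{v \in \calV_{0}}, (U_{e})_{e \in \calV_{0}^{(2)}})$. Given $\calS$, the event $\{a \in \calC_{\infty}(G_{a})\}$ depends only on $(W_{a}, (U_{av})_{v \in \calV_{0}})$ and $\{b \in \calC_{\infty}(G_{b})\}$ only on $(W_{b}, (U_{bv})_{v \in \calV_{0}})$, which are independent families; so the two events are conditionally independent given $\calS$. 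Write $f(\calS) := \pr(a \in \calC_{\infty}(G_{a}) \mid \calS)$ and $g(\calS) := \pr(b \in \calC_{\infty}(G_{b}) \mid \calS)$; both are monotone increasing in $\calS$ by the same monotonicity argument as above. The product of a PPP with IID marks satisfies FKG for increasing events (by the standard discretisation of $\R^{d}$ into unit cells, on which the Poisson counts and their attached marks form a product measure to which the classical Harris inequality applies), so $\E[f g] \ge \E[f]\, \E[g] = \pr(a \in \calC_{\infty}(G_{a})) \pr(b \in \calC_{\infty}(G_{b})) = \theta_{\infty}^{2}$.

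The only real subtlety is to justify FKG rigorously for the PPP together with its IID marks; this is standard but not entirely trivial, and I would either invoke it from the literature or write out the short discretisation argument in an appendix. Everything else is bookkeeping once the monotone coupling between $G_{a}$, $G_{b}$, $G_{ab}$ is in place and we remember that uniqueness of $\calC_{\infty}$ turns ``$a$ lies in some infinite component of a subgraph'' into ``$a$ lies in the infinite component of the full graph''.
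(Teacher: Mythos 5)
Your route (Slivnyak--Mecke reduction plus Harris/FKG, aiming for $\rho=\theta_\infty^2$) is genuinely different from the paper's: for $d\ge 2$ the paper simply imports Claim~3.10 of the companion paper, and for $d=1$ it argues directly, paying a constant probability $\asymp M_0^{-2(\tau-1)}$ for $a$ and $b$ to both have weight in $[M_0,2M_0]$ and then using Lemma~\ref{lem:1d-infinite-path} to launch an infinite weight-increasing path from each with probability $1-e^{-M_0^{\delta(\tau-1)/4}}$, finishing with a union bound. Your argument would be cleaner and dimension-independent, but as written it has one genuine gap.

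The gap is the claimed monotonicity in the weights. You couple via $uv\in\calE$ iff $U_{uv}\le h(u-v,W_u,W_v)$ and assert that $\{a\in\calC_\infty\}$ is increasing in every $W_v$; this requires $h$ to be nondecreasing in its weight arguments, which is \emph{not} assumed in Definition~\ref{def:girg} --- $h$ is only sandwiched as in \eqref{eq:connection_prob} between $\underline{c}\,(1\wedge w_1w_2/|x|^d)^\alpha$ and $\overline{c}\,(1\wedge w_1w_2/|x|^d)^\alpha$, and may oscillate within that corridor, so increasing a weight can delete edges under your coupling. The same issue affects the IGIRG half, where you need $f(\calS)$ and $g(\calS)$ to be increasing in the weights carried by $\calS$. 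The repair is straightforward: run the whole argument on the minorant graph obtained by replacing $h$ with $\underline{c}\,(1\wedge w_1w_2/|x|^d)^\alpha$, coupled as a subgraph of $G$ via the same uniforms; this is itself a valid SFP/IGIRG with $\tau\in(2,3)$ (hence percolates with a unique infinite cluster), its connection function is genuinely monotone in the weights, and ``$a$ lies in an infinite component of the minorant'' implies $a\in\calC_\infty(G)$, so you obtain $\rho$ equal to the squared one-point density of the minorant's infinite cluster. With that fix, the remaining ingredients --- identifying $\{a\in\calC_\infty\}$ a.s.\ with the increasing event ``$a$ lies in some infinite component'' via uniqueness, the Mecke reduction, the conditional independence given the shared randomness, and Harris/FKG for the marked PPP (which does require encoding the pair-indexed uniforms $U_{uv}$ as per-point marks before discretizing) --- are standard and sound, though the last deserves the written-out justification you flag.
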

\begin{proof}
    For $d \ge 2$, this is~\cite[Claim~3.10]{komjathy2022one2}. We provide a proof for $d=1$ on page \pageref{proof:claim-rho} in Appendix~\ref{app:1d-endpoints}.
\end{proof}
The next lemma, that we prove in the companion paper \cite{komjathy2022one2}, is the main tool of the section. We first need some definitions.
We use the notation $\pi_{a,b}$ for a path between vertices $a$ and $b$. Let $r, \kappa, \zeta, C>0$ and $z\in \R^d$.
We say that a set of vertices $\calH\subseteq \calV_M$ is \emph{$r$-strongly dense} around $z\in \R^d$ in $\calV_M$ if the following event holds:
 \begin{equation}\label{eq:a-dense}
         \calA_\mathrm{dense}(\calH, \calV_M, r,z):=\Big\{\forall y \in B_r(z): \big|B_{r^{1/3}}(y) \cap \calH\big| \ge |B_{r^{1/3}}(y) \cap \calV_M|/2\Big\}.
        \end{equation}
We say that a set of vertices $\calH\subseteq \calC_\infty^M$ shows \emph{$r$-strongly $\kappa$-linear} distances with deviation $\zeta$ in $\calC_\infty^M$ around $z\in \R^d$ if the following event holds:
 \begin{equation}\label{eq:a-linear}
 \begin{aligned}
        \calA_\mathrm{linear}(\calH, \calC_\infty^M,r,\kappa,\zeta,C, z):=&\Big\{\forall a \in \calB_r(z) \cap \calH,\  \forall b \in \calH: \exists\mbox{ a path } \pi_{a,b} \subseteq \calC_\infty^M \mbox{ with}\\
        &\quad\calC(\pi_{a,b})\le \kappa |a-b|+C,\  \mathrm{dev}(\pi_{a,b})\le \zeta |a-b| + C\Big\}.
        \end{aligned}
        \end{equation}
Finally, we say that a set $\calH$ is $(r,C)$-near to $z\in \calV$ if the following event holds:  
 \begin{equation}
 \begin{aligned}\label{eq:a-near}
 \calA_{\mathrm{near}}(\calH, r,C,z):=&\Big\{\exists \mbox{ a path } \pi_{z,a} \mbox{ to some } a\in B_r(z)\cap \calH\mbox{ with}\\
 &\qquad \qquad\qquad\qquad\calC(\pi_{z,a})\le C, \ \mathrm{dev}(\pi_{z,a})\le C \Big\}.
  \end{aligned}
 \end{equation}

\begin{lemma}\label{lem:new-external}
    Consider Setting~\ref{set:joining-common} and assume $d\ge 2$. Let $M,r_1,r_2,C,\kappa > 0$ and $q,\zeta \in (0,1)$. 
    Whenever $C \ggs r_2$ and $r_1,r_2\ggs M,\zeta,q,\mpar$, and $\kappa\ggs M$,
    then a.s.\ 
    $\calC_\infty^M \neq \normalfont{\texttt{None}}$ and there is an infinite-sized vertex set $\calH_\infty\subseteq \calC_\infty^M$ determined by $(\widetilde \calV, \calE(G_M))$ so that $G_M[\calH_\infty]$ is connected, and for all $z\in \R^d$,
    \begin{align}
    &\pr(\calA_\mathrm{dense}(\calH_\infty,\calV_M,r_1,z))\ge 1-q/10, \qquad\pr(\calA_\mathrm{near}(\calH_\infty, r_2,C,z)\mid z\in \calC_\infty) \ge 1-q/10, \label{eq:dense-near}\\
     &\pr(\calA_{\mathrm{linear}}(\calH_\infty, \calC_\infty^M, r_2,\kappa, \zeta,C,z))\ge1-q/10.\label{eq:linear-again}
    \end{align}
    The statement remains valid conditioned on $\calF_{y,z}=\{y,z \in \calV\}$; moreover, the constraints on $C,r,M,\kappa$ are uniform over $\{\calF_{y,z}: y,z\in \R^d\}$.
\end{lemma}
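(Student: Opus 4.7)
The plan is to defer the bulk of the argument to the companion paper \cite{komjathy2022one2}, which develops the renormalisation machinery on the subgraph $G_M$ needed to establish linear cost-distances in the infinite component. What I would do here is extract the three properties \eqref{eq:dense-near}--\eqref{eq:linear-again} from a single renormalisation scheme: partition $\R^d$ into axis-aligned boxes of side length $\ell$ with $\ell \ggs M$, and call a box \emph{good} if (i) it contains $\Theta(\ell^d)$ vertices of weight in $[M,2M]$, and (ii) the induced subgraph of $G_M$ on these vertices is sufficiently connected to its neighbours (e.g. contains a spanning subcluster that meets every neighbouring good box in a macroscopic vertex set). For $M$ large relative to $\mpar$ and then $\ell$ large relative to $M$, the probability that any given box is good exceeds the critical threshold for $1$-dependent site percolation on $\Z^d$ (using $d \ge 2$), yielding a unique infinite cluster of good boxes, and hence a unique $\calC_\infty^M$ almost surely.

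Given this, I would define $\calH_\infty$ as the union over good boxes in the infinite cluster of their designated ``backbone'' subclusters. By construction $G_M[\calH_\infty]$ is connected, $\calH_\infty \subseteq \calC_\infty^M$, and within each good box at least half of the vertices in $\calV_M$ lie in $\calH_\infty$ (the threshold in (ii) can be tuned up to $1/2+\eps$ by choosing the box-goodness event stringently). The density property \eqref{eq:dense-near} then follows from a Chernoff bound combined with a union bound over the at most $(r_1/\ell^{1/3})^{O(d)}$ relevant sub-ball centres $y \in B_{r_1}(z)$, using that the fraction of good boxes near $z$ concentrates above $1-q/100$.

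For the nearness claim in \eqref{eq:dense-near}: conditioned on $z \in \calC_\infty$, the path from $z$ in the ambient infinite component $\calC_\infty$ hits a vertex of weight in $[M,2M]$ after $O_M(1)$ edges with positive probability, using the doubly-logarithmic graph-distances of GIRG/SFP together with the power-law weight tail; such a vertex lies in $\calC_\infty^M$ with positive probability by the renormalisation, and sprinkling an additional few edges yields a cost-$O(C)$ connection into $\calH_\infty$. Property \eqref{eq:linear-again} is the heart of the matter and the main obstacle: one must show that for any $a,b \in \calH_\infty$ with $a$ near $z$, there is a path in $\calC_\infty^M$ of cost at most $\kappa|a-b| + C$ and deviation at most $\zeta|a-b| + C$. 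This follows from a deterministic shortest-path argument on the good-box cluster (each crossed good box contributes cost $O(M^{3\mu}) \le \kappa \ell$), combined with the well-known fact that supercritical site percolation on $\Z^d$ admits paths whose deviation from a given straight line is $o(D)$ -- both of which are established in \cite{komjathy2022one2} via its own renormalisation.

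The hardest part, and the reason this is relegated to the companion paper, is closing the gap between a purely combinatorial linear-distance statement on $\calC_\infty^M$ and the quantitative, uniform-in-$z$ version needed here, while simultaneously accommodating the conditioning on $\calF_{y,z}$. This conditioning only forces at most two vertices into $\calV$ and therefore perturbs the renormalisation in a negligible way: one re-runs the argument omitting the (at most) two boxes containing $y,z$, uses the stochastic dominance of $1$-dependent percolation by Bernoulli percolation (Liggett--Schonmann--Stacey) to absorb the local perturbation, and obtains constants $\kappa, C$ that are uniform over $y,z \in \R^d$.
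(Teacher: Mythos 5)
Your proposal is correct and matches the paper's approach: the paper's own proof of this lemma is simply a citation map into the companion paper, taking $\calH_\infty$ and its connectivity from \cite{komjathy2022one2} Corollary~3.9(ii), the linearity \eqref{eq:linear-again} from Corollary~3.9(iv), the nearness from Claim~3.11, and the density from Corollary~3.9(iii) (which gives the stronger statement with radius $(\log r_1)^2$ and density $1-\eps$, converted to the $r_1^{1/3}$-version by a covering argument exactly as you sketch). Your renormalisation outline is essentially a reconstruction of what the companion paper does internally, and you correctly identify that the conditioning on $\calF_{y,z}$ is harmless because the cited results allow planted vertices.
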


\begin{proof}[Proof]
The lemma follows from results in~\cite{komjathy2022one2}. There, we show that $\calH_\infty$ exists, and is infinite and connected in $G_M$ in Corollary~3.9(ii).
The $r_2$-strong $\kappa$-linearity comes from Corollary~3.9(iv) in~\cite{komjathy2022one2} applied with $r_{3.9}=r_2$ and $C_{3.9}=C$, and the $(r_2,C)$-near property comes from in~\cite[Claim 3.11]{komjathy2022one2}. Moreover, we can apply ~\cite[Corollary~3.9(iii)]{komjathy2022one2} with $r_{3.9}=r_1$ to get the $r_1$-dense property with $(\log r_1)^2$ instead of $r_1^{1/3}$ and arbitrary density $1-\eps$ instead of $1/2$ in \eqref{eq:a-dense}. This is a strictly stronger statement since we can cover any ball of radius $r_1^{1/3}$ with balls of radius $(\log r_1)^2$, at the cost of increasing the fraction $\eps$ of non-covered vertices by a $d$-dependent factor.
\end{proof}

For $d=1$, the graph $G_M$ does not have an infinite component for any $M$ and the proof techniques in Lemma~\ref{lem:new-external} do not apply. Instead, on page~\pageref{proof:lem:new_external} in Appendix~\ref{app:1d-endpoints} we directly prove the following analogous statement for $G_M$ in a \emph{finite} interval.

\begin{restatable}{lemma}{newexternaloned}\label{lem:new-external-1d}
    Consider Setting~\ref{set:joining-common} with $d = 1$. Let $q,\zeta \in (0,1)$, $r_M := e^{(\log M)^2}$, $\kappa_M := 2M^{3\mu+2}$ and $C_M := M^{2(\tau-1)+3\mu}$. Let $z \in \R^d$, and $\calH_M := B_{2r_M}(z) \cap \calV_M$. Then whenever $M \ggs q,\mpar$,
    \begin{align}
    &\pr(\calA_\mathrm{dense}(\calH_M,\calV_M,r_M,z)) = 1, \qquad\pr(\calA_\mathrm{near}(\calH_M, C_M,C_M,z)\mid z\in \calC_\infty) \ge 1-q/10, \label{eq:dense-near-1d}\\
     &\pr(\calA_{\mathrm{linear}}(\calH_M, \calH_M, r_M,\kappa_M, 0,2\kappa_M,z))\ge 1-q/10.\label{eq:linear-again-1d}
    \end{align}
    The statement remains valid conditioned on $\calF_{y,z}=\{y,z \in \calV\}$; moreover, the constraints on $r_M$ are uniform over $\{\calF_{y,z}: y,z\in \R^d\}$.
\end{restatable}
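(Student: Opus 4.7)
The proof handles the three events of Lemma~\ref{lem:new-external-1d} separately.

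The density event $\calA_\mathrm{dense}(\calH_M,\calV_M,r_M,z)$ holds \emph{deterministically}: for $r_M \ge 8$ and any $y \in B_{r_M}(z)$ the interval $B_{r_M^{1/3}}(y)$ is contained in $B_{r_M+r_M^{1/3}}(z) \subseteq B_{2r_M}(z)$, so by the definition $\calH_M := B_{2r_M}(z)\cap\calV_M$ we have $B_{r_M^{1/3}}(y)\cap\calV_M = B_{r_M^{1/3}}(y)\cap\calH_M$ and~\eqref{eq:a-dense} holds with ratio exactly~$1$.

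For $\calA_\mathrm{linear}$ the plan is a one-dimensional renormalisation. Partition $B_{2r_M}(z)$ into consecutive intervals $J_1,\ldots,J_K$ of length $L := M^{\tau-1}(\log M)^3$, so that $K = O(r_M/L) = e^{(1+o(1))(\log M)^2}$ and $\E[|J_i\cap\calV_M|] = \Theta(L\,\ell(M)\,M^{-(\tau-1)}) = \Theta((\log M)^3)$ by~\eqref{eq:power_law}. Since $L \ll M^2$, any two vertices $u,v$ of $\calV_M$ lying in the same block or in two adjacent blocks are at distance at most $2L$, so~\eqref{eq:connection_prob} gives edge probability in $G$ at least $\underline{c}$, and the cost threshold in the definition of $\calE_M$ is satisfied with probability $1-o(1)$ per pair (because $(w_uw_v)^\mu \le (2M)^{2\mu} \ll M^{3\mu}$ and $F_L(M^\mu/4^\mu) \to 1$); hence $uv \in \calE_M$ independently with probability at least some $\underline{c}' > 0$ across pairs. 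The induced graph on $J_i\cap\calV_M$ is then a dense Erd\H{o}s--R\'enyi-type graph on $\Theta((\log M)^3)$ vertices, connected with failure probability $\exp(-\Omega((\log M)^3))$; the same order bound controls the absence of edges between $J_i\cap\calV_M$ and $J_{i+1}\cap\calV_M$ and the Chernoff concentration of $|J_i\cap\calV_M|$ around its mean. Because $(\log M)^3 \gg \log K = (1+o(1))(\log M)^2$, a single union bound over all blocks and adjacencies yields, with failure probability at most $q/20$, a \emph{backbone}: a connected subgraph of $G_M[\calH_M]$ meeting every block in $\Theta((\log M)^3)$ vertices.

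On the backbone event, for any $a \in \calB_{r_M}(z)\cap\calH_M$ and $b \in \calH_M$ one uses a single edge from $a$ to some backbone vertex $a'$ in its own block (failure probability $e^{-\Omega((\log M)^3)}$, absorbed in the same union bound), traverses $O(|a-b|/L)$ consecutive blocks along the backbone via $O(1)$ edges per hop, and hops to $b$, producing a path $\pi_{a,b}\subseteq\calH_M$ of cost $O\!\big((|a-b|/L+1)\,M^{3\mu}\big)$, comfortably below $\kappa_M|a-b|+2\kappa_M = 2M^{3\mu+2}(|a-b|+2)$. The path visits only blocks in $B_{2r_M}(z)$ adjacent to $[a,b]$, so $\mathrm{dev}_{a,b}(\pi_{a,b}) \le 2L \ll 2\kappa_M$, confirming the $\zeta=0$ deviation bound. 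For $\calA_\mathrm{near}$, set $R := M^{(\tau+1)/2} \le M^2$; within $B_R(z)$ there are $\Theta(M^{(3-\tau)/2})$ vertices of weight in $I_M$ (Chernoff on the Poisson/Bernoulli count), each joined to $z$ in $G$ with probability at least $\underline{c}$ by~\eqref{eq:connection_prob}, so at least one such edge $zv$ exists with probability $\ge 1-q/40$; since $w_z$ is a.s.\ finite (even after the conditioning on $z\in\calC_\infty$, whose probability is bounded below by Claim~\ref{claim:two-in-Cinfty}), the cost $L_{zv}(w_zw_v)^\mu$ is at most $C_M = M^{2(\tau-1)+3\mu}$ with probability $\ge 1-q/40$ for $M$ large, and the single-edge path has deviation $\le R \le C_M$.

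The main technical obstacle is the union bound over the super-polynomially many blocks $K = e^{(1+o(1))(\log M)^2}$: this is exactly what forces each block to contain super-polylogarithmically many $\calV_M$-vertices, dictating the slightly unusual choice $L = M^{\tau-1}(\log M)^3$ (any power of $\log M$ larger than $2$ would do). The conditioning on $\calF_{y,z}=\{y,z\in\calV\}$ adds only two deterministic vertices to an otherwise Poisson or lattice vertex set, so it changes none of the density, connectivity or cost estimates above by more than an $M$-independent constant factor; all constraints on $r_M$, $\kappa_M$, and $C_M$ remain uniform in the locations $y,z$ as required.
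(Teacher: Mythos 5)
Your treatment of $\calA_{\mathrm{dense}}$ is correct and identical to the paper's (the event is deterministic because $\calH_M$ is defined as \emph{all} of $B_{2r_M}(z)\cap\calV_M$). Your treatment of $\calA_{\mathrm{linear}}$ is also essentially the paper's renormalisation: the paper partitions $B_{2r_M}(z)$ into cells of length $M^2$ containing $\Theta(M^2)$ vertices of $\calV_M$ and cites \cite[Corollary 3.9(i)]{komjathy2022one2} for per-cell connectivity with failure probability $e^{-M^{3-\tau-\eps}}$, which beats the union bound over $e^{(1+o(1))(\log M)^2}$ cells; your shorter blocks of length $M^{\tau-1}(\log M)^3$ with $\Theta((\log M)^3)$ vertices and a hands-on Erd\H{o}s--R\'enyi connectivity estimate achieve the same thing self-containedly, and the cost and deviation accounting goes through. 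That part is fine.

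The proof of the $\calA_{\mathrm{near}}$ bound, however, has a fatal gap. You claim that each of the $\Theta(M^{(3-\tau)/2})$ vertices of $\calV_M$ in $B_R(z)$ with $R=M^{(\tau+1)/2}$ is joined to $z$ with probability at least $\underline{c}$. By~\eqref{eq:connection_prob} (with $d=1$) the connection probability of $z$ to a weight-$\Theta(M)$ vertex at distance $r$ is $\Theta\big(\min\{1,w_zM/r\}^{\alpha}\big)$, and this is $\Theta(1)$ only for $r\lesssim w_zM$. The vertex $z$ is an arbitrary point of $\calC_\infty$, so $w_z$ is typically of constant order; most vertices of $B_R(z)\cap\calV_M$ sit at distance $\Theta(M^{(\tau+1)/2})\gg w_zM$, where the connection probability is only $\Theta\big((w_z M^{(1-\tau)/2})^{\alpha}\big)$. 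Worse, the expected \emph{total} number of neighbours of $z$ in $\calV_M$ (integrating over all distances) is $\Theta\big(w_z M^{2-\tau}\ell(M)\big)\to 0$ since $\tau>2$, so with probability tending to $1$ the vertex $z$ has \emph{no} edge to $\calV_M$ at all, and no choice of $R$ rescues a single-edge strategy. This is precisely the difficulty the lemma is designed to overcome. The paper's proof instead decomposes $\calA_{\mathrm{near}}$ into four events: it first routes from $z$ to the nearest vertex $v_0$ of $\calV_{M_0}\cap\calC_\infty$ for a large \emph{constant} $M_0$ (using that the cheapest-path cost and its spatial extent are a.s.\ finite random variables, absorbed into $C_M$), checks that $v_0$ lies within radius $r_0=M_0^{2(\tau-1)}$ of $z$, and then climbs from weight $M_0$ to weight $M$ along a weight-increasing path supplied by Lemma~\ref{lem:1d-infinite-path}, whose intermediate weights $M_0^{\theta^i}$ grow doubly exponentially so that consecutive vertices connect with probability $\Theta(1)$ and the whole path stays inside $B_{C_M}(z)$ with cost at most $M^{3\mu}\le C_M/2$. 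Some mechanism of this kind for bridging the weight gap between $w_z=O(1)$ and $M$ is indispensable, and your argument does not contain one.
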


We use the next claim to connect the endpoints $y_0^\star, y_x^\star$ of the path $\pi_{y_0^\star, y_x^\star}$ in Corollaries \ref{cor:computations-polylog} and \ref{cor:computations-polynomial} to $\calH_\infty \subseteq \calC_\infty^M$ from Lemma~\ref{lem:new-external} (when $d \ge 2$) and $\calH_{M}$ from Lemma~\ref{lem:new-external-1d} (when $d=1$).

\begin{restatable}{claim}{ClaimJoinCm}
\label{claim:join-Cm}
    Consider Setting~\ref{set:joining-common} and any $d\ge 1$. Let $w\ggs q,\mpar$ with $w \ge M^{8(\tau-1)}$, let $r := w^{3/d}$ and let $z \in \R^d$. Let $\calH\subseteq \calV_M$ be a random vertex set which depends only on $(V,w_V,\calE_M)$ and which satisfies $\pr(\calA_{\mathrm{dense}}(\calH,\calV_M,r,z) \mid \calF_{0,x}) \ge 1-q/10$. Let 
    \begin{align*}
    \calA_{\mathrm{down}}(w,z):=\Big\{ \forall y \in \widetilde \calV\cap (B_{r}(z)\times[w, 4w]): \exists u \in \calH \cap B_{r^{1/3}}(y),\ yu\in\calE,\ \calC(yu) \le w^{2\mu} \Big\}.
     \end{align*}
     Then for all $z \in \R^d$, $\pr(\calA_{\mathrm{down}}(w,z)\mid\calF_{0,x}) \ge 1 - q/3$.
\end{restatable}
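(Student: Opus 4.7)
The plan is to condition on $(V,w_V,\calE_M)$ first; this determines $\calH$, the event $\calA_{\mathrm{dense}}$, and the set $\calN_y := \widetilde\calV \cap (B_r(z) \times [w,4w])$ of target vertices. Since $w \ge M^{8(\tau-1)} \gg 2M$ for $M$ large, the weight intervals $[w,4w]$ and $[M,2M]$ are disjoint, so $\calN_y \cap \calV_M = \emptyset$; by the CIRG property of Definition~\ref{def:CIRG}, conditional on $(V,w_V,\calE_M)$ the edge indicators $\ind{yu \in \calE}$ and costs $\calC(yu)$ for $y \in \calN_y$, $u \in \calV_M$ remain independent across pairs, with the same joint distribution as under the conditioning on $(V,w_V)$ alone. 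This independence is what will let us search for a good neighbour $u \in \calH$ for each $y \in \calN_y$.

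Beyond $\calA_{\mathrm{dense}}$ itself, I would introduce two auxiliary events measurable with respect to $(V,w_V,\calE_M)$: $\calE_{\mathrm{count}} := \{|\calV_M \cap B_{r^{1/3}}(y)| \ge cw/M^{\tau-1}$ for every $y \in B_r(z)\}$ and $\calE_{\mathrm{target}} := \{|\calN_y| \le Cw^{4-\tau+o(1)}\}$, for suitable constants $c,C$. Since $B_{r^{1/3}}(y)$ has volume $\Theta(w)$ and $\calV_M$ has vertex-density $\Theta(\ell(M)/M^{\tau-1})$, a Chernoff bound combined with a covering of $B_r(z)$ by $O(w^2)$ balls of radius $r^{1/3}/2$ (so that any $B_{r^{1/3}}(y)$ contains a covering ball, transferring the count lower bound to every $y$) yields $\pr(\neg\calE_{\mathrm{count}} \mid \calF_{0,x}) \le q/30$; a simpler Chernoff argument on $|\calN_y|$ gives $\pr(\neg\calE_{\mathrm{target}} \mid \calF_{0,x}) \le q/30$. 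Together with the hypothesis, $\pr(\calA_{\mathrm{dense}} \cap \calE_{\mathrm{count}} \cap \calE_{\mathrm{target}} \mid \calF_{0,x}) > 1 - q/5$.

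Next I would fix a realisation of $(V,w_V,\calE_M)$ on which all three events hold and estimate the conditional probability of $\calA_{\mathrm{down}}(w,z)$ failing. For each $y \in \calN_y$, the candidate set $\calH \cap B_{r^{1/3}}(y)$ has at least $cw/(2M^{\tau-1})$ elements. For any candidate $u$, $|y-u|^d \le (r^{1/3})^d = w$ and $w_y w_u \ge wM$ give $w_y w_u/|y-u|^d \ge M \ge 1$, hence $h(y-u,w_y,w_u) \ge \underline c$ by~\eqref{eq:connection_prob}. The cost bound $\calC(yu) \le w^{2\mu}$ (with $w_y w_u \le 8wM$) reduces to $L_{yu} \le (w/(8M))^\mu$, and since $w/(8M) \ge M^{7(\tau-1)}/8$ is large for $M \ggs \mpar$, this holds with probability at least $1/2$ (as $F_L(t) \to 1$ when $t \to \infty$). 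Thus each candidate is a good neighbour, independently of the others, with probability at least $\underline c/2$.

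A union bound then finishes the argument: the probability that some $y \in \calN_y$ has no good neighbour is at most $|\calN_y|\exp(-c'w/M^{\tau-1}) \le Cw^{4-\tau+o(1)}\exp(-c'w^{7/8})$, where I used $M^{\tau-1} \le w^{1/8}$ from $w \ge M^{8(\tau-1)}$. Since $\exp(-c'w^{7/8})$ beats any polynomial in $w$, this is at most $q/10$ for $w \ggs q,\mpar$, and combined with the $q/5$ bound on auxiliary failures it yields $\pr(\calA_{\mathrm{down}}(w,z) \mid \calF_{0,x}) \ge 1 - q/3$. The main technical obstacle I anticipate is the uniform-in-$y$ lower bound $\calE_{\mathrm{count}}$, which requires the covering argument; the rest is routine Chernoff bookkeeping built on the key probabilistic fact that after revealing $(V,w_V,\calE_M)$ the edges from $\calN_y$ to $\calV_M$ are still independent with their original distributions.
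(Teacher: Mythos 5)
Your proposal is correct and follows essentially the same route as the paper: condition on $(V,w_V,\calE_M)$, note that edges from the weight-$[w,4w]$ targets to $\calV_M$ are untouched by that conditioning and hence still independent with their original law, establish a uniform-in-$y$ lower bound on $|\calV_M\cap B_{r^{1/3}}(y)|$ via a covering of $B_r(z)$ by radius-$r^{1/3}/2$ balls plus Chernoff, upper-bound the number of targets, show each candidate works with probability $\ge\underline{c}/2$, and union bound. The only (harmless) imprecision is dropping the slowly varying factor $\ell(M)$ from the density of $\calV_M$ in your $\calE_{\mathrm{count}}$ bound; the paper absorbs it by weakening the exponent to $M^{-3(\tau-1)/2}$, which your final superpolynomial-versus-polynomial comparison tolerates anyway.
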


The proof is in Appendix \ref{app:proof-join-Cm}, but here we briefly explain why the conditions are also satisfied for $d=1$. Fixing $w$ and requiring $w^{3/d}=r$ gives an equation for $M$ in terms of $w$ in Lemma \ref{lem:new-external-1d}:  $w^{3/d}=r_M=e^{(\log M)^2}$, which implies that $M=e^{\sqrt{\log(w)\cdot 3/d}}$. Then $M^{8(\tau-1)}$ is a subpolynomial function of $w$, and so the requirement $w\ge M^{8(\tau-1)}$ is satisfied for all sufficiently large $w\ggs \mpar$.

We are now ready to prove the main results. Recall $\mu_{\log}, \mu_{\mathrm{pol}}$ from \eqref{eq:mu_pol_log}. The following two lemmas contain the statements for the infinite graph cases.
\begin{lemma}\label{lem:polylog-deviation}
    Consider Setting~\ref{set:joining-common}. Suppose that either $\alpha\in(1,2)$ or $\mu\in(\mu_{\mathrm{expl}},\mu_{\log})$ or both hold, and let $\Delta_0$ be as defined in~\eqref{eq:Delta_0}, \eqref{eq:alpha-infty-Delta_0}, or~\eqref{eq:beta-infty-Delta_0}, depending on whether $\alpha,\beta<\infty$, $\alpha=\infty$, or $\beta=\infty$. For every $q, \eps,\zeta >0$ there exists $C>0$ such that the following holds. For any $x \in \R^d$ let $\calA_{\mathrm{polylog}}$ be the event that $G$ contains a path $\pi_{0,x}$, with endpoints $0$ and $x$, of cost $\calC(\pi_{0,x})\le (\log |x|)^{\Delta_0+\eps} + C$ and deviation  $\mathrm{dev}(\pi_{0,x})\le \zeta|x| + C$. Then $\pr(\calA_{\mathrm{polylog}} \mid 0,x \in \calC_\infty) \ge 1-q$.
\end{lemma}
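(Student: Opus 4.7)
The plan is to build $\pi_{0,x}$ out of three blocks: two short endpoint-connection blocks (one at $0$, one at $x$) and the polylogarithmic hierarchical path of Corollary~\ref{cor:computations-polylog} between them. First, I would fix $\delta \lls q,\eps,\mpar$ and a sufficiently large $M$ (constant if $d\ge2$, a slowly-growing function of $\log|x|$ if $d=1$), work conditional on $\{0,x\in\calV\}$, and apply Lemma~\ref{lem:weak-nets-exist} with $t=2$ and $Q$ a cube of side length $|x|$ centred on the midpoint of $0$ and $x$ to obtain, with probability at least $1-q\rho/20$, a weak $(\delta/4,w_0)$-net $\calN$ in $Q$ containing $0$ and $x$; here $\rho>0$ is the absolute constant given by Claim~\ref{claim:two-in-Cinfty}. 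On any such realisation $(V,w_V)$, Corollary~\ref{cor:computations-polylog} (applied with $\eps/2$ in place of $\eps$, $\zeta/3$ in place of $\zeta$, and $q\rho/20$ in place of $q$) furnishes a path $\pi_{y_0^\star,y_x^\star}\subseteq\calN$ of cost at most $(\log|x|)^{\Delta_0+\eps/2}$ and deviation at most $\zeta|x|/3$, with endpoints of weight in $[\olw,4\olw]$ for some $\olw\in[\log\log|x|,(\log|x|)^{\eps/4}]$ lying in Euclidean balls of radius $\olw^{3/d}$ around $0$ and $x$.

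Next I would apply Lemma~\ref{lem:new-external} (or Lemma~\ref{lem:new-external-1d} for $d=1$) twice, with centres $z=0$ and $z=x$, density radius $r_1=\olw^{3/d}$, near/linear radius and constant $C$ both large enough, and linear-cost factor $\kappa$ appropriately large, to obtain a set $\calH_\infty\subseteq\calC_\infty^M$ on which, with probability at least $1-q\rho/20$, the events $\calA_{\mathrm{dense}}$, $\calA_{\mathrm{near}}$ and $\calA_{\mathrm{linear}}$ hold simultaneously at both centres (the near event additionally uses $\{0,x\in\calC_\infty\}$). Then Claim~\ref{claim:join-Cm} applied at $z\in\{0,x\}$ with $w=\olw$ produces, with the same order of probability, a short ``drop-edge'' from each of $y_0^\star,y_x^\star$ down into $\calH_\infty$, landing at vertices $u_0^\star,u_x^\star$ within Euclidean distance $\olw^{1/d}$ and of cost at most $\olw^{2\mu}$. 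The overall path is then assembled by concatenating: $\calA_{\mathrm{near}}$ from $0$ to a vertex $u_0\in\calH_\infty$, $\calA_{\mathrm{linear}}$ from $u_0$ to $u_0^\star$, the drop-edge to $y_0^\star$, the hierarchical path to $y_x^\star$, and the mirror image on the $x$-side; the resulting walk can be pruned to a self-avoiding path of no greater cost.

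The cost accounting is straightforward: the four tail blocks contribute $O(C)$ from the near pieces, $O(\kappa\cdot\mathrm{polylog}|x|+C)$ from the linear pieces (since $|u_0-u_0^\star|\le 2\olw^{3/d}=(\log|x|)^{O(\eps)}$), and $O(\olw^{2\mu})=(\log|x|)^{o(1)}$ from the drop-edges, all polylogarithmic in $|x|$ and therefore dominated by $(\log|x|)^{\Delta_0+\eps/2}\cdot(\log|x|)^{\eps/2}\le (\log|x|)^{\Delta_0+\eps}$ for $|x|$ large enough, plus an absorbed additive $C$. The deviation accounting is similar: the hierarchy contributes $\zeta|x|/3$ to the deviation from $S_{0,x}$; the linear pieces contribute at most $\zeta\cdot|u_0-u_0^\star|/3+C=o(|x|)+C$; the near and drop pieces contribute at most $O(\mathrm{polylog}|x|+C)$ each, since they are within polylogarithmic Euclidean range of the relevant endpoint. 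All of these are absorbed into the final bound $\zeta|x|+C$.

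The main subtlety I expect is converting these ``$\{0,x\in\calV\}$-conditional'' probability bounds into the required ``$\{0,x\in\calC_\infty\}$-conditional'' bound. Since $\{0,x\in\calC_\infty\}\subseteq\{0,x\in\calV\}$, Claim~\ref{claim:two-in-Cinfty} gives $\pr(\neg\calA_{\mathrm{polylog}}\mid 0,x\in\calC_\infty)\le \pr(\neg\calA_{\mathrm{polylog}}\mid 0,x\in\calV)/\rho$; having arranged each of the four error sources above to be at most $q\rho/20$ then yields the claimed bound $q$. A minor independence concern arises because the hierarchical construction and the $G_M$-construction both reveal edges, but the vertex-weight ranges $[\olw,4\olw]$ (and higher weights in intermediate hierarchy vertices) and $[M,2M]$ are disjoint once $|x|$ is large relative to $M$, so the relevant edge sets are automatically disjoint; if this needs to be enforced, a two-round multi-round exposure with $\theta_1=\theta_2=1/2$ via Proposition~\ref{prop:multi-round-exposure} applied to $\{\calG\mid V,w_V\}$ does so at the cost of replacing $\theta=1$ by $\theta=1/2$ in the underlying building-block claims, which only affects constants.
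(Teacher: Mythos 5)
Your proposal follows essentially the same route as the paper: a weak net via Lemma~\ref{lem:weak-nets-exist}, the hierarchical path from Corollary~\ref{cor:computations-polylog}, the endpoint blocks built from Lemma~\ref{lem:new-external} (resp.\ Lemma~\ref{lem:new-external-1d} for $d=1$) together with Claim~\ref{claim:join-Cm}, the same seven-piece concatenation, and deconditioning via Claim~\ref{claim:two-in-Cinfty}; the cost and deviation accounting also matches. The only omission is the regime of small $|x|$ (the statement is for \emph{any} $x\in\R^d$, and Corollary~\ref{cor:computations-polylog} needs $|x|$ large): this is precisely what the additive constant $C$ is for, and the paper covers it by noting that the cheapest-path cost and deviation from $0$ to any vertex of $\calC_\infty$ within the threshold radius are a.s.\ finite random variables, so $C$ can be chosen to exceed them with probability $1-q$.
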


\begin{proof} We first prove the result for $d\ge 2$, then describe the necessary modifications for $d=1$. Let $\rho > 0$ be as in Claim~\ref{claim:two-in-Cinfty} and let $\delta \lls \eps,q,\rho,\mpar$ and $w_0 >1$. We want to apply Corollary~\ref{cor:computations-polylog}, which holds for sufficiently large $|x|$. Thus there exists $r_{\ref{cor:computations-polylog}} \ggs q,\delta,\eps,\zeta,w_0,\mpar$ such that Corollary~\ref{cor:computations-polylog} is applicable whenever $|x| \ge r_{\ref{cor:computations-polylog}}$. We may also assume $r_{\ref{cor:computations-polylog}} \ggs \kappa$. To cover $|x| < r_{\ref{cor:computations-polylog}}$, for all $v\in \calC_\infty \cap B_{r_{\ref{cor:computations-polylog}}}(0)$, pick the cheapest path $\pi_{0,v}$ from $0$ to $v$. Then $R_1 := \max\{\calC(\pi_{0,v}): v\in \calC_\infty \cap B_{r_{\ref{cor:computations-polylog}}}(0)\}$ and $R_2 := \max\{\mathrm{dev}(\pi_{0,v}): v\in \calC_\infty \cap B_{r_{\ref{cor:computations-polylog}}}(0)\}$ are almost surely finite random variables, and since we may assume $C \ggs r_{\ref{cor:computations-polylog}},q,\mpar$, we have $\pr(R_1,R_2 \le C \mid 0,x\in \calC_\infty) \ge 1-q$, as required. So from now on we may assume $|x| \ge r_{\ref{cor:computations-polylog}}$.

Let $\olw$ be as in \eqref{eq:weight-crit-cor1} in Corollary~\ref{cor:computations-polylog} and let $r_1:=\olw^{3/d}$. Let $M,r_2,\kappa > 0$ satisfy $\olw, C \ggs r_2 \ggs M,\zeta$, $q, \rho, \eps,\mpar$ and $C \ggs r_{\ref{cor:computations-polylog}} \ggs \kappa \ggs M$ as in Lemma~\ref{lem:new-external}, and note that $|x|\ge r_{\ref{cor:computations-polylog}}$ implies $\olw,r_1 \ggs \kappa$.
Let $Q$ be a cube of side length $|x|$ containing 0 and $x$, and let $\calA_{\mathrm{net}}$ be the event that $Q$ contains a weak $(\delta/4, w_0)$-net (as in Definition \ref{def:weak-net}) which contains $0$ and $x$. Apply Corollary~\ref{cor:computations-polylog} with $\eps_{\ref{cor:computations-polylog}} = \eps/2$ and $q_{\ref{cor:computations-polylog}}:=q\rho/5$ to obtain $\calX_{\mathrm{polylog}}(0,x)$. Then consider the intersection of the following events from Corollary \ref{cor:computations-polylog}, Lemma~\ref{lem:new-external} (defined in \eqref{eq:a-linear}, \eqref{eq:a-near}) and  Claim~\ref{claim:join-Cm}:
\begin{align}
\begin{split}\label{eq:big-intersection}
 &\calA:=\calA_{\mathrm{net}}\cap
    \calX_{\mathrm{polylog}}(0,x)\\
    &\quad\cap \bigcap_{v\in\{0,x\}}\Big(\calA_\mathrm{near}(\calH_\infty, r_2,C/4,v)\cap \calA_\mathrm{linear}(\calH_\infty, \calC_{\infty}^M, r_2,\kappa,\zeta,C/4,v)\cap \calA_{\mathrm{down}}(\olw,v)\Big).
\end{split}   
\end{align}
When $\calA$ occurs, $\calX_{\mathrm{polylog}}(0,x)$ gives a path between endpoints $y_0^\star, y_x^\star$ with weights in $[\olw, 4\olw]$ and within distance $r_1=\olw^{3/d}$ from $0,x$, with cost $\calC(\pi_{y_0^\star, y_x^\star})\le (\log |x|)^{\Delta_0+\eps/2}$ and deviation $\mathrm{dev}_{0x}(\pi_{y_0^\star, y_x^\star}) \le \zeta |x|$, respectively, see \eqref{eq:weight-crit-cor1}--\eqref{eq:cost-crit-cor1}. 
Then, the events $\calA_{\mathrm{down}}(\olw,0), \calA_{\mathrm{down}}(\olw,x)$  from Claim~\ref{claim:join-Cm} applied respectively to $y_0^\star, y_x^\star$ give us two paths $\pi_{y_0^\star, u_0^\star}$ and $\pi_{y_x^\star, u_x^\star}$ with $u_0^\star, u_x^\star\in \calH_\infty$ and within respective distance $\olw^{1/d}$ from $y_0^\star, y_x^\star$, and cost at most $\overline w^{2\mu}$. Further, the events $\calA_{\mathrm{near}}(\calH_\infty, r_2,C/4,0)$ and $\calA_{\mathrm{near}}(\calH_\infty, r_2,C/4,x)$ in \eqref{eq:a-near} also give us two paths $\pi_{0, u_0}$ and $\pi_{x, u_x}$, with respective endpoints $ u_0, u_x\in \calH_\infty$ within distance $r_2 \le \olw^{3/d}$ from $0,x$ respectively, and cost at most $C/4$ each. Finally, since $u_0, u_0^\star, u_x, u_x^\star\in \calH_\infty$, and $u_0, u_x$ is within distance $r_2$ from $0,x$, respectively, the events $\calA_\mathrm{linear}(\calH_\infty, \calC_{\infty}^M, r_2,\kappa,\delta,C/4,v), v\in\{0,x\}$ in \eqref{eq:a-linear} ensure that there exist paths $\pi_{u_0, u_0^\star}$ and $\pi_{u_x, u_x^\star}$ in $G$ that have cost at most $\kappa|u_v-u_v^\star|+C/4 \le \kappa3\olw^{3/d}+C/4$ since $|u_v-u_v^\star|\le 3r_1 = 3\olw ^{3/d}$ and deviation at most $\zeta|u_v-u_v^\star|+C/4\le \zeta 3\olw^{3/d}+C/4$.
The concatenated path is $\pi_{0,x}:=\pi_{0,u_0} \pi_{u_0,u_0^\star} \pi_{u_0^\star, y_0^\star} \pi_{y_0^\star, y_x^\star} \pi_{y_x^\star, u_x^\star} \pi_{u_x^\star, u_x} \pi_{u_x, x}$. Then, since $\olw\le (\log |x|)^{\eps/2}$ in \eqref{eq:weight-crit-cor1}, we can estimate the cost, and using that the vertices of the paths $\pi_{0,u_0},\pi_{u_0,u_0^\star}, \pi_{y_0^\star, y_x^\star}, \pi_{y_x^\star, u_x^\star},\pi_{u_x^\star, u_x}, \pi_{u_x, x}$ are all within distance $3r_1=3\olw^{3/d}$ from $0$ and $x$ respectively, we can bound cost and deviation as
\begin{equation}\label{eq:cost-calculation-long}
\begin{aligned}  
  \calC(\pi_{0,x})&\le 2\cdot C/4+ 2\olw^{2\mu}+ 2(\kappa3\olw^{3/d}+C/4) + (\log |x|)^{\Delta_0+\eps/2}\le (\log |x|)^{\Delta_0+\eps} +C,\\
  \mathrm{dev}(\pi_{0,x})&\le \max\big\{\mathrm{dev}_{0,x}(\pi_{y_0^\star, y_x^\star}), 2\olw^{3/d}+\zeta3\olw^{3/d}+C/4\big\}\le \zeta |x|+C, 
\end{aligned}
\end{equation}
using $|x| \ge r_{\ref{cor:computations-polylog}} \ggs \eps, \kappa, \mpar$. Thus $\calA\subseteq\calA_{\mathrm{polylog}}$.  
A union bound on the complement of the events in \eqref{eq:big-intersection} from Lemma \ref{lem:weak-nets-exist} with $t=2$ and $\eps_{\ref{lem:weak-nets-exist}}:=\delta/4$ for $\calA_{\mathrm{net}}$, Corollary \ref{cor:computations-polylog}, Lemma~\ref{lem:new-external} with $q_{\ref{lem:new-external}}:=q\rho$, and Claim~\ref{claim:join-Cm} with $q_{\ref{claim:join-Cm}}:=q\rho$ gives
\begin{align*}
\pr(\neg\calA_{\mathrm{polylog}}\mid 0,x\in\calC_\infty) &\le \pr(\neg\calA \mid 0,x\in\calC_\infty) =\frac{\pr(\neg\calA \cap \{0,x\in\calC_\infty\}\mid 0,x\in \calV)}{\pr(0,x\in\calC_\infty\mid 0,x\in\calV)}\\ &\le \frac{q\rho}{\pr(0,x\in\calC_\infty\mid 0,x\in\calV)}.
\end{align*}
The result therefore follows from Claim~\ref{claim:two-in-Cinfty}.

When $d=1$, we construct $\pi_{0,x}$ in exactly the same way as below \eqref{eq:big-intersection}, using Lemma~\ref{lem:new-external-1d} with $M_{\ref{lem:new-external-1d}} := \exp(\sqrt{(3/d)\log\olw})$ (so that $r_{M_{\ref{lem:new-external-1d}}} = \olw^{3/d}$), in place of Lemma~\ref{lem:new-external}. We may assume $|x| \ggs \eps,\zeta$ as for $d\ge 2$. Note that $M_{\ref{lem:new-external-1d}} \le \exp(\sqrt{\log\log|x|})$ since $\olw \le (\log|x|)^{\eps}$ and $\eps \lls \mpar$, and in particular we may assume $\olw \ge M_{\ref{lem:new-external-1d}}^{8(\tau-1)}$ as in Claim~\ref{claim:join-Cm} since $\olw \ggs \eps, \mpar$, see also the computation below Claim \ref{claim:join-Cm}. Using $|x|\ggs\eps,\zeta$, this implies the costs of all our subpaths counted in \eqref{eq:cost-calculation-long} except $\pi_{y_0^\star,y_x^\star}$ are negligible compared to the $(\log |x|)^{\Delta_0+\eps/2}$ cost of $\pi_{y_0^\star,y_x^\star}$, as in the $d \ge 2$ case, and likewise that the deviation of these subpaths from the line segment $S_{0x}$ is negligible compared to $\zeta |x|$. The cost and deviation of $\pi_{y_0^\star,y_x^\star}$ are bounded using Corollary~\ref{cor:computations-polylog} exactly as in the $d \ge 2$ case in \eqref{eq:cost-calculation-long}.
\end{proof}
\vskip-1em
\begin{lemma}\label{lem:polynomial-deviation}
    Consider Setting~\ref{set:joining-common}. Suppose that $\alpha>2$ and $\mu>\mu_{\log}$, and let $\eta_0$ be as defined in~\eqref{eq:eta_0}, \eqref{eq:alpha-infty-definitions},~\eqref{eq:beta-infty-definitions}, or \eqref{eq:alpha-beta-infty-definitions}, depending on $\alpha,\beta<\infty$, $\beta <\alpha=\infty$, $\alpha <\beta=\infty$, or $\alpha=\beta =\infty$. For every  $q, \eps,\delta >0$ there is $C>0$ such that the following holds. 
    For any $x \in \R^d$ let $\calA_{\mathrm{pol}}$ be the event that $G$ contains a path $\pi_{0,x}$, with endpoints $0$ and $x$, of cost $\calC(\pi_{0,x})\le |x|^{\eta_0+\eps} + C$ and deviation  $\mathrm{dev}(\pi_{0,x})\le \zeta|x| + C$. Then $\pr(\calA_{\mathrm{pol}} \mid 0,x \in \calC_\infty) \ge 1-q$.
\end{lemma}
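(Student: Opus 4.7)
The plan is to adapt the argument of Lemma~\ref{lem:polylog-deviation} essentially verbatim, substituting Corollary~\ref{cor:computations-polynomial} for Corollary~\ref{cor:computations-polylog}. The only genuine point to verify is that the auxiliary costs paid when attaching the endpoints $0$ and $x$ to the hierarchical path are negligible relative to $|x|^{\eta_0+\varepsilon}$; this is where the polynomial analogue is actually easier than the polylogarithmic one, since $\eta_0>0$ whenever $\mu>\mu_{\log}$ and we can therefore afford polynomially small, not just sublogarithmic, ancillary contributions.

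In detail, first handle small $|x|$: there exists a threshold $r_{\ref{cor:computations-polynomial}}$ above which Corollary~\ref{cor:computations-polynomial} applies; for $|x|<r_{\ref{cor:computations-polynomial}}$ the maximal cost and deviation of cheapest paths from $0$ to any vertex of $\calC_\infty\cap B_{r_{\ref{cor:computations-polynomial}}}(0)$ are a.s.~finite random variables, so any $C$ chosen large enough relative to $r_{\ref{cor:computations-polynomial}},q,\mpar$ covers them with probability $1-q$, exactly as in Lemma~\ref{lem:polylog-deviation}. For $|x|\ge r_{\ref{cor:computations-polynomial}}$, pick $\delta\lls \varepsilon,q,\rho,\mpar$, set $\varepsilon_{\ref{cor:computations-polynomial}}:=\tfrac{\varepsilon}{2}\wedge \tfrac{\eta_0}{10(2\mu+3/d+1)}$ (this is the only change in parameter tuning), and apply Corollary~\ref{cor:computations-polynomial} with this $\varepsilon_{\ref{cor:computations-polynomial}}$, $q_{\ref{cor:computations-polynomial}}:=q\rho/5$, and $\zeta_{\ref{cor:computations-polynomial}}:=\zeta/2$ to obtain, on the event $\calX_{\mathrm{pol}}(0,x)$, a path $\pi_{y_0^\star,y_x^\star}$ contained in a weak $(\delta/4,w_0)$-net $\calN\ni 0,x$ with endpoints of weight in $[\olw,4\olw]$ within distance $\olw^{3/d}$ of $0$ and $x$, total cost $|x|^{\eta_0+\varepsilon/2}$, and deviation from $S_{0x}$ at most $(\zeta/2)|x|$, where $\olw\le |x|^{\varepsilon_{\ref{cor:computations-polynomial}}}$.

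Next, mirroring Lemma~\ref{lem:polylog-deviation}, set $r_1:=\olw^{3/d}$ and choose constants $M,r_2,\kappa,C$ with $\olw,C\ggs r_2\ggs M,\zeta,q,\rho,\varepsilon,\mpar$, $C\ggs r_{\ref{cor:computations-polynomial}}\ggs \kappa \ggs M$ for $d\ge 2$ (for $d=1$, use Lemma~\ref{lem:new-external-1d} with $M_{\ref{lem:new-external-1d}}=\exp(\sqrt{(3/d)\log\olw})$ in place of Lemma~\ref{lem:new-external}, as in the $d=1$ paragraph of the proof of Lemma~\ref{lem:polylog-deviation}). Invoke Lemma~\ref{lem:new-external}/\ref{lem:new-external-1d} at both $v=0$ and $v=x$ with $q_{\ref{lem:new-external}}:=q\rho$ to secure the events $\calA_{\mathrm{near}}(\calH_\infty,r_2,C/4,v)$ and $\calA_{\mathrm{linear}}(\calH_\infty,\calC_\infty^M,r_2,\kappa,\zeta/2,C/4,v)$; then invoke Claim~\ref{claim:join-Cm} with $w=\olw$, $z\in\{0,x\}$, and $q_{\ref{claim:join-Cm}}:=q\rho$ to secure the events $\calA_{\mathrm{down}}(\olw,0)$ and $\calA_{\mathrm{down}}(\olw,x)$. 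The condition $\olw\ge M^{8(\tau-1)}$ required by Claim~\ref{claim:join-Cm} is satisfied because $\olw\ge \log\log |x|$ is large relative to $M$ (for $d=1$, $M_{\ref{lem:new-external-1d}}$ is subpolynomial in $\olw$, as explained below Claim~\ref{claim:join-Cm}).

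On the intersection $\calA$ of these events with $\calA_{\mathrm{net}}$ (which has probability $\ge 1-\delta/2$ by Lemma~\ref{lem:weak-nets-exist} with $t=2$), concatenate as in Lemma~\ref{lem:polylog-deviation}:
\begin{equation*}
\pi_{0,x}:=\pi_{0,u_0}\,\pi_{u_0,u_0^\star}\,\pi_{u_0^\star,y_0^\star}\,\pi_{y_0^\star,y_x^\star}\,\pi_{y_x^\star,u_x^\star}\,\pi_{u_x^\star,u_x}\,\pi_{u_x,x},
\end{equation*}
where $\pi_{v,u_v}$ comes from $\calA_{\mathrm{near}}$, $\pi_{u_v,u_v^\star}$ from $\calA_{\mathrm{linear}}$, $\pi_{u_v^\star,y_v^\star}$ from $\calA_{\mathrm{down}}$, and $\pi_{y_0^\star,y_x^\star}$ from $\calX_{\mathrm{pol}}(0,x)$. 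The cost estimate analogous to \eqref{eq:cost-calculation-long} becomes
\begin{equation*}
\calC(\pi_{0,x})\le 2\cdot C/4+2\olw^{2\mu}+2(3\kappa\olw^{3/d}+C/4)+|x|^{\eta_0+\varepsilon/2}\le |x|^{\eta_0+\varepsilon}+C,
\end{equation*}
provided $|x|$ is large, since $\olw^{2\mu}\le |x|^{2\mu\varepsilon_{\ref{cor:computations-polynomial}}}$ and $\kappa\olw^{3/d}\le \kappa|x|^{3\varepsilon_{\ref{cor:computations-polynomial}}/d}$ are both $\le \tfrac{1}{10}|x|^{\eta_0+\varepsilon/2}$ by our choice of $\varepsilon_{\ref{cor:computations-polynomial}}$. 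The deviation bound $\mathrm{dev}(\pi_{0,x})\le \zeta|x|+C$ follows verbatim, using $\mathrm{dev}_{0,x}(\pi_{y_0^\star,y_x^\star})\le (\zeta/2)|x|$ and the fact that the six short subpaths all lie in balls of radius $O(\olw^{3/d})=O(|x|^{3\varepsilon_{\ref{cor:computations-polynomial}}/d})$ around $0$ or $x$. Finally, the conditioning on $0,x\in\calC_\infty$ is removed via Claim~\ref{claim:two-in-Cinfty} exactly as in the polylogarithmic case, giving $\pr(\calA_{\mathrm{pol}}\mid 0,x\in\calC_\infty)\ge 1-q$. The only substantive check beyond copying the earlier proof is the tuning of $\varepsilon_{\ref{cor:computations-polynomial}}$ relative to $\eta_0$ to dominate the $O(\olw^{2\mu}+\kappa\olw^{3/d})$ ancillary cost, and this uses crucially that $\eta_0>0$ throughout the polynomial regime.
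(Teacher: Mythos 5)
Your argument is correct and matches the paper's for the range $\mu\in(\mu_{\log},\mu_{\mathrm{pol}}]$ — the paper's own proof in that range is literally ``identical to Lemma~\ref{lem:polylog-deviation} with Corollary~\ref{cor:computations-polylog} replaced by Corollary~\ref{cor:computations-polynomial}'', and your extra tuning of $\eps_{\ref{cor:computations-polynomial}}$ against $\eta_0$ to absorb the ancillary costs $\olw^{2\mu}+\kappa\olw^{3/d}=|x|^{O(\eps_{\ref{cor:computations-polynomial}})}$ is a sensible (and, in this regime where $\olw$ may be polynomial rather than polylogarithmic in $|x|$, genuinely necessary) detail.

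However, there is a real gap: the lemma is stated for all $\mu>\mu_{\log}$, which includes $\mu>\mu_{\mathrm{pol}}$ where $\eta_0=1$. In that case the hypothesis of Corollary~\ref{cor:computations-polynomial} — which explicitly requires $\mu\in(\mu_{\log},\mu_{\mathrm{pol}}]$ — fails, because neither Claim~\ref{claim:polynomial-small-mu} nor Claim~\ref{claim:polynomial-large-mu} produces a $(K,A)$-valid parameter setting when $\mu$ exceeds both $\mu_{\mathrm{pol},\alpha}$ and $\mu_{\mathrm{pol},\beta}$ (the constraints $\Lambda(\eta,z)>0$ and $\Phi(\eta,z)>0$ cannot be met with $\eta\le 1$). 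So your proof does not construct the path $\pi_{y_0^\star,y_x^\star}$ at all in this regime, and the whole concatenation collapses. The paper handles $\mu>\mu_{\mathrm{pol}}$ by an entirely separate route: for $d=1$ it bypasses the hierarchy and applies Lemma~\ref{lem:new-external-1d} directly with $r_M=|x|$ (hence $M=\exp(\sqrt{\log|x|})$, so that $C_M$ and $\kappa_M$ are $|x|^{o(1)}$), using $\calA_{\mathrm{near}}$ at $0$ and $x$ plus $\calA_{\mathrm{linear}}$ inside $\calH_M$ to get cost at most $|x|^{1+\eps}$; for $d\ge 2$ it invokes the strictly linear upper bound from the companion paper. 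You need to add one of these (or an equivalent) to cover $\mu>\mu_{\mathrm{pol}}$.
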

\begin{proof}
The proof when $\mu\in(\mu_{\log}, \mu_{\mathrm{pol}}]$ is identical to the proof of Lemma~\ref{lem:polylog-deviation}, except that the event $\calX_{\mathrm{polylog}}(0,x)$ from Corollary~\ref{cor:computations-polylog} is replaced by $\calX_{\mathrm{pol}}(0,x)$ from Corollary~\ref{cor:computations-polynomial}.
     
When $\mu>\mu_{\mathrm{pol}}$ and $d=1$,  we have $\eta_0=1$ and we can prove that the cost distance is at most $|x|^{1+\eps}$ by using  Lemma \ref{lem:new-external-1d} directly as follows.
We set $r_M=|x|$, which gives, using $r_M=\exp((\log M)^2)$, the value $M=\exp(\sqrt{\log |x|})$, which is slowly varying in $|x|$. Lemma \ref{lem:new-external-1d} defines $\calH_M:=B_{2r_M}\cap \calV_M$, and with $C_M=M^{2(\tau-1)+3\mu}$ and $\kappa_M=2M^{3\mu+2}$, it states that 
\begin{align*}
\pr\Big(\calA_\mathrm{near}(\calH_M, C_M,C_M,0) &\cap \calA_\mathrm{near}(\calH_M, C_M,C_M,x)\\
&\cap \calA_{\mathrm{linear}}(\calH_M, \calH_M, r_M,\kappa_M, 0,2\kappa_M,0) \mid 0,x \in \calC_\infty\Big)\ge 1-3q/10.
\end{align*}
The first two events $\calA_\mathrm{near}(\calH_M, C_M,C_M,z)$ with $z\in\{0,x\}$ guarantee that we find two paths $\pi_{0,y^\star_0}$ and $\pi_{x, y_x^\star}$ with cost at most $C_M$ from $0$ and $x$ to respective vertices $y_0^\star, y_x^\star\in \calV_M$, that are fully contained in $B_{C_M}(0)$ and $B_{C_M}(x)$, respectively. Here, $C_M=M^{2(\tau-1)+3\mu}\le |x|^{\eps/2}$ for sufficiently large $|x|$. Then, since $y_0^\star$ is within distance $C_M<r_M$ from $0$, the third event $\calA_{\mathrm{linear}}$ guarantees a path between $y_0^\star$ and every vertex in $\calH_M=\calV_M\cap B_{2|x|}(0)$ with $\kappa_M$-linear cost, in particular there is such a path $\pi_{y_0^\star, y_x^\star}$ between $y_0^\star$ and $y_x^\star$. Let $\pi_{0,x}:=\pi_{0,y_0^\star}\pi_{y_0^\star, y_x^\star}\pi_{y_x^\star, x}$ be the concatenation of these paths. Since the distance  $|y_0^\star- y_x^\star|\le 2C_M+|x|$, the cost and deviation  of this path is, using that $\kappa_M=2M^{3\mu+2}\le |x|^{\eps/2}$ for $|x|$ large,
\begin{align*}
\calC(\pi_{0,x})&=\calC(\pi_{0,y_0^\star})+\calC(\pi_{y_x^\star, x})+ \calC(\pi_{y_0^\star, y_x^\star}) \le 2C_M + \kappa_M|y_0^\star-y_x^\star| +2\kappa_M\\
&\le 2|x|^{\eps/2} + |x|^{\eps/2} (|x|+2|x|^{\eps/2}) +2 |x|^{\eps/2}\le |x|^{1+\eps},
\\
\mathrm{dev}(\pi_{0,x})&\le \max\{\mathrm{dev}_{0,x}(\pi_{0,y_0^\star}), \mathrm{dev}_{0,x}(\pi_{y_0^\star, y_x^\star}), \mathrm{dev}_{0,x}(\pi_{y_x^\star},x)\} \\
&\le \max\{C_M , 0|y_0^\star-y_x^\star|+2\kappa_M\} \le |x|^\eps,
\end{align*}
for $|x|$ large enough. For small $|x|$ we can absorb the costs and deviation in the constant $C$.
This proves the lemma when $d=1$ and $\mu>\mu_{\mathrm{pol}}$ with $\eta_0=1$. 

When $\mu>\mu_{\mathrm{pol}}$ and $d\ge 2$, a straightforward adaptation of the above proof for $d=1$ could in principle also be used in higher dimensions. 
However, with some more effort one can get rid of the extra $+\eps$ in the exponent, and prove fully linear cost distances. We prove this stronger version (without the $|x|^\eps$ factor) in~\cite[Theorems 1.8, 1.10]{komjathy2022one2}. 
\end{proof}

\subsection{Proof of the Main Theorems}\label{sec:main_proof}

The proofs of Theorems~\ref{thm:polylog_regime} and~\ref{thm:polynomial_regime} follow directly from Lemmas~\ref{lem:polylog-deviation} and~\ref{lem:polynomial-deviation}, respectively, and so do their extensions to $\alpha=\infty$ and/or $\beta=\infty$ in Theorem~\ref{thm:threshold_regimes}. It remains to prove Theorem~\ref{thm:finite_graph}, including its extension to $\alpha=\infty$ and/or $\beta=\infty$.

\begin{proof}[Proof of Theorem~\ref{thm:finite_graph}]
Following Def.~\ref{def:girg}, let $G_n$ be a finite GIRG obtained by intersecting an IGIRG $G = (\calV,\calE)$ with a finite cube $Q_n$ of volume $n$, and let $u_n,v_n$ be two vertices chosen uniformly at random from $\calV \cap Q_n$. For the polylogarithmic case we must prove \eqref{eq:finite-polylog}.
For this, first we prove the slightly different statement that for two uniformly random \emph{positions} $x_n,y_n\in Q_n$, 
\begin{align}\label{eq:replace-giant-with-infinite2}
    \lim_{n\to\infty} \pr\big( d_{\calC}^{G_n}(x_n,y_n) > (\log |x_n-y_n|)^{\Delta_0+\varepsilon} \mid x_n, y_n \in \calC_\infty  \big) =0.
\end{align}
Compared to \eqref{eq:finite-polylog}, there are two differences. First, $\calC_\infty$ replaces $\calC^{\scriptscriptstyle{(n)}}_{\max}$ in the conditioning. By~\cite[Theorem~3.11]{komjathy2020explosion} there is a constant $\rho >0$ such that a.a.s.\! $|\calC^{\scriptscriptstyle{(n)}}_{\max}| \ge \rho |\calV \cap Q_n| \ge \rho|\calC_\infty \cap Q_n|$, and on the other hand $\lim_{n\rightarrow\infty} \pr\big(\calC_{\max}^{(n)} \subseteq \calC_{\infty}\big)=1$ since $\calC_\infty$ is unique. Hence, the probability of the conditions $\pr(u_n,v_n\in \calC^{\scriptscriptstyle{(n)}}_{\max})$ and $\pr(u_n,v_n \in \calC_\infty)$ differ by at most a constant factor, which means that~\eqref{eq:finite-polylog} is equivalent to conditioning on $\{u_n,v_n \in \calC_\infty\}$.
Secondly, in \eqref{eq:finite-polylog} we draw two random vertices $u_n,v_n$ from $\calV\cap Q_n$, while in \eqref{eq:replace-giant-with-infinite2} we draw two random positions $x_n,y_n$ and condition on those being in the vertex set. This changes the number of vertices in $Q_n$ from $\textrm{Poisson}(n)$ to $\textrm{Poisson}(n)\!+\!2$. The total variation distance of these two distributions is vanishing as $n\to\infty$, so this difference can also be ignored, and proving \eqref{eq:replace-giant-with-infinite2} implies \eqref{eq:finite-polylog}.

To prove~\eqref{eq:replace-giant-with-infinite2}, let $C>0$ be the constant from Lemma~\ref{lem:polylog-deviation}, let $0<\zeta\lls q,\mpar$ and consider the event $\calA_{\text{pos}}(x_n, y_n)$ that $|x_n-y_n| \ge \log n$ and that $x_n,y_n$ have distance at least $2\sqrt{d}\zeta n^{1/d}$ from the boundary of $Q_n$, a box of side-length $n^{1/d}$. Then, since $\zeta\lls q, \mpar$,
\begin{align}\label{eq:proof-of-main-theorems1}
\pr(\calA_{\text{pos}}(x_n, y_n)) \ge 1-q/2.
\end{align}
Consider now any given realisation $x_n,y_n\in Q_n$ of the random positions that satisfy $\calA_{\text{pos}}(x_n, y_n)$. By Lemma~\ref{lem:polylog-deviation} applied with $\eps_{\ref{lem:polylog-deviation}}:=\eps/2, q_{\ref{lem:polylog-deviation}}:=q/2$, conditional on $x_n,y_n\in \calC_\infty$ there is a path $\pi_{x_n, y_n}$ from $x_n$ to $y_n$ with $\mathrm{dev}(\pi_{x_n,y_n})\le \zeta|x_n-y_n| + C \le 2\sqrt{d}\zeta n^{1/d}$ and cost at most $\calC(\pi) \le (\log|x_n-y_n|)^{\Delta_0+\eps/2}+C$ with probability at least $1-q/2$. Since $\calA_{\text{pos}}(x_n, y_n)$ holds, the deviation bound of $\pi_{x_n, y_n}$ ensures that the path  $\pi_{x_n,y_n}$ lies fully within $Q_n$ and thus in $G_n$. Moreover, since $|x_n-y_n|\ge \log n$ and $n$ is sufficiently large, $\calC(\pi) \le (\log|x_n-y_n|)^{\Delta_0+\eps/2}+C \le (\log|x_n-y_n|)^{\Delta_0+\eps}$. Hence, for all  $n$ large enough, whenever $x_n,y_n$ satisfies $\calA_{\text{pos}}(x_n, y_n)$,  
\begin{align}\label{eq:proof-of-main-theorems2}
\pr\big( d_{\calC}^{G_n}(x_n,y_n) \le (\log |x_n-y_n|)^{\Delta_0+\varepsilon} \mid x_n, y_n \in \calC_\infty\big) \ge 1-q/2.
\end{align}
Since $q$ was arbitrary, together with~\eqref{eq:proof-of-main-theorems1}, this proves~\eqref{eq:replace-giant-with-infinite2} and concludes the proof for the polylogarithmic case of Theorem~\ref{thm:finite_graph} (including the extensions for $\alpha=\infty$, and/or $\beta =\infty$). The polynomial case is identical except that we use Lemma~\ref{lem:polynomial-deviation} instead of Lemma~\ref{lem:polylog-deviation}.
\end{proof}

\paragraph{Acknowledgments}
We thank Zylan Benjert for generating the simulations and the heatmaps in Figure~\ref{fig:simulation}.

\appendix
\section{Appendix}
\subsection{Chernoff bound and proofs related to nets in Section \ref{sec:nets}}\label{sec:chernoff}
\begin{theorem}\label{thm:chernoff}{(\textbf{Chernoff bounds})}
Let $X_1,\ldots X_k$ be independent Bernoulli distributed random variables, and define $X:=\sum_{i=1}^k X_i$ and $m:=\E[X]$. Then, for all $\lambda\in(0,1]$ and all $t\ge 2em$,
\begin{align*}
    \pr(X\leqslant (1-\lambda)m)\leqslant e^{-m\lambda^2/2},\quad\quad
    \pr(X\geqslant (1+\lambda)m)\leqslant e^{-m\lambda^2/3}, \quad\quad
    \pr(X\ge t) \le 2^{-t}.
\end{align*}
The same bounds hold when $X$ is instead a Poisson variable with mean $m$.
\end{theorem}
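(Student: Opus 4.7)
The plan is to use the standard exponential Markov (Chernoff) method. The key ingredient common to both the Bernoulli-sum case and the Poisson case is the moment generating function bound
\[
    \E[e^{tX}] \le \exp\big(m(e^t - 1)\big) \qquad \text{for all } t \in \R.
\]
For Poisson $X$ with mean $m$ this is an equality by direct computation. For $X = \sum_{i=1}^k X_i$ with independent Bernoulli $X_i$, independence gives $\E[e^{tX}] = \prod_i \E[e^{tX_i}]$, and the elementary inequality $1 + x \le e^x$ applied to each factor with $x = \pr(X_i=1)(e^t - 1)$ gives the bound. So from here on the two cases are treated uniformly.

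For the lower tail $\pr(X \le (1-\lambda)m) \le e^{-m\lambda^2/2}$, I would apply Markov to $e^{-tX}$ for arbitrary $t > 0$, giving
\[
    \pr(X \le (1-\lambda)m) \le \exp\big( t(1-\lambda)m + m(e^{-t}-1) \big),
\]
then optimize by taking $t = -\log(1-\lambda)$. The resulting exponent is $-m\big((1-\lambda)\log(1-\lambda) + \lambda\big)$, and the claim follows from the elementary calculus inequality $(1-\lambda)\log(1-\lambda) + \lambda \ge \lambda^2/2$ valid for $\lambda \in [0,1]$. For the moderate upper tail $\pr(X \ge (1+\lambda)m) \le e^{-m\lambda^2/3}$, the same strategy with $e^{tX}$ and optimal choice $t = \log(1+\lambda)$ gives exponent $-m\big((1+\lambda)\log(1+\lambda) - \lambda\big)$, and one uses $(1+\lambda)\log(1+\lambda) - \lambda \ge \lambda^2/3$ for $\lambda \in (0,1]$.

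For the large-deviation bound $\pr(X \ge t) \le 2^{-t}$ when $t \ge 2em$, I would again apply Markov to $e^{sX}$ and optimize; the optimal choice is $s = \log(t/m)$, which after substitution yields
\[
    \pr(X \ge t) \le \Big(\frac{em}{t}\Big)^t.
\]
Under the hypothesis $t \ge 2em$ one has $em/t \le 1/2$, so the right-hand side is at most $2^{-t}$, as claimed.

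The only ``obstacle'' is bookkeeping: two of the three inequalities require verifying the standard elementary bounds on the Kullback--Leibler-type functions $(1\pm\lambda)\log(1\pm\lambda) \mp \lambda$, but these are routine one-variable calculus exercises (convexity/Taylor expansion at $\lambda = 0$), and the Poisson case requires no separate argument because the very same MGF inequality holds with equality. Hence the proof is entirely standard and poses no real difficulty beyond invoking these textbook facts; in practice many papers simply cite a reference such as Janson--{\L}uczak--Ruci{\'n}ski or Mitzenmacher--Upfal rather than reproducing the calculation.
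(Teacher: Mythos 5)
Your proof is correct. The paper does not actually prove Theorem~\ref{thm:chernoff}: it is stated in the appendix as a standard fact and used as a black box, so there is no argument of the authors' to compare against. Your exponential-Markov derivation is the canonical one, the uniform MGF bound $\E[e^{tX}]\le \exp(m(e^t-1))$ correctly covers both the Bernoulli-sum and Poisson cases, and the three optimizations together with the elementary inequalities $(1-\lambda)\log(1-\lambda)+\lambda\ge\lambda^2/2$ on $[0,1]$, $(1+\lambda)\log(1+\lambda)-\lambda\ge\lambda^2/3$ on $(0,1]$, and $(em/t)^t\le 2^{-t}$ for $t\ge 2em$ all check out.
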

We restate the following claim before proving it:
\ClaimPolylogRegime*

\begin{proof}[Proof of Claim~\ref{claim:nets-mu-bound}]
    We start by showing \eqref{eq:mui-bound}. 
    We write $I(w) = [w_-, w_+)$. 
    The definition of a base-$2$-cover (Definition~\ref{def:base-2-cover}) ensures that $w_-,w_+ \in [w/2,2w]$ and $w_+/w_-=2$.
    Thus by the lower bound~\eqref{eq:nets-ell-bound-0} on $w_0$, for all $w\in[w_0, f(r_R)]$,
    \begin{align*}
        \pr(W \in I(w)) &=
        \ell(w_-)w_-^{-(\tau-1)} - \ell(w_+)w_+^{-(\tau-1)}\\
        &\ge \ell(w)\Big(\frac{99}{100}w_-^{-(\tau-1)} - \frac{101}{100}w_+^{-(\tau-1)}\Big)\\
        &= \ell(w)w_-^{-(\tau-1)}\Big(\frac{99}{100} - \frac{101}{100}\cdot 2^{-(\tau-1)}\Big).
    \end{align*}
    Since $\tau > 2$ and $w_- \le w$, it follows that
    \[
        \mu_i(I(w)) \ge (r_i')^d\ell(w) w^{-(\tau-1)}/2^{\tau+1}.
    \]
    The required lower bound on $\mu_i(I(w))$ then follows by the lower bound (P1) in Definition \ref{def:nets-partition} on $r_i'$.
    By a very similar argument to the lower bound, \eqref{eq:nets-ell-bound-0} also implies that
    \begin{align*}
        \pr(W \in I(w)) 
        \le \frac{101}{100}\ell(w)w_-^{-(\tau-1)} \le 2^\tau\ell(w)w^{-(\tau-1)},
    \end{align*}
    so the required upper bound on $\mu_i(I(w))$ likewise follows by the upper bound (P1) on $r_i'$.
    It remains to show \eqref{eq:z-bound}. 
    First we show that $f(r_R) \ge f(r_{R-1}) \ge \ldots \ge f(r_1) \ge w_0$. Indeed, let $j \in [R]$. By the definition of $f$ in \eqref{eq:nets-f-def},
\begin{align}\label{eq:f-ratio}
        \frac{f(r_j)}{f(r_{j-1})} = \left(\frac{r_j}{r_{j-1}}\right)^{d/(\tau-1)} \cdot \left(\frac{1 \wedge \inf\big\{\ell(x)\colon x \in [w_0,r_j^{d/(\tau-1)}]\big\}}{1 \wedge \inf\big\{\ell(x)\colon x \in [w_0,r_{j-1}^{d/(\tau-1)}]\big\}}\right)^{1/(\tau-1)}.
    \end{align}
    To bound the second factor, we will first observe that since $r_{j-1} \le r_j$, we have $\inf\{\ell(x)\colon x \in [w_0,r_j^{d/(\tau-1)}]\} \le \inf\{\ell(x)\colon x \in [w_0,r_{j-1}^{d/(\tau-1)}]\}$. By considering the two possible values of $1 \wedge \inf\{\ell(x)\colon x \in [w_0,r_j^{d/(\tau-1)}]\}$ separately, it follows that we can drop the minimum with $1$ in the ratio:
    \begin{align}
\begin{split}\label{eq:f-ratio-inter}
    &\left(\frac{1 \wedge \inf\big\{\ell(x)\colon x \in [w_0,r_j^{d/(\tau-1)}]\big\}}{1 \wedge \inf\big\{\ell(x)\colon x \in [w_0,r_{j-1}^{d/(\tau-1)}]\big\}}\right)^{1/(\tau-1)} \\
    &\qquad\ge \left(\frac{\inf\big\{\ell(x)\colon x \in [w_0,r_j^{d/(\tau-1)}]\big\}}{\inf\big\{\ell(x)\colon x \in [w_0,r_{j-1}^{d/(\tau-1)}]\big\}}\right)^{1/(\tau-1)}.
    \end{split}
    \end{align}
    We now bound $\ell(x)$ on $[w_0, r_{j-1}^{d/(\tau-1)}]$ (in the denominator) by repeatedly applying~\eqref{eq:nets-ell-bound-0}. We write $\lg(\cdot)=\log_2(\cdot)$. Then, we have $r_j^{d/(\tau-1)} = 2^{\lg ((r_j/r_{j-1} )^{d/(\tau-1)})} r_{j-1}^{d/(\tau-1)}$, so we iterate the bound in \eqref{eq:nets-ell-bound-0} roughly $\lg ((r_j/r_{j-1} )^{d/(\tau-1)})$ many times to obtain that for all $x \in [r_{j-1}^{d/(\tau-1)}, r_j^{d/(\tau-1)}]$ we have 
    \[
        \ell(x) \ge \Big(\frac{99}{100}\Big)^{1 + \lg ((r_j/r_{j-1} )^{d/(\tau-1)})} \ell(r_{j-1}).
    \]
    Returning to \eqref{eq:f-ratio-inter}, the ratio of the two infima in the smaller interval $[w_0,r_{j-1}^{d/(\tau-1)}]$ is one. And 
    since $r_j \ge 2r_{j-1}$ by (R2) of Definition~\ref{def:well-spaced} (using $\delta <1/16 < 1$), it follows that
    \begin{align*}
        \left(\frac{\inf\big\{\ell(x)\colon x \in [w_0,r_j^{d/(\tau-1)}]\big\}}{\inf\big\{\ell(x)\colon x \in [w_0,r_{j-1}^{d/(\tau-1)}]\big\}}\right)^{1/(\tau-1)} 
        &\ge \Big(\frac{99}{100}\Big)^{1 + \lg ((r_j/r_{j-1} )^{d/(\tau-1)}) } \\
        &\ge \Big(\frac{1}{2}\Big)^{\frac{1}{\tau-1}\lg ((r_j/r_{j-1} )^{d/(\tau-1)}) }\\
        & = \Big(\frac{r_{j-1}}{r_j}\Big)^{d/(\tau-1)^2} .
    \end{align*}
    Combining this with~\eqref{eq:f-ratio}, \eqref{eq:f-ratio-inter}, and the fact that $\tau > 2$, we obtain
    \[
        \frac{f(r_j)}{f(r_{j-1})} \ge \Big(\frac{r_j}{r_{j-1}}\Big)^{\frac{d}{\tau-1}(1-1/(\tau-1))} \ge 1.
    \]
    Hence $f(r_R) \ge f(r_{R-1}) \ge \ldots \ge f(r_1)$,  as claimed.
      It is now relatively easy to prove the desired lower bound. From the definition of $f$ in~\eqref{eq:nets-f-def}, we have $f(r_i) \le r_i^{d/(\tau-1)}$, and  \eqref{eq:nets-small-r} in the definition of well-spacedness ensures that $f(r_i)\ge w_0$, so
    \begin{align*}
    \ell(f(r_i))f(r_i)^{-(\tau-1)} &
        =
        \frac{\ell(f(r_i))}{1 \wedge \inf\{\ell(x)\colon x \in [w_0,r_i^{d/(\tau-1)}]\}} \cdot \frac{(2d)^{2\tau+d+8}\log(16R/\delta)}{r_i^d}  \\
        &\ge \frac{(2d)^{2\tau+d+8}\log(16R/\delta)}{r_i^d}.
    \end{align*}
    Multiplying by $r_i^d$ finishes the statement of \eqref{eq:z-bound}. 
\end{proof}

\subsection{Proofs of the building block claims in Section~\ref{sec:lemmas}}\label{app:proof-building-lemmas}

For the sake of readability, we will restate each of the claims before proving them.

\ClaimSingleBridgeEdge*

\begin{proof} 
    We first bound the number of possible edges in $N_{\eta,\gamma,z,\ulw}(Z_x,Z_y)$ from below. In Def.~\ref{def:weak-net}, we will take $\eps=\delta/4$, $w=\ulw D^{z/2}$ and $r=D^\gamma$. We have $\ulw D^{z/2} \ge \ulw \ge w_0$. Since $2d\gamma > z(\tau-1)$, we have $z/2 < d\gamma/(\tau-1)$, and $\ulw \le D^\delta$ by hypothesis; for sufficiently small $\delta$, it follows that $\ulw D^{z/2} < D^\delta \cdot D^{d\gamma/(\tau-1) - 2\delta} \le D^{d\gamma/(\tau-1) - \delta/4}$; thus the requirement on $\ulw$ of Def.~\ref{def:weak-net} is satisfied. Also $D^\gamma \in [(\log\log\xi\sqrt{d})^{16/\delta}, \xi\sqrt{d}]$ by hypothesis. Thus \eqref{eq:net-defining-crit-eps} gives for $v \in \{x,y\}$:
    \[
	    |\calZ(v)| \ge D^{\gamma d(1-\delta/4)} \ell(\ulw D^{z/2})\ulw^{-(\tau-1)}D^{-z(\tau-1)/2}.
   	\]
    Since $\ulw D^{z/2} \le D^{\delta+z/2}$ and $D\ggs \delta, z$, 
    by Potter's bound
    $|\calZ(v)| \ge \ulw^{-(\tau-1)}D^{d\gamma - z(\tau-1)/2 - 3\delta\gamma d/10}$, so  $|Z_v| \ge |\calZ(v)|/4 \ge \ulw^{-(\tau-1)}D^{d\gamma - z(\tau-1)/2 - 3\delta\gamma d/10}/4$ for $v \in \{x,y\}$.
    Accounting for the possibility of overlap between $Z_x$ and $Z_y$, we obtain
    \begin{equation}\label{eq:cheap-bridge-N}
	    \big|\{\{a,b\}: a\in Z_x, b\in Z_y|\}\big|\! \ge\! \frac{(|Z_x|\wedge|Z_y|)^2}{4} \ge  \ulw^{-2(\tau-1)}D^{2d\gamma - z(\tau-1) - 3 \delta \gamma d/5}/64.
	\end{equation}
   	We now lower-bound the probability that $a\in Z_x, b\in Z_y$ forms a low-cost edge as in \eqref{eq:N-gamma-eta-z}.
    By hypothesis $|x-y| \le c_HD$, $a \in B_{D^\gamma}(x)$, $b \in B_{D^\gamma}(y)$, and $\gamma<1$, so
	    $|a-b| \le c_HD + 2D^\gamma \le (c_H+2)D.$
	Since $w_a, w_b \in [\ulw D^{z/2}/2, 2\ulw D^{z/2}]$ by \eqref{eq:calZ-def}, it follows from \eqref{eq:connection_prob} that
	\begin{equation}\label{eq:bb-conn-prob}
	    \pr\big(ab \in \calE(G') \mid V, w_V  \big) \ge \theta\underline{c}\Big(1 \wedge \frac{\ulw^2D^z}{4(c_H+2)^dD^d} \Big)^\alpha \ge \theta\underline{c}\big(1 \wedge \ulw D^{z-d} \big)^\alpha,
	\end{equation}
	where we used the assumption that $\underline w\ge 4(c_H+2)^d$ to simplify the formula.
	Since $z \in [0,d]$, and $\delta$ is small relative to $z$, if $z-d < 0$ then we may assume $z-d \le -2\delta$. Since $1 \le \ulw \le D^\delta$, the minimum in \eqref{eq:bb-conn-prob} is attained at $1$ only for $z=d$. So, for all $\{a,b\} \in Z_x\times Z_y$,
	\begin{equation}\label{eq:bb-conn-prob-2}
	    \pr\big(ab \in E(G') \mid V,w_V\big)\ge
	    \begin{cases}
	        \theta \underline{c} \mathbbm{1}\{z = d\}& \mbox{if }\alpha=\infty,\\
	        \theta \underline{c}  D^{\alpha(z-d)}& \mbox{otherwise.}
	      \end{cases}
	\end{equation}
	Since $w_a,w_b \le 2\ulw D^{z/2}$ by \eqref{eq:calZ-def}, 
	\begin{align*}
	    \pr\big(\calC(ab)\le (\ulw/10)^{3\mu}D^\eta \mid ab\in\calE(G'), V,w_V\big)
        &\ge \pr\big((4\ulw^2D^z)^{\mu}L_{ab} \le (\ulw/10)^{3\mu}D^\eta\big) \\
        &= F_L(4000^{-\mu}\ulw^{\mu} D^{\eta-\mu z}).
	\end{align*}
	If $\eta < \mu z$, then since $\delta\lls z,\eta,\mpar$ we may assume that $\eta-\mu z \le -2\mu\delta$. Since $\ulw \le D^\delta$ and $D\ggs \delta$, it follows that $4000^{-\mu}\ulw^{\mu} D^{\eta-\mu z} \le D^{-\mu\delta} \le t_0$ and hence using Assumption~\ref{assu:L} and the assumption $\ulw \ge 4000 c_1^{-1/(\mu\beta)}$ we get $F_L(4000^{-\mu}\ulw^{\mu} D^{\eta-\mu z}) \ge D^{\beta(\eta-\mu z)}$ after simplifications. If instead $\eta \ge \mu z$, then $F_L(4000^{-\mu}\ulw^{\mu} D^{\eta-\mu z}) \ge F_L(4000^{-\mu}\ulw^{\mu}) \ge 1/2$ by hypothesis. Summarising the two cases with indicators we arrive at 
	\begin{align}
	\begin{split}\label{eq:bb-cost-prob}
	    \pr\big(\calC(ab)\le (\ulw/10)^{3\mu}D^\eta \mid ab\in \calE(G'), V,w_V\big)
	    \ge 
	    \begin{cases}
	        \mathbbm{1}\{\eta \ge \mu z\}/2 & \mbox{if }\beta=\infty,\\
	        D^{0 \wedge \beta(\eta -\mu z)}/2 & \mbox{otherwise.}
	    \end{cases}
	\end{split}
	\end{align}
	With the convention that $\infty \cdot 0 = 0$, the second row equals the first row in both~\eqref{eq:bb-conn-prob-2} and~\eqref{eq:bb-cost-prob}. Combining \eqref{eq:bb-conn-prob-2} and~\eqref{eq:bb-cost-prob}, we obtain that for all $\{a,b\}\in Z_x\times Z_y$:
	\begin{equation}\label{eq:cheap-bridge-p}
	    \pr\big(\{a,b\} \in N_{\eta,\gamma,z,\ulw}(x,y) \mid V,w_V\big) \ge \theta \underline{c} D^{\alpha(z-d) + (0 \wedge \beta(\eta - \mu z))}/2.
	\end{equation}
	Given $V, w_V$, the possible edges $\{a,b\}$ lie in $N_{\eta,\gamma,z,\ulw}(x,y)$ independently. Hence by~\eqref{eq:cheap-bridge-N} and~\eqref{eq:cheap-bridge-p}, $|N_{\eta,\gamma,z}(x,y)|$ stochastically dominates a binomial random variable whose mean $m$ is the product of the two equations' right-hand sides. On bounding $\underline{c}/128 \ge D^{-\delta \gamma d/15}$, we obtain
    \[
        m \ge \theta \ulw^{-2(\tau-1)}D^{\Lambda(\eta,z)-2\delta\gamma d/3}.
    \]
    Inequality \eqref{eq:lem-cheap-bridge} follows since this binomial variable is zero with probability at most $e^{-m}$.
\end{proof}

\ClaimSingleCheapEdgeNearby*

\begin{proof}
    Let $r = 4^{-1/d}(D \wedge (KM)^{1/d})$, and define $\calZ'(x):=\calN \cap (B_r(x)\times [\tfrac12K, 2K])$. 
    We will first lower-bound $|\calZ'(x)|$ by applying Definition~\ref{def:weak-net} with $\eps=\delta/4$, $w=K$ and the same value of $r$. Observe that~\eqref{eq:KM-xi} and the fact that $K \ge w_0$ imply all the requirements of Definition~\ref{def:weak-net} except $K \le r^{d/(\tau-1)-\delta/4}$, which we now prove. By~\eqref{eq:K-D-M}, $M \ge K^{\tau-2+\tau\delta}$ and hence
    \[
    \begin{aligned}
      r^{d/(\tau-1)-\delta/4}&\ge  (KM/4)^{1/(\tau-1) - \delta/(4d)} \ge (K^{\tau-1+\tau\delta})^{1/(\tau-1)-\delta/4} /4\\
      &=  K^{1 + \delta(\tau/(\tau-1)-(\tau-1)/4-\tau\delta/4)}/4.
      \end{aligned}
    \]
    Since $\tau<3$ and $\delta\lls \tau$, the coefficient of $\delta$ is positive in the exponent so the rhs is at least $K$, as required required by Def.~\ref{def:weak-net}. 
Applying \eqref{eq:net-defining-crit-eps} followed by  Potter's bound (since $D,K\ggs \delta$) yields that
    \begin{equation}\label{eq:cheap-edge-to-nice-vertex-N}
       |\calZ'(x)| \ge \ell(K)K^{-(\tau-1)}r^{d(1-\delta/4)} \ge K^{-(\tau-1)}\big(D \wedge KM)^{1-\delta/2}.
    \end{equation}
        We now lower-bound the probability that for a $y\in \calZ'(x)$ the edge $xy$ is present and has cost at most $U$, satisfying the requirements of $\calA_{K,D,U}(x)$. 
    Let $y\in \calZ'(x)$. Since $w_x\in[M/2, 2M]$ and $w_y\in[K/2, 2K]$, by \eqref{eq:connection_prob} and the definition of $r=D \wedge (KM/4)^{1/d}$ we have
    \begin{equation}\label{eq:cheap-edge-to-nice-vertex-p1}
        \pr\big(xy\in \calE \mid V, w_V\big) \ge \theta\underline{c}(1 \wedge KM/(4r^d))^\alpha = \theta\underline{c},
    \end{equation}
   since the minimum is at the first term; also for $\alpha=\infty$.
    Moreover,  if $\beta < \infty$, we apply \eqref{eq:F_L-condition}; otherwise, since $U(KM)^{-\mu}$ is large, $F_L(U(4KM)^{-\mu}) \ge 1/2$, to estimate the cost
    \begin{align}
    \begin{split}\label{eq:cheap-edge-to-nice-vertex-p2}
        \pr\big(\cost{xy} \le U \mid xy\in \calE(G'), V,w_V\big)
        &\ge \pr\big((4KM)^{\mu}L_{xy} \le U\big) = F_L((4KM)^{-\mu}U) \\
        &\ge C(1 \wedge (U(KM)^{-\mu})^\beta),
    \end{split}
    \end{align}
for an appropriate choice of $C>0$ depending only on \mpar.
    Combining~\eqref{eq:cheap-edge-to-nice-vertex-p1} and~\eqref{eq:cheap-edge-to-nice-vertex-p2}, we obtain for any $y\in \calZ'(x)$:
    \begin{equation}\label{eq:cheap-edge-to-nice-vertex-p}
	    \pr\big(xy \in \calE(G') \mbox{ with }\cost{xy} \le U\mid V,w_V\big) \ge \theta C\underline{c}(1 \wedge (U(KM)^{-\mu})^\beta).
	\end{equation}
	   Conditioned on $(V,w_V)$, edges are present independently, so the number of low-cost edges between $x$ and $\calZ'(x)$ stochastically dominates a binomial random variable with parameters the rhs of~\eqref{eq:cheap-edge-to-nice-vertex-N} and~\eqref{eq:cheap-edge-to-nice-vertex-p}. The mean is
    \[
       \theta C\underline{c} K^{-(\tau-1)}(D^d\wedge KM)^{1-\delta/2} (1 \wedge (U(KM)^{-\mu})^\beta).
    \]
    Since $K,M,D\ggs \delta$, we may swallow the constant factor $\theta C\underline{c}$ by increasing $\delta/2$ to $\delta$. The result follows since for a binomial variable $Z$, $\pr(Z=0)\le \exp(-\mathbb E[Z])$.
\end{proof}

\ClaimWeightIncreasingPath*

\begin{proof} We will build $\pi$ vertex-by-vertex by applying Claim~\ref{claim:cheap-edge-to-nice-vertex} $q$ times. We first define target weights. Let $M_0 := M$, and for all $i \in [q]$, let
   \begin{align}\label{eq:Mi}
       M_i := M^{1/(\tau-2+2d\tau\delta)^i} \wedge K.
   \end{align}
   Since $\tau<3$ and $\delta$ is small, $\tau-2+2d\tau\delta < 1$; hence on substituting the definition of $q$ in~\eqref{eq:cond-q} into~\eqref{eq:Mi} and removing the ceiling, we obtain
   \begin{equation*}
       M^{1/(\tau-2+2d\tau\delta)^q} = \exp\Big(\log M\cdot \mathrm{e}^{{-}q\log(\tau-2+2d\tau\delta)}\Big) \ge \exp\Big(\log M\cdot \mathrm{e}^{\log\big(\frac{\log K}{\log M}\big)}\Big) = K,
   \end{equation*}
   and hence $M_q = K$. 
   By a very similar argument, $M_{q-1} < K$.
   We now define $Y_0 = y_0$, and define an arbitrary ordering on $\calN$. For all $i \in [q]$, we define $Y_i$ to be the first vertex in $\calN$ in $B_D(Y_{i-1}) \times [\tfrac{1}{2}M_i, 2M_i]$ so that  the edge $Y_{i-1}Y_i$ is present in $G'$ and has cost at most $U$. If no such vertex exists, we define $Y_j = \texttt{None}$ for all $j \ge i$. Let $\calA_i$ be the event that $Y_0,\ldots,Y_{i} \ne \texttt{None}$. Then, if $\calA_q$ occurs, the path $\pi=Y_0\ldots Y_q$ yields $V(\pi)\subseteq \calN \cap B_{qD}(y_0)$ and $\cost{\pi} \le qU$, and that $w_{Y_q} \in [\tfrac{1}{2}K,2K]$ since $M_q = K$. So, (and because $\calA_{i-1}\subseteq \calA_i$), 
   \begin{equation}
\label{eq:cheap-increasing-path-step-1}
        \pr\big(\calA_{\pi(y_0)} \mid V,w_V\big) \ge \pr\big(\calA_q \mid V,w_V\big) 
        = \prod_{i=1}^q \pr\big(\calA_i \mid \calA_{i-1}, V,w_V\big).
   \end{equation}
      We now simplify the conditioning in~\eqref{eq:cheap-increasing-path-step-1}. For all $i \in [q]$, let
   \begin{align}
        \calF_i &:= \calE(G') \cap \big\{\{x,y\}\colon x,y\in\calN,\ w_x \in [\tfrac{1}{2} M_{i-1},2M_{i-1}],\ w_y \in [\tfrac{1}{2}M_i,2M_i]\big\},\label{eq:fi-111}\\
        \calF_{\le i} &:= (\calF_1,\dots,\calF_i),\qquad\qquad p_i := \pr\big(\calA_i \mid \calA_{i-1},V,w_V\big).\nonumber
   \end{align}
   Observe that $\calA_1,\dots,\calA_i$ and $Y_1,\dots,Y_i$ are deterministic functions of $\calF_{\le i}$. Moreover, if $F$ is a possible realisation of $\calF_{\le i-1}$ such that $\calA_{i-1}(F)$ occurs, then conditioned on $\calF_{\le i-1}=F$, $\calA_i$ occurs iff $Y_i \ne \texttt{None}$. Thus, with $\calA_{K,D,U}$ from \eqref{eq:akdu}, \eqref{eq:cheap-increasing-path-step-1} implies that for all $i \in [q]$,
   \begin{align}\nonumber
       p_i&\ge \min_{F\colon \calA_{i-1}(F)\text{ occurs}} \pr\big(Y_i \ne \texttt{None} \mid \calF_{\le i-1} = F,\,V,w_V\big)\\\label{eq:cheap-increasing-path-step-1b}
       &= \min_{F\colon \calA_{i-1}(F)\text{ occurs}} \pr\big(\calA_{M_i,D,U}(Y_{i-1}(F)) \text{ occurs}\mid \calF_{\le i-1} = F,\,V,w_V\big).
   \end{align}
   Now observe that since $\tau \in (2,3)$, $\delta$ is small, and $M\ggs \delta$, for all $i \in [q-2]$, we may assume $M_{i} = M_{i-1}^{1/(\tau-2+2d\tau\delta)^i} > 4M_{i-1}$, and moreover $M_q \ge M_{q-1}$. Therefore, the intervals $[\tfrac{1}{2}M_1,2M_2],\dots,[\tfrac{1}{2}M_{q-1},2M_q]$ are all disjoint except possibly for $[\tfrac{1}{2}M_{q-2},2M_{q-1}]$ and $[\tfrac{1}{2}M_{q-1},2M_q]$. It follows that the variables $\calF_1,\ldots,\calF_q$ are determined by disjoint sets of possible edges. For $\calF_{q-1}$ and $\calF_q$ the weights $w_{q-1}$ and $w_q$ still fall into disjoint intervals, so the edge sets in $\calF_i$ in \eqref{eq:fi-111} are disjoint across $i$. So,  $\calA_{M_i,D,U}(Y_{i-1}(F))$ is independent of $\calF_{\le i-1}$ in~\eqref{eq:cheap-increasing-path-step-1b} (conditioned on $(V,w_V)$), and hence
   \begin{equation}\label{eq:cheap-increasing-path-step-2}
    p_i \ge \min_{y \colon w_y \in [M_{i-1}/2,2M_{i-1}]} \pr\big(\calA_{M_i,D,U}(y) \mid V,w_V\big).
   \end{equation}
   We now apply Claim~\ref{claim:cheap-edge-to-nice-vertex} on the rhs: take there $\delta_{\ref{claim:cheap-edge-to-nice-vertex}} := \delta$, $\theta_{\ref{lem:CETNV}} := \theta$, $M_{\ref{lem:CETNV}}:=M_{i-1}, K_{\ref{lem:CETNV}}:=M_i$, $D_{\ref{lem:CETNV}}:=D$ and $U_{\ref{lem:CETNV}}:=U$. Observe $K_{\ref{lem:CETNV}}, M_{\ref{lem:CETNV}}\ge M$; thus by hypothesis we have that $\delta_{\ref{lem:CETNV}}\lls\mpar$ is small and that $K_{\ref{lem:CETNV}},M_{\ref{lem:CETNV}},D_{\ref{lem:CETNV}}\ggs \delta, w_0$, as required by Claim~\ref{lem:CETNV}. Next, we have $(K_{\ref{lem:CETNV}}M_{\ref{lem:CETNV}})^\mu \le K^{2\mu}$, so if $\beta=\infty$ it follows that $U_{\ref{lem:CETNV}}(K_{\ref{lem:CETNV}}M_{\ref{lem:CETNV}})^{-\mu} \ge UK^{-2\mu}$ is large as required, by the assumptions before \eqref{eq:cond-q}.
   Next, $(D \wedge (M_iM_{i-1})^{1/d})/4^d \le D \le \xi\sqrt{d}$ by hypothesis,
   and $(D \wedge (M_iM_{i-1})^{1/d})/4^d \ge (M_0/2)^{2/d} \ge (\log\log\xi\sqrt{d})^{16/\delta}$ by hypothesis, so~\eqref{eq:KM-xi} holds. Next, $M_i \le M_q = K \le D^{d/2} \le D^{d/(\tau-1)-\delta}$ by hypothesis and because $\delta$ is small; and finally $M_i \le M_{i-1}^{1/(\tau-2+2d\tau\delta)} < M_{i-1}^{1/(\tau-2+\tau\delta)}$ by definition, so~\eqref{eq:K-D-M} holds. Thus the conditions of Claim~\ref{claim:cheap-edge-to-nice-vertex} all hold, and applying  \eqref{eq:cheap-edge-to-nice-vertex} to~\eqref{eq:cheap-increasing-path-step-2} yields that for all $i \in [q]$:
    \begin{align}\label{eq:cheap-increasing-path-step-3}
    p_i \ge1-\exp\Big({-}\theta M_i^{-(\tau-1)}\Big[(D^d \wedge M_iM_{i-1})^{1-\delta} (1\wedge (U(M_iM_{i-1})^{-\mu})^\beta\Big]\Big) .
    \end{align}
    Clearly $M_iM_{i-1} \le M_q^2 = K^2$; and since $K \le D^{d/2}$ and $U\ge K^{2\mu}$ by hypothesis, the first minimum is at $M_iM_{i-1}$, while the second minimum is taken at $1$ on the rhs. Hence
    \[
        p_i \ge 1-\exp\Big({-}\theta M_i^{-(\tau-2)-\delta}M_{i-1}^{1-\delta} \Big).
    \]
    Since $M_i \le M_{i-1}^{1/(\tau-2+2d\tau\delta)}$ by \eqref{eq:Mi}, $\delta$ is small, and $\tau \in (2,3)$, after simplification the exponent of $M_{i-1}$ is at least    $\delta (\tau+1-2d\tau\delta)/(\tau-2+2d\tau\delta)\ge 3\delta$, so $p_i \ge 1-\exp({-}\theta  M_{i-1}^{3\delta})$. Using this bound in~\eqref{eq:cheap-increasing-path-step-1}, we obtain that
    \begin{equation}\label{eq:cheap-increasing-path-step-4}
        \pr\big(\calA_q \mid V,w_V\big) \ge \prod_{i=1}^q \big(1-\exp({-}\theta M_i^{3\delta})\big) \ge 1 - \sum_{i=1}^q \exp({-}\theta M_i^{3\delta}).
    \end{equation}
    Recall that for all $2 \le i \le q-1$, $M_i = M_{i-1}^{1/(\tau-2+2d\tau\delta)}$, and so since $\delta$ is small and $\tau \in (2,3)$ we have $M_i \ge M_{i-1}^{1+\delta}$. Since $M_0 = M\ggs \delta,\theta$, we obtain $\exp({-}\theta M_i^{3\delta}) \le \tfrac{1}{2}\exp({-}\theta M_{i-1}^{3\delta})$. It follows from~\eqref{eq:cheap-increasing-path-step-4} that
    \begin{align*}
        \pr\big(\calA_q \mid V,w_V\big) &\ge 1 - \sum_{i=1}^{q-1}\exp({-}\theta M_i^{3\delta}) - \exp(-M_q^{3\delta})
        \ge 1 - 3\exp({-}\theta M^{3\delta}) \ge 1 - \exp({-}\theta M^\delta)
    \end{align*}
    as required, where the last step holds since $M\ggs \delta,\theta$.
\end{proof}

\ClaimCommonNeighbour*

\begin{proof}
     We define a vertex $v\in\widetilde{\calV}$ as \emph{good} if $v \in \calN \cap (B_{D}(x_0)\times [(c_H+1)^{d}D^{d/2}, 4(c_H+1)^{d}D^{d/2}])$; thus for $\calA_{x_0\star x_1}$ to occur, it suffices that there is a good vertex $v$ such that $x_0vx_1$ is a path of cost at most $D^{2\mu d}$ in $G'$. We call this a \emph{good path}. We first lower-bound the number of good vertices. By assumption, $2(c_H+1)^{d}D^{d/2} \ge D^{d/2}\ge w_0$, and since $\tau < 3$, $\delta$ is small and $D\ggs c_H,\delta$ we have $2(c_H+1)^{d}D^{d/2} \le D^{d/(\tau-1)-\delta/4}$. Since $\calN$ is a weak $(\delta/4, w_0)$ net, by \eqref{eq:net-defining-crit},
    \begin{equation}
    \begin{aligned}\label{eq:common-neighbour-N}
        &\big|\calN \cap (B_{D}(x_0)\times [(c_H+1)^{d}D^{d/2}, 4(c_H+1)^{d}D^{d/2}])\big| \\
        &\qquad\ge
        D^{d(1-\delta/4)}\ell\big(2(c_H+1)^{d}D^{d/2}\big)\big(2(c_H+1)^{d}D^{d/2}\big)^{-(\tau-1)}   \ge D^{(3-\tau-\delta)d/2}, 
    \end{aligned}
    \end{equation}
  where the last inequality follows by Potter's bound since $D\ggs c_H,\delta$. 
    We now lower-bound the probability that for a good $v\in \calN$, the edges $x_0v, vx_1$ are present and have cost at most $D^{3\mu d/2}$ in $G'$. Observe that $|x_1-v| \le |x_1-x_0| + |x_0-v| \le (c_H+1)D$. Thus, by \eqref{eq:connection_prob}, and since $G'$ is a $\theta$-percolation,  $\pr(x_1v\in \calE(G')|V, w_V)\ge \theta\underline{c}\big[1 \wedge (c_H+1)^d(D^{d/2})^2/((c_H+1)D)^d\big]^\alpha = \theta\underline{c}$, also when $\alpha=\infty$. Further, conditioned on the existence of the edge $x_1v$, 
    \begin{align*}
        \pr\big(\cost{x_1v} \le D^{3\mu d/2} \mid x_1v\in\calE(G'), V,w_V\big) &\ge \pr\big((16(c_H+1)^{d}D^{d})^{\mu}L \le D^{3\mu d/2}\big) \\
        &= F_L(16^{-\mu}(c_H+1)^{-\mu d}D^{\mu d/2}) \ge 1/2,
    \end{align*}
    where the last inequality holds (including when $\beta=\infty$) since $D\ggs c_H$. Combining the two bounds, for all good vertices $v\in \widetilde{\calV}$,
    \begin{equation*}
	    \pr\big(x_1v \in \calE(G'), \cost{x_1v} \le D^{3\mu d/2} \mid V,w_V\big) \ge \theta\underline{c}/2.
	\end{equation*}
	Since $|x_0-v| \le D$, the same lower bounds hold for the edge $x_0v$. The two events are independent conditioned on $(V,w_V)$, and since $2D^{3\mu d/2} < D^{2\mu d}$, for all good vertices $v\in \widetilde{\calV}$,
    \begin{equation}\label{eq:common-neighbour-p}
    \begin{aligned}
        &\pr\big(x_0v,x_1v\in \calE(G'), \cost{x_0vx_1} \le D^{2\mu d} \mid V,w_V\big) 
        \ge \theta^2\underline{c}^2/4.
    \end{aligned}
    \end{equation}
Conditioned on $(V,w_V)$, the presence and cost of $x_0vx_1$ vs $x_0v'x_1$ are independent, so the number of good paths between $x_0$ and $x_1$ stochastically dominates a binomial random variable with parameters given by the rhs of~\eqref{eq:common-neighbour-N} and that of~\eqref{eq:common-neighbour-p}. For a binomial variable $Z$, $\pr(Z\neq0) \ge 1-\exp(-\E[Z])$, and so we obtain \eqref{eq:common-neighbor} by absorbing the constant $\underline{c}^2/4$ by replacing $\delta$ with $2\delta$ in the exponent of $D$, using that $D\ggs \delta$.
\end{proof}

\subsection{Proofs using two rounds of exposure in Section \ref{sec:hierarchy}}\label{app:two-rounds-proof}

Here we prove Lemma~\ref{lem:hierarchy-final-weights} and Proposition~\ref{prop:path-from-hierarchy}. We restate the results for the sake of readability.

\LemmaHierarchyHighWeights*

\begin{proof}
As in Lemma~\ref{lem:hierarchy-intermediate-weights}, let $\ulw := \xi^{\gamma^{R-1}\delta}$. To construct a $(\gamma,c_H\olw^{4\mu}\xi^{\eta},\olw,c_H)$-hierarchy in $\calN$, we use two rounds of exposure. In the first round we apply Lemma~\ref{lem:hierarchy-intermediate-weights} to get $\calH_{low}=\{\tilde y_\sigma\}$, a $(\gamma,3\ulw^{3\mu}\xi^{\eta},\ulw,2)$-hierarchy with failure probability $\mathrm{err}_{\xi,\delta}$ in \eqref{eq:hierarchy-intermediate-prob}. In the second round, we use weight-increasing paths from Claim~\ref{claim:cheap-path-to-larger-weight} to connect each $\tilde y_\sigma\in \calH_{low}$ to a vertex $y_\sigma$ of weight in $[\olw,4\olw]$, transforming $\calH_{low}$ into $\calH_{high}=\{y_\sigma\}$, a $(\gamma,c_H\olw^{4\mu}\xi^{\eta},\olw,c_H)$-hierarchy.
    
We now define an iterative cost construction on $G' \sim\{\calG^{\theta}|V, w_V\}$. Let $\calF_1$ be the collection of all tuples of possible edges that could form the bridges $(P_\sigma)_{\sigma\in \{0,1\}^R}$ of a $(\gamma,3\ulw^{3\mu}\xi^{\eta},\ulw,2)$-hierarchy $\{\tilde y_{\sigma}\}_{\sigma\in\{0,1\}^R}$ fully contained in $\calN$, and $\calU_1$ be the event that the costs of all $P_\sigma$ satisfy \emph{(H\ref{item:H3})} of Definition~\ref{def:hierarchy} with $U=3\ulw^{3\mu}\xi^{\eta}$, so that $\calH_{low}$ is indeed a valid $(\gamma,3\ulw^{3\mu}\xi^{\eta},\ulw,2)$-hierarchy. Here, $\calU_1$ is measurable wrt $\sigma(V, w_V, \calE_1)$ as required by Definition~\ref{def:iter-construct} (\ref{item:iter4}), and the round-$1$ marginal costs in \eqref{eq:marginal-cost} are equal to the true edge costs (in $G'$).
For each $\sigma\in\{0,1\}^R$, let $\calP(\sigma)$ be the set of all paths $ \pi_{\tilde y_\sigma, y_\sigma}$ in $\calN$ connecting $\tilde y_{\sigma}$ to any vertex $y_{\sigma}\in \calN \cap (B_{(c_H-2)\xi^{\gamma^{R-1}}/2}(\tilde y_{\sigma}) \times [\olw, 4\olw])$. 
Given $(V, w_V, E_1)$, call a tuple $\underline t$ of edges \emph{admissible} if it contains exactly one such potential path from $\calP(\sigma)$ for each $\sigma\in\{0,1\}^{R}$, and let $\calF_2$ be the collection of all admissible tuples, with an arbitrary ordering.
Let $\calU_2$ be the event that each potential path in the chosen tuple is present in the graph under consideration, and has round-$2$ marginal cost at most $(c_H-3)\olw^{4\mu}\xi^{\eta}/2$ in \eqref{eq:marginal-cost}.
Following Def.~\ref{def:iter-construct}\eqref{item:iter5} and Prop \ref{prop:multi-round-exposure}, for $i=1,2$, we set $\calE_i^{G'}$ to be the first tuple (in the ordering of $\calF_i$) for which $\calU_i$ occurs, and we set $\calE_i^{G'}$ to $\mathtt{None}$ if no such tuple exists.
This defines an iterative cost construction on $G'$, 
$I_{G'} = ((G',\calE_1^{G'},\calF_1,\calU_1), (G',\calE_2^{G'},\calF_2,\calU_2))$. 
If $I_{G'}$ succeeds, then $\{y_\sigma\}_{ \sigma\in\{0,1\}^R}$ is a $(\gamma,c_H\olw^{4\mu}\xi^{\eta},\olw,c_H)$-hierarchy. Indeed, condition~\emph{(H\ref{item:H1})} of Def.~\ref{def:hierarchy} is satisfied by construction. By the triangle inequality, \emph{(H\ref{item:H2})} is satisfied since for all $\sigma \in \{0,1\}^i$, $y_{\sigma1} \in B_{(c_H-2)\xi^{\gamma^i}/2}(\tilde y_{\sigma1})$ and $y_{\sigma0} \in B_{(c_H-2)\xi^{\gamma^i}/2}(\tilde y_{\sigma0})$ by construction, and $|\tilde y_{\sigma1} - \tilde y_{\sigma0}| \le 2\xi^{\gamma^i}$ by \emph{(H\ref{item:H2})} since $\tilde y_\sigma$ forms a $(\gamma, 3\ulw^{3\mu}\xi^{\eta},\ulw, 2)$-hierarchy. This also implies that $\mathrm{dev}_{y_0y_1}(\calH_{high})\le c_H(\xi^\gamma+\xi^{\gamma^2}+\dots+ \xi^{\gamma^{R-1}})\le 2c_H\xi^\gamma$, since $\xi\ggs \gamma$, as required.
Finally, $\widetilde P_\sigma$ is a path between $\tilde y_{\sigma01}$ and $\tilde y_{\sigma10}$, so let $P_\sigma$ be the concatenated path $\pi_{y_{\sigma01},\tilde y_{\sigma01}}\widetilde P_\sigma \pi_{\tilde y_{\sigma10}, y_{\sigma10}}$. 
Then the total cost of $P_\sigma$ is
\begin{align*}
   \calC(P_\sigma)&\le \calC(\widetilde P_\sigma) + \mathrm{mcost}_2(\pi_{y_{\sigma01},\tilde y_{\sigma01}}) + \mathrm{mcost}_2(\pi_{\tilde y_{\sigma10},y_{\sigma10}}) \\&\le 3\ulw^{3\mu}\xi^{\eta}+2(c_H-3)\olw^{4\mu}\xi^{\eta}/2 \le c_H \olw^{4\mu} \xi^\eta,
\end{align*}
since $\ulw=\olw^{2\delta/d}$, see \eqref{eq:ulw-in-prop} vs \eqref{eq:c_H}. Let $H_1$ and $H_2$ be independent $\theta/2$-percolations of $\calG$, i.e., with distribution $\{\calG^{\theta/2}\mid V, w_V\}$, and consider the corresponding iterative cost construction $I_H = ((H_i,\calE_i^H,\calF_i,\calU_i)\colon i = 1,2)$ on $H_1,H_2$ as in Proposition \ref{prop:multi-round-exposure}, and let $\calA_{I_H}(V, w_V, E_1)$ be the event that the first round returns the edge set $\calE_1^H = E_1$. Then   Proposition~\ref{prop:multi-round-exposure} followed by a union bound gives
\begin{align}
\begin{split}\label{eq:hierarchy-final-weights-multi}
    &\pr\big(\calX_{high\textnormal{-}hierarchy}(R,\eta,y_0,y_1) \mid V,w_V\big)\ge \pr\big(I_{G'} \textnormal{ succeeds}\mid V,w_V\big) \\
    &\quad\quad \ge \pr\big(\calE_1^H \neq \texttt{None}\mid  V,w_V\big)
    \cdot \min_{E_1 \neq \mathtt{None}} \pr\big(\calE_2^H \neq \mathtt{None} \mid \calA_{I_H}(V, w_V,E_1)\big) \\
    &\quad\quad \ge 1-\pr\big(\calE_1^H = \texttt{None}\mid V,w_V\big)-
    \max_{E_1 \neq \mathtt{None}} \pr\big(\calE_2^H = \mathtt{None} \mid \calA_{I_H}(V,w_V, E_1)\big).
\end{split}
\end{align}

The event $\calE_1^H \ne \texttt{None}$ is equivalent that the graph $H_1$ contains a $(\gamma,3\ulw^{3\mu}\xi^{\eta},\ulw,2)$-hierarchy $\calH_{low}:=\{\tilde y_\sigma\}$ of depth $R$ fully contained in $\calN$ with first level $\calL_1 = \{y_0,y_1\}$. Since $H_1\sim \{\calG^{\theta/2}\mid V, w_V\}$ is a CIRG, and since the conditions here are stronger than those in Lemma~\ref{lem:hierarchy-intermediate-weights}, all requirements of Lemma~\ref{lem:hierarchy-intermediate-weights} hold with $\theta$ replaced by $\theta/2$, so the first error term in \eqref{eq:hierarchy-final-weights-multi} is at most $\mathrm{err}_{\xi,\delta}$ in \eqref{eq:hierarchy-intermediate-prob}.
It remains to upper-bound the second error term in \eqref{eq:hierarchy-final-weights-multi}. 

Let
\begin{align}
     q^\star:= \left\lceil\frac{\log(d/\delta)}{\log(1/(\tau-2+2d\tau\delta))}\right\rceil,\label{eq:q-star}
\end{align}
and for each $v\in\calN$ let $\calA_{\mathrm{path}}(v)$ be the event that $H_2$ contains a path $\pi_{v,v'}$ from $v$ to some vertex $v'\in\calN \cap (B_{4q^\star \xi^{\gamma^{R-1}}}(v) \times [\olw,4\olw])$ with cost $\calC_2(\pi_{v,v'}) \le q^\star \olw^{4\mu}\xi^{\eta}$. The value $c_H$ in~\eqref{eq:c_H} is chosen so that $4q^\star \le (c_H-2)/2$ and 
$q^\star \le (c_H-3)/2$ both hold; thus conditioned on $\calA_{I_H}(V,w_V, E_1)$, the event $\calE_2^H=\texttt{None}$ occurs only if for some $\sigma\in\{0,1\}^R$ the event $\neg\calA_{\mathrm{path}}(\tilde y_{\sigma})$ occurs. There are $2^R$ strings $\sigma\in\{0,1\}^R$, and all the events in $\calA_{I_H}(V, w_V, E_1)$ are contained in the $\sigma$-algebra generated by $H_1$, which is independent of $H_2$ given $V, w_V$. So, by a union bound,
\begin{align}
     \max_{E_1 \neq \mathtt{None}} \pr\big(\calE_2^H = \mathtt{None} \mid \calA_{I_H}(V,w_V,E_1)\big) &\le 2^R\cdot\max_{\substack{\sigma \in \{0,1\}^{R}\\E_1 \neq \mathtt{None}}}\pr\big(\neg\calA_{\mathrm{path}}(\tilde y_{\sigma}) \mid \calA_{I_H}(V,w_V,E_1)\big)\nonumber\\
     &\le 2^R\cdot\max_{\tilde y_{\sigma}\neq \mathtt{None}}\pr\big(\neg\calA_{\mathrm{path}}(\tilde y_{\sigma})\mid V,w_V\big), \label{eq:hierarchy-final-weights-3}
\end{align}
where the maximum is taken over all possible values of $\tilde y_{\sigma}$ in non-\texttt{None} $E_1$.
To bound~\eqref{eq:hierarchy-final-weights-3}, we apply Claim~\ref{claim:cheap-path-to-larger-weight} with $G'=H_2$, $\theta$ replaced by $\theta/2$, $K=2\olw$, $M=2\ulw$, $D=4\xi^{\gamma^{R-1}}$, $U=\olw^{4\mu}\xi^{\eta}$, $y_0=y_{\sigma}$, and all other variables taking their present values. Using $\ulw,\olw$ from \eqref{eq:ulw-in-prop} and \eqref{eq:c_H},
we compute
\[
    \frac{\log K}{\log M} = \frac{\log (2\olw)}{\log (2\ulw)}= \frac{\log 2 + \tfrac12\gamma^{R-1}d\log\xi}{\log 2 + \gamma^{R-1}\delta\log\xi}\le 1 + \frac{d}{2\delta} \le \frac{d}{\delta};
 \]
and therefore $q$ from~\eqref{eq:cond-q} with these choices satisfies
     \begin{align*}
        q 
        &= \bigg\lceil\frac{\log\big(\log K/\log M\big)}{\log(1/(\tau-2+2d\tau\delta))} \bigg\rceil \le 
        \bigg\lceil\frac{ \log(d/\delta)}{\log(1/(\tau-2+2d\tau\delta))} \bigg\rceil = q^\star.
     \end{align*}
Hence the event $\calA_{\pi(\tilde y_{\sigma})}$ in Claim~\ref{claim:cheap-path-to-larger-weight} is contained in $\calA_{\mathrm{path}}(\tilde y_{\sigma})$.
We now verify that the requirements of Claim~\ref{claim:cheap-path-to-larger-weight} hold in order of their appearance there. Whenever $E_1 \neq \mathtt{None}$, $\tilde y_{\sigma}$ lies in $\calN$ with weight in $[\ulw,4\ulw]=[M/2,2M]$ by construction, where $M=2\ulw >1$. Similarly, $K=2\olw>1$ and $D=4\xi^{\gamma^{R-1}}>1$ by our choices. We check the requirement $U\ge K^{2\mu}$. By definition of $\olw$ in \eqref{eq:c_H} and the choices $U=\olw^{4\mu}\xi^{\eta}$ and $K=2\ulw$, we compute
\begin{align*}
    UK^{-2\mu} = \olw^{4\mu}\xi^{\eta}(2\olw)^{-2\mu} = 2^{-2\mu} \olw^{2\mu} \xi^{\eta},
\end{align*}
which is larger than $1$ (even if $\eta\!=\!0$) since $\mu>1$ and $\olw \ge (\log\log\xi\sqrt{d})^{8d^2/\delta^2}$ and $\xi\ggs \delta$. Next, since $\delta\lls \mpar$ by hypothesis, $\olw=\xi^{\gamma^{R-1}d/2} > \xi^{\gamma^{R-1}\delta}=\ulw$ and so $K \ge M$. Moreover, $K = 2\xi^{\gamma^{R-1}d/2} \le 4^{d/2} \xi^{\gamma^{R-1}d/2}=D^{d/2}$ also holds. Since $M= 2\ulw =2 \xi^{\gamma^{R-1}\delta} \ge 2(\log\log\xi\sqrt{d})^{16d/\delta}$, $\xi \ggs \theta,\delta,w_0$, and $M\le K\le D^{d/2}$, we have $K,M,D\ggs \theta,\delta,w_0$. Clearly $D = 4\xi^{\gamma^{R-1}} < \xi \le \xi\sqrt{d}$ since $\gamma<1$ and $\xi$ is large. Next, we check $(M/2)^{2/d} = (\xi^{\gamma^{R-1}\delta})^{2/d} > \xi^{\gamma^{R-1}\delta/d} \ge (\log\log\xi\sqrt{d})^{16/\delta}$ as required. Finally, if $\beta=\infty$ then we also need that $U(KM)^{-\mu}=\olw^{4\mu}\xi^{\eta}(4\olw\ulw)^{-\mu}\ge 4^{-\mu}\olw^{2\mu}\xi^{\eta}$ is sufficiently large. This holds even when $\eta\!=\!0$ since $\olw \ggs \mpar$.
Hence, all requirements of Claim~\ref{claim:cheap-path-to-larger-weight} are satisfied, and since $\theta$ changes to $\theta/2$ and $M=2\ulw=2\xi^{\gamma^{R-1}\delta}$ in \eqref{eq:weight-increasing-path-error}, \eqref{eq:hierarchy-final-weights-3} can be bounded as
\begin{align}
\begin{split}\label{eq:hierarchy-final-weights-4}
    \max_{E_1 \neq \mathtt{None}} \pr\big(\calE_2^H = \mathtt{None} &\mid \calA_{I_H}(V, w_V, E_1)\big) \le 2^R\exp\Big({-}(\theta/2)2^{\delta}\xi^{\gamma^{R-1}\delta^2}\Big)\\
    &\le 2^R\exp\Big({-}(\log\log\xi)^{15}\Big)\le  \exp\Big({-}(\log\log\xi)^{14}\Big),
\end{split}
\end{align}
where we obtained the second row from the hypotheses $\xi^{\gamma^{R-1}\delta^2} \ge (\log\log\xi)^{16}$ and $\xi\ggs \theta$, and then from $2^R \le e^{R}$ and $R\le (\log \log \xi)^2$.
Combining~\eqref{eq:hierarchy-final-weights-4} with \eqref{eq:hierarchy-final-weights-multi} and recalling that the first error term there is at most $\exp(- (\log \log \xi)^{1/\sqrt{\delta}})$ finishes the proof of \eqref{eq:hierarchy-final-prob} since $\delta$ is small and $\xi$ is large.
\end{proof}

\PropositionPathFromHierarchy*

\begin{proof}
    To construct the path $\pi_{y_0^\star, y_1^\star}$, we again use two rounds of exposure. In the first round we apply Lemma~\ref{lem:hierarchy-final-weights} to get a $(\gamma,c_H\olw^{4\mu}\xi^{\eta},\olw,c_H)$-hierarchy $\calH_{high}:=\{y_\sigma\}$ of depth $R$ fully contained in $\calN$ with first level $\{y_0, y_1\}$. In the second round, we use Claim~\ref{claim:common-neighbour} to connect, via a common neighbour, each pair of level-$R$ siblings $y_{\sigma0}, y_{\sigma1}, \sigma \in \{0,1\}^{R-1}\setminus\{0_{R-1}, 1_{R-1}\}$. This yields a path between $y_{0_{R-1}1}=:y^{\star}_0$ and $y_{1_{R-1}0}=:y^{\star}_1$.
    
    We now define an iterative cost construction on $G'\sim \{\calG^{\theta}|V,w_V\}$. Let $\calF_1$ be the collection of all tuples of possible edges $e$ with $\mathrm{dev}_{y_0y_1}(e)\le 2c_H\xi^\gamma$ that could form the bridges of a $(\gamma,c_H\olw^{4\mu}\xi^{\eta},\olw,c_H)$-hierarchy $\calH_{high}=\{y_{\sigma}\}_{\sigma\in\{0,1\}^R}$ fully contained in $\calN$ with first level $\{y_0, y_1\}$. Moreover,
    let $\calU_1$ be the event that the costs of all $P_\sigma$ satisfy \emph{(H\ref{item:H3})} of Definition~\ref{def:hierarchy} with $U=c_H\olw^{4\mu}\xi^{\eta}$, so that $\calH_{high}$ is indeed a valid $(\gamma,c_H\olw^{4\mu}\xi^{\eta},\olw,c_H)$-hierarchy with $\mathrm{dev}_{y_0y_1}(\calH_{high})\le 2c_H\xi^\gamma$.     
    For each $\sigma\in\{0,1\}^{R-1} \setminus \{0_{R-1}, 1_{R-1}\}$, define $\calJ(\sigma)$ to be the set of all potential paths $J_\sigma$ fully contained in $\calN$ connecting $y_{\sigma0}$ and $y_{\sigma1}$ with $\mathrm{dev}_{y_0y_1}(J_\sigma)\le 3c_H\xi^\gamma$. 
     Given $(V, w_V, E_1)$, call a tuple $\underline t$ of edges \emph{admissible} if it contains exactly one such potential path from $\calJ(\sigma)$ for each $\sigma\in\{0,1\}^{R} \setminus \{0_{R-1}, 1_{R-1}\}$, and let $\calF_2$ be the collection of all admissible tuples, with an arbitrary ordering.
 Let $\calU_2$ be the event that each potential path in the chosen tuple is present in the graph under consideration, and has round-$2$ marginal cost $\mcost_2(J_\sigma) \le \olw^{4\mu}$ in \eqref{eq:marginal-cost}.
 Following Def.~\ref{def:iter-construct}\eqref{item:iter5} and Prop \ref{prop:multi-round-exposure}, for $i=1,2$, we set $\calE_i^{G'}$ to be the first tuple (in the ordering of $\calF_i$) for which $\calU_i$ occurs, and we set $\calE_i^{G'}$ to $\mathtt{None}$ if no such tuple exists.
This defines an iterative cost construction on $G'$, 
$I_{G'} = ((G',\calE_1^{G'},\calF_1,\calU_1), (G',\calE_2^{G'},\calF_2,\calU_2))$. 

  If $I_{G'}$ succeeds, then there is a path $\pi_{y_0^\star,y_1^\star}\subseteq\calN$ between $y_0^\star:=y_{0_{R-1}1}$ and $y_1^\star:=y_{1_{R-1}0}$ with $\cost{\pi_{y_0^\star,y_1^\star}} \le c_H 2^R \olw^{4\mu}\xi^{\eta}$ and $\mathrm{dev}_{y_0y_1}(\pi_{y_0^\star,y_1^\star})\le 3c_H\xi^\gamma$. Indeed, let us order the elements $y_{\sigma}$ of $\calH_{high}$ lexicographically by their index $\sigma$, omitting $y_0$ and $y_1$, i.e.\ 
    \begin{align*}
	    y_0^\star=y_{0_{R-1}1}, y_{0_{R-2}10}, y_{0_{R-2}11}, \ldots, y_{1_{R-2}00}, y_{1_{R-2}01}, y_{1_{R-1}0}=y_1^\star,
	\end{align*}
	and notice that $P_\sigma\in \calH_{high}$ is a path between every consecutive pair of the form $y_{\sigma01}, y_{\sigma10}$ while $J_\sigma$ is a path between every consecutive pair of the form $y_{\sigma00}, y_{\sigma01}$ or $y_{\sigma{10}}, y_{\sigma11}$, so the concatenation forms a connected walk $\pi^+$. We then remove any cycles from $\pi^+$, passing to an arbitrary sub-path $\pi_{y_0^\star, y_1^\star}\in \calN$. Since $\calH_{high}$ is a $(\gamma,c_H\olw^{4\mu}\xi^{\eta},\olw,c_H)$ hierarchy with first level $y_0, y_1$, by Definition~\ref{def:hierarchy} \emph{(H\ref{item:H1})} $w_{y_0^\star}, w_{y_1^\star} \in [\olw, 4\olw]$, and by \emph{(H\ref{item:H2})}, the distances $|y_0-y_{0}^\star| \le c_H\xi^{\gamma^{R-1}}$ and $|y_1-y_{1}^\star| \le c_H\xi^{\gamma^{R-1}}$ both hold.  Finally, since each edge of $\pi^+$ is contained in $\pi_{y_0^\star,y_1^\star}$ only once, its cost is at most
    \[
        \cost{\pi_{y_0^\star,y_1^\star}} \le \sum_{e \in E(\pi^+)} \cost{e} \le \sum_{\sigma\in\{0,1\}^{i}: 0\le i\le R-2} \cost{E^-(P_\sigma)} + \sum_{\sigma\in\{0,1\}^{R-1} \setminus \{0_{R-1}, 1_{R-1}\}} \mcost_2(J_\sigma).
    \]
The cost of each $E^-(P_\sigma)$ is at most $c_H \olw^{4\mu}\xi^{\eta}$ by $\calH_{high}$ (see \emph{(H\ref{item:H3})}), and $\mcost_2(J_{\sigma}) \le \olw^{4\mu}$ by construction; since $c_H \ge 1$ it follows that
    \[
        \cost{\pi_{y_0^\star,y_1^\star}} \le (2^{R-1}-1) c_H\olw^{4\mu}\xi^{\eta} + (2^{R-1}-2)\olw^{4\mu}< c_H2^R\olw^{4\mu}\xi^{\eta},
    \]
    as required by $\calX_{high\textnormal{-}path}$. The deviation bound $3c_H\xi^\gamma$ also holds since it holds individually for all $J_\sigma$ and it holds for $\calH_{high}$ already by Lemma~\ref{lem:hierarchy-final-weights}. Let $H_1,H_2$ again be independent $\{\calG^{\theta/2}\mid V, w_V\}$, and let $I_H = ((H_i,\calE_i^H,\calF_i,\calU_i)\colon i = 1,2)$ be the corresponding iterative cost construction on $H_1,H_2$ in Proposition \ref{prop:multi-round-exposure}, and let $\calA_{I_H}(V, w_V, E_1)$ be the event that the first round returns the edge set $\calE_1^H = E_1$. Then Proposition~\ref{prop:multi-round-exposure} followed by a union bound gives similarly to \eqref{eq:hierarchy-final-weights-multi} that
    \begin{align}
    \begin{split}\label{eq:path-from-hierarchy-multi}
      \pr(\calX_{high\textnormal{-}path} &\mid V,w_V) \ge   \pr\big(I_{G'} \textnormal{ succeeds} \mid V,w_V\big) \\
      &\ge 1-\pr\big(\calE_1^H = \texttt{None}\mid V,w_V\big)\
        -
        \max_{E_1 \neq \mathtt{None}} \pr\big(\calE_2^H = \mathtt{None} \mid \calA_{I_H}(V,w_V,E_1)\big).
    \end{split}
    \end{align}
   We bound both errors on the rhs. The event $\calE_1^H \neq \texttt{None}$ can be bounded using 
   Lemma~\ref{lem:hierarchy-final-weights} with $\theta$ replaced by $\theta/2$, since $H_1\sim \{\calG^{\theta/2}\mid V, w_V\}$. All the requirements of Lemma~\ref{lem:hierarchy-final-weights} are fulfilled by hypothesis, so the first term on the rhs is at most $\exp\big({-}(\log\log\xi)^{13}\big)$ by \eqref{eq:hierarchy-final-prob}.
    It remains to upper-bound the second term in \eqref{eq:path-from-hierarchy-multi}. 
    For each $x_0,x_1\in \calN$, let $\widetilde \calA_{x_0\star x_1}$ be the event that $H_2$ contains a two-edge path $x_0vx_1 \subseteq\calN$ of cost at most $\olw^{4\mu}$ with $|x_0-v| \le c_H\xi^\gamma$. If $\mathrm{dev}_{y_0y_1}(x_0)\le 2c_H\xi^\gamma$, this implies $\mathrm{dev}_{y_0y_1}(v)\le 3c_H\xi^\gamma$. Hence, conditioned on $\calA_{I_H}(V,w_V,E_1)$, the event $\calE_2^H=\texttt{None}$ occurs only if for some $\sigma\in\{0,1\}^{R-1} \setminus \{0_{R-1}, 1_{R-1}\}$ the event $\neg\widetilde \calA_{y_{\sigma0}\star y_{\sigma1}}$ occurs. Since all the events in $\calA_{I_H}(V, w_V, E_1)$ are contained in the $\sigma$-algebra generated by $H_1$, which is independent of $H_2$ given $V, w_V$, we get by a union bound that
     \begin{align}
     \begin{split}\label{eq:path-from-hierarchy-3}
         &\max_{E_1 \neq \mathtt{None}} \pr\big(\calE_2^H = \mathtt{None} \mid \calA_{I_H}(V, w_V,E_1)\big) \\
         &\qquad\qquad\le (2^{R-1}-2)\cdot\max_{\substack{\sigma \in \{0,1\}^{R-1} \setminus \{0_{R-1}, 1_{R-1}\}\\E_1 \neq \mathtt{None}}}\pr\big(\neg\widetilde \calA_{y_{\sigma0}\star y_{\sigma1}} \mid \calA_{I_H}(V, w_V, E_1)\big) \\
         &\qquad\qquad\le  (2^{R-1}-2)\cdot \max_{y_{\sigma0}, y_{\sigma1}\neq \mathtt{None}}\pr\big(\neg\widetilde \calA_{y_{\sigma0}\star y_{\sigma1}} \mid V, w_V\big),
     \end{split}
     \end{align}
     where the maximum is taken over all possible values of $y_{\sigma0},y_{\sigma1}$ occurring in non-\texttt{None} $E_1$.
     To bound~\eqref{eq:path-from-hierarchy-3}, we observe $|y_{\sigma0}-y_{\sigma1}|\le c_H \xi^{\gamma^{R-1}}$ and $w_{y_{\sigma0}}, w_{y_{\sigma1}} \in [\olw,4\olw]$ when $\sigma\in\{0,1\}^R\setminus \{0_{R-1}, 1_{R-1}\}$, by Def.~\ref{def:hierarchy} \emph{(H\ref{item:H2})} and \emph{(H\ref{item:H1})}, since $\calH_{high}$ is a $(\gamma,c_H\olw^{4\mu}\xi^{\eta},\olw,c_H)$ hierarchy. Thus we apply Claim~\ref{claim:common-neighbour} with $G'$ replaced by $H_2$, $\theta$ replaced by $\theta/2$, $D=\xi^{\gamma^{R-1}}$, $x_0=y_{\sigma0}$, $x_1=y_{\sigma1}$ and all other variables taking their present values. We verify that the requirements of Claim~\ref{claim:common-neighbour} all hold in order of their appearance there. $\delta \lls \mpar$ by hypothesis and $D=\xi^{\gamma^{R-1}} \ge (\log\log\xi\sqrt{d})^{16d/\delta^2} \ge(\log\log\xi\sqrt{d})^{16/\delta}$ by assumption, so in particular $D\ggs c_H, \delta$ and $D \ge w_0^{2/d}$ since $\xi \ggs \mpar,\delta,w_0$. Also, clearly $\xi^{\gamma^{R-1}} \le \xi\sqrt{d}$. Next, we check the distance and weights of $y_{\sigma0},y_{\sigma1}$. Since $E_1\neq \texttt{None}$, they must lie in $\calN$, and as argued already, satisfy $|y_{\sigma0}-y_{\sigma1}| \le c_H\xi^{\gamma^{R-1}}=c_HD$ and $w_{y_{\sigma0}}, w_{y_{\sigma1}} \in [\olw,4\olw] = [D^{d/2}, 4D^{d/2}]$. Finally, the cost-bound in Claim \ref{claim:common-neighbour} is $D^{2\mu d}=\xi^{2\mu d \gamma^{R-1}}=\olw^{4\mu}$ exactly as we require it here, and the vertex $v$ satisfies $|x_0-v|\le D = \xi^{\gamma^{R-1}}\le c_H\xi^\gamma$, also as required. 
     Claim~\ref{claim:common-neighbour} applies and~\eqref{eq:path-from-hierarchy-3} can be bounded as
     \begin{align*}
         \max_{E_1 \neq \mathtt{None}} \pr\big(\calE_2^H = \mathtt{None} \mid \calA_{I_H}(V, w_V, E_1)\big) &\le 2^{R-1}\exp\Big({-}(\theta^2/4) \xi^{\gamma^{R-1}(3-\tau-2\delta)d/2}\Big) \\
         &\le 2^{R-1}\exp\big({-}(\log\log\xi)^{15/\delta}\big),
    \end{align*}
  where for the second row we used that $\delta \lls \mpar$, so $(3-\tau-2\delta)d/2 \ge \delta$ and by hypothesis $\xi^{\gamma^{R-1}\delta } \ge (\log\log\xi)^{16/\delta}$ and $\xi \ggs \theta$.
This, together with that the first error term in \eqref{eq:path-from-hierarchy-multi} was at most $\exp\big({-}(\log\log\xi)^{13}\big)$ concludes the proof of \eqref{eq:high-path}.
\end{proof}

\subsection{The optimisation of total cost: proofs of Corollaries~\ref{cor:computations-polylog} and~\ref{cor:computations-polynomial}}\label{app:path-proofs}

Both corollaries follow from Proposition~\ref{prop:path-from-hierarchy} with suitably-chosen parameters. Throughout, we use the convention that $\infty\cdot 0 = 0$. In Proposition~\ref{prop:path-from-hierarchy}, the values of $(\gamma,z,\eta,R)$ are not set yet (and they are not part of the model parameters $\mpar$). We introduce constraints on these parameters in Definition~\ref{def:valid} (``$(K,A)$-validity''), then we show in Lemma~\ref{lem:valid-works} that a $(K,A)$-valid assignment of values in Proposition~\ref{prop:path-from-hierarchy} yields a path between $0$ and $x$ of cost~$K$ with a multiplicative ``error'' of at most $A$.
Recall $\Lambda$, $\Phi$ and $\olw$ from \eqref{eq:Lambda-def}, \eqref{eq:Phi-def} and~\eqref{eq:c_H}, and that $\xi$ is the side-length of the box $Q$ in which the net exist.

\begin{definition}[Valid parameter choices]\label{def:valid}
The \emph{reduced Setting~\ref{set:hierarchy-common}} is Setting~\ref{set:hierarchy-common}, except without $\gamma$ being defined. Consider the reduced Setting ~\ref{set:hierarchy-common}, and let $K,A > 0$. A setting of parameters $(\gamma,z,\eta,R)$ is \emph{$(K,A)$-valid} for $\xi$ if the following conditions all hold for $\xi\ggs \mpar$, writing $\olw := \xi^{\gamma^{R-1}d/2}$:
 \begin{align}
 \gamma=\gamma(\mpar)& \in (0,1),\quad  z=z(\mpar) \in [0,d], \quad \eta=\eta(\mpar) \in [0,\infty), \label{eq:valid-trivial}\\
 R&=R(\mpar, \xi) \in [2, (\log\log\xi)^2/4]\cap \N, \mbox{ with }\label{eq:valid-R}\\
\olw^2&
\in [e^{(\log^{*3}\xi)^2}, A/\log\log\xi],\label{eq:valid-rounds}\\
 2^R\olw^{4\mu}\xi^{\eta} & \le  KA/\log\log \xi, \label{eq:valid-low-cost}\\
 \Lambda(\eta,z) &> 0 \qquad \mbox{and}\qquad \mbox{either } z=0 \mbox{ or } \Phi(\eta,z) > 0.\label{eq:valid-lambda-phi}
 \end{align}
\end{definition}

\begin{lemma}\label{lem:valid-works}
    Consider the reduced Setting~\ref{set:hierarchy-common}. Let $q,\zeta > 0$, let $0<\delta\lls q,\mpar$, and suppose that $\xi\ggs \delta, q, w_0,\zeta,\mpar$. Let $K,A > 0$, and suppose that $(\gamma,z,\eta,R)$ is $(K,A)$-valid for $\xi$. Let $\calX_{(K,A)}$ be the event that there is a path $\pi_{y_0^\star, y_x^\star}$ in $G'$ with endpoints $y_0^\star$ and $y_x^\star$ satisfying 
    \begin{align}\label{eq:valid-works-weights}
    &w_{y_0^\star}, w_{y_x^\star}\in[\olw, 4\olw],\\\label{eq:valid-works-positions}
    &y_0^\star \in B_{\olw^{3/d}}(0), \quad y_x^\star \in B_{\olw^{3/d}}(x),\\\label{eq:valid-works-costs}
    &\calC(\pi_{y_0^\star, y_x^\star})\le KA \mbox{ and }\mathrm{dev}_{0x}(\pi) \le \zeta |x|.
    \end{align}
    Then $\pr(\calX_{(K,A)} \mid V,w_V) \ge 1-q$.
\end{lemma}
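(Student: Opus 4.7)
The plan is to apply Proposition~\ref{prop:path-from-hierarchy} directly with $y_0 := 0$ and $y_1 := x$, and to show that $(K,A)$-validity packages precisely the hypotheses needed so that the conclusion of that proposition delivers \eqref{eq:valid-works-weights}--\eqref{eq:valid-works-costs}. Setting~\ref{set:hierarchy-common} places $0, x \in \calN$; as in Corollaries~\ref{cor:computations-polylog}--\ref{cor:computations-polynomial}, we identify $\xi$ with $|x|$ (the cube $Q$ of side length $\xi$ is chosen with $\xi = |x|$), so that the hypothesis $|y_0 - y_1| = \xi$ of the proposition is met with $y_0 = 0, y_1 = x$.

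First I verify the hypotheses of Proposition~\ref{prop:path-from-hierarchy}. Conditions \eqref{eq:valid-trivial}--\eqref{eq:valid-R} immediately give $\gamma \in (0,1)$, $z \in [0,d]$, $\eta \ge 0$, and $R \in [2, (\log\log\xi)^2]$. Because $\gamma, z, \eta$ depend only on $\mpar$, the hypothesis $\delta \lls q, \mpar$ yields $\delta \lls \gamma, \eta, z, \mpar$ as well. The strict inequalities $\Lambda(\eta, z) > 0$ and, when $z \ne 0$, $\Phi(\eta, z) > 0$ in \eqref{eq:valid-lambda-phi} then imply the quantitative bounds $\Lambda(\eta, z) \ge 2\sqrt{\delta}$ and (when $z \ne 0$) $\Phi(\eta, z) \ge \sqrt{\delta}$ required by the proposition. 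The condition $\xi^{\gamma^{R-1}} \ge (\log\log\xi\sqrt{d})^{16d/\delta^2}$ rewrites as $\olw^{2/d} \ge (\log\log\xi\sqrt{d})^{16d/\delta^2}$, which follows from the lower bound $\olw^2 \ge e^{(\log^{*3}\xi)^2}$ in \eqref{eq:valid-rounds} once $\xi$ is sufficiently large. The remaining precondition $\xi \ggs \gamma, \eta, z, \theta, \delta, w_0$ is part of the assumptions of the lemma.

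Applying the proposition yields the event $\calX_{high\text{-}path}(R, \eta, 0, x)$ with probability at least $1 - 2\exp(-(\log\log\xi)^{13}) \ge 1 - q$ for $\xi \ggs q$. On this event, the resulting path $\pi$ fully contained in $\calN$ between $y_0^\star, y_x^\star$ satisfies: both endpoint weights lie in $[\olw, 4\olw]$ (this is \eqref{eq:valid-works-weights}); $|y_0^\star| \le c_H \xi^{\gamma^{R-1}}$ and $|y_x^\star - x| \le c_H \xi^{\gamma^{R-1}}$; $\calC(\pi) \le c_H 2^R \olw^{4\mu} \xi^\eta$; and $\mathrm{dev}_{0, x}(\pi) \le 3c_H \xi^\gamma = 3c_H |x|^\gamma$. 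To upgrade the position bound to \eqref{eq:valid-works-positions}, I note $c_H \xi^{\gamma^{R-1}} = c_H \olw^{2/d} \le \olw^{3/d}$ once $\olw^{1/d} \ge c_H$, which holds because $c_H = c_H(\mpar, \delta)$ is fixed while $\olw$ is forced to be very large by \eqref{eq:valid-rounds}. The cost bound \eqref{eq:valid-works-costs} follows from \eqref{eq:valid-low-cost}: $c_H 2^R \olw^{4\mu} \xi^\eta \le c_H \cdot KA/\log\log\xi \le KA$ provided $\log\log\xi \ge c_H$, which is automatic for $\xi \ggs \mpar$. Finally, $3c_H|x|^\gamma \le \zeta |x|$ holds since $\gamma < 1$ and $\xi \ggs \zeta$.

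No single step is intricate; the proof is essentially a verification, since the notion of $(K,A)$-validity has been engineered to dovetail with the input--output profile of Proposition~\ref{prop:path-from-hierarchy}. The only conceptual nuance is the position-bound step, which requires $\olw^{1/d} \ge c_H$ and thereby motivates the lower bound $\olw^2 \ge e^{(\log^{*3}\xi)^2}$ in \eqref{eq:valid-rounds}: one needs $\olw$ to grow fast enough in $\xi$ to swallow the constant $c_H(\mpar,\delta)$, and also to ensure that the underlying hierarchy can be built at all.
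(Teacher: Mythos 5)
Your proposal is correct and follows essentially the same route as the paper's proof: set $y_0:=0$, $y_1:=x$, $\xi:=|x|$, check that $(K,A)$-validity supplies every hypothesis of Proposition~\ref{prop:path-from-hierarchy} (in particular that the strict inequalities in \eqref{eq:valid-lambda-phi} upgrade to $\Lambda\ge 2\sqrt{\delta}$ and $\Phi\ge\sqrt{\delta}$ since $\delta\lls\mpar$, and that \eqref{eq:valid-rounds} gives $\xi^{\gamma^{R-1}}\ge(\log\log\xi\sqrt{d})^{16d/\delta^2}$), and then convert the output of $\calX_{high\text{-}path}$ into \eqref{eq:valid-works-weights}--\eqref{eq:valid-works-costs} by absorbing $c_H$ into $\olw^{1/d}$ and into the $\log\log\xi$ slack of \eqref{eq:valid-low-cost}. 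The only cosmetic difference is that the paper explicitly takes $\theta:=1$, whereas you leave $\theta$ implicit; this does not affect correctness.
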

\begin{proof}
    Let $y_0 := 0$, let $y_1 := x$, let $\xi := |x|$, and let $\theta := 1$. We first verify that the conditions of Proposition~\ref{prop:path-from-hierarchy} hold. Since $\delta\lls\mpar$, by~\eqref{eq:valid-trivial} we may also assume $\delta \lls \gamma, z, \eta$ as required by Proposition~\ref{prop:path-from-hierarchy}. Combined with~\eqref{eq:valid-lambda-phi} this implies that $\Lambda(\eta,z) \ge 2\sqrt\delta$ as required, and that either $z=0$ or $\Phi(\eta,z) \ge \sqrt\delta$ as required by Prop.\ \ref{prop:path-from-hierarchy}. Since $\xi \ggs \delta,\mpar$, the inequalities $\xi \ggs \gamma, z, \eta, \delta,w_0$ and $\xi^{\gamma^{R-1}} \ge (\log\log\xi\sqrt d)^{16d/\delta^2}$ by \eqref{eq:valid-rounds} and since $\olw := \xi^{\gamma^{R-1}d/2}$, which is also required. Finally, $R \in [2, (\log\log\xi)^2]$ by~\eqref{eq:valid-R} and $\gamma \in (0,1)$, $z \in [0,d]$ and $\eta \ge 0$ by~\eqref{eq:valid-trivial}.

    Suppose that the event $\calX_{\mathrm{high\textnormal{-}path}}$ of Proposition~\ref{prop:path-from-hierarchy} occurs, and let $\pi$ be a path as in the definition of $\calX_{\mathrm{high\textnormal{-}path}}$. Then $\pi$ satisfies~\eqref{eq:valid-works-weights} immediately, because $\calX_{\mathrm{high\textnormal{-}path}}$ requires that the end-vertices of the path $\pi$ have weights in $[\olw, 4\olw]$. The event also requires that the end-vertices are within distance $c_H\xi^{\gamma^{R-1}}$ from $0,x$ respectively. Since $\xi\ggs\mpar, \delta$,  $c_H\xi^{\gamma^{R-1}} \le \xi^{\gamma^{R-1}3/2} = \olw^{3/d}$, and so $\pi$ satisfies \eqref{eq:valid-works-positions}. The cost of $\pi$ is at most $c_H2^R\olw^{4\mu}\xi^\eta$; by~\eqref{eq:valid-low-cost} combined with the fact that $\xi\ggs\mpar, \delta$, it follows that $\calC(\pi) \le KA$. The event $\calX_{\mathrm{high\textnormal{-}path}}$ ensures that the deviation of $\pi$ from the section $S_{0,x}$ is at most $3c_H\xi^\gamma$ where $\xi\ggs\zeta,\mpar, \delta$ and $\gamma<1$, so \eqref{eq:valid-works-costs} follows for any $\zeta>0$ fixed. Thus 
    \[
        \pr\big(\calX_{(K,A)} \mid V,w_V\big) \ge \pr\big(\calX_{\mathrm{high\textnormal{-}path}} \mid V,w_V\big).
    \]
    By Proposition~\ref{prop:path-from-hierarchy} and the fact that $\xi\ggs q$, it follows that
    \[
        \pr\big(\calX_{(K,A)} \mid V,w_V\big) \ge 1 - 2e^{-(\log\log\xi)^{13}} \ge 1 - q.\qedhere
    \]
\end{proof}

We shall now apply Lemma~\ref{lem:valid-works} to prove Corollary~\ref{cor:computations-polylog} (which covers the polylogarithmic regime). Here, there are two possible choices of parameters $(\gamma, z, \eta, R)$ for Lemma~\ref{lem:valid-works}: if $\alpha < 2$, then we are able to build a polylogarithmic-cost path using long-range edges between low-weight vertices (Claim~\ref{claim:polylog-low-alpha} below); if $\mu< \mu_{\log}$ then we are able to build a polylogarithmic-cost path using edges between high-weight vertices (Claim~\ref{claim:polylog-low-mu} below). We then prove Corollary~\ref{cor:computations-polylog} by applying whichever parameter setting constructs a lower-cost path (in Corollary~\ref{cor:polylog-combine}). In both regimes, we need the following algebraic fact.

\begin{claim}\label{claim:valid-log-gamma-algebra}
    Let
    \begin{equation}\label{eq:R-def-long}
        R=R(\xi):= \Big\lceil\frac{\log\log\xi-(\log^{*4}\xi)^2}{\log\gamma^{-1}}\Big\rceil.
    \end{equation}
    Then for all $\gamma \in (1/2,1)$ and $\xi\ggs \gamma$, it holds that
    \begin{equation}\label{eq:xi-iteration-bounds}
        \xi^{\gamma^{R-1}} \in [e^{(\log^{*3}\xi)^2}, e^{\sqrt{\log\log\xi}}].
    \end{equation}
\end{claim}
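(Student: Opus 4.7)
The plan is to take $\log\log$ of both sides of \eqref{eq:xi-iteration-bounds} so that the inequalities become linear in $R$, and then read off what we need from the definition of $R$. Since $\log\log(\xi^{\gamma^{R-1}}) = \log\log\xi + (R-1)\log\gamma - \log\gamma^{-1}\cdot 0 = \log\log\xi - (R-1)\log\gamma^{-1}$, the desired statement \eqref{eq:xi-iteration-bounds} is equivalent to
\[
    \log\log\xi - (R-1)\log\gamma^{-1} \in \bigl[\,2\log^{*4}\xi,\ \tfrac{1}{2}\log^{*3}\xi\,\bigr].
\]

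From the definition of $R$ in \eqref{eq:R-def-long}, the ceiling yields
\[
    \log\log\xi - (\log^{*4}\xi)^2 \,\le\, R\log\gamma^{-1} \,<\, \log\log\xi - (\log^{*4}\xi)^2 + \log\gamma^{-1}.
\]
Subtracting $\log\gamma^{-1}$ and rearranging, I would obtain
\[
    (\log^{*4}\xi)^2 \,<\, \log\log\xi - (R-1)\log\gamma^{-1} \,\le\, (\log^{*4}\xi)^2 + \log\gamma^{-1}.
\]

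It remains to compare the two endpoints above with the target interval $[2\log^{*4}\xi,\tfrac{1}{2}\log^{*3}\xi]$. For the lower bound, since $\log^{*4}\xi \to \infty$ as $\xi \to \infty$, for $\xi$ large enough we have $(\log^{*4}\xi)^2 \ge 2\log^{*4}\xi$. For the upper bound, the constraint $\gamma \in (1/2,1)$ gives $\log\gamma^{-1} < \log 2$, a constant, and since $\log^{*3}\xi$ grows to infinity strictly faster than any fixed polynomial in $\log^{*4}\xi$ (as $\log^{*3}\xi = \exp(\log^{*4}\xi)$), for $\xi \ggs \gamma$ we have $(\log^{*4}\xi)^2 + \log 2 \le \tfrac{1}{2}\log^{*3}\xi$. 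Both inequalities are uniform in $\gamma \in (1/2,1)$ (using only $\log\gamma^{-1} < \log 2$), so the condition $\xi \ggs \gamma$ suffices.

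This claim is purely algebraic bookkeeping of iterated logarithms — there is no probabilistic content and no real obstacle beyond unwinding the definitions carefully. The only subtlety is keeping track of which iterated logarithm is in play: $(\log^{*4}\xi)^2$ is the natural error term coming from the ceiling in \eqref{eq:R-def-long}, and it comfortably sits between $2\log^{*4}\xi$ (linear in $\log^{*4}\xi$) and $\tfrac{1}{2}\log^{*3}\xi = \tfrac{1}{2}\exp(\log^{*4}\xi)$ (exponential in $\log^{*4}\xi$).
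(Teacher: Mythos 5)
Your proof is correct and follows essentially the same route as the paper's: both arguments unwind the ceiling in \eqref{eq:R-def-long} to trap $\log\log\xi-(R-1)\log\gamma^{-1}$ between $(\log^{*4}\xi)^2$ and $(\log^{*4}\xi)^2+\log\gamma^{-1}$, and then compare this with $2\log^{*4}\xi$ and $\tfrac12\log^{*3}\xi=\tfrac12 e^{\log^{*4}\xi}$ (the paper phrases the same comparison multiplicatively in terms of $\gamma^{R-1}$ rather than after taking a second logarithm). Your observation that only $\log\gamma^{-1}<\log 2$ is used, so the threshold on $\xi$ is in fact uniform over $\gamma\in(1/2,1)$, is a harmless strengthening.
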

\begin{proof}
    The value of $\xi$ is large, so using \eqref{eq:R-def-long} and that $\lceil x\rceil \le x+1$, 
    \[
        \gamma^{R-1} \ge e^{-\log\log\xi + (\log^{*4}\xi)^2} = (\log^{*3}\xi)^{\log^{*4}\xi}/\log\xi \ge (\log^{*3}\xi)^2/\log \xi.
    \]
    It follows that $\xi^{\gamma^{R-1}} \ge e^{(\log^{*3}\xi)^2}$, as required in \eqref{eq:xi-iteration-bounds}. Moreover, since $\xi\ggs\gamma$, it holds that
    \[
        \gamma^{R-1} \le e^{(\log^{*4}\xi)^2}/(\gamma^2\log\xi) \le e^{(\log^{*3}\xi)/2}/\log\xi = \sqrt{\log\log\xi}/\log\xi.
    \]
    It follows that $\xi^{\gamma^{R-1}} \le e^{\sqrt{\log\log\xi}}$, as required in \eqref{eq:xi-iteration-bounds}.
\end{proof}
The next claim finds a $(K,A)$-valid parameter setting when $\alpha<2$,  for polylogarithmic cost-bound $KA$.
\begin{claim}\label{claim:polylog-low-alpha}
    Consider the reduced Setting~\ref{set:hierarchy-common}, and fix $\eps > 0$. When $\alpha  < 2$, then writing $\Delta_\alpha := 1/(1-\log_2\alpha)$, the following assignment is $((\log \xi)^{\Delta_\alpha}, (\log\xi)^\eps)$-valid for $\xi \ggs \eps,\mpar$ and $0<\eps'\lls \eps,\mpar$:
    \begin{equation}\label{eq:choices-for-alpha}
        \gamma := \frac{\alpha}{2} + \eps'; \qquad 
        z := 0; \qquad 
        \eta := 0; \qquad 
        R:= \Big\lceil\frac{\log\log\xi-(\log^{*4}\xi)^2}{\log\gamma^{-1}}\Big\rceil.
    \end{equation}
\end{claim}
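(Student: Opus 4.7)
The plan is to verify the five conditions~\eqref{eq:valid-trivial}--\eqref{eq:valid-lambda-phi} of Definition~\ref{def:valid} in turn for the assignment in~\eqref{eq:choices-for-alpha}, with $K = (\log\xi)^{\Delta_\alpha}$ and $A = (\log\xi)^\eps$. The trivial conditions \eqref{eq:valid-trivial} are immediate: since $\alpha \in (1,2)$ and $\eps' \lls \eps, \mpar$ is small, $\gamma = \alpha/2 + \eps' \in (1/2, 1)$; also $z = 0 \in [0,d]$ and $\eta = 0 \in [0,\infty)$. For \eqref{eq:valid-R}, $R$ as defined in~\eqref{eq:choices-for-alpha} is of order $\log\log\xi/\log\gamma^{-1}$, which lies in $[2, (\log\log\xi)^2/4]$ once $\xi \ggs \eps, \mpar$. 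Since $\gamma \in (1/2, 1)$, Claim~\ref{claim:valid-log-gamma-algebra} applies and yields $\xi^{\gamma^{R-1}} \in [e^{(\log^{*3}\xi)^2}, e^{\sqrt{\log\log\xi}}]$; therefore $\olw^2 = \xi^{\gamma^{R-1}d}$ lies in $[e^{(\log^{*3}\xi)^2}, e^{d\sqrt{\log\log\xi}}]$, and the upper bound is well below $(\log\xi)^\eps/\log\log\xi = A/\log\log\xi$ since $d\sqrt{\log\log\xi} \ll \eps\log\log\xi$ for $\xi \ggs \eps, \mpar$. This gives~\eqref{eq:valid-rounds}.

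The key computation is~\eqref{eq:valid-low-cost}. Since $\eta = z = 0$, the cost $2^R \olw^{4\mu}\xi^\eta$ factors into $2^R \cdot \olw^{4\mu}$. For the weight factor, $\olw^{4\mu} = \exp(2\mu d \gamma^{R-1}\log\xi) \le \exp(2\mu d \sqrt{\log\log\xi})$ by the upper bound from Claim~\ref{claim:valid-log-gamma-algebra}, which is sub-polylogarithmic in $\xi$ and therefore negligible compared to any positive power of $\log\xi$. For the dominant factor $2^R$: writing $a := 1/\gamma$, we have
\[
2^R = \exp\bigl(R\log 2\bigr) = \exp\!\left(\tfrac{\log 2}{\log a}(\log\log\xi - (\log^{*4}\xi)^2)(1 + o(1))\right) = (\log \xi)^{\log_a 2 + o(1)},
\]
where $o(1) \to 0$ as $\xi \to \infty$. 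Now $\log_a 2 = 1/\log_2(1/\gamma) = 1/(1 - \log_2(\alpha + 2\eps'))$, which tends to $\Delta_\alpha = 1/(1-\log_2\alpha)$ as $\eps' \to 0$ by continuity. Therefore, choosing $\eps' \lls \eps, \mpar$ small enough ensures $2^R \le (\log\xi)^{\Delta_\alpha + \eps/2}$ for $\xi \ggs \eps, \mpar$, from which
\[
2^R \olw^{4\mu}\xi^\eta \le (\log\xi)^{\Delta_\alpha+\eps/2}\cdot e^{2\mu d\sqrt{\log\log\xi}} \le (\log\xi)^{\Delta_\alpha+\eps}/\log\log\xi = KA/\log\log\xi,
\]
verifying~\eqref{eq:valid-low-cost}.

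Finally, for~\eqref{eq:valid-lambda-phi}, we substitute $\eta = z = 0$ into the definition~\eqref{eq:Lambda-def} of $\Lambda$ to get $\Lambda(0,0) = 2d\gamma - \alpha d = d(\alpha + 2\eps') - \alpha d = 2d\eps' > 0$, and the condition on $\Phi$ does not need to be checked since $z = 0$. This completes the verification of $(K, A)$-validity. There is no real obstacle in the proof; the content is the algebraic identification $\log_a 2 \to \Delta_\alpha$ as $\eps' \to 0$, which explains why the choice $\gamma = \alpha/2 + \eps'$ is exactly the one that realises the polylogarithmic exponent $\Delta_\alpha$.
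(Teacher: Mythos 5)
Your proposal is correct and follows essentially the same route as the paper's proof: both verify the conditions of Definition~\ref{def:valid} in order, invoke Claim~\ref{claim:valid-log-gamma-algebra} for the bounds on $\olw$, identify $2^R \le (\log\xi)^{\Delta_\alpha+\eps/2}$ via the computation $\log 2/\log\gamma^{-1} \to \Delta_\alpha$ as $\eps'\to 0$, absorb the subpolylogarithmic factor $\olw^{4\mu}$, and conclude with $\Lambda(0,0)=2d\eps'>0$ and the vacuousness of the $\Phi$ condition at $z=0$. The only cosmetic difference is that you phrase the exponent of $2^R$ with a $(1+o(1))$ asymptotic while the paper uses the direct upper bound $R \le \log\log\xi/\log\gamma^{-1}$; the content is identical.
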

\begin{proof}
We check the requirements in Definition \ref{def:valid} one-by-one.  All the requirements of~\eqref{eq:valid-trivial} and~\eqref{eq:valid-R} are immediately satisfied except for $R \le (\log\log\xi)^2/4$, which follows from the definition since $\xi\ggs\gamma$ and $\gamma > 1/2$. Also since $\xi\ggs\gamma, \mpar$, \eqref{eq:valid-rounds} follows from Claim~\ref{claim:valid-log-gamma-algebra} since $\olw=\xi^{\gamma^{R-1}d/2}$ and $A=(\log \xi)^\eps$.
We now prove \eqref{eq:valid-low-cost}. Since $\xi\ggs\gamma$, we estimate $2^R$ using $R$ and $\gamma$ in \eqref{eq:choices-for-alpha}:
    \begin{equation}\label{eq:polylog-low-alpha-1}
        2^R \le 2^{\log\log\xi/\log\gamma^{-1}} = (\log \xi)^{\log 2/\log\gamma^{-1}}\!=  (\log \xi)^{-\log 2/\log(\alpha/2+\eps')}.
   \end{equation}
    Since $\eps'\lls \eps$, the exponent of $\log \xi$ on the rhs is  
    \begin{equation}\label{eq:polylog-low-alpha-2}
        \frac{\log 2}{-\log (\alpha/2+\eps')} \le \frac{\log 2}{\log(2/\alpha)} + \frac{\eps}{2} = \frac{1}{1-\log_2\alpha} + \frac{\eps}{2} = \Delta_\alpha + \frac{\eps}{2}.
    \end{equation}
    Moreover, since $\eta=0$ in \eqref{eq:choices-for-alpha}, the other factor $\olw^{4\mu}\xi^\eta$ in \eqref{eq:valid-low-cost} is at most (using Claim~\ref{claim:valid-log-gamma-algebra} and $\xi \ggs \eps$),
    \begin{equation}\label{eq:polylog-low-alpha-3}
        \olw^{4\mu}\xi^\eta = \xi^{2\mu d \gamma^{R-1}} \le e^{2\mu d \sqrt{\log\log\xi}} \le (\log\xi)^{\eps/2}/\log\log\xi.
    \end{equation}
   Then \eqref{eq:valid-low-cost} with $KA=(\log \xi)^{\Delta_\alpha+\eps}$ follows from~\eqref{eq:polylog-low-alpha-1}--\eqref{eq:polylog-low-alpha-3}. We next prove~\eqref{eq:valid-lambda-phi}. Using the formula in~\eqref{eq:Lambda-def}, with $z=0$ and $\eta=0$, and $\gamma=\alpha/2+\eps'$,
    \[
        \Lambda(\eta, z):= 2d\gamma-\alpha(d-z)-z(\tau-1)+\big(0\wedge\beta(\eta-\mu z)\big) = 2d(\alpha/2 + \eps') - \alpha d = 2d\eps',
    \]
    so $\Lambda(\eta,z) > 0$ as required. Since $z=0$, \eqref{eq:valid-lambda-phi} follows, so all criteria in Def.~\ref{def:valid} are satisfied.
\end{proof}
The next claim finds a $(K,A)$-valid parameter setting when $\mu<\mu_{\log}$,  for polylogarithmic cost bound $KA$. 
\begin{claim}\label{claim:polylog-low-mu}
    Consider the reduced Setting~\ref{set:hierarchy-common}, and fix $\eps > 0$. When $\mu < \mu_{\log}$, then writing $\Delta_{\beta} := 1/(1-\log_2(\tau-1+\mu\beta))$, the following assignment is $((\log \xi)^{\Delta_\beta}, (\log\xi)^\eps)$-valid for $\xi \ggs \eps,\mpar$ and $\eps'\lls \eps,\mpar$:
    \begin{equation}\label{eq:choices-for-mu}
        \gamma := \frac{\tau-1+\mu\beta}{2} + \eps'; \qquad 
        z := d; \qquad 
        \eta := 0; \qquad 
        R:= \left\lceil\frac{\log\log\xi-(\log^{*4}\xi)^2}{\log\gamma^{-1}}\right\rceil.
    \end{equation}
\end{claim}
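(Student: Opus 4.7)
The plan is to verify each of the five conditions \eqref{eq:valid-trivial}--\eqref{eq:valid-lambda-phi} in Definition~\ref{def:valid} for the given assignment, following the same template as the proof of Claim~\ref{claim:polylog-low-alpha}. The key difference is that here we take $z=d$ (so we build bridges between high-weight rather than low-weight vertices), and $\gamma$ is chosen to halve the effective ``scale exponent'' $\tau-1+\mu\beta$ rather than $\alpha$.

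First I would check \eqref{eq:valid-trivial} and \eqref{eq:valid-R}. The hypotheses $\mu\in(\mu_{\mathrm{expl}},\mu_{\log})$ translate to $\mu\beta\in((3-\tau)/2, 3-\tau)$, so $\tau-1+\mu\beta\in((\tau+1)/2,\, 2)\subseteq(3/2,\,2)$. Hence for $\eps'\lls\mpar$, we have $\gamma\in(3/4,1)$, and $z=d$, $\eta=0$ lie in the admissible ranges. The bounds on $R$ in \eqref{eq:valid-R} follow verbatim as in Claim~\ref{claim:polylog-low-alpha} since $\gamma>1/2$ and $\xi\ggs\gamma$. Then \eqref{eq:valid-rounds} is immediate from Claim~\ref{claim:valid-log-gamma-algebra}, because $\olw^2=\xi^{\gamma^{R-1}d}\in[e^{(\log^{*3}\xi)^2}, e^{d\sqrt{\log\log\xi}}]$, and for $\xi\ggs\eps$ the upper end is at most $(\log\xi)^\eps/\log\log\xi = A/\log\log\xi$.

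Next I would verify the cost inequality \eqref{eq:valid-low-cost}. Since $\eta=0$, we have $2^R\olw^{4\mu}\xi^\eta=2^R\cdot\xi^{2\mu d\gamma^{R-1}}$. Claim~\ref{claim:valid-log-gamma-algebra} gives $\xi^{2\mu d\gamma^{R-1}}\le e^{2\mu d\sqrt{\log\log\xi}}\le (\log\xi)^{\eps/2}/\log\log\xi$, exactly as in \eqref{eq:polylog-low-alpha-3}. For $2^R$ I would use the same computation as \eqref{eq:polylog-low-alpha-1}--\eqref{eq:polylog-low-alpha-2}:
\[
    2^R\le(\log\xi)^{\log 2/\log\gamma^{-1}}=(\log\xi)^{-\log 2/\log((\tau-1+\mu\beta)/2+\eps')},
\]
and since $\eps'\lls\eps$, the exponent is at most
\[
    \frac{\log 2}{\log(2/(\tau-1+\mu\beta))}+\frac{\eps}{2}=\frac{1}{1-\log_2(\tau-1+\mu\beta)}+\frac{\eps}{2}=\Delta_\beta+\frac{\eps}{2}.
\]
Multiplying gives $2^R\olw^{4\mu}\xi^\eta\le (\log\xi)^{\Delta_\beta+\eps}/\log\log\xi = KA/\log\log\xi$, as required.

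The only step with any genuine content is \eqref{eq:valid-lambda-phi}, where we must check both that $\Lambda(0,d)>0$ and that $\Phi(0,d)>0$ (since $z\ne0$). Substituting into \eqref{eq:Lambda-def} with $z=d, \eta=0$,
\[
    \Lambda(0,d)=2d\gamma-\alpha(d-d)-d(\tau-1)+\bigl(0\wedge\beta(-\mu d)\bigr)=d\bigl(2\gamma-(\tau-1)-\mu\beta\bigr),
\]
which equals $2d\eps'>0$ by the choice of $\gamma$; the $\alpha$-dependent term drops out, so the argument is also valid when $\alpha=\infty$. For $\Phi$ from \eqref{eq:Phi-def}: since $\gamma>1/2$ we have $d\gamma\wedge d/2=d/2$, and $0\wedge\beta(-\mu d/2)=-\mu\beta d/2$, so $\Phi(0,d)=d(1-\mu\beta)/2$. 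The hypothesis $\mu<\mu_{\log}=(3-\tau)/\beta$ combined with $\tau>2$ yields $\mu\beta<3-\tau<1$, so $\Phi(0,d)>0$. The main (minor) obstacle worth being careful about is this last point---one must use both endpoints of the interval $(\mu_{\mathrm{expl}},\mu_{\log})$: the upper endpoint $\mu_{\log}$ is needed to make $\Phi(0,d)$ strictly positive (equivalently, to make edges between weight-$\xi^{\gamma^{R-1}d/2}$ vertices sufficiently cheap), while the relation $2\gamma = \tau-1+\mu\beta+2\eps'$ that kills the leading $\Lambda$-terms is really the definition of $\gamma$.
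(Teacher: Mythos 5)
Your proof is correct and follows essentially the same route as the paper's: check the five validity conditions of Definition~\ref{def:valid}, with the identical computations $\Lambda(0,d)=2d\eps'$ and $\Phi(0,d)=d(1-\mu\beta)/2>0$ via $\mu<\mu_{\log}$. One minor caveat: you invoke $\mu>\mu_{\mathrm{expl}}$ to conclude $\gamma>3/4$, but the claim only assumes $\mu<\mu_{\log}$; this is harmless because all that is needed is $\gamma>1/2$, which (as the paper notes) already follows from $\tau>2$ and $\mu\beta\ge 0$, and the case $\beta=\infty$ is vacuous since then $\mu_{\log}=0$.
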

\begin{proof}
First note that $\beta=\infty$ is not possible here, since in that case $\mu_{\log}=0$, see \eqref{eq:beta-infty-definitions}.
We check the requirements in Definition \ref{def:valid} one-by-one. Since $\tau > 2$ and $\mu\beta\ge0$,  we obtain $\gamma > 1/2 > 0$ above, and since $\mu < \mu_{\mathrm{log}} = (3-\tau)/\beta$ and $\eps'\lls\mpar$ it also holds that $\gamma < 1$; thus all the requirements of~\eqref{eq:valid-trivial} are satisfied. It is also immediate that~\eqref{eq:valid-R} is satisfied except for $R \le (\log\log\xi)^2/4$, which follows from the definition in \eqref{eq:choices-for-mu} since $\xi\ggs\gamma$ and $\gamma > 1/2$. Since $\xi\ggs\gamma, \mpar$, \eqref{eq:valid-rounds} follows from Claim~\ref{claim:valid-log-gamma-algebra} since $\olw=\xi^{\gamma^{R-1}d/2}$ and $A=(\log \xi)^\eps$ as in the previous claim.
We now prove \eqref{eq:valid-low-cost}. Analogously to \eqref{eq:polylog-low-alpha-1}: 
    \begin{equation}\label{eq:polylog-high-alpha-1}
        2^R \le 2^{\log\log\xi/\log\gamma^{-1}} = (\log \xi)^{\log 2/\log(1/\gamma)}.
    \end{equation}
    Since $\eps'\lls\eps$, now $\gamma$ is given in \eqref{eq:choices-for-mu} and
    \begin{equation}\label{eq:polylog-high-alpha-2}
        \frac{\log 2}{\log (1/\gamma)} \le \frac{\log 2}{\log(2/(\tau-1+\mu\beta))} + \frac{\eps}{2} = \frac{1}{1-\log_2(\tau-1+\mu\beta)} + \frac{\eps}{2} = \Delta_{\beta} + \frac{\eps}{2}.
    \end{equation}
    Moreover, by Claim~\ref{claim:valid-log-gamma-algebra} and since $\xi\ggs\eps$, it holds that
\begin{equation}\label{eq:polylog-high-alpha-3}
        \olw^{4\mu}\xi^\eta = \xi^{2\mu d \gamma^{R-1}} \le e^{2\mu d \sqrt{\log\log\xi}} \le (\log\xi)^{\eps/2}/\log\log\xi.
    \end{equation}
    Now \eqref{eq:valid-low-cost} follows immediately from~\eqref{eq:polylog-high-alpha-1}--\eqref{eq:polylog-high-alpha-3}. We next prove~\eqref{eq:valid-lambda-phi}. 
    Using the formula in~\eqref{eq:Lambda-def}, with $z=d$ and $\eta=0$, and $\gamma$ as in \eqref{eq:choices-for-mu},
    \[
        \Lambda(\eta, z) = 2d\gamma-\alpha(d-z)-z(\tau-1)+\big(0\wedge\beta(\eta-\mu z)\big) = 2d\gamma - d(\tau-1) - d\mu\beta = 2d\eps',
    \]
    so $\Lambda(\eta,z) > 0$ as required. This also remains true for $\alpha=\infty$ both formally with $\alpha(d-z)=\infty\cdot 0=0$ as well as intuitively, since $z=d$ means we use edges which are present with constant probability. It remains to prove $\Phi(\eta,z) > 0$.  Using the formula in~\eqref{eq:Phi-def}, and that $\gamma\wedge1/2=1/2$,
    \[
        \Phi(\eta, z) = \Big[d\gamma \wedge \frac{z}{2}\Big] + \Big[0 \wedge \beta\Big(\eta - \frac{\mu z}{2}\Big)\Big] = d(\gamma \wedge 1/2) - \beta\mu d/2 = d(1-\mu\beta)/2. 
    \]
    Since $\mu \le \mu_{\mathrm{log}} = (3-\tau)/\beta$, it follows that $\Phi(\eta,z) \ge d(\tau-2)/2$; since $\tau > 2$, \eqref{eq:valid-lambda-phi} follows.
\end{proof}

Comparing the definition of $\Delta_0$ in~\eqref{eq:Delta_0}  to those in Claims \ref{claim:polylog-low-alpha} and \ref{claim:polylog-low-mu}, we recover here that
\begin{equation}\label{eq:delta-again}
        \Delta_0 = \frac{1}{1 - \log_2 (\min\{\alpha, \tau-1+\mu\beta\})}=\min\{\Delta_\alpha, \Delta_\beta\},
    \end{equation}
 which formally remains true also when $\alpha = \infty$ or $\beta = \infty$ by \eqref{eq:alpha-infty-Delta_0}, or \eqref{eq:beta-infty-Delta_0}. Combining the two claims we obtain the following corollary:

\begin{corollary}\label{cor:polylog-combine} 
    Consider the reduced Setting~\ref{set:hierarchy-common}, fix $\eps > 0$. When either $\alpha\in(1,2)$ or  $\mu\in(\mu_{\mathrm{expl}},\mu_{\log})$ or both hold, then there exists a setting of $(\gamma,z, \eta,R)$ (depending on $\eps$) which is $((\log\xi)^{\Delta_0}, (\log\xi)^\eps)$-valid for $\xi\ggs \eps,\mpar$.
\end{corollary}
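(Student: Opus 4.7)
The plan is to apply either Claim~\ref{claim:polylog-low-alpha} or Claim~\ref{claim:polylog-low-mu} depending on which of $\Delta_\alpha$ or $\Delta_\beta$ achieves the minimum in the identity $\Delta_0=\min\{\Delta_\alpha,\Delta_\beta\}$ recorded in~\eqref{eq:delta-again}. To handle the extensions, we adopt the convention that $\Delta_\alpha:=+\infty$ when $\alpha\notin(1,2)$ (in particular when $\alpha=\infty$) and $\Delta_\beta:=+\infty$ when $\tau-1+\mu\beta\ge 2$, i.e.\ when $\mu\ge\mu_{\log}$ (in particular when $\beta=\infty$ and $\mu>0$). With these conventions the formula $\Delta_0=\min\{\Delta_\alpha,\Delta_\beta\}$ remains valid in the cases $\alpha=\infty$ and $\beta=\infty$ covered by~\eqref{eq:alpha-infty-Delta_0} and~\eqref{eq:beta-infty-Delta_0}.

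The hypothesis of the corollary --- either $\alpha\in(1,2)$ or $\mu\in(\mu_{\mathrm{expl}},\mu_{\log})$ --- guarantees that at least one of $\Delta_\alpha,\Delta_\beta$ is finite, so $\Delta_0$ is well-defined and equals one of them. First I would treat the case $\Delta_0=\Delta_\alpha$. Finiteness of $\Delta_\alpha$ forces $\alpha\in(1,2)$, so Claim~\ref{claim:polylog-low-alpha} applies and produces an assignment $(\gamma,z,\eta,R)$ that is $((\log\xi)^{\Delta_\alpha},(\log\xi)^\eps)$-valid for $\xi\ggs\eps,\mpar$; since $\Delta_\alpha=\Delta_0$ by assumption, this is exactly a $((\log\xi)^{\Delta_0},(\log\xi)^\eps)$-valid assignment. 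Second I would treat the complementary case $\Delta_0=\Delta_\beta<\Delta_\alpha$. Finiteness of $\Delta_\beta$ combined with our convention forces $\mu<\mu_{\log}$ and $\beta<\infty$, so Claim~\ref{claim:polylog-low-mu} applies and yields an assignment that is $((\log\xi)^{\Delta_\beta},(\log\xi)^\eps)=((\log\xi)^{\Delta_0},(\log\xi)^\eps)$-valid.

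There is essentially no obstacle here: the only thing to check is that we pick the \emph{correct} claim, because the cost parameter $K$ enters Definition~\ref{def:valid} only through the upper bound~\eqref{eq:valid-low-cost}, so $(K',A)$-validity implies $(K,A)$-validity whenever $K\ge K'$ but not in the other direction. This is why we cannot freely weaken $K$ downward to $(\log\xi)^{\Delta_0}$ from the larger of the two claim outputs; we must match the claim to the minimising index. The case split above does precisely that, and together with the observation that $\Delta_0<\infty$ under the hypotheses, it exhausts all possibilities and completes the proof.
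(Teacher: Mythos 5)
Your proof is correct and follows essentially the same route as the paper's: identify which of $\Delta_\alpha,\Delta_\beta$ attains the minimum in $\Delta_0=\min\{\Delta_\alpha,\Delta_\beta\}$, verify that the corresponding hypothesis ($\alpha\in(1,2)$ resp.\ $\mu<\mu_{\log}$) must then hold, and invoke Claim~\ref{claim:polylog-low-alpha} or Claim~\ref{claim:polylog-low-mu} accordingly. The only (cosmetic) difference is that you encode the applicability check in the convention $\Delta_\alpha=\infty$ for $\alpha\notin(1,2)$ and $\Delta_\beta=\infty$ for $\mu\ge\mu_{\log}$ and deduce it from finiteness of the minimiser, whereas the paper derives the same inequalities directly from the hypothesis; your remark on the monotone role of $K$ in Definition~\ref{def:valid} correctly explains why the case split is needed.
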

\begin{proof}
Recall that $\mu_{\mathrm{log}} = (3-\tau)/\beta$, so if $\mu_{\mathrm{expl}} < \mu < \mu_{\mathrm{log}}$ then $\beta < \infty$; thus we cannot have $\alpha = \beta = \infty$, and the formula \eqref{eq:delta-again} is valid whenever at least one of $\alpha, \beta$ is finite.   
    
We show that when the minimum in the denominator is $\alpha \le \tau-1+\mu\beta$, (so that $\Delta_0 = \Delta_\alpha$), then also $\alpha<2$ holds. Then, Claim~\ref{claim:polylog-low-alpha} directly gives a $((\log\xi)^{\Delta_\alpha},(\log\xi)^\eps)$-valid parameter setting. There are two cases: either $\mu>\mu_{\log}$, then $\alpha < 2$ must hold by the hypothesis of the lemma; or $\mu<\mu_{\log}=(3-\tau)/\beta$, so $\alpha$ being the minimum gives that $\alpha< \tau-1+\mu_{\log}\cdot\beta=2$.

Similarly, we show that when the minimum in the denominator is $\tau-1+\mu\beta<\alpha$, (so that $\Delta_0 = \Delta_\beta$), then also $\mu<\mu_{\log }$ holds. Then, Claim~\ref{claim:polylog-low-mu} directly gives a $((\log\xi)^{\Delta_\beta},(\log\xi)^\eps)$-valid parameter setting. There are again two cases: either $\alpha \ge 2$, then $\mu < \mu_{\mathrm{log}}$ must hold by the hypothesis of the lemma; or $\alpha<2$, so $\tau-1+\mu\beta$ being the minimum gives that $\tau-1 + \mu\beta<2$ and hence $\mu < (3-\tau)/\beta = \mu_{\mathrm{log}}$.\end{proof}
Finally, we are ready to prove the first main result of Section \ref{sec:hierarchy}:
\ComputationsPolyLog*

\begin{proof}
    Immediate from combining Lemma~\ref{lem:valid-works} with Corollary~\ref{cor:polylog-combine}, where the required bounds on $\olw$ in~\eqref{eq:weight-crit-cor1} follow from~\eqref{eq:valid-rounds}.
\end{proof}

We next apply Lemma~\ref{lem:valid-works} to prove Corollary~\ref{cor:computations-polynomial} that covers the polynomial regime. As with the proof of Corollary~\ref{cor:computations-polylog}, we show that multiple possible choices of parameters are valid and choose the one which yields the lowest-cost path. We start with the case where $\alpha = \beta = \infty$. Recall the definition of $\eta_0$ from \eqref{eq:eta_0}, \eqref{eq:alpha-infty-definitions}, \eqref{eq:beta-infty-definitions} and~\eqref{eq:alpha-beta-infty-definitions}.

\begin{claim}\label{claim:polynomial-infinite}
    Consider the reduced Setting~\ref{set:hierarchy-common}, and fix $\eps > 0$. When $\alpha=\beta=\infty$ and $\mu \in(\mu_{\mathrm{log}} ,\mu_{\mathrm{pol}}]$, i.e., $\eta_0=1\wedge d\mu$ in \eqref{eq:alpha-beta-infty-definitions}, then the following setting is $(\xi^{\eta_0},\xi^\eps)$-valid whenever $\eps'\lls \eps,\mpar$ (with $1/(\eps')^2$ an integer), and $\xi\ggs\eps,\eps',\mpar$:
    \begin{equation}\label{eq:choices-for-eta-inf-inf}
        \gamma := 1 - \eps';\qquad z := d;\qquad \eta := \eta_0 + \sqrt{\eps'};\qquad R := 1/(\eps')^2.
    \end{equation}
\end{claim}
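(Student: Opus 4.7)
The plan is to verify the five conditions of Definition~\ref{def:valid} directly for the parameter choices in \eqref{eq:choices-for-eta-inf-inf}, exploiting the conventions $\infty\cdot 0 = 0$ and $\infty \cdot t = \infty$ for $t>0$ imposed by $\alpha=\beta=\infty$, together with the range $\mu\in(\mu_{\log},\mu_{\mathrm{pol}}]=(0,1/d]$ coming from~\eqref{eq:alpha-beta-infty-definitions}. In particular this range forces $\eta_0 = d\mu\le 1$, which will make the $\eta-\mu z$ contribution to $\Lambda$ and $\Phi$ strictly positive, so that every $\beta\cdot(\eta-\mu z/c)$-term is either capped at $0$ by the minimum with $0$, or contributes $+\infty$ which again evaluates to $0$ after the same minimum.

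The trivial range constraints \eqref{eq:valid-trivial} and \eqref{eq:valid-R} are immediate from the definitions $\gamma=1-\eps'$, $z=d$, $\eta=\eta_0+\sqrt{\eps'}$, and $R=1/(\eps')^2$, using $\eps'\lls\mpar$ for $R\ge 2$ and $\xi\ggs\eps'$ for $R\le(\log\log\xi)^2/4$. For \eqref{eq:valid-lambda-phi}, with $z=d$ we have $\alpha(d-z)=\infty\cdot 0 = 0$, and since $\eta-\mu z=\sqrt{\eps'}>0$ the minimum term $0\wedge\beta(\eta-\mu z)$ equals $0$; thus $\Lambda(\eta,z) = 2d(1-\eps')-d(\tau-1) = d(3-\tau-2\eps')>0$ since $\tau<3$ and $\eps'\lls\mpar$. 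Similarly $\Phi(\eta,z) = [d(1-\eps')\wedge d/2]+[0\wedge\beta(\mu d/2+\sqrt{\eps'})] = d/2>0$.

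The quantitative bounds \eqref{eq:valid-rounds} and \eqref{eq:valid-low-cost} both reduce to checking that $\gamma^{R-1}$ is small enough but not too small. Writing $\gamma^{R-1}=(1-\eps')^{1/(\eps')^2-1}$, for $\eps'\lls 1$ this is of order $\exp(-1/\eps'\cdot(1+O(\eps')))$, so it is exponentially small in $1/\eps'$ but a constant in $\xi$. The upper bound in \eqref{eq:valid-rounds} then becomes $d\gamma^{R-1}\log\xi\le\eps\log\xi-\log\log\log\xi$, which holds for $\eps'\lls\eps$ and $\xi\ggs\eps,\eps'$; the lower bound becomes $d\gamma^{R-1}\log\xi\ge(\log^{*3}\xi)^2$, which holds once $\xi$ is large enough relative to $\eps'$ (here $\log\xi$ eventually dwarfs any constant multiple of $(\log^{*3}\xi)^2$). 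For \eqref{eq:valid-low-cost}, the factor $2^R=2^{1/(\eps')^2}$ is a constant in $\xi$, and $\olw^{4\mu}\xi^\eta=\xi^{2\mu d\gamma^{R-1}+\eta_0+\sqrt{\eps'}}$, so after choosing $\eps'$ small enough relative to $\eps$ (and $\xi$ large enough to absorb the $2^R/\log\log\xi$ factor) the total exponent of $\xi$ stays below $\eta_0+\eps$, as required by $KA=\xi^{\eta_0+\eps}$.

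The main obstacle is really just the bookkeeping in \eqref{eq:valid-rounds}: the window for $\olw^2=\xi^{d\gamma^{R-1}}$ is very narrow in absolute terms, so one has to be careful to pick $R$ large enough (making $\gamma^{R-1}$ small and so landing $\olw^2$ below $\xi^{\eps}/\log\log\xi$) while still keeping $\olw^2$ above the doubly-exponential lower threshold $e^{(\log^{*3}\xi)^2}$. The choice $R=1/(\eps')^2$ achieves this cleanly once $\xi\ggs\eps,\eps'$, and I would handle it by recording the explicit bounds $\exp({-}2/\eps')\le\gamma^{R-1}\le \exp({-}1/(2\eps'))$ (valid for $\eps'\lls 1$) and then plugging into both sides of \eqref{eq:valid-rounds} and \eqref{eq:valid-low-cost}.
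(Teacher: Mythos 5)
Your proposal is correct and follows essentially the same route as the paper's proof: the same evaluation of $\Lambda(\eta,z)=d(3-\tau-2\eps')$ and $\Phi(\eta,z)=d/2$ using the $\infty\cdot 0=0$ convention and the fact that $\eta-\mu z=\sqrt{\eps'}>0$, and the same explicit sandwich $\gamma^{R-1}\in[e^{-2/\eps'},e^{-1/(2\eps')}]$ to verify \eqref{eq:valid-rounds} and \eqref{eq:valid-low-cost}. No gaps.
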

\begin{proof}
    Recall from~\eqref{eq:alpha-beta-infty-definitions} that when $\alpha=\beta=\infty$, the values $\mu_{\mathrm{log}} = 0$, $\mu_{\mathrm{pol}} = 1/d$. 
    We check the requirements in Definition \ref{def:valid} one-by-one. Both   \eqref{eq:valid-trivial} and~\eqref{eq:valid-R} are immediate. Since $\eps' < 1/2$, it holds that $\gamma \in [e^{-2\eps'}, e^{-\eps'}]$; thus $\gamma^{R-1} \in [e^{-2/\eps'}, e^{-1/(2\eps')}]$ by the choice of $R$ in \eqref{eq:choices-for-eta-inf-inf}. Since $\xi\ggs\eps'$ and $\eps'\lls\eps, \mpar$, it follows that $\xi^{\gamma^{R-1}d} \in [e^{(\log^{*3}\xi)^2}, \xi^\eps/\log\log\xi]$ as required by \eqref{eq:valid-rounds}. Moreover, using that $\olw=\xi^{\gamma^{R-1}2/d}$ we estimate  $2^R\olw^{4\mu}\xi^\eta=2^R\xi^{\eta+2\mu d\gamma^{R-1}} \le \xi^{\eps/3} \cdot \xi^{\eta_0} \cdot \xi^{\eps/3} \le \xi^{\eta_0+\eps}/\log\log \xi$ and \eqref{eq:valid-low-cost} holds. 
    It remains to prove~\eqref{eq:valid-lambda-phi}. Using the formula in ~\eqref{eq:Lambda-def} with $\gamma, \eta,z$ as in \eqref{eq:choices-for-eta-inf-inf}, and that $\mu\le \mu_{\mathrm{pol}}$,
    \begin{align*}
        \Lambda(\eta, z) &= 2d\gamma-\alpha(d-z)-z(\tau-1)+\big(0\wedge\beta(\eta-\mu z)\big)\\
        &= 2d(1-\eps') - \infty\cdot 0 - d(\tau-1) + (0 \wedge \infty) = d(3-\tau-2\eps');
    \end{align*}
    since $\tau < 3$ and $\eps'\lls\mpar$, $\Lambda(\eta,z)>0$ as required. Finally, using the formula in~\eqref{eq:Phi-def} and that $\gamma\wedge 1/2=1/2$, we analogously obtain that
    \[
        \Phi(\eta, z) = \Big[d\gamma \wedge \frac{z}{2}\Big] + \Big[0 \wedge \beta\Big(\eta - \frac{\mu z}{2}\Big)\Big] = d(\gamma \wedge 1/2) + (0 \wedge \infty(d\mu/2 + \sqrt{\eps'})) = d/2 > 0, 
    \]
    so~\eqref{eq:valid-lambda-phi} follows. Hence, all criteria in Def.~\ref{def:valid} are satisfied.
\end{proof}

When at least one of $\alpha, \beta$ is non-infinite, we can find two possible optimisers: one when  $\mu<\mu_{\mu_{\mathrm{pol,\alpha}}}$ and one when $\mu<\mu_{\mathrm{pol, \beta}}$ hold in \eqref{eq:mu_pol_log}. We treat the two cases separately.
Recall $\mu_{\mathrm{pol},\beta}=1/d+(3-
\tau)/\beta$ and let $\eta_\beta:=d(\mu-\mu_{\log})$, the first term in the second row of \eqref{eq:eta_0}.
\begin{claim}\label{claim:polynomial-small-mu}
    Consider the reduced Setting~\ref{set:hierarchy-common}, and fix $\eps > 0$. When $\alpha > 2$, $\mu\in(\mu_{\mathrm{log}},\mu_{\mathrm{pol},\beta}]$, then the following setting is $(\xi^{\eta_\beta},\xi^\eps)$-valid for $\eps'\lls \eps,\mpar$ (with $1/(\eps')^2$ an integer) and $\xi\ggs \eps,\eps',\mpar$:
\begin{equation}\label{eq:choices-for-eta-mu}
        \gamma := 1 - \eps';\qquad z := d;\qquad \eta := \eta_\beta + \sqrt{\eps'};\qquad R := 1/(\eps')^2.
    \end{equation}
\end{claim}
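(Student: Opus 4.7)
The strategy is to mirror the verification in Claim~\ref{claim:polynomial-infinite}, checking the five conditions \eqref{eq:valid-trivial}--\eqref{eq:valid-lambda-phi} of Definition~\ref{def:valid} one-by-one with the assignment in \eqref{eq:choices-for-eta-mu}. Since $\mu \in (\mu_{\log}, \mu_{\mathrm{pol},\beta}]$, we have $\eta_\beta = d(\mu - \mu_{\log}) \in (0,1]$, and we may assume $\sqrt{\eps'}$ is small relative to $\eps,\mpar$. Thus \eqref{eq:valid-trivial} and \eqref{eq:valid-R} are immediate: $\gamma \in (1/2, 1)$, $z=d$, $\eta \in (0, 1+\sqrt{\eps'}]$, and $R = 1/(\eps')^2$ is an integer in $[2, (\log\log\xi)^2/4]$ once $\xi \ggs \eps'$.

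For the two quantitative conditions \eqref{eq:valid-rounds} and \eqref{eq:valid-low-cost}, I follow the computation already used in Claim~\ref{claim:polynomial-infinite} verbatim. Since $\gamma = 1-\eps' \in [e^{-2\eps'}, e^{-\eps'}]$ for $\eps'<1/2$, one gets $\gamma^{R-1} \in [e^{-2/\eps'}, e^{-1/(2\eps')}]$, and hence $\olw^2 = \xi^{\gamma^{R-1}d} \in [e^{(\log^{*3}\xi)^2}, \xi^{\eps}/\log\log\xi]$ when $\xi \ggs \eps, \eps'$. For \eqref{eq:valid-low-cost}, the factors $2^R$ (a constant depending only on $\eps'$), $\olw^{4\mu} = \xi^{2\mu d\gamma^{R-1}}$ (subpolynomial in $\xi$ since $\gamma^{R-1}$ is), and $\xi^{\sqrt{\eps'}}$ are each bounded by $\xi^{\eps/4}$; combining with $\xi^{\eta_\beta}$ yields the required bound $\xi^{\eta_\beta + \eps}/\log\log\xi$ for $\xi$ large, since $KA = \xi^{\eta_\beta}\cdot \xi^\eps$.

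The main step, and the only one with genuine algebraic content, is to verify \eqref{eq:valid-lambda-phi}. Substituting $z=d$, $\gamma=1-\eps'$, $\eta = \eta_\beta + \sqrt{\eps'}$, and using $\eta_\beta - \mu d = -d\mu_{\log} = -d(3-\tau)/\beta$, the formula~\eqref{eq:Lambda-def} becomes
\[
   \Lambda(\eta,d) = 2d(1-\eps') - \alpha\cdot 0 - d(\tau-1) + \bigl(0 \wedge \bigl(-d(3-\tau) + \beta\sqrt{\eps'}\bigr)\bigr).
\]
For $\eps'$ small relative to $\mpar$ the bracket is negative, so it equals $-d(3-\tau)+\beta\sqrt{\eps'}$, and the first three summands cancel exactly, giving $\Lambda(\eta,d) = \sqrt{\eps'}(\beta - 2d\sqrt{\eps'})>0$ as required. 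This is precisely why the definition of $\eta_\beta$ is the minimal exponent producing nonvacuous bridging edges at weight scale $\xi^{d/2}$: $\eta_\beta$ makes $\Lambda$ vanish at $\gamma=1$, and the perturbation $\sqrt{\eps'}$ in $\eta$ beats the $2d\eps'$ loss from $\gamma = 1-\eps'$. For $\Phi$, substituting into~\eqref{eq:Phi-def} gives $d\gamma \wedge z/2 = d/2$ (since $\gamma\ge 1/2$), so $\Phi \ge d/2 + [0\wedge\beta(\eta - \mu d/2)]$; if the bracket is nonnegative then $\Phi = d/2 > 0$, and if negative then $\Phi = d(2\tau - 5 + \beta\mu)/2 + \beta\sqrt{\eps'}$, which is positive since $\mu > \mu_{\log} = (3-\tau)/\beta$ forces $2\tau - 5 + \beta\mu > \tau - 2 > 0$.

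I expect the only real obstacle to be bookkeeping in the $\Lambda$ computation: one has to recognise that the choice of $\eta = \eta_\beta + \sqrt{\eps'}$ is \emph{tight} when $\gamma\to 1$, and then confirm that the perturbation of order $\sqrt{\eps'}$ dominates the linear-in-$\eps'$ correction from $\gamma = 1-\eps'$. The $\Phi$ check is a small but necessary extra computation which uses $\tau > 2$ and $\mu > \mu_{\log}$ in an essential way, but no deeper input is required beyond what appears in the $\alpha=\beta=\infty$ case of Claim~\ref{claim:polynomial-infinite}.
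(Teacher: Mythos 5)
Your proof follows the same route as the paper's: the conditions \eqref{eq:valid-trivial}--\eqref{eq:valid-low-cost} are dispatched by the identical computations used for Claim~\ref{claim:polynomial-infinite}, and your $\Lambda$ and $\Phi$ calculations for finite $\beta$ agree with the paper's (your $\Lambda(\eta,d)=\sqrt{\eps'}(\beta-2d\sqrt{\eps'})$ is the paper's $\beta\sqrt{\eps'}-2d\eps'$, and your $\Phi$ bound reduces to the paper's $(\tau-2)d/2>0$).

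There is, however, one case you have not covered. The reduced Setting~\ref{set:hierarchy-common} allows $\beta=\infty$, and the hypothesis $\mu\in(\mu_{\log},\mu_{\mathrm{pol},\beta}]$ is non-vacuous in that case, since then $\mu_{\log}=0$ and $\mu_{\mathrm{pol},\beta}=1/d$ by \eqref{eq:beta-infty-definitions}. When $\beta=\infty$ (and $\alpha<\infty$, the doubly-infinite case being delegated to Claim~\ref{claim:polynomial-infinite}), your assertion that ``the bracket is negative'' fails: there $\eta_\beta=d\mu$, so $\eta-\mu z=\sqrt{\eps'}>0$ and the minimum $0\wedge\beta(\eta-\mu z)$ is attained at $0$, not at the second term. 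The three summands therefore do not cancel; instead one gets $\Lambda(\eta,d)=2d(1-\eps')-d(\tau-1)=d(3-\tau-2\eps')>0$ using $\tau<3$, and similarly $\Phi=d/2>0$ since $\eta-\mu d/2>0$. The conclusion survives, but via the other branch of the minimum, and the paper spells this out explicitly. Your proof as written only establishes the claim for $\beta<\infty$; adding the two-line computation for $\beta=\infty$ closes the gap.
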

\begin{proof}
 The $\alpha=\beta=\infty$ case was treated in Claim~\ref{claim:polynomial-infinite} with \eqref{eq:choices-for-eta-inf-inf} coinciding with \eqref{eq:choices-for-eta-mu}. We treat the cases when at least one of $\alpha,\beta$ is finite. 
    We check the requirements in Definition \ref{def:valid} one-by-one.
    Both~\eqref{eq:valid-trivial} and~\eqref{eq:valid-R} are immediate. Since $\eps'$ is small we may choose it $\eps' < 1/2$, implying that $\gamma \in [e^{-2\eps'}, e^{-\eps'}]$; thus $\gamma^{R-1} \in [e^{-2/\eps'}, e^{-1/(2\eps')}]$. Since $\xi\ggs\eps'$ and $\eps'\lls\eps,\mpar$, it follows that $\xi^{\gamma^{R-1}d} \in [e^{(\log^{*3}\xi)^2}, \xi^\eps/\log\log\xi]$ as required by \eqref{eq:valid-rounds}. Moreover, for \eqref{eq:valid-low-cost} we use that $\olw=\xi^{\gamma^{R-1}2/d}$ and estimate $2^R\olw^{4\mu}\xi^\eta = 2^R\xi^{\eta+2\mu d\gamma^{R-1}} \le \xi^{\eps/3} \cdot \xi^{\eta_{\beta}} \cdot \xi^{\eps/3} \le \xi^{\eta_{\beta}+\eps}/\log\log \xi$ and so \eqref{eq:valid-low-cost} holds.  It remains to prove~\eqref{eq:valid-lambda-phi}. 

By their definition in \eqref{eq:choices-for-eta-mu}, $z=d$ and $\eta=\eta_\beta+\sqrt{\eps'}$ where $\eta_\beta=d(\mu-\mu_{\log})=d(\mu-(3-\tau)/\beta)$, we compute $\eta - \mu z = \sqrt{\eps'} - (3-\tau)d/\beta < 0$, since $\eps'\lls\mpar$. So, using the formula in \eqref{eq:Lambda-def} with $\gamma, \eta,z$ as in \eqref{eq:choices-for-eta-mu},
 \begin{align*}
        \Lambda(\eta, z) &= 2d\gamma-\alpha(d-z)-z(\tau-1)+\big(0\wedge\beta(\eta-\mu z)\big)\\
        &= 2d(1-\eps') - d(\tau-1) + \beta \sqrt{\eps'} - (3-\tau)d = \beta\sqrt{\eps'} - 2d\eps'.
    \end{align*}
    Since $\eps'\lls\mpar$, it follows that $\Lambda(\eta,z) > 0$ as required by~\eqref{eq:valid-lambda-phi}. 
    This computation also remains valid both formally and intuitively when $\alpha=\infty$ and $\beta<\infty$, since $z=d$ and $\alpha(d-d)=0$ reflects the fact that the edges we use appear with constant probability each. When $\alpha<\infty$ and $\beta=\infty$, $\mu_{\log}=0$ and $\mu_{\mathrm{pol},\beta}=1/d$, hence $\eta-\mu z=d \mu+\sqrt{\eps'} -\mu d=\sqrt{\eps'}$, so the minimum in $0\wedge \beta(\eta-\mu z)=0$. Hence when $\beta=\infty$, since $\gamma=1-\eps'$ and $\tau-1<2$,
    \begin{align}\label{eq:beta-comp-again}
        \Lambda(\eta, z) &= 2d\gamma-\alpha(d-d)-d(\tau-1)= d(2\gamma-(\tau-1))>0.
    \end{align} 
    Finally we treat $\Phi(\eta,z)>0$. When $\beta<\infty$, using the formula in~\eqref{eq:Phi-def} and that $\gamma\wedge 1/2=1/2$, with parameters in \eqref{eq:choices-for-eta-mu} and  $\eta=\eta_\beta+\sqrt{\eps'}=\mu d - \frac{(3-\tau)d}{\beta}+\sqrt{\eps'}$, we analogously obtain that
    \begin{align*}
        \Phi(\eta,z) &= \Big[d\gamma \wedge \frac{z}{2}\Big] +  \Big[0 \wedge \beta\Big(\eta - \frac{\mu z}{2}\Big)\Big] = \frac{d}{2} + \Big[0 \wedge \beta\Big(\sqrt{\eps'} + \frac{\mu d}{2} - \frac{(3-\tau)d}{\beta}\Big)\Big].
    \end{align*}
    In case the minimum on the rhs is at $0$, $\Phi(\eta,z)>0$ and so \eqref{eq:valid-lambda-phi} is satisfied. In case the minimum is at the other term, we use that $\mu > \mu_{\mathrm{log}} = (3-\tau)/\beta$, so $\mu d/2 > (3-\tau)d/(2\beta)$, so     \[
        \Phi(\eta,z) \ge \frac{d}{2} - \beta\cdot \frac{(3-\tau)d}{2\beta} = \frac{(\tau-2)d}{2} > 0.
    \]
    and so $\tau\in(2,3)$ ensures that~\eqref{eq:valid-lambda-phi} holds again. The computation remains valid when $\alpha=\infty$ since $\Phi$ does not depend on $\alpha$. When $\alpha<\infty$ and $\beta=\infty$, the computation simplifies, and $\eta-\mu z/2>0$ holds since already $\eta-\mu z>0$ see above \eqref{eq:beta-comp-again}. Hence in this case $\Phi(\eta,z)=d\gamma\wedge z/2= d/2>0$. 
    Hence, all criteria in Definition \ref{def:valid} are satisfied with the choice in \eqref{eq:choices-for-eta-mu}.
    \end{proof}
The next claim finds minimisers whenever $\mu<\mu_{\mathrm{pol,\alpha}}$. Recall that $\mu_{\mathrm{pol},\alpha}=\tfrac{\alpha-(\tau-1)}{d(\alpha-2)}$ from \eqref{eq:mu_pol_log} and let $\eta_\alpha:=\mu/\mu_{\mathrm{pol},\alpha}$, the second term in the second row of \eqref{eq:eta_0}.
\begin{claim}\label{claim:polynomial-large-mu}
    Consider the reduced Setting~\ref{set:hierarchy-common}, and fix $\eps > 0$. When $\alpha > 2$, $\mu\in(\mu_{\mathrm{log}}, \mu_{\mathrm{pol},\alpha}]$,  then the following setting is $(\xi^{\eta_\alpha},\xi^\eps)$-valid for $\eps'\lls\eps,\mpar$ (with $1/(\eps')^2$ and integer), and $\xi\ggs\eps,\eps',\mpar$:
    \begin{equation}\label{eq:choices-for-eta-alpha}
        \gamma := 1 - \eps';\qquad z := (\eta_\alpha + \sqrt{\eps'})/\mu;\qquad \eta := \eta_\alpha + \sqrt{\eps'};\qquad R := 1/(\eps')^2.
    \end{equation}
\end{claim}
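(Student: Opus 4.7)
The proof will proceed by verifying the five clauses of Definition~\ref{def:valid} in the order they appear, following the template of Claim~\ref{claim:polynomial-small-mu}. The routine items~\eqref{eq:valid-trivial}--\eqref{eq:valid-low-cost} go through essentially as in the previous claim: the inequalities $\gamma \in (0,1)$ and $\eta \ge 0$ are immediate from the definitions, and the bound $z \le d$ requires a small computation, namely that $z = 1/\mu_{\mathrm{pol},\alpha} + \sqrt{\eps'}/\mu$ and $\mu_{\mathrm{pol},\alpha} = (\alpha-(\tau-1))/(d(\alpha-2)) \ge 1/d + (3-\tau)/(d(\alpha-2))$ is strictly larger than $1/d$ for $\alpha < \infty$, $\tau < 3$, so the gap absorbs the $\sqrt{\eps'}/\mu$ term when $\eps' \lls \mpar$. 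The conditions on $R$ and $\olw$ follow exactly as in Claim~\ref{claim:polynomial-small-mu} from $\gamma^{R-1} \in [e^{-2/\eps'}, e^{-1/(2\eps')}]$ and $\eps' \lls \eps, \mpar$. For~\eqref{eq:valid-low-cost} with $KA = \xi^{\eta_\alpha + \eps}$, I estimate $2^R \olw^{4\mu}\xi^\eta = 2^R \xi^{\eta + 2\mu d\gamma^{R-1}} \le \xi^{\eps/3} \cdot \xi^{\eta_\alpha + \sqrt{\eps'}} \cdot \xi^{\eps/3} \le \xi^{\eta_\alpha + \eps}/\log\log\xi$ for $\xi \ggs \eps, \eps'$.

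The point where the present choice genuinely differs from Claim~\ref{claim:polynomial-small-mu} is the verification of~\eqref{eq:valid-lambda-phi}, and this is where the specific value $z := \eta/\mu$ has been engineered. The plan is to observe the key identity $\eta - \mu z = 0$, which kills the third bracket of $\Lambda(\eta, z)$ in~\eqref{eq:Lambda-def} and reduces $\eta - \mu z/2$ to $\mu z/2 \ge 0$ in~\eqref{eq:Phi-def}. Using $1/\mu_{\mathrm{pol},\alpha} = d(\alpha-2)/(\alpha-(\tau-1))$, the substitution $z = (\eta_\alpha + \sqrt{\eps'})/\mu$ gives
\begin{align*}
\Lambda(\eta, z) &= 2d\gamma - \alpha(d - z) - z(\tau-1) = 2d(1-\eps') - \alpha d + z(\alpha - (\tau-1))\\
&= 2d - 2d\eps' - \alpha d + d(\alpha - 2) + \frac{\sqrt{\eps'}}{\mu}(\alpha - (\tau-1)) = -2d\eps' + \frac{\sqrt{\eps'}}{\mu}(\alpha - (\tau-1)),
\end{align*}
which is strictly positive for $\eps' \lls \mpar$ because $\sqrt{\eps'} \gg \eps'$ and $\alpha > \tau - 1$.

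Finally, for $\Phi(\eta, z)$: since $\eta = \mu z$, the bracket $0 \wedge \beta(\eta - \mu z/2) = 0 \wedge \beta \mu z /2 = 0$, and thus $\Phi(\eta, z) = d\gamma \wedge z/2$. Since $\gamma = 1 - \eps'$ and $z \ge \eta_\alpha/\mu = 1/\mu_{\mathrm{pol},\alpha} > 0$, both arguments of the minimum are strictly positive, so $\Phi(\eta, z) > 0$, giving~\eqref{eq:valid-lambda-phi} and completing the verification. The main (mild) obstacle is just keeping track that the claim tacitly assumes $\alpha < \infty$, since otherwise $z = d + \sqrt{\eps'}/\mu > d$ would violate~\eqref{eq:valid-trivial}; this is harmless because the $\alpha = \infty$ case of Corollary~\ref{cor:computations-polynomial} is already covered by Claims~\ref{claim:polynomial-infinite} and~\ref{claim:polynomial-small-mu}.
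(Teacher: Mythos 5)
Your proposal is correct and takes essentially the same route as the paper's proof: the routine clauses \eqref{eq:valid-trivial}--\eqref{eq:valid-low-cost} are checked by the same reduction to Claim~\ref{claim:polynomial-small-mu} (with $z\le d$ following from $1/\mu_{\mathrm{pol},\alpha}<d$), and the heart of the argument is the identical cancellation $d(2-\alpha)+z\big(\alpha-(\tau-1)\big)=\sqrt{\eps'}\,(\alpha-(\tau-1))/\mu$ giving $\Lambda(\eta,z)=\sqrt{\eps'}(\alpha-(\tau-1))/\mu-2d\eps'>0$, together with the observation that $\eta=\mu z$ annihilates both $\beta$-brackets so that $\Phi(\eta,z)=z/2>0$. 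The paper also opens by discarding $\alpha=\infty$ (arguing the hypothesis on $\mu$ is then vacuous), so your closing caveat mirrors its treatment.
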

\begin{proof}
We first show that $\alpha=\infty$, $\beta<\infty$ is not possible here. From \eqref{eq:alpha-infty-definitions} it follows that $\mu_{\mathrm{pol,\alpha}}=1/d$, while $\mu_{\log}=1/d+(3-\tau)/\beta$, so for all $\beta>0$ the strict inequality $\mu_{\log}> \mu_{\mathrm{pol},\alpha}$ holds and hence the interval for $\mu$ is empty when $\alpha=\infty$. Hence $\alpha<\infty$ is necessary for the conditions to be satisfied. 
     We check the requirements of Definition \ref{def:valid} one-by-one. Using the formula for $\mu_{\mathrm{pol},\alpha}$ and $\tau <3$, we compute that $\eta_\alpha = \mu d(\alpha-2)/(\alpha-(\tau-1)) < \mu d$. Hence, since $\eps'\lls\mpar$ for all sufficiently small $\eps'$ the inequality $z\le d$ holds as required by~\eqref{eq:valid-trivial}. The other conditions of~\eqref{eq:valid-trivial} and~\eqref{eq:valid-R} are immediate. 
    Since $\gamma$ and $\eta$ is the same here and in Claim \ref{claim:polynomial-small-mu}, \eqref{eq:valid-rounds} and \eqref{eq:valid-low-cost} hold by the same argument as in Claim \ref{claim:polynomial-small-mu}. 
 It remains to prove~\eqref{eq:valid-lambda-phi}. Using the formula in ~\eqref{eq:Lambda-def} with $\gamma, \eta,z$ as in \eqref{eq:choices-for-eta-alpha}, which implies that $\eta-\mu z=0$,
    \begin{equation}
    \begin{aligned}
        \Lambda(\eta, z) &= 2d\gamma-\alpha(d-z)-z(\tau-1)+\big(0\wedge\beta(\eta-\mu z)\big)\\
        &= d(2-\alpha) - 2\eps'd + z(\alpha-(\tau-1)) + 0.
\end{aligned}
\end{equation}
This also remains valid both formally and intuitively when $\beta=\infty$ (with the convention that $\infty\cdot 0=0)$, since $\eta-\mu z=0$ reflects the fact that the random variable $L$ on the edge we use is constant order. 
We substitute $z=(\eta_\alpha+\sqrt{\eps'})/\mu$ from \eqref{eq:choices-for-eta-alpha} and $\eta_\alpha=\mu d(\alpha-2)/(\alpha-(\tau-1))$:
\begin{align*}        
     \Lambda(\eta, z)   &= d(2-\alpha) + \eta_\alpha(\alpha-(\tau-1))/\mu + \sqrt{\eps'}(\alpha-(\tau-1))/\mu - 2\eps' d\\
        &= \sqrt{\eps'}(\alpha-(\tau-1))/\mu - 2\eps'd,
    \end{align*}
    since the first two terms in the first row cancelled each other. 
    Since $\eps'\lls\mpar$, $\alpha>2$ and $\tau\in(2,3)$, $\alpha-(\tau-1)$ is positive, and so is $\mu>0$, so $\Lambda(\eta,z)>0$ as required by~\eqref{eq:valid-lambda-phi}. Finally, by~\eqref{eq:Phi-def} and since $z\le d$,
    \begin{align*}
        \Phi(\eta,z) &= \Big[d\gamma \wedge \frac{z}{2}\Big] +  \Big[0 \wedge \beta\Big(\eta - \frac{\mu z}{2}\Big)\Big] = \frac{z}{2} + 0 > 0,
    \end{align*}
    and so~\eqref{eq:valid-lambda-phi} holds. This also remains true for $\beta=\infty$ since the minimum is at $0$, meaning we use edges with constant value $L$. Hence, all criteria in Definition \ref{def:valid} are satisfied with the choice in \eqref{eq:choices-for-eta-alpha}.
\end{proof}

We are ready to prove Corollary \ref{cor:computations-polynomial}:
\ComputationsPolynomial*

\begin{proof}
Claim \ref{claim:polynomial-infinite} finds a setting of parameters that is $(|x|^{\eta_0}, |x|^{\eps})$-valid whenever $\alpha=\beta=\infty$ and $\mu\le \mu_{\mathrm{pol}}=1/d$. When at least one of $\alpha,\beta$ is non-infinite, Claims
\ref{claim:polynomial-small-mu} and \ref{claim:polynomial-large-mu} respectively find a setting of parameters that is $(|x|^{\eta_\beta}, |x|^{\eps})$-valid whenever $\mu\le \mu_{\mathrm{pol,\beta}}$ and one that is $(|x|^{\eta_\alpha}, |x|^{\eps})$-valid whenever $\mu\le \mu_{\mathrm{pol,\alpha}}$. By noting that $\eta_{\beta}\le 1$ exactly when $\mu<\mu_{\mathrm{pol}, \beta}$ and $\eta_{\alpha}\le 1$ exactly when $\mu\le \mu_{\mathrm{pol}, \alpha}$, we obtain that whenever $\mu\le\max\{\mu_{\mathrm{pol}, \alpha},\mu_{\mathrm{pol}, \beta}\}$, the two claims together find a parameter setting that is $(|x|^{\min\{\eta_\alpha, \eta_\beta\}}, |x|^{\eps})$ valid. Since $\eta_0=\min\{\eta_\alpha, \eta_\beta\}$ in \eqref{eq:eta_0}, the proof from here is immediate by applying Lemma~\ref{lem:valid-works},
where the required bounds on $\olw$ in~\eqref{eq:weight-crit-cor2} follow from~\eqref{eq:valid-rounds}.
\end{proof}

\subsection{Connecting the endpoints $0$ and $x$: proof of Claim~\ref{claim:join-Cm}}\label{app:proof-join-Cm}

In this section we give the proof of Claim~\ref{claim:join-Cm}, which we restate for convenience.

\ClaimJoinCm*

\begin{proof}
   Fix $z\in \R^d$ and let $\calA_1:=\calA_{\mathrm{dense}}(\calH, \calV_M, r,z)$ in \eqref{eq:a-dense}, so that $\pr(\neg \calA_1\mid \calF_{0,x})\le q/10$ by hypothesis.  
    Considering the definition of $\calA_{\mathrm{dense}}$ in~\eqref{eq:a-dense}, let $\calA_2$ be the event that for all $y \in B_r(z)$, $|B_{r^{1/3}}(y)\cap \calV_M| \ge r^{d/4}$. Choose fixed points $x_1,\dots,x_{\lceil r^d\rceil} \in B_r(z)$ such that $\{B_{r^{1/3}/2}(x_i)\colon i \le \lceil r^d\rceil\}$ covers $B_r(z)$. 
    For all $y$, the ball $B_{r^{1/3}}(y)$ must contain at least one ball $B_{r^{1/3}/2}(x_i)$, so if in each ball $B_{r^{1/3}/2}(x_i)$ we find at least $r^{d/4}$ vertices from $\calH\subseteq \calV_M$ then the event $\calA_2$ holds. Let  $c_d$ denote the volume of a unit-radius $d$-dimensional ball. 
By \eqref{eq:power_law}, in IGIRG $|B_{r^{1/3}/2}(x_i) \cap \calV_M|$ is a Poisson variable with mean 
\[
    2^{-d}c_d r^{d/3} \Big(\frac{\ell(M)}{M^{\tau-1}} - \frac{\ell(2M)}{(2M)^{\tau-1}}\Big)
    \ge 2r^{d/3}M^{-3(\tau-1)/2} 
    \ge 2r^{d/4}
\] 
(also conditioned on $\calF_{0,x}$), where the first inequality holds because $M \ggs \mpar$ and the second inequality holds since $r^{d/12}=w^{1/4} \ge M^{2(\tau-1)}$ and $M\ge1$. Similarly, for SFP it is a binomial variable with mean greater than $2r^{d/4}$; in either case, the Chernoff bound of Theorem~\ref{thm:chernoff} applies, and since $r\ggs q$ we have
    \begin{equation}\label{eq:njcm-a2}
    \begin{aligned}
        \pr(\calA_2\mid\calF_{0,x}) &= \pr\Big(\forall y \in B_r(z): |B_{r^{1/3}}(y)\cap \calV_M| \ge r^{d/4}\mid \calF_{0,x}\Big) \\ 
        &\ge \pr(\forall i\colon |B_{r^{1/3}/2}(x_i) \cap \calV_M| \ge r^{d/4}\mid \calF_{0,x}) \ge 1-\lceil r^d\rceil \cdot e^{-r^{d/4}/4} \ge 1-q/30.
        \end{aligned}
    \end{equation}
    Let $\calA_3$ be the event that $B_r(z)$ contains at most $2(c_dr^d+2)$ vertices. In SFP, $\pr(\calA_3\mid\calF_{0,x}) = 1$; in IGIRG, Theorem~\ref{thm:chernoff} applies. In both cases, using $r\ggs q, \mpar$,
    \begin{equation}\label{eq:njcm-a3}
        \pr(\calA_3\mid \calF_{0,x})=\pr(|B_r(z) \cap \calV|\le 2(c_dr^d+2)\mid  \calF_{0,x}) \ge 1-e^{-c_dr^d/3} \ge 1-q/30.
    \end{equation}
    Since $\pr(\neg \calA_1\mid \calF_{0,x}) \le q/10$, a union bound with \eqref{eq:njcm-a2} and \eqref{eq:njcm-a3} yields $\pr(\calA_1\cap \calA_2\cap \calA_3 \mid \calF_{0,x}) \ge 1-q/6$.  We abbreviate $\pr(\cdot \mid V,w_V,E_M)$ when we condition on the event that $\widetilde\calV = (V,w_V)$ and $\calE_M = E_M$. The events $\calF_{0,x},\calA_2,\calA_3$, and also the set $\calH$ and thus $\calA_1$ are all determined by $(V,w_V,E_M)$.
Let us call the realisation $(V,w_V,E_M)$ \emph{good} if the event $\calA_1\cap\calA_2\cap\calA_3\cap\calF_{0,x}$ holds. Then
    \begin{equation}\label{eq:njcm-goal}
        \pr(\calA_{\mathrm{down}}(w,z)\mid\calF_{0,x}) \ge 1 - q/6 - \max_{(V,w_V,E_M)\text{ good}}\pr\big(\neg\calA_{\mathrm{down}}(w,z)\mid V,w_V,E_M\big).
    \end{equation}
    Fix a good realisation $(V,w_V,E_M)$. Following $\calA_{\mathrm{down}}$, let $y_1,\dots,y_k$ be the (fixed) vertices in $B_r(z)$ with weights in $[w,4w]$, and for each $i \in [k]$ let $a_{1}^{\scriptscriptstyle{(i)}},\dots,a_{\ell_i}^{\scriptscriptstyle{(i)}}$ be the (fixed) vertices in $B_{r^{1/3}}(y_i) \cap \calH$. Thus, by definition of $\calA_{\mathrm{down}}$,
    \begin{align}
    \begin{split}\label{eq:njcm-all-edges}
        &\pr\big(\neg\calA_{\mathrm{down}}(w,z)\mid  V,w_V,E_M\big) \\
        &\qquad =\pr\big(\exists i\in[k]\colon\forall j\colon y_ia_{j}^{\scriptscriptstyle{(i)}}\notin \calE(G) \mbox{ or }\calC(y_ia_{j}^{\scriptscriptstyle{(i)}})\! >\! w^{2\mu}\mid V,w_V,E_M\big).
    \end{split}
    \end{align}
    Conditioned on $(V,w_V,E_M)$, the edges $y_ia_j^{\scriptscriptstyle{(i)}}$ are present independently since $w_{y_i} \ge w \ge M^{8(\tau-1)} > 2M$ since $\tau > 2$ and $M\ggs\mpar$. Since $w_{y_i}\in[w, 4w]$, $w_{a_j^{\scriptscriptstyle{(i)}}}$ $\in[M, 2M]$, and $|y_i-a_j^{\scriptscriptstyle{(i)}}|\le r^{1/3}=w^{1/d}$, we get using~\eqref{eq:connection_prob} and \eqref{eq:cost} that for all $i$ and $j$,
    \begin{align*}
        &\pr(y_ia_{j}^{\scriptscriptstyle{(i)}} \notin\calE(G) \mbox{ or }\calC(y_ia_{j}^{\scriptscriptstyle{(i)}}) > w^{2\mu}\mid V,w_V,E_M) 
        \le 1 - \underline{c}\big(1 \wedge \tfrac{wM}{w}\big)^\alpha + \pr\big(L> \tfrac{w^{2\mu}}{(8wM)^{\mu}}\big) \le 1 - \tfrac{\underline{c}}{2},
    \end{align*}
    where the last inequality holds because $w^{2\mu}/(8wM)^{\mu} \ge M^{8\mu(\tau-1)-\mu}/8^\mu$ tends to infinity with $M$ and  $M\ggs\mpar$. This computation also holds for $\alpha=\infty$ or $\beta=\infty$. Conditioned on a good realisation $(V,w_V,E_M)$, $\calA_1\cap \calA_2\cap\calA_3$ occurs, so for each $i$, the number of vertices $a_j^{\scriptscriptstyle{(i)}}$ is $\ell_i \ge r^{d/4}$, and the number of vertices $y_i$ is $k\le 2c_dr^d+4$. By independence across $j$, \eqref{eq:njcm-all-edges}, and a union bound,
    \begin{align*}
    \pr\big(\neg\calA_{\mathrm{down}}(w,z) \mid V,w_V,E_M\big) &\le \sum_{i\le k}\pr\big(\forall j\colon y_ia_{j}^{\scriptscriptstyle{(i)}} \notin\calE \mbox{ or }\calC( y_ia_{j}^{\scriptscriptstyle{(i)}}) > w^{2\mu}\mid V,w_V,E_M\big) \\
    &\le \sum_{i\le k}(1-\underline{c}/2)^{\ell_i} \le (2c_dr^d+4)e^{-r^{d/4}\underline{c}/2} \le q/6,
    \end{align*}
    where the last inequality holds since $r=w^{3/d}\ggs q,\mpar$. The claim then follows by~\eqref{eq:njcm-goal}.
\end{proof}

\subsection{Connecting the endpoints when \texorpdfstring{$d=1$}{d=1}}\label{app:1d-endpoints}

In this section we provide the missing proofs from Section~\ref{sec:endpoints} for $d=1$. We first show a simple variant of~\cite[Lemma~4.3]{komjathy2020stopping}; this lemma says that any suitably high-weight vertex is very likely to lie at the start of an infinite weight-increasing path.

\begin{lemma}\label{lem:1d-infinite-path}
    Consider Setting~\ref{set:joining-common} with $d=1$. Let $\theta>1$ and $\delta\in(0,1)$ with $\delta,\theta-1\lls\mpar$, and let $M_0 \ggs \theta,\delta,\mpar$. Let $z \in \R$ (or $\Z$ for SFP), and for all $i\ge 0$ define, $M_i := M_0^{\theta^i}$, $R_i := M_i^{(1+\delta)(\tau-1)}$, and $I_i := [z, z+R_i]$. Let $\calA_{\mathrm{inc}}(M_0,\theta,z)$ be the event that there is an infinite path $\pi_z = z_0z_1\dots $ in $G$ starting at $z=:z_0$ such that for all $i \ge 1$ we have $z_i \in (I_i\setminus I_{i-1}) \cap \calV_{M_i}$. Then 
    \begin{equation}\label{eq:weight-increasing-11}
        \pr(\neg\calA_{\mathrm{inc}}(M_0,\theta,z) \mid z \in \calV_{M_0}) \le \exp(-M_0^{\delta(\tau-1)/4}).
    \end{equation}
   The bound remains true if we additionally condition on $y\in \calV$ for any $y\in \R\setminus\{z\}$ for GIRG.
\end{lemma}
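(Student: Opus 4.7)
The plan is to construct the path $\pi_z = z_0 z_1 z_2 \dots$ inductively, showing that at each step $i \ge 0$, given $z_0, \dots, z_i$ satisfying the required constraints (i.e.\ $z_j \in (I_j\setminus I_{j-1}) \cap \calV_{M_j}$ and $z_{j-1}z_j \in \calE$ for $j \le i$), the next vertex $z_{i+1}$ exists with conditional probability at least $1 - \exp(-M_i^{\delta(\tau-1)/2})$. A union bound over $i \ge 0$ then yields~\eqref{eq:weight-increasing-11}.

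For the inductive step, let $A_i := (I_{i+1}\setminus I_i) \cap \calV_{M_{i+1}}$ be the set of candidate vertices for $z_{i+1}$. Since $\theta - 1, \delta \lls \mpar$, we have $R_{i+1} \ge 2 R_i$, so $|I_{i+1}\setminus I_i| \ge R_{i+1}/2$. The expected number of candidates is therefore at least
\[
  \tfrac12 R_{i+1}\bigl(\ell(M_{i+1})M_{i+1}^{-(\tau-1)} - \ell(2M_{i+1})(2M_{i+1})^{-(\tau-1)}\bigr) \ge c_\tau M_{i+1}^{\delta(\tau-1)}\ell(M_{i+1}),
\]
for some constant $c_\tau > 0$, which, by Potter's bound and $M_0 \ggs \theta,\delta,\mpar$, dominates $M_{i+1}^{\delta(\tau-1)(1-o(1))}$. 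Note that conditioning on $z_0,\dots,z_i$ (and on any fixed $y \in \calV$ for the last clause) only increases or decreases the vertex-set in $A_i$ by a constant number, which is negligible. A standard Chernoff bound (Theorem~\ref{thm:chernoff}) gives that $|A_i| \ge M_{i+1}^{\delta(\tau-1)/2}$ except with probability at most $\tfrac12\exp(-M_i^{\delta(\tau-1)/2})$; this works uniformly for both IGIRG (Poisson count) and SFP (binomial count).

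Next, condition on the positions and weights of the vertices in $A_i$ (revealed above without touching any edges incident to $z_i$ with endpoint of weight $\ge M_{i+1} > 2M_i$, since these weight classes are disjoint). For each $y \in A_i$,
\[
  \frac{w_{z_i}w_y}{|z_i - y|} \ge \frac{M_i M_{i+1}}{R_{i+1}} = M_i \cdot M_{i+1}^{1-(1+\delta)(\tau-1)} = M_i^{1 - \theta((1+\delta)(\tau-1)-1)} \ge 1,
\]
where the final inequality uses that $\theta((1+\delta)(\tau-1)-1) \le 1$ for $\theta - 1, \delta \lls \mpar$ (since $\tau \in (2,3)$ gives $\tau-2 < 1$). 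Hence by~\eqref{eq:connection_prob} the edge $z_i y$ is present in $\calE(G)$ with conditional probability at least $\underline c$, independently across $y \in A_i$. On the high-probability event $|A_i| \ge M_{i+1}^{\delta(\tau-1)/2}$, the probability that no such edge exists is at most
\[
  (1-\underline c)^{M_{i+1}^{\delta(\tau-1)/2}} \le \tfrac12\exp(-M_i^{\delta(\tau-1)/2}),
\]
for $M_0\ggs\mpar$. Choosing $z_{i+1}$ to be any vertex in $A_i$ adjacent to $z_i$ advances the induction.

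The main (mild) obstacle is handling the conditioning cleanly: we need independence of the edges $\{z_iy\colon y\in A_i\}$ from the revealed history $z_0,\dots,z_i$ (and possibly a fixed additional vertex $y^*$). This is resolved by the CIRG property (Definition~\ref{def:CIRG}): conditional on the vertex set and weights, edges are independent, and the construction reveals weights and positions only in the nested regions $I_0,I_1,\dots$, so each new edge-existence event is fresh. Summing the failure probabilities via a geometric series in $M_i = M_0^{\theta^i}$ yields a total failure probability of at most $\exp(-M_0^{\delta(\tau-1)/4})$, as claimed.
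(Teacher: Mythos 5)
Your proposal is correct and follows essentially the same route as the paper's proof: the same inductive construction of the weight-increasing path, a Chernoff bound on the number of candidate vertices in $(I_{i+1}\setminus I_i)\cap\calV_{M_{i+1}}$, the same verification that $M_iM_{i+1}/R_{i+1}\ge 1$ so that each candidate edge is present with probability at least $\underline{c}$, and the same geometric summation dominated by the $i=0$ term. The only differences are cosmetic (an index shift, and phrasing the independence of fresh edges via the CIRG property rather than by conditioning on the realisation of $G[I_{i-1}]$).
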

\begin{proof}
    The proof is very similar to~\cite[Lemma~4.3]{komjathy2020stopping}, which uses a similar construction but in more than one dimension and with less control over the weights. For all $j \ge 1$, let $\calA_{\mathrm{inc}}^j$ be the event that there is a path $\pi_z = z_0z_1\dots z_j$ in $G$ with $z_0 := z$ such that for all $i \in [j]$ we have $z_i \in (I_i\setminus I_{i-1})\cap \calV_{M_i}$. Let $\calA_{\mathrm{inc}}^0$ be the empty event. Then
    \begin{equation}\label{eq:1d-infinite-path-Ainci}
        \pr(\neg\calA_{\mathrm{inc}}(M_0,\theta,z) \mid z \in \calV_{M_0}) = \sum_{i=1}^\infty \pr(\neg\calA_{\mathrm{inc}}^i \mid \calA_{\mathrm{inc}}^{i-1} \mbox{ and } z \in \calV_{M_0}).
    \end{equation}
    We now bound each term in the sum of~\eqref{eq:1d-infinite-path-Ainci} above. Fix $i \ge 1$. Observe that $\calA_{\mathrm{inc}}^{i-1}$ only depends on $G[I_{i-1}]$. Let $G'$ be a possible value (realisation) of $G[I_{i-1}]$ which implies $\calA_{\mathrm{inc}}^{i-1}$. Then we can decompose the conditioning in \eqref{eq:1d-infinite-path-Ainci} by conditioning on events of the type $\calF_i := \{G[I_{i-1}] = G'\} \cap \{z\in \calV_{M_0}\}$ and later integrating over the possible realisations $G'$. Given $G'$ satisfying $\calA_{\mathrm{inc}}^{i-1}$, fix the vertices $z_0,\dots,z_{i-1}$ ensuring $\calA_{\mathrm{inc}}^{i-1}$. Let $\calA_{\mathrm{vert}}^i$ be the event that $|(I_i\setminus I_{i-1}) \cap \calV_{M_i}| \ge M_i^{\delta(\tau-1)/2}$; then
    \begin{equation}\label{eq:1d-infinite-path-union-1}
        \pr(\neg\calA_{\mathrm{inc}}^i \mid \calF_i) \le \pr(\neg\calA_{\mathrm{vert}}^i \mid \calF_i) + \pr(\neg\calA_{\mathrm{inc}}^i \mid \calA_{\mathrm{vert}}^i \cap \calF_i).
    \end{equation}
     By~\eqref{eq:power_law}, the number of vertices in $(I_i\setminus I_{i-1}) \cap \calV_{M_i}$ is either a Poisson variable (for IGIRG) or a binomial variable (for SFP) with mean at least
    \[
        (R_i-R_{i-1}-1)\Big(\frac{\ell(M_i)}{M_i^{\tau-1}} - \frac{\ell(2M_i)}{(2M_i)^{\tau-1}} \Big).
    \]
    Since $\ell$ is slowly-varying, $\tau > 2$, and $M_i > M_0 \ggs \mpar$, it follows that
    \[
        \E\big[|(I_i\setminus I_{i-1}) \cap \calV_{M_i}| \mid \calF_i\big] \ge \frac{R_i}{2}\cdot \frac{\ell(M_i)}{4M_i^{\tau-1}} = \frac{\ell(M_i)M_i^{\delta(\tau-1)}}{8} \ge 2 M_i^{\delta(\tau-1)/2}.
    \]
    In both IGIRG and SFP, it follows that
    \begin{equation}\label{eq:1d-infinite-path-vert-prob}
        \pr(\neg\calA_{\mathrm{vert}}^i \mid \calF_i) \le \exp(-M_i^{\delta(\tau-1)/2}).
    \end{equation}
We next lower-bound the probability that $z_{i-1}$ is connected to any given $z' \in (I_i\setminus I_{i-1}) \cap \calV_{M_i}$. 
Let $(V,w_V)$ be a possible value of $\widetilde{\calV}$ which implies $\calA_{\mathrm{vert}}^i$, and suppose that $z' \in (I_i\setminus I_{i-1}) \cap \calV_{M_i}$ for $\widetilde{\calV} = (V,w_V)$. The distance between $z_{i-1}$ and $z'$ is at most $R_i$, and vertices have weight in $[M, 2M]$ in  $\calV_M$, so by~\eqref{eq:connection_prob} (remembering that $d=1$), 
\begin{align}
\begin{split}\label{eq:1d-infinite-path-conn-prob}
\pr(z_{i-1}z'\in\calE \mid (V,w_V),\calF_i) 
&\ge \underline{c} \cdot \min\Big\{1,\tfrac{M_{i-1}M_i}{R_i}\Big\}^\alpha\\
&= \underline{c}\cdot \min\Big\{1,M_0^{\theta^{i-1}[1+\theta-\theta (1+\delta)(\tau-1)]}\Big\}^\alpha.
\end{split}
\end{align}
Observe that $1+\theta > 2$, and that since $\theta-1,\delta\lls\mpar$ and $\tau < 3$ we have $\theta (1+\delta)(\tau-1) < 2$; thus the exponent on the rhs of~\eqref{eq:1d-infinite-path-conn-prob} is positive and we obtain
    \[
        \pr(z_{i-1}z'\in\calE \mid (V,w_V),\calF_i) \ge \underline{c}.
    \]
    By $\calA_{\mathrm{vert}}^i$ (defined above \eqref{eq:1d-infinite-path-union-1}) there are at least $M_i^{\delta(\tau-1)/2}$ such vertices $z'$, each joined to $z_{i-1}$ independently. Thus,
    \begin{equation}\label{eq:1d-infinit-path-inc-prob}
        \pr(\neg\calA_{\mathrm{inc}}^i \mid \calA_{\mathrm{vert}}^i \cap \calF_i) \le \exp(-\log(1/\underline{c})M_i^{\delta(\tau-1)/2}).
    \end{equation}
    Combining~\eqref{eq:1d-infinite-path-union-1},~\eqref{eq:1d-infinite-path-vert-prob} and~\eqref{eq:1d-infinit-path-inc-prob} and using $M_i \ge M_0 \ggs \mpar, \delta$ yields
    \[
        \pr(\neg\calA_{\mathrm{inc}}^i \mid \calF_i) \le \exp(-M_i^{\delta(\tau-1)/2}) + \exp(-\log(1/\underline{c})M_i^{\delta(\tau-1)/2}) \le \exp(-M_i^{\delta(\tau-1)/3}).
    \]
    Substituting this bound into~\eqref{eq:1d-infinite-path-Ainci} and using $M_0 \ggs \mpar, \delta$ then yields the required bound of
    \[
        \pr(\neg\calA_{\mathrm{inc}}(M_0,\theta,z) \mid z \in \calV_{M_0}) \le \sum_{i=1}^\infty \exp(-M_i^{\delta(\tau-1)/3}) \le \exp(-M_0^{\delta(\tau-1)/4}).\qedhere
    \]
    The bound remains true if we additionally condition also on $y\in \calV$: there is a unique index $i$ so that $y\in I_i\setminus I_{i-1}$. The number of points in this interval changes by one, but the concentration bound in \eqref{eq:1d-infinite-path-vert-prob} still remains valid under the conditioning.
\end{proof}

We now apply Lemma~\ref{lem:1d-infinite-path} to prove the two remaining claims from Section~\ref{sec:endpoints} when $d=1$.

\twoinCinfty*
\begin{proof}\label{proof:claim-rho}
    For $d \ge 2$ this result appears as \cite[Lemma~3.10]{komjathy2022one2}. For $d=1$, we instead apply Lemma~\ref{lem:1d-infinite-path}. Wlog, suppose $a<b$. Let $M_0,\theta>1$ and $0<\delta<1$ with $\delta,\theta-1\lls\mpar$ and $M_0\ggs\delta,\theta,\mpar$. Let $\calA_{\mathrm{path}}(a)$ be the event that $a\in \calV$ lies in an infinite component of $G[[a,\infty)]$, and let $\calA_{\mathrm{path}}(b)$ be the event that $b$ lies in an infinite component of $G[(-\infty,b]]$. By Lemma~\ref{lem:1d-infinite-path}, (and the last sentence there), we have
    \begin{align*}
        \pr(\neg\calA_{\mathrm{path}}(a) \mid b \in \calV, a \in \calV_{M_0}) & \le \exp(-M_0^{\delta(\tau-1)/4}),\\
        \pr(\neg\calA_{\mathrm{path}}(b) \mid a \in \calV, b \in \calV_{M_0}) &  \le \exp(-M_0^{\delta(\tau-1)/4}).
    \end{align*}
    Thus by a union bound,
    \begin{equation}\label{eq:two-in-Cinfty-0}
        \pr(\calA_{\mathrm{path}}(a) \cap \calA_{\mathrm{path}}(b) \mid a,b\in\calV) \ge \pr(a,b \in \calV_{M_0} \mid a,b\in\calV) - 2\exp(-M_0^{\delta(\tau-1)/4}).
    \end{equation}
    By~\eqref{eq:power_law}, since $M_0\ggs\mpar$, $\tau>2$, and $\ell$ is slowly-varying, we have
    \[
        \pr(a,b\in\calV_{M_0}\mid a,b\in\calV) = \Big(\frac{\ell(M_0)}{M_0^{\tau-1}} - \frac{\ell(2M_0)}{(2M_0)^{\tau-1}}\Big)^2 \ge \Big(\frac{\ell(M_0)}{4M_0^{\tau-1}}\Big)^2 \ge \frac{1}{M_0^{3(\tau-1)}}.
    \]
    Since $M_0 \ggs \delta,\mpar$, it follows from~\eqref{eq:two-in-Cinfty-0} that
    \[
        \pr(\calA_{\mathrm{path}}(a) \cap \calA_{\mathrm{path}}(b) \mid a,b\in\calV) \ge M_0^{-3(\tau-1)} - 2\exp(-M_0^{\delta(\tau-1)/4}) > M_0^{-3(\tau-1)}/2;
    \]
    since $\calC_\infty$ is a.s.\ unique, the result therefore follows by taking $\rho := M_0^{-3(\tau-1)}/2$.
\end{proof}

\newexternaloned*
\begin{proof}\label{proof:lem:new_external} 
    We write $r_M=:r$. Since $\calH_M = B_{2r}(z) \cap \calV_M$, i.e, all vertices in $\calV_M$ in $B_{2r}(z)$ belong to $\calH_M$, also all vertices in $B_{r^{1/3}}(y)\cap \calV_M$ are in $\calH_\infty$ for all $y\in B_r(z)$, so the event $\calA_\mathrm{dense}(\calH,\calV_M,r,z)$ always occurs by definition, as required by~\eqref{eq:dense-near-1d}. 
    
    We next prove~\eqref{eq:linear-again-1d} by dividing $B_{2r}(z)$ into sub-interval ``cells'' and proving that each cell is whp both connected and joined to each of its adjacent cells in $G_M$. To this end, let $R := M^{2/d}/\sqrt{d}=M^2$, let $i_{\max} := \lceil 2r/R\rceil$ and $i_{\min} := -i_{\max}$. For all $i \in [i_{\min},i_{\max}]$, let $y_i := z+i\cdot R$ and $Q^{\scriptscriptstyle{(i)}} := [y_i,y_i+R)$; thus $Q^{\scriptscriptstyle{(i_{\min})}},\dots,Q^{\scriptscriptstyle{(i_{\max})}}$ partition $[z-R\lceil 2r/R \rceil,z+(R+1)\lceil 2r/R \rceil) \supset B_{2r}(z)$. Let $\calA_{\mathrm{path}}$ be the event that $G_M[Q^{\scriptscriptstyle{(i_{\min})}}],\dots,G_M[Q^{\scriptscriptstyle{(i_{\max})}}]$ are connected graphs containing at most $2R$ vertices and that for all $i \in [i_{\min},i_{\max}-1]$ there is at least one edge in $G_M$ from $Q^{\scriptscriptstyle{(i)}}$ to $Q^{\scriptscriptstyle{(i+1)}}$. If $\calA_{\mathrm{path}}$ occurs, then for all $a,b \in B_{2r}(z)\cap \calV_M$ there is a path $\pi_{a,b}$ from $a$ to $b$ in $G_M$ intersecting at most $\lfloor |a-b|/R\rfloor+2\le |a-b|/R+2$ many cells;   
    since each cell contains at most $2R$ vertices and each edge in $G_M$ has cost at most $M^{3\mu}$, and since $2RM^{3\mu} = \kappa$, it follows that
    \[
        \calC(\pi_{a,b}) \le (|a-b|/R+2) \cdot 2R \cdot M^{3\mu} = 2(|a-b|+2R) \cdot M^{3\mu} \le \kappa|a-b| + 2\kappa.
    \]
    Moreover, the deviation of $\pi_{a,b}$ is at most the size of the box-length, i.e., $R$, and $ R< \kappa$, i.e., it does not depend on $|a-b|$. With $\zeta=0$ and $C=2\kappa$, we have just shown that 
    \begin{equation}\label{eq:new-external-1d-path}
        \calA_{\mathrm{linear}}(\calH_M,\calH_M,r_M,\kappa,0,2\kappa,z) \subseteq \calA_{\mathrm{path}}.
    \end{equation}
We now bound $\pr(\calA_{\mathrm{path}})$ below. The same approach of dividing $\R^d$ into cells is used in~\cite{komjathy2022one2}, and for both IGIRG and SFP~\cite[Corollary 3.9(i)]{komjathy2022one2} lower bounds the probability that the conditions of $\calA_{\mathrm{path}}$ hold for a single cell $Q^{\scriptscriptstyle{(i)}}$ by $1-e^{-M^{3-\tau-\eps}}$ for some $\eps \lls \mpar$ with $M \ggs \eps$ (by coupling to an Erd\H{o}s-R\'{e}nyi graph). Combining \cite[Corollary 3.9(i)]{komjathy2022one2} with a union bound over the at most $2\cdot\lceil 2r/R\rceil+1$ cells yields that
    \[
        \pr(\calA_{\textrm{path}}) \ge 1-(2\cdot\lceil 2r/R\rceil+1) \cdot e^{-M^{3-\tau-\eps}} \ge  1 -5re^{-M^{3-\tau-\eps}}.
    \]
    Since $r = e^{(\log M)^2}$ and $M \ggs \mpar$, the $e^{-M^{3-\tau-\eps}}$ term dominates, and together with~\eqref{eq:new-external-1d-path} and $M\ggs c,q$ we obtain
    $
        \pr(\calA_{\mathrm{linear}}(\calH_M,\calH_M,r,\kappa,0,2\kappa,z)) \ge 1-q/10,
    $
    and we have proved~\eqref{eq:linear-again-1d} as required. The argument conditioned on $\calF_{0,x}$ is identical; note in particular that~\cite[Corollary 3.9(i)]{komjathy2022one2} explicitly allows for planted vertices. 

    It remains to bound $\pr(\calA_{\mathrm{near}}(\calH_M,C_M,C_M,z))$ conditioned on $z \in \calC_\infty$, see \eqref{eq:a-near} for the definition of $\calA_{\mathrm{near}}$. Here, we replaced the `usual' radius $r_M=\exp((\log M)^2)$  by $C_M=M^{2(\tau-1)+3\mu}\ll r_M$, i.e., we can find a path from $z$ to a vertex with weight $M$ within a much smaller radius from $z$ that $r_M$ would give. We first dominate $\calA_{\mathrm{near}}(\calH_M,C_M,C_M,z)$ below by events $\calA_1$ to $\calA_4$ defined as follows. Let $\rho \lls \mpar$ be as in Claim~\ref{claim:two-in-Cinfty}, and define $M_0 > 0$ satisfying $M \ggs M_0\ggs q,\rho,\mpar$, and let $r_0 := M_0^{2(\tau-1)}$ coming from Lemma~\ref{lem:1d-infinite-path}.
    By Lemma~\ref{lem:1d-infinite-path} we know that a.s.\ $\calC_\infty$ contains a vertex in $\calV_{M_0}$, and let $v_0$ be an (arbitrarily-chosen) closest such vertex to $z$ in Euclidean distance.
    We define the following events:
    \begin{enumerate}[(C1)]
        \item\label{item:a1} $\calA_1$: there is a path $\pi_{z,v_0}$ from $z$ to $v_0$ with $\calC(\pi_{z,v_0}) \le C_M/2$ and $\calV(\pi_{z,v_0}) \subseteq B_{C_M}(z)$;
        
        \item\label{item:a2} $\calA_2$: $B_{r_0}(z)$ contains a vertex in $\calV_{M_0}\cap \calC_\infty$, i.e., $v_0 \in B_{r_0}(z)$;
        
        \item\label{item:a3} $\calA_3$: every vertex $x \in B_{r_0}(z) \cap \calV_{M_0}$ has an associated path $\pi_{x\to\calV_M}$ from $x$ to some vertex in $\calV_M$ with $\calV(\pi_{x\to \calV_M}) \subset B_{C_M}(z)$; 
        and
        
        \item\label{item:a4} $\calA_4$: $\calA_2$ and $\calA_3$ both occur and $\calC(\pi_{v_0\to\calV_M}) \le M^{3\mu} \le C_M/2$.
    \end{enumerate}
    Observe that if $\calA_1$, $\calA_2$, $\calA_3$ and $\calA_4$ all occur then concatenating $\pi_{z,v_0}$ and $\pi_{v_0\to\calV_M}$ yields the path required by $\calA_{\mathrm{near}}(\calH_M,C_M,C_M,z)$; thus
    \begin{align*}
        \pr(\neg\calA_{\mathrm{near}}(\calH_M,C_M,C_M,z) \mid z \in \calC_\infty) &\le \sum_{i=1}^3 \pr(\neg\calA_i\mid z \in\calC_\infty) + \pr(\neg\calA_4\mid \calA_2,\calA_3, z \in \calC_\infty).
    \end{align*}
    By Claim~\ref{claim:two-in-Cinfty}, it follows that
    \begin{align}
    \begin{split}\label{eq:external-1d-union}
        &\pr(\neg\calA_{\mathrm{near}}(\calH_M,C_M,C_M,z) \mid z \in \calC_\infty) \le \pr(\neg\calA_1\mid z \in\calC_\infty) + \\
        &\qquad\qquad  \pr(\neg\calA_2\mid z \in\calV)/\rho
         +\pr(\neg\calA_3\mid z \in \calV)/\rho + \pr(\neg\calA_4\mid \calA_2,\calA_3,z \in \calV)/\rho.
    \end{split}  
    \end{align}
We first bound $\pr(\neg\calA_1\mid z\in\calC_\infty)$ in (C\ref{item:a1}). Given that we fixed $v_0$, let $\pi_{z,v_0}$ be an (arbitrarily-chosen) cheapest path from $z$ to $v_0$; such a path must exist whenever $z \in \calC_\infty$. Since $\calC(\pi_{z,v_0})$ and $\inf\{R>0\colon \calV(\pi_{z,v_0})\subseteq B_R(z)\}$ are a.s.\! finite random variables and since $C_M \ggs q,\mpar, M_0$, we can choose $C_M$ sufficiently large so that
    \begin{equation}\label{eq:external-1d-A1}
        \pr(\neg\calA_1 \mid z \in \calC_\infty) \le q/40.
    \end{equation}
   We next bound $\pr(\neg\calA_2\mid z \in \calV)$ in (C\ref{item:a2}). The event $\calA_2$ occurs if and only if $B_{r_0}(z) \cap \calV_{M_0} \cap \calC_\infty \ne \emptyset$. Similarly as before,  $|B_{r_0}(z) \cap \calV_{M_0}|$ is either a Poisson variable (in IGIRG) or a binomial variable (in SFP) with mean 
    \begin{equation}\label{eq:mean-vertices-in-r0}
        \E[|B_{r_0}(z) \cap \calV_{M_0}| \mid z \in \calV] \ge r_0\Big(\frac{\ell(M_0)}{M_0^{\tau-1}} - \frac{\ell(2M_0)}{(2M_0)^{\tau-1}}\Big) \ge 2M_0^{(\tau-1)/2},
    \end{equation}
    where we used $M_0 \ggs \mpar$ and the value of $r_0=M_0^{2(\tau-1)}$ for the second inequality. In particular, by Chernoff's bound,
    \begin{align}\label{eq:external-1d-Markov1}
    \pr(|B_{r_0}(z) \cap \calV_{M_0}| < M_0^{(\tau-1)/2} \mid z \in \calV) \le \exp(-M_0^{(\tau-1)/8}).
    \end{align}
    Let $\theta > 1$ and $\delta \in (0,1)$ satisfy $\delta,\theta\! -\! 1 \lls \mpar$ and $M_0 \ggs \delta,\theta$. Recall the event $\calA_{\mathrm{inc}}(M_0,\theta,x)$ about having an infinite weight-increasing path from Lemma~\ref{lem:1d-infinite-path}; this event implies $\{x\in\calC_\infty\}$. So by \eqref{eq:weight-increasing-11} in Lemma~\ref{lem:1d-infinite-path}, $\pr(x \notin \calC_\infty \mid x\in\calV_{M_0}) \le \exp(-M_0^{\delta(\tau-1)/4})$ for some $\delta \lls \mpar$ with $M_0 \ggs \delta$. By translation invariance, this implies 
    \begin{align*}
        \pr(x \notin \calC_\infty \mid x\in B_{r_0}(z)\cap\calV_{M_0}, z\in\calV) \le \exp(-M_0^{\delta(\tau-1)/4}).    
    \end{align*}
    Hence, the expected number of vertices in $\calV_{M_0}$ outside the infinite component is at most
        \begin{align*}
        &\E[|(B_{r_0}(z) \cap \calV_{M_0})\setminus\calC_\infty| \mid z \in \calV] \le \E[|B_{r_0}(z) \cap \calV_{M_0}| \mid z \in \calV] \cdot \exp(-M_0^{\delta(\tau-1)/4}) \\
        &\quad\quad \le \E[|B_{r_0}(z) \cap \calV| \mid z \in \calV] \cdot \exp(-M_0^{\delta(\tau-1)/4}) \le (2r_0+1)\exp(-M_0^{\delta(\tau-1)/4}),
    \end{align*}
 which is a crude upper bound. It follows by Markov's inequality that
    \begin{align}\begin{split}\label{eq:external-1d-Markov2}
        \pr(|(B_{r_0}(z) \cap \calV_{M_0}) \setminus \calC_\infty| \ge M_0^{(\tau-1)/2}/2 \mid z\in \calV) &\le \frac{(2r_0+1)\exp(-M_0^{\delta(\tau-1)/4})}{M_0^{(\tau-1)/2}/2} \\
        & \le \exp(-M_0^{\delta(\tau-1)/8}),
    \end{split}\end{align}
    where we used $r_0 = M_0^{2(\tau-1)}$ and $M_0 \ggs \delta,\mpar$.
    By a union bound over \eqref{eq:external-1d-Markov1} and~\eqref{eq:external-1d-Markov2},
    \begin{align}
    \begin{split}\label{eq:external-1d-A2}
        \pr(\neg\calA_2 \mid z \in \calV) 
        &\le \exp(-M_0^{(\tau-1)/8}) + \exp(-M_0^{\delta(\tau-1)/8})\le \rho q/40,
    \end{split}
    \end{align}
    for all sufficiently large $M_0$.
    We next bound $\pr(\neg\calA_3\mid z \in \calV)$ in (C\ref{item:a3}). For all $x \in B_{r_0}(z) \cap \calV_{M_0}$, let $\calA_3(x)$ be the event that $x$ has an associated path $\pi_{x\to \calV_M}$ as in $\calA_\mathrm{3}$. We restrict this path to be a weight-increasing path as in Lemma \ref{lem:1d-infinite-path}. Let $\theta > 1$ and $\delta \in (0,1)$ satisfy $\delta,\theta - 1 \lls \mpar$, and require that $M_0 \ggs \delta,\theta$. For sufficiently large $M$, we may choose $M_0$ such that $i := (\log \log M -\log \log M_0)/ \log \theta \le \log M$ is an integer, so $M_0^{\theta^i} = M$.  Then by Lemma~\ref{lem:1d-infinite-path}, for any given $x \in B_{r_0}(z)$, the weight increasing path reaches a vertex of weight $M$ at radius $R_i=M_0^{\theta^i(\tau-1)(1+\delta)}=M^{(\tau-1)(1+\delta)} < M^{2(\tau-1)+3\mu}=C_M$, and so the path is contained in $B_{C_M}(z)$, and we obtain
\begin{equation}\label{eq:weight-increasing-from-x}
   \begin{aligned}
        \pr(\neg\calA_3(x) \mid x \in \calV_{M_0},z \in \calV) &\le \pr(\neg\calA_\mathrm{inc}(M_0, \theta, x) \mid x \in \calV_{M_0},z \in \calV)\\&\le \exp(-M_0^{\delta(\tau-1)/4}).
        \end{aligned}
    \end{equation}
    It follows by a union bound that
    \begin{align*}
        \pr(\neg\calA_3 \mid z \in \calV) \le \pr(|B_{r_0}(z) \cap \calV_{M_0}| \ge r_0 \mid z \in \calV) + r_0 \exp(-M_0^{\delta(\tau-1)/4}).
    \end{align*}
   As before $|B_{r_0}(z) \cap \calV_{M_0}|$ is either a Poisson variable (in IGIRG) or a binomial variable (in SFP) with mean bounded from above     \begin{align*}
        \E(|B_{r_0}(z) \cap \calV_{M_0}| \mid z \in \calV) \le 2r_0\Big(\frac{\ell(M_0)}{M_0^{\tau-1}} - \frac{\ell(2M_0)}{(2M_0)^{\tau-1}}\Big) \le r_0 M_0^{-(\tau-1)/2}.
    \end{align*}
Therefore $\pr(|B_{r_0}(z) \cap \calV_{M_0}| \ge r_0 \mid z \in \calV) \le 2^{-r_0}$ and we get
    \begin{align}
    \begin{split}\label{eq:external-1d-A3}
        \pr(\neg\calA_3 \mid z \in \calV) & 
        \le 2^{-r_0}+r_0 \exp(-M_0^{\delta(\tau-1)/4}) \le \rho q/40,
    \end{split}
    \end{align}
    where the second inequality holds because because $r_0 = M_0^{2(\tau-1)}$ and $M_0 \ggs \delta, \rho, q, \mpar$.

 Finally we bound the last term in \eqref{eq:external-1d-union}, (see $\calA_4$ in (C\ref{item:a4})). Conditioned on the realisation of $G$, any path $\pi_{v_0\to \calV_M}$ that satisfies the weight-increasing path property in \eqref{eq:weight-increasing-from-x} has at most $\log M$ edges and all vertex weights at most $M$. So, its expected cost is at most 
  \[ \E[\calC(\pi_{v_0\to \calV_M})]\le |\calE(\pi_{v_0\to \calV_M})| \cdot M^{2\mu}\E[L] \le M^{2\mu}\E[L]\log M \le \rho q M^{3\mu}/40\] 
since $M \ggs \rho, q, \mpar$. Thus by Markov's inequality, the probability that the cost of this path is larger than $M^{3\mu}$ is at most $\rho q/40$.
    \begin{equation}\label{eq:external-1d-A4}
        \pr(\neg\calA_4 \mid \calA_2,\calA_3, z \in \calV) \le \rho q/40.
    \end{equation}
    The result therefore follows on substituting the bounds 
  \eqref{eq:external-1d-A1}, \eqref{eq:external-1d-A2}, \eqref{eq:external-1d-A3}, and \eqref{eq:external-1d-A4}  into~\eqref{eq:external-1d-union}. The argument conditioned on $\calF_{y,z}$ is identical; note in particular that in applying Lemma~\ref{lem:1d-infinite-path}, we may assume wlog that $y < z$ by symmetry.
\end{proof}

\bibliographystyle{abbrv}	
\bibliography{references}

\end{document}